\theoremstyle{plain}
\newtheorem{theorem}{Theorem}[section]
\newtheorem{theorem-definition}[theorem]{Theorem and Definition}
\newtheorem{definition}[theorem]{Definition}
\newtheorem{corollary}[theorem]{Corollary}
\newtheorem{lemma}[theorem]{Lemma}
\newtheorem{notation}[theorem]{Notation}
\newtheorem{proposition}[theorem]{Proposition}
\newtheoremstyle{myexstyle}
{}{}{}{}{\bfseries}{.}{ }{}
\theoremstyle{myexstyle}
\newtheorem{example}[theorem]{Example}
\newtheorem{remark}[theorem]{Remark}
\newtheorem{notations}[theorem]{Notations}
\newtheorem{warning}[theorem]{Warning}
\numberwithin{equation}{section}
\newcommand{\N}{\mathbb{N}}
\newcommand{\R}{\mathbb{R}}
\newcommand{\pr}{\operatorname{pr}}
\begin{document}
%\frontmatter
\title{Partial Dirac Structures\\
 and Dynamical Systems}

\date{}
\author{Fernand Pelletier \& Patrick Cabau}

\maketitle

\begin{abstract}
In a previous paper (\cite{PeCa24}), the notion of Dirac structure in finite dimension was extended to the convenient setting. In particular, we introduce the notion of \emph{partial Dirac structure on a convenient manifold} and look for which all geometrical results in finite dimension which are still true in this infinite dimensional framework.  Note that  this context is justified by many mechanical infinite dimensional examples which recover all the classical ones, such as Hilbert, Banach, Fr\'{e}chet context or direct limits of Banach spaces. Another reason is that if we want to extend the variational technics, as in \cite{YoMa06II}, to the infinite dimensional   setting, the category of convenient vector spaces is cartesian closed, which is not the case for the category of locally convex vector spaces and so this variational approach does not work in this last setting. 
In this second part, first, we look for an adaptation of the results obtained in \cite{YoMa06II} for a variational approach of constraint Lagrangian on a subbundle of a Banach manifold. Then we study the same type of problem but for constraint Lagrangians on a \textit{singular} distribution. Theses results are finally applied to the  characterization of normal geodesics for a conical Finsler metric on a Banach manifold.
\end{abstract}

% \tableofcontents

\noindent 
\textbf{Keywords.--}
Partial Dirac structures, convenient Lie algebroids, convenient manifolds, constaint Lagrangian, direct limits, projective limits, geodesics, conical Finsler metric.

\noindent
\textbf{Classification MSC 2020.--}  
22E65, %Infinite-dimensional Lie groups and their Lie algebras: general properties 
46T05, %Infinite-dimensional manifolds
58B20, %Riemannian, Finsler and other geometric structures on infinite-dimensional manifolds
70G45,  %Differential geometric methods (tensors, connections, symplectic, Poisson, contact, Riemannian, nonholonomic, etc.) for problems in mechanics
70H45  %Constrained dynamics, Dirac’s theory of constraints
.

\section{Introduction}
\label{__Introduction}

An algebraic theory of Dirac structures first appeared in \cite{Dor87} in the Hamiltonian framework of integrable evolution equations.  
The geometric approach was realized in \cite{Cou90} and \cite{CoWe88}, in the finite dimensional setting, as a subset $D$ of the Pontryagin bundle $TM \oplus T^\ast M$ satisfying certain conditions.  This allows to treat both weak symplectic  and Poisson structures in a unified framework.\\
The notion was intensively developed, in finite dimension, in connection with the study of non holonomic mechanical systems or mechanical systems with constraints.   The Hamiltonian framework refers to the principle of least action and Legendre transformation through Euler equations  and a variational approach (cf. \cite{JoRa12} and \cite{YJM10}).\\

The purpose of \cite{PeCa24} and this paper is to propose three directions:
\begin{enumerate}
\item[(1)]   
{\it From section~\ref{__PartialAlmostDiracStructuresAndImplicitHamiltonians} to section~\ref{__VariationalApproach} with  applications in section~\ref{__DynamicSystemsImplicitLagrangiansOnDirectLimitsOfFiniteDimensionalManifolds}},\\ 
taking into account the context of \cite{JaZw23} and \cite{SB-KZB17} (see also all references of the same authors inside), we  propose a generalization to the convenient setting of the finite dimensional  geometrical context of Dirac structure and its associated Hamiltonian framework (among of a lot of  references, cf., for example, \cite{JoRa12}, \cite{YJM10}), a generalization of  such  approaches to the infinite dimensional setting for constraint mechanical system in restriction to a subbundle of the configuration manifold. This context is justified by the fact that some infinite mechanical systems and some infinite mathematical control theory problems sit in different infinite dimensional contexts such as Hilbert, Banach, Fr\'echet or even convenient settings: 
for instance the Korteweg de Vries hierarchy, the non-linear Schr\"odinger equation or geodesic problems on groups of diffeomorphisms or sub-Riemanian geometry (cf. for example  \cite{DuNo84}, \cite{Fad80}, \cite{Gra94}, \cite{GKMS18}, \cite{KMM20},  \cite{GMV15}, \cite{Arg20}, \cite{Pel19}), \cite{SB-KZB17}).
\item[(2)] 
{\it In section~\ref{__DiracStructuresOnSingularConstraintsInTheBanachSetting}},\\
 extension of the previous results to infinite mechanical systems constraint by a singular distribution on the configuration manifold.
\item[(3)]
{\it In  section~\ref{__NormalGeodesicsConicSubHilbertFinsler}},\\
 application of the  previous results to "conic Finsler metrics".
\end{enumerate} 

Recall that, {\bf  in a  first part  (\cite{PeCa24}}), we have proposed  a version of partial Dirac structures on convenient manifolds and studied their geometrical properties:\\
 according to the framework of partial Poisson structures (cf. \cite{CaPe23}, Chapter~7), we introduce the notion on the partial  bundle
\[
{T}^{\mathfrak{p}}M = TM \oplus T^\flat M
\]
where $T^\flat M$ is a weak sub-bundle of the  cotangent bundle $T'M$ of a convenient manifold\footnote{i.e. the inclusion of $T^\flat M$ in $T^\prime M$ is an injective convenient morphism.}.

As in finite dimension, this bundle can be provided with a Courant bracket and an anchor 
$\rho^D: {T}^{\mathfrak{p}}(M)\to TM$ which is called a partial Dirac structure if the Courant bracket in restriction to local sections of  ${T}^{\mathfrak{p}}(M)$ takes values in such a set of sections and if the induced bracket satisfies the Jacobi identity, which means, by the way, that 
$({T}^{\mathfrak{p}}(M),\rho^D)$ is provided with a Lie algebroid structure.\\
In finite dimension, any Dirac structure is integrable, that is the distribution  defines a (singular) foliation  provided with a canonical pre-symplectic structure on each leaf. Unfortunately, even in the Hilbert setting, this result in no more true. The interested reader  will find sufficient conditions in the Banach setting under which  a singular Banach foliation is associated to a Banach partial Dirac structure and conditions under which such a foliation can be  provided with a pre-symplectic structure.  All along the text, the reader will find examples of  partial Dirac structures.\\

{\bf  In this second part}, all these geometrical results will give rise to applications on:
\begin{enumerate} 
\item[1.]
Dynamical systems associated with implicit Hamiltonians, non degenerated and degenerated Lagrangians (as a generalization to our context of  \cite{YoMa06II});
\item[2.]  
Dynamical systems of non degenerate Lagrangians on "singular non  holonomic constraints" with application, in the Banach setting, to "conic  sub-Finsler" metric and, in particular, (singular) sub-Riemannian  metric with characterization of their geodesics as a generalization of \cite{Arg20}.
\end{enumerate}

This paper is organized as follows.\\  Section~\ref{__DiracStructuresOnFiniteDimensionalManifolds}  summaries  most  classical results about Dirac structures in finite dimension. It  can  be considered as a  basis of comparison between such results in finite dimension  and  all the adaptations proposed in this work. In particular,  {\it this allows to point out  all  the obstructions we have met in this work  for such a generalization in the infinite dimensional setting.}. 
In section~\ref{__PartialAlmostDiracStructuresAndImplicitHamiltonians}, after having recalling the notion of (weak) Pontryagin bundle $T^\mathfrak{p}M=TM\oplus T^\flat M \to M$ in the convenient setting, we define various concepts such as the Courant bracket, partial almost Dirac structures  and study its properties. We end this section with the core concept of implicit Hamiltonian system. 
In section~\ref{__ImplicitLagrangians}, we extend to the convenient setting the notion of implicit Lagrangians and some results obtained in finite dimension by H. Yoshimura, H. L. Jacobs and J. E. Marsden in \cite{YJM10}. 
Section~\ref{__VariationalApproach} is devoted to the link, in the convenient setting, between the variational approach of implicit Lagrangian systems and implicit Hamiltonian systems.    
Section~\ref{__DynamicSystemsImplicitLagrangiansOnDirectLimitsOfFiniteDimensionalManifolds} is dedicated to the properties of the direct limit (resp. projective  limit) of an ascending  sequence (resp. projective sequence) of Banach almost Dirac structures and sufficient conditions under which  direct limit  of a ascending  sequence of Hilbert  integrable Dirac structures is a convenient integrable Dirac structure. 
In section~\ref{__DiracStructuresOnSingularConstraintsInTheBanachSetting},
we present, in the Banach setting, a generalization  to a singular distribution on $M$ of  the induced  partial almost Dirac structure associated to a closed subbundle $\Delta_M$ of $TM$.  
The last section is devoted to the study of normal geodesics  of a  conic sub Hilbert-Finsler structure on a Banach manifold.
\section{Dirac Structures on Finite Dimensional Manifolds}
\label{__DiracStructuresOnFiniteDimensionalManifolds}

Let $M$ be a manifold of dimension $n$, $TM$ its tangent bundle and $T^\ast M$ its cotangent bundle. 

\subsection{Dirac Structures}
\label{___DiracrStructures}

The Pontryagin bundle\index{Pontryagin bundle} 
${T}^{\mathfrak{p}}(M) = TM \oplus T^\ast(M)$ is the Whitney sum bundle over $M$, i.e. it is the bundle over the base $M$ where the fibres over the point $x \in M$ equal to $T_x M \oplus T_x^\ast M$. This bundle is equipped with 
%the natural projections
%\[
%\operatorname{pr}_T : {T}^{\mathfrak{p}}(M) \to TM
%\textrm{  et  }
%\operatorname{pr}_{T^\ast} : {T}^{\mathfrak{p}}(M) \to T^\ast(M)
%\]
%as well as
\begin{description}
\item[$\bullet$]
a nondegenerate, symmetric fibrewise bilinear form $<.,.>$ defined for any $x \in M$, any pair $(X_x,Y_x)$ of $T_xM$ and any pair $(\alpha_x,\beta_x)$ of $T_x^\ast M$ by
\begin{eqnarray}
\label{eq_DirectSymmetricBilinearFormDiracStructures}
<(X_x,\alpha_x),(Y_x,\beta_x)>
= 
\beta_x(X_x) + \alpha_x(Y_x)
\end{eqnarray}
\item[$\bullet$]
the \emph{Courant bracket}\index{Courant bracket}\index{bracket!Courant} defined for any pair $(X,Y)$ of vector fields and any pair $(\alpha,\beta)$ of $1$-forms by
\begin{eqnarray}
\label{eq_CourantBracketDiracStructures}
[[
(X,\alpha),(Y,\beta)
]]
=
\left( 
[X,Y], 
L_X \beta - L_Y \alpha 
+\dfrac{1}{2} d \left( \alpha(Y) - \beta(X) \right) 
\right)
\end{eqnarray}
\end{description}
The Courant bracket satisfies the identity:
\[
\circlearrowleft [[[[a1, a2]], a3]]  
=
-
\circlearrowleft \dfrac{1}{3} d( < [[a1, a2]], a3 > )
\]
where $\circlearrowleft$ stands for the sum of cyclic permutations.

\begin{notation}
\label{N_OrthogonalDiracStructure}
For a vector sub-bundle $D$ of ${T}^{\mathfrak{p}}M$, we denote $D^\perp$ the orthogonal of $D$ with respect to $<.,.>$:
\[
D^\perp 
= 
\{ (U,\eta) \in {T}^{\mathfrak{p}}M :\;
\forall (X,\alpha) \in D, < (X,\alpha),(U,\eta) > =0  \}
\]
\end{notation}
\begin{notation}
\label{N_SetOfSectionsOfAsub-bundle}
We denote $\Gamma(D)$ the set of smooth sections of a vector sub-bundle $D$ of ${T}^{\mathfrak{p}}M$.
\end{notation}

\begin{definition}
\label{D_DiracStructures}
An \emph{almost Dirac structure}\index{almost!Dirac structure} on $M$ is a vector sub-bundle $D$ of ${T}^{\mathfrak{p}}M$ satisfying
\begin{description}
\item[\textbf{(ADS)}]
{\hfil 
$D = D^\perp$
}
\end{description}
\end{definition}
Moreover, if $\Gamma(D)$\index{GammaL@$\Gamma(D)$ (distribution)} fulfils the condition
\begin{description}
\item[\textbf{(DS)}]
{\hfil 
$[[\Gamma(D), \Gamma(D)]] \subset \Gamma(D)$
}
\end{description}
$D$ is a \emph{Dirac structure}\footnote{In \cite{CoWe88}, the condition \textbf{(DS)} is replaced by
\[
\forall 
\left( 
\left( X_1,\alpha_1 \right) , 
\left( X_2,\alpha_2 \right) , 
\left( X_3,\alpha_3 \right) 
\right)
\in \left( \Gamma(D) \right) ^3,\;
\circlearrowleft \left\langle L_{X_1} \alpha_2 , X_3
 \right\rangle = 0 .
\]
}
\index{Dirac structure} on $M$.\\
%\begin{eqnarray}
%\label{eq_BracketDiracStructures}
%[[
%(X,\alpha),(Y,\beta)
%]]
%=
%\left( [X,Y], L_X \beta - i_Y d\alpha \right)
%\end{eqnarray}
Since the pairing $<.,.>$ has split signature, \textbf{(ADS)} is equivalent to
\begin{enumerate}
\item[$\bullet$]
$<.,.>_{|D} = 0$
\item[$\bullet$]
$\operatorname{rank} D = n$ 
\end{enumerate}
For any Lagrangian sub-bundle $L \subset TM$, the expression
\[
\mathbf{T}_{L} \left( a_1, a_2, a_3 \right) 
:= 
\circlearrowleft <[[a_1, a_2]], a_3>
\]
for any $a_1$, $a_2$ and $a_3$ in  $\Gamma(L)$, defines a $3$-tensor called the \emph{Courant tensor}\index{Courant!tensor} of $L$.\\ 
For a  sub-bundle $D$, the condition of involutivity  \textbf{(DS)} is equivalent to $\mathbf{T}_D = 0$. 

\subsection{Induced Dirac Structures on the Cotangent Bundle}
\label{___InducedDiracStructuresOnTheCotangentBundle}

An important class of Dirac structures is the induced Dirac structure on the cotangent bundle $T^\ast Q$ of a finite dimensional manifold $Q$ (cf. \cite{LeOh11}, 2.2). \\
Let $p_{T^\ast Q} : T^\ast Q \to Q$ be the cotangent bundle and $\Omega^\flat : T T^\ast Q \to T^\ast T^\ast Q$ the morphism associated to the Liouville $2$-form $\Omega$ (canonical  symplectic structure on $T^\ast Q$).\\
Given a constant-dimensional distribution $\Delta \subset TQ$, one defines the lifted distribution
\[
\Delta_{T^\ast Q} 
:= 
\left( T p_{T^\ast Q} \right) ^{-1} (\Delta) \subset T T^\ast Q
\] 
%The annihilator $\Delta_{T^\ast Q}^{\operatorname{o}}$ is also given by $\Delta_{T^\ast Q}^{\operatorname{o}} 
%= 
%p_Q^\ast \left( \Delta \right) $.\\
Then the sub-bundle $\mathcal{D}_\Delta$ of $T T^\ast Q \oplus T^\ast T^\ast Q$ defined by
\[
\mathcal{D}_\Delta 
=
\{
(u,\alpha) \in T T^\ast Q \oplus T^\ast T^\ast Q: \;
u \in \Delta_{T^\ast Q},\; \alpha - \Omega^\flat (u) 
\in \Delta_{T^\ast Q}^{\operatorname{o}}
\}
\]
is a Dirac structure on $T^\ast Q$. 

\subsection{Integrable Dirac Structures as Lie Algebroids}
\label{___IntegrableDiracStructuresAsLieAlgebroids}

A \emph{Lie algebroid}\index{Lie algebroid} is a finite dimensional smooth vector bundle $\pi : E \to M$  over $M$ with a vector bundle homomorphism $ \rho : E \to TM$, called the \emph{anchor}\index{anchor}, and a Lie algebra bracket $[.,.]_E : \Gamma(E) \times \Gamma(E) \to \Gamma(E)$ satisfying:
\begin{description}
\item[\textbf{(LA1)}]
$\rho$ is a Lie algebra homomorphism;
\item[\textbf{(LA2)}]
for all $f \in C^\infty(M)$ and all $X$ and $Y$ in $\Gamma(E)$,
\[
[X,fY]_E = f[X,Y]_E + \left( L_{\rho(X)}f \right) Y.
\]
\end{description}
For a Dirac structure $L$ on $M$, the vector bundle $\pi_L : L \to M$ inherits a Lie algebroid structure with bracket on $\Gamma(L)$ given by the restriction of the Courant bracket and  anchor  given by the restriction of the projection $T^{\mathfrak{p}}(M) \to TM$ to $L$ (cf. \cite{JoRa12}, 2.2 and \cite{Bur13}, 4.1).

\section{Partial Almost Dirac Structures and Implicit Hamiltonians} 

\label{__PartialAlmostDiracStructuresAndImplicitHamiltonians}

\subsection{Pontryagin Bundle}\label{___PontyaginBundle}

In all this section, $M$ is a fixed  convenient manifold  modelled on a convenient space $\mathbb{M}$ and we denote by $p_{TM}:TM\to M$ its kinematic bundle (\cite{KrMi97}, 28.12)  and $p_{T^\prime M}:T^\prime M\to M$ its associated convenient dual bundle (\cite{KrMi97}, 33.1). \\

A \emph{weak subbundle}\index{weak!subbundle} $p_{T^\flat M}:T^\flat M \to M$ of $T^\prime M$  is a linear subbundle $T^\flat M$ of $T^\prime M$ modelled on a convenient space $\mathbb{M}^\flat$ such that the inclusion $T^\flat M \to T^\prime M$ is a an injective convenient bundle morphism (cf. \cite{CaPe23}, Chap.~3). 
By the way, we can assume that $\mathbb{M}^\flat $ is a subspace of $\mathbb{M}^\prime$ and the inclusion $\mathbb{M}^\flat \to \mathbb{M}^\prime$ is bounded.  
Note that $p_{T^\flat M}$ is nothing but the restriction of $p_{T^\prime M}$ to $T^\flat M$.\\  
When $T^\flat M$ is fixed,  if $\Delta$ is a closed subbundle of $TM$, we consider
\[
\Delta^0_x
=
\{\alpha\in T_x^\flat M:\; \forall u\in \Delta_x,\alpha(u)=0
\}.
\] 
Then $\Delta^0=\displaystyle\bigcup _{x\in M} \Delta_x^0$\index{$\Delta^0$} is a closed subbundle of $T^\flat M$.\\

A \emph{Pontryagin bundle over $M$}\index{Pontryagin bundle} is a bundle of type
\begin{center}  
$\pi^\mathfrak{p}_M:T^\mathfrak{p}M=TM\oplus T^\flat M \to M$\index{TPM@$T^\mathfrak{p}M$ (Pontryaguin bundle)}.
\end{center}

Let $(U,\phi)$ be a chart  such that $U$ is simply connected.  Then, as classically, we may assume that $U$ is a $c^\infty$ open set in $M$;  we have the following trivializations:
 $TM_{|U}\cong U\times\mathbb{M}$ and $T^\prime M_{| U}\cong U\times \mathbb{M}^\prime$;

  $T^\flat M_{| U}\cong U\times\mathbb{M}^\flat$ and $T^\mathfrak{p}M\cong U\times \mathbb{M}\times\mathbb{M}^\flat$

\noindent and associated local coordinates:

$(\mathsf{x},\mathsf{v})\in U\times\mathbb{M}$ and $(\mathsf{x},\mathsf{p})\in U\times \mathbb{M}^\prime$;

 $(\mathsf{x},\mathsf{p})\in U\times \mathbb{M}^\flat$ and $(\mathsf{x},\mathsf{v},\mathsf{p})\in U\times \mathbb{M}\times\mathbb{M}^\flat$

\noindent which gives rises to the following local expressions of the projections:

$p_{TM}\cong (\mathsf{x},\mathsf{v})\mapsto \mathsf{x} $;

$p_{T^\prime M}\cong (\mathsf{x},\mathsf{p})\mapsto \mathsf{x} $;

$\pr :T^\mathfrak{p}M\to TM 
 \cong  
 (\mathsf{x},\mathsf{v},\mathsf{p})\mapsto (\mathsf{x},\mathsf{v})$;

$\pr^\flat :T^\mathfrak{p}M\to T^\flat M
\cong 
(\mathsf{x},\mathsf{v},\mathsf{p})\mapsto (\mathsf{x},\mathsf{p})$.
 
% Consider the canonical weak symplectic form $\Omega$ on  $T^\prime M$. Then $\Omega^\flat: T T^\prime M\to T^\primeT^\prime M$ is an injective morphism whose range $T^\flat T^\prime M$ is a weak subbundle of $T^\prime T^\prime M$.   Thus $(\Omega^\flat )^{-1}$ is an isomorphism from      $T^\flat T^\prime M$ to $TT^\prime M$. In local coordinates we have:
% \begin{description}
%\Omega_{(\mathsf{x},\mathsf{p})(
% For the tangent  bundle $pr_{TT^\prime M}:TT^\prime M\to T^\prime M$ and $pr_{T^\prime  TM}:T^\prime T M:\to T^\prime M$ in local coordinates are then:
% \begin{description}
% \item $pr_{TT^\prime M}:TT^\prime M\to T^\prime M$:  $\; (\mathsf{x},\mathsf{p},\dot{\mathsf{x}},\dot{\mathsf{p}})\mapsto  (\mathsf{x},\mathsf{p})$
%\item  $pr_{T^\prime TM}:TT^\prime M\to T^\prime M$: $\; (\mathsf{x},\mathsf{v},\dot{\mathsf{x}},\dot{\mathsf{p}})\mapsto (\mathsf{x},\mathsf{v})$
%\end{description}

\subsection{Some Canonical Maps}\label{___CanonicalMaps}  
We consider the following projections:\\
$\pi_{T^\prime TM}: T^\prime T M\to TM$ and $Tp_{T^\prime M}:TT^\prime M\to TM$. \\
In local coordinates, we have:

$\pi_{T^\prime TM}\cong (\mathsf{x},\mathsf{p},\dot{\mathsf{x}},\dot{\mathsf{p}})\mapsto  (\mathsf{x},\dot{\mathsf{x}})$;

$Tp_{T^\prime M}\cong (\mathsf{x},\dot{\mathsf{x}},\mathsf{p},\dot{\mathsf{p}})\mapsto (\mathsf{x},\dot{\mathsf{x}})$.\\
As in finite dimension, we have a unique smooth diffeomorphism  map 
\[
\kappa_M: TT^\prime M\to T^\prime T M
\]
which satisfies the following relation:
$\pi_{T^\prime TM}\circ \kappa_M
=Tp_{T^\prime M}$.\\ 
In local coordinates, $\kappa_M$  is given by:

$\kappa_M \cong (\mathsf{x},\mathsf{p},\dot{\mathsf{x}},\dot{\mathsf{p}})
\mapsto  (\mathsf{x},\dot{\mathsf{x}},\mathsf{p},\dot{\mathsf{p}})$\\
(see for instance \cite{CMR04}).\\

Let $\Omega$  be the weak canonical symplectic form on $T^\prime M$.\\
Then $\Omega^\flat: T T^\prime M \to T^\prime T^\prime M$ is an injective convenient morphism whose range is a weak subbundle $T^\flat T^\prime M$ of $T^\prime T^\prime M$ which is  isomorphic to $T T^\prime M$ (cf. [KrMi97], 48).  Since $T^\flat T^\prime M$  is  isomorphic to $T T^\prime M$, over a chart domain $U$, we have

$T^\flat T^\prime M\cong (U\times\mathbb{M}^\prime)\times (\mathbb{M}^\prime\times\mathbb{M})$\\
where $\mathbb{M}^\prime\times\mathbb{M}$ is considered as a subspace of  $\mathbb{M}^\prime\times\mathbb{M}^{\prime\prime}$.\\

Denote by $(\mathsf{x},\mathsf{p},\mathsf{a},\mathsf{w})$ the local coordinates  on $T^\prime T^\prime M \cong (U\times\mathbb{M})\times (\mathbb{M}^\prime\times\mathbb{M}^{\prime\prime})$. Thus, in local coordinates, $\Omega^\flat \cong TT^\prime M \to T^\prime T^\prime M$ is given by:
	
$\Omega^\flat \cong (\mathsf{x},\mathsf{p},\dot{\mathsf{x}},\dot{\mathsf{p}})\mapsto (\mathsf{x},\mathsf{p},\mathsf{a}=-\dot{\mathsf{p}},\mathsf{w}=\dot{\mathsf{x}})$

\noindent where, in the range, the term $\dot{\mathsf{x}}$  is considered as an element of $\mathbb{M}\subset \mathbb{M}^{\prime\prime}$.\\
By the way, we obtain a smooth diffeomorphism 
\[
\gamma_M=\Omega^\flat\circ (\kappa_M)^{-1}:  T^\prime TM \to T^\flat T^\prime M.
\]
In local coordinates, $\gamma_M$ is given by:

$\gamma_M:(\mathsf{x},\dot{\mathsf{x}},\mathsf{p},\dot{\mathsf{p}})\mapsto(\mathsf{x},\mathsf{p},-\dot{\mathsf{p}},\dot{\mathsf{x}})$.\\
In particular, $(\mathsf{x},\mathsf{p},-\dot{\mathsf{p}},\dot{\mathsf{x}})$ is a \emph{local coordinates system} on $T^\flat T^\prime M$.

\begin{remark}
\label{R_CanonicalMapFinite} 
In  this convenient context, we have formally the same type of diffeomorphism which generalizes        Tulczyjew's isomorphism in finite dimension (cf. \cite{Tul77}). Note that it is not in general a    diffeomorphism $T^\prime TM \to T^\prime  T^\prime M$ but onto a weak subbundle of $T^\prime  T^\prime M$. However, if $\mathbb{M}$ is reflexive, then $\Omega^\flat $ is surjective and we recover the result as in finite dimension.
\end{remark}

\subsection{Partial Almost Dirac Structures}
\label{___PartialAlmostDiracStructures}
\emph{In this section, the bundle $T^\flat M$ is fixed and so the Pontryagin bundle is $T^\mathfrak{p}M=TM\oplus T^\flat M$.}\\

If $<.,.>$ denotes the canonical pairing between $T^\prime M$ and $TM$, its restriction to $T^\flat M\times TM$ is a bilinear form which is bounded since the injection of $T^\flat M$ in $T^\prime M$ is bounded. If there is no ambiguity, this restriction will be also denoted by $<.,.>$. \\
 
The Pontryagin bundle is  provided with a pairing  $<<.,.>>$ defined as follows:\\
for any $x \in M$, any pair $(X_x,Y_x)$ of $T_xM$ and any pair $(\alpha_x,\beta_x)$ of $T_x^\flat M$,
\begin{eqnarray}
\label{eq_pairringTM}
%\label{eq_DirectSymmetricBilinearFormAlmostPartialDiracStructures}
<<(X_x,\alpha_x),(Y_x,\beta_x)>>
= 
\beta_x(X_x) + \alpha_x(Y_x)
\end{eqnarray}

Let $D$ be a closed subbundle of  $T^\mathfrak{p}M$. We set\index{DPerp@$D^\perp$}
\[
D^\perp=
\left\lbrace
(u,\alpha) \in T_x^\mathfrak{p}M:\; \forall x\in M, \forall (v,\beta)\in D_x,\;
 <<(u,\alpha),(v,\beta)>>=0
\right\rbrace
. 
\]
\emph{A partial almost Dirac structure}\index{partial!almost Dirac structure} on $M$ is a closed subbundle $D$ of $T^\mathfrak{p}M$ such that  $D^\perp=D$.\\

For each open set $U$ in $M$, we can provide the set of local sections $\Gamma \left( T^\mathfrak{p}M_{| U} \right)$ with a \emph{Courant bracket}\index{Courant!bracket}\index{bracket!Courant} defined by (cf. \cite{PeCa24}, 3.3)
\begin{equation}
\label{eq_CourantTM}
[(X,\alpha),(Y,\beta)]_C=\left([X,Y],L_X\beta-L_Y\alpha-\displaystyle\frac{1}{2}d(i_Y\alpha -i_X\beta)\right)
\end{equation}
for all sections $(X,\alpha), (Y,\beta)\in \Gamma \left( T^{\mathfrak{p}}M_{|U} \right) $, where $[\;,\;]$ is the usual Lie bracket of vector fields.\\
Note that 
$[(X,\alpha),(Y,\beta)]_C$ always belongs to $\Gamma \left( T^{\mathfrak{p}}M_{|U} \right) $ (cf. \cite{CaPe23}, 3.2).\\

\emph{A partial almost Dirac structure} on $M$ is called \emph{involutive}\index{involutive} if the  the restriction of $[.,.]_C$ to $\Gamma(D_{| U})$ takes values in $\Gamma(D_{| U}) $ for any open set $U$ in $M$. A partial almost involutive Dirac structure on $M$ will be simply called a \emph{partial Dirac Structure} on $M$. In the particular case where $T^\flat M=T^\prime M$, a partial (almost) Dirac structure is simply called a \emph{(almost) Dirac structure}.

For examples of partial (almost) Dirac structure on $M$  in the convenient setting, the reader can see \cite{PeCa24}. \\

Given a partial almost Dirac structure, then $\pr(D)$ is a distribution on $M$ called the \emph{characteristic distribution}\index{characteristic distribution}. A partial almost Dirac structure is called \emph{integrable}\index{integrable} if, there exists an immersed submanifold $L$ contained in $M$ such that $T_x L=\pr(D_x)$ for any $x\in L$. Such a manifold is called a \emph{characteristic leaf}\index{characteristic leaf}. In finite dimension, if $D$ is a Dirac structure, then the distribution is always integrable. Unfortunately, in infinite dimension, this result is no more true in general without more assumptions, even in the Banach setting. However, there exist partial almost Dirac structures which are integrable but not involutive.\\

From \cite{PeCa24}, Theorem~4.24, we  have the following general results:

\begin{theorem}
\label{T_GeometricStructuresOnLeaves} 
Let $D$ be a partial almost  integrable Dirac structure on a convenient manifold $M$. Assume that $\ker \pr_{| D}$  and $\ker \pr^\flat_{| D}$ are supplemented  in each fibre. Consider a  leaf $L$ of the characteristic foliation. Then we have:
\begin{enumerate}
\item[1.] 
$T^\flat L:=\displaystyle\bigcup_{x\in L}\pr^\flat(D_x)$ is a closed weak convenient subbundle of $T^\prime M_{| L}$. 
\item[2.] 
There exists a  skew symmetric convenient bundle morphism $P_L:TL\to T^\prime L$  whose kernel is $(D\cap TM)_{| L}\subset TL$ and whose range is  the  weak subbundle $T^\flat L\cap T^\prime L$. In particular, $\Omega_L(u,v)=<P_L(v), u>$ is a  $2$-form on $L$ whose kernel is  $(D\cap TM)_{| L}$ and  we have $\Omega^\flat =P_L$. Moreover, if $D$ is a partial Dirac structure, $\Omega_L$ is closed and so is strong pre-symplectic.
\item[3.] 
There exists a  skew symmetric convenient bundle morphism $P_L^\flat$ from $T^\flat L$ to its dual bundle $T ^{\flat\prime} L$  whose kernel is $(D\cap T^\flat M)_{| L}\subset T^\flat L$ and whose range is  the  weak subbundle $T^{\flat\prime} L\cap TL$ of $T^{\flat\prime} L$. Moreover, if $D$ is a partial Dirac structure,  $P^\flat_L: T^\flat L \to (T^{\flat\prime} L\cap TL)\subset TL$ is a partial Poisson structure on $L$.
\item[4.]  
$P_L$ (resp. $P^\flat_L$) induces a convenient isomorphism $\widetilde{P}_L$ (resp. $\widehat{P}^\flat_L$) from $T^\flat L\cap T^\prime L$ to $TL\cap T^{\flat\prime}$ (resp. 
$TL\cap T^{\flat\prime}$ to $T^\flat L\cap T^\prime L$) and $P_L^{-1}=P^\flat_L$.\\
Moreover, $D_L\cap TM_{L}=\{0\}$ if and only if $D_L\cap T^\flat M=\{0\}$ and then $P_L$ is an isomorphism from $TL$ to $T^\flat L$ and, if $D$ is a partial Dirac structure,   $\Omega_L$ is weak symplectic  form on $L$.
\end{enumerate}
\end{theorem}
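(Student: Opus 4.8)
The plan is to treat everything fibrewise first, where the whole construction is linear algebra governed by the single identity $D=D^\perp$, and only afterwards to promote the pointwise maps to convenient bundle morphisms by means of the two supplementation hypotheses. For $x\in L$ and $v\in T_xL=\pr(D_x)$ I choose any $(v,\beta)\in D_x$ and set $P_L(v):=\beta_{|T_xL}\in T^\prime_xL$, so that $\Omega_L(u,v)=<P_L(v),u>=\beta(u)$. Well-definedness is immediate from the isotropy $D\subset D^\perp$: two choices differ by some $(0,\beta-\beta')\in D_x$, and pairing against an arbitrary $(u,\alpha)\in D_x$ forces $(\beta-\beta')(u)=0$ for every $u\in\pr(D_x)=T_xL$. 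Skew-symmetry of $\Omega_L$ is the isotropy relation $\beta(u)+\alpha(v)=0$ itself. Exchanging the roles of $TM$ and $T^\flat M$ produces $P^\flat_L:T^\flat L\to T^{\flat\prime}L$ in exactly the same way, which covers the algebraic content of items 2 and 3.

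Next I would identify the kernels, which is the heart of the pointwise argument and uses $D=D^\perp$ in both directions. Clearly $(v,0)\in D_x$ gives $v\in\ker P_L$. Conversely, if $v\in\ker P_L$, i.e. $(v,\beta)\in D_x$ with $\beta_{|T_xL}=0$, then $(v,\beta)\in D^\perp$ yields $\alpha(v)+\beta(u)=0$ for all $(u,\alpha)\in D_x$; since $u\in\pr(D_x)=T_xL$ annihilates $\beta$, this reads $\alpha(v)=0$ for every $\alpha\in\pr^\flat(D_x)$, i.e. $(v,0)\in D^\perp_x=D_x$. Hence $\ker P_L=(D\cap TM)_{|L}$ and, dually, $\ker P^\flat_L=(D\cap T^\flat M)_{|L}$. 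The same bookkeeping identifies the ranges as $T^\flat L\cap T^\prime L$ and $T^{\flat\prime}L\cap TL$ and shows that $P_L,P^\flat_L$ descend to mutually inverse isomorphisms $\widetilde P_L,\widehat P^\flat_L$ between $T^\flat L\cap T^\prime L$ and $TL\cap T^{\flat\prime}$, giving $P_L^{-1}=P^\flat_L$ (item 4); the equivalence $D_L\cap TM=\{0\}\iff D_L\cap T^\flat M=\{0\}$ then holds because each side just asserts the triviality of one of these two kernels, whence $P_L:TL\to T^\flat L$ is an isomorphism.

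The genuinely infinite-dimensional work is to assemble these fibrewise data into closed weak subbundles and bounded morphisms, and here the hypotheses that $\ker\pr_{|D}$ and $\ker\pr^\flat_{|D}$ be supplemented in each fibre are indispensable: in the convenient (even Banach) category the image of a bounded map need not be closed or complemented, so it is precisely the splittings $D=\ker\pr_{|D}\oplus S=\ker\pr^\flat_{|D}\oplus S^\flat$ that realise $\pr^\flat(D)=T^\flat L$ as a closed weak subbundle of $T^\prime M_{|L}$ (item 1) and furnish smooth inverses on the complemented ranges. I would verify these bundle statements in a trivialisation over a simply connected $c^\infty$-open chart, with $TM_{|U}\cong U\times\mathbb M$ and $T^\flat M_{|U}\cong U\times\mathbb M^\flat$, transporting the fibrewise splittings along. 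Closedness of $\Omega_L$ when $D$ is a genuine partial Dirac structure is where involutivity enters: for sections $a_i=(X_i,\alpha_i)\in\Gamma(D)$ with $X_i$ tangent to $L$ one has $\Omega_L(X_i,X_j)=\alpha_j(X_i)$, and expanding $d\Omega_L(X_1,X_2,X_3)=\sum_{\mathrm{cyc}}X_1\cdot\Omega_L(X_2,X_3)-\sum_{\mathrm{cyc}}\Omega_L([X_1,X_2],X_3)$ and matching it term by term against the $T^\flat M$-component of the Courant bracket \eqref{eq_CourantTM}, the isotropy identity together with involutivity $[\Gamma(D),\Gamma(D)]_C\subset\Gamma(D)$ makes the cyclic Courant tensor $\circlearrowleft<<[a_1,a_2]_C,a_3>>$ vanish; this is exactly the obstruction to $d\Omega_L=0$. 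Closedness upgrades $\Omega_L$ to a strong pre-symplectic form, and in the nondegenerate situation of item 4 to a weak symplectic form, while the Jacobi identity that makes $P^\flat_L$ a partial Poisson structure is the same vanishing repackaged.

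I expect the main obstacle to be this convenient-analytic step rather than the linear algebra: ensuring that the ranges are closed and complemented without finite-dimensional transversality, that $P_L$ and $P^\flat_L$ are genuinely smooth (bounded) with smooth inverses on those ranges, and that the Leibniz and Jacobi identities survive transport through the diffeomorphisms $\kappa_M$ and $\gamma_M$ — all of which the supplementation hypotheses are engineered to control.
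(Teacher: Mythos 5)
First, a structural point: this paper contains no proof of Theorem~\ref{T_GeometricStructuresOnLeaves} at all --- it is imported verbatim from the companion paper (\cite{PeCa24}, Theorem~4.24), so there is no in-text argument to compare yours against and your proposal must stand on its own. Its fibrewise part does stand: the definition $P_L(v)=\beta_{|T_xL}$ for any $(v,\beta)\in D_x$, its well-definedness and skew-symmetry from the isotropy $D\subset D^\perp$, the identification $\ker P_L=(D\cap TM)_{|L}$ (and dually $\ker P_L^\flat=(D\cap T^\flat M)_{|L}$) from the reverse inclusion $D^\perp\subset D$, and the sketch of $d\Omega_L=0$ from involutivity via the Courant bracket are all correct and classical, modulo the routine point that tangent vectors of $L$ must be extended to local sections of $D$ along $L$.

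The genuine gap sits exactly at the step you yourself identify as the heart of the matter and then dispose of by assertion. You claim that the splittings $D=\ker\pr_{|D}\oplus S=\ker\pr^\flat_{|D}\oplus S^\flat$ are what ``realise $\pr^\flat(D)=T^\flat L$ as a closed weak subbundle''. They cannot be: supplementing the kernel only yields an injective bounded parametrization $\pr^\flat_{|S^\flat}\colon S^\flat\to T^\flat M$ of the range, and an injective bounded operator with zero (hence trivially split) kernel can still have dense, non-closed range, even between Hilbert spaces. Concretely, let $M$ be a Hilbert space, $T^\flat M=T^\prime M$, and let $D$ be the graph of a constant bounded skew-symmetric operator $P$ with $\ker P=0$ and $\operatorname{Range}P$ dense but not closed (for instance $P(x,y)=(Ky,-Kx)$ on $\ell^2\oplus\ell^2$ with $K$ diagonal, positive, compact, injective). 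Then $D$ is a closed split subbundle with $D=D^\perp$, it is involutive and integrable with single leaf $L=M$, and both $\ker\pr_{|D}$ and $\ker\pr^\flat_{|D}$ vanish, yet $\pr^\flat(D)=\operatorname{Range}P$ is not closed in $T^\prime M$. So no proof of item~1 along the lines you describe can succeed: topological closedness of $T^\flat L$ must come from something other than the two kernel splittings --- a different characterization of $\pr^\flat(D_x)$ extracted from $D=D^\perp$ (annihilator-type identities), additional hypotheses carried by the original statement in \cite{PeCa24}, or a reading of ``closed'' as completeness of the transported convenient structure rather than closedness in the ambient fibre --- and your proposal settles none of these. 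A second, smaller flaw: your justification of the equivalence $D_L\cap TM_{L}=\{0\}\iff D_L\cap T^\flat M=\{0\}$ as ``each side just asserts the triviality of one of these two kernels'' is a non sequitur, since $\ker P_L$ and $\ker P_L^\flat$ are a priori independent conditions (when the leaf degenerates, one can vanish while the other does not); that equivalence is a claim needing its own argument through $D=D^\perp$, not a restatement of the kernel computations.
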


\begin{remark}
\label{R_AssumptionsSatisfied}
In finite dimension, all the previous  assumptions are satisfied and so Theorem~\ref{T_GeometricStructuresOnLeaves} is a generalization of classical results on Dirac manifolds (cf. \cite{Cou90}, \cite{YoMa06I}, \cite{Bur13} and \cite{Marc16}). \\
In the Hilbert setting, the assumptions of Theorem~\ref{T_GeometricStructuresOnLeaves} are always satisfied.\\
In the Banach setting,  Theorem~\ref{T_GeometricStructuresOnLeaves} is always true.
\end{remark}

\subsection{Induced Partial Almost Dirac Structures}
\label{___InducedPartialAlmostDiracStructures}
\emph{In this subsection, we will define a partial almost Dirac structure on the cotangent bundle which will play an important r\^ole for implicit Lagrangians as in finite dimension (cf.\cite{YJM10} for instance). We will give an adaptation of such a situation  to our context.}\\

Let  $\Delta_M$ be  a closed  subbundle of $TM$ which will be called a \emph{constraint distribution}\index{constraint distribution}\index{distribution!constraint}. On the cotangent bundle $T^\prime M$, we define a distribution $\Delta_{T^\prime M}$ by 
\[
\Delta_{T^\prime M}
=
(Tp_{T^\prime M})^{-1}(\Delta_M)\subset TT^\prime M
\]
 where $Tp_{T^\prime M}:TT^\prime M\to TM$ is the tangent map of $p_{T^\prime M}:T^\prime M\to M$. Considering the weak canonical symplectic form  $\Omega$ on  $T^\prime M$, we  associate to $\Delta_{T^\prime M}$ the set  $D_{\Delta_M}$  of $T^\mathfrak{p} T^\prime M= TT^\prime M\oplus T^\flat  T^\prime M$ by:
\[
(D_{\Delta_M})_{(x,p)}=
\left\lbrace
(u,\alpha)\in T^\mathfrak{p}T^\prime M:\;
 u\in (\Delta_{T^\prime M})_{(x,p)};\;
   \alpha- \Omega_{(x,p)}^\flat (u)\in (\Delta_{T^\prime M}^0)_{(x,p)} 
\right\rbrace .
\]

We set 
$D_{\Delta_M}:=
\displaystyle\bigcup_{(x,p)\in T^\prime M}(D_{\Delta_M})_{(x,p)}$.\\
As in    \cite{YoMa06I}, Theorem 2.3, we have:
\begin{proposition}
\label{L_CloseneesOfOmegaDelta}  
Let $\widetilde{\Omega}^\flat$ be  the restriction of $\Omega^\flat$ to $\Delta_{T^\prime M}$.  Then $D_{\Delta_M}$ is the graph of the morphism $\widetilde{\Omega}^\flat : \Delta_{T^\prime M}\to T^\flat M$ and  is a partial almost Dirac structure on $T^\prime M$
\end{proposition}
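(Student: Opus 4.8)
The plan is to prove the two assertions separately: first that $D_{\Delta_M}$ is cut out by the graph of $\widetilde{\Omega}^\flat$ (offset by the annihilator $\Delta^0_{T^\prime M}$), and then that $D_{\Delta_M}^\perp=D_{\Delta_M}$. I would start by recording that $\widetilde{\Omega}^\flat$ is well defined and bounded: since $Tp_{T^\prime M}\colon TT^\prime M\to TM$ is a convenient submersion and $\Delta_M$ is a closed subbundle of $TM$, the preimage $\Delta_{T^\prime M}=(Tp_{T^\prime M})^{-1}(\Delta_M)$ is a closed subbundle of $TT^\prime M$, and restricting the bounded injective morphism $\Omega^\flat$ to it yields a bounded injective convenient morphism $\widetilde{\Omega}^\flat\colon\Delta_{T^\prime M}\to T^\flat T^\prime M$. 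Reading the defining relation fibrewise, $(u,\alpha)\in(D_{\Delta_M})_{(x,p)}$ means precisely $u\in(\Delta_{T^\prime M})_{(x,p)}$ and $\alpha=\widetilde{\Omega}^\flat(u)+\eta$ with $\eta\in(\Delta^0_{T^\prime M})_{(x,p)}$; thus $D_{\Delta_M}$ is the graph of $\widetilde{\Omega}^\flat$ shifted by $\{0\}\oplus\Delta^0_{T^\prime M}$, and since $\Delta_{T^\prime M}$ and $\Delta^0_{T^\prime M}$ are closed subbundles and $\Omega^\flat$ is bounded, this exhibits $D_{\Delta_M}$ as a closed subbundle of $T^\mathfrak{p}T^\prime M$.

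For the inclusion $D_{\Delta_M}\subseteq D_{\Delta_M}^\perp$ I would take $(u,\alpha),(v,\beta)\in D_{\Delta_M}$, write $\alpha=\Omega^\flat(u)+\eta$ and $\beta=\Omega^\flat(v)+\zeta$ with $\eta,\zeta\in\Delta^0_{T^\prime M}$, and expand the pairing directly as $<<(u,\alpha),(v,\beta)>>=\beta(u)+\alpha(v)=\Omega^\flat(v)(u)+\Omega^\flat(u)(v)+\zeta(u)+\eta(v)$. The two annihilator terms $\zeta(u)$ and $\eta(v)$ vanish because $u,v\in\Delta_{T^\prime M}$ while $\zeta,\eta\in\Delta^0_{T^\prime M}$, and the two symplectic terms cancel because $\Omega^\flat(v)(u)+\Omega^\flat(u)(v)=\Omega(v,u)+\Omega(u,v)=0$ by skew-symmetry of $\Omega$. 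Hence $D_{\Delta_M}$ is isotropic.

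The reverse inclusion $D_{\Delta_M}^\perp\subseteq D_{\Delta_M}$ is where the real work lies. Given $(w,\gamma)\in D_{\Delta_M}^\perp$, pairing against the elements $(0,\zeta)\in D_{\Delta_M}$, with $\zeta$ ranging over $\Delta^0_{T^\prime M}$, forces $\zeta(w)=0$ for every $\zeta\in\Delta^0_{T^\prime M}$; I then want to conclude $w\in\Delta_{T^\prime M}$, i.e. that $(\Delta^0_{T^\prime M})^{0}=\Delta_{T^\prime M}$. Once this is known, pairing $(w,\gamma)$ against a general $(v,\Omega^\flat(v)+\zeta)\in D_{\Delta_M}$ and using $\zeta(w)=0$ reduces the orthogonality condition to $\gamma(v)+\Omega^\flat(v)(w)=0$ for all $v\in\Delta_{T^\prime M}$, which skew-symmetry rewrites as $(\gamma-\Omega^\flat(w))(v)=0$ for all $v\in\Delta_{T^\prime M}$. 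Since $\gamma\in T^\flat T^\prime M$ by definition of $T^\mathfrak{p}T^\prime M$ and $\Omega^\flat(w)\in T^\flat T^\prime M=\operatorname{Im}\Omega^\flat$, the difference $\gamma-\Omega^\flat(w)$ lies in $T^\flat T^\prime M$ and annihilates $\Delta_{T^\prime M}$, hence belongs to $\Delta^0_{T^\prime M}$; therefore $(w,\gamma)\in D_{\Delta_M}$, and combined with the isotropy this gives $D_{\Delta_M}^\perp=D_{\Delta_M}$.

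The step $(\Delta^0_{T^\prime M})^{0}=\Delta_{T^\prime M}$ is the main obstacle, and everything hinges on it. Fibrewise, using the local model $T^\flat T^\prime M\cong\mathbb{M}^\prime\times\mathbb{M}$ one checks that $\Delta^0_{T^\prime M}$ has the form (annihilator of $\Delta_M$ in the full dual $\mathbb{M}^\prime$)$\,\times\{0\}$, so the required identity reduces to the bipolar statement that a closed subspace of $\mathbb{M}$ coincides with the annihilator of its annihilator taken in $\mathbb{M}^\prime$. This is automatic in finite dimension and holds in the Banach setting by Hahn--Banach (matching the Banach hypotheses under which the theory is developed), whereas in the general convenient setting it has to be extracted from the closedness of $\Delta_M$ together with the separation properties of the duality between $T^\flat T^\prime M$ and $TT^\prime M$; this is the only place where closedness of the constraint distribution is genuinely used.
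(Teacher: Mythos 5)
Your proof is correct, but it takes a genuinely different route from the paper's. The paper settles Proposition \ref{L_CloseneesOfOmegaDelta} essentially by citation: the fibrewise fact that $(D_{\Delta_M})_{(x,p)}$ is a linear partial Dirac structure (i.e.\ equals its own orthogonal) is quoted from Lemma~3.6(2) of the companion paper \cite{PeCa24}, closedness of the resulting subbundle is deduced from $\Omega^\flat$ being an isomorphism onto $T^\flat T^\prime M$, and the conclusion is then read off from Definition~3.16 of \cite{PeCa24}; no orthogonality computation appears at all. You instead verify $D_{\Delta_M}^{\perp}=D_{\Delta_M}$ directly by the two inclusions, which is exactly the content the citation hides; your isotropy computation and the reduction of the reverse inclusion to $\gamma-\Omega^\flat(w)\in\Delta^{0}_{T^\prime M}$ are both correct. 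Two features of your treatment deserve emphasis. First, your reading of $D_{\Delta_M}$ as the graph of $\widetilde{\Omega}^\flat$ shifted by $\{0\}\oplus\Delta^{0}_{T^\prime M}$ is sharper than the proposition's literal wording: when $\Delta_M\neq TM$, the fibre of $D_{\Delta_M}$ over a fixed $u\in\Delta_{T^\prime M}$ is the affine space $\Omega^\flat(u)+(\Delta^{0}_{T^\prime M})_{(x,p)}$, so $D_{\Delta_M}$ is not the graph of any map; your description is also the one consistent with Example \ref{Ex_InducedDirac}, where $P_0=D_{\Delta_M}\cap T^\flat T^\prime M=\Delta^{0}_{T^\prime M}$ and $P_1=\Omega^\flat(\Delta_{T^\prime M})\oplus\Delta^{0}_{T^\prime M}$. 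Second, you isolate the one genuine functional-analytic ingredient, namely the bipolar identity for the closed typical fibre $\mathbb{E}$ of $\Delta_M$: that $\ell(u)=0$ for all $\ell\in\mathbb{E}^0$ forces $u\in\mathbb{E}$, which is what converts $\zeta(w)=0$ for all $\zeta\in\Delta^{0}_{T^\prime M}$ into $w\in\Delta_{T^\prime M}$. This holds by Hahn--Banach in the Banach setting, and you correctly flag that in the general convenient setting it is an additional separation hypothesis (a $c^\infty$-closed subspace need not be separated from outside points by bounded functionals); the paper's appeal to \cite{PeCa24} simply buries this point inside the cited lemma. In sum, your route buys self-containedness and an explicit accounting of where closedness of $\Delta_M$ is actually used, at the cost of leaving the convenient-setting bipolar property as an acknowledged assumption --- the same assumption the paper delegates, silently, to its companion paper.
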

 
\begin{proof}
On the one hand, from Lemma 3.6 (2), 
$(D_{\Delta_M})_{(x,p)}$ is the the graph of $\widetilde{\Omega}^\flat$ and is a partial linear Dirac structure on $T_{(x,p)} (T^\prime M$). On the other hand, since  $\Delta_{T^\prime M}  $
is a closed  subbundle of $TT^\prime M$  and  since $\Omega^\flat$ is an isomorphism,  so, $\Omega^\flat \left( D_{\Delta_{T^\prime M}} \right)
=\widetilde{\Omega}^\flat \left( D_{\Delta_{T^\prime M}} \right) $ is a closed subbundle of $T^\flat T^\prime M$. Therefore from the Definition 3.16, in \cite{PeCa24}, $D_{\Delta_M}$ is a partial almost Dirac structure on $T^\prime M$.
\end{proof}

\begin{remark}
\label{R_DOmegaM}${}$
\begin{enumerate}
\item[1.]  
 In local coordinates relative to  a chart $(U,\phi)$, as in finite dimension,  as in  \cite{YoMa06I}),  we have 
\begin{equation}
\label{Eq_DOmegaM}
\begin{split}
D_{\Delta_M} \cong
\{
\left( (\mathsf{x},\mathsf{p},\dot{\mathsf{x}},\dot{\mathsf{p}}),(\mathsf{x},\mathsf{p},\mathsf{a},\mathsf{w})\right):\hspace{4.8cm}\\
\mathsf{x}\in\mathbb{U}=\phi(U),\; \mathsf{p}\in \mathbb{M}^\prime ,\;\dot{\mathsf{x}}\in \mathbb{E}, \mathsf{w}=\dot{\mathsf{x}}, \; \mathsf{a}+\dot{\mathsf{p}}\in \mathbb{E}^0
\}
\end{split}
\end{equation}

 \item[2.]
According to \cite{PeCa24}, Proposition 3.27, since $\Omega$ is closed, $D_{\Delta_M} $ will be a partial Dirac structure  if and only if $\Delta_M$ is involutive.
\end{enumerate}
\end{remark}

\subsection{Implicit Hamiltonian Systems}
\label{__ImplicitHamiltonianSystems}
The concept  of implicit Hamiltonian system relative to a Dirac structure  in finite  dimension is now classical. In this section, we give an adaptation to our context  of partial almost  Dirac structure on a convenient manifold  and  we essentially refer to  A. J. Van der Schaft's papers   \cite{VdS94}, \cite{VdS95} and  \cite{VdS98}.

\begin{definition}\label{D_ImplicitHamiltoniansystem} Consider a partial almost Dirac structure $D$ on a convenient manifold $M$ and relative to a Pontryaguin bundle  $T^\mathfrak{p} M=TM\oplus T^\flat M$. Let $H:M\to \mathbb{R}$ be a smooth function called a \emph{Hamiltonian}\index{Hamiltonian}. 
\begin{enumerate}
\item 
The \emph{implicit Hamiltonian system}\index{implicit Hanimtonian system} corresponding to $(M,D,H)$
 is the dynamical system characterized by the differential inclusion
 \begin{equation}
 \label{Eq_IH}
\left( \dot{x}(t), d_{x(t)}H \right) \in D_{x(t)} 
\end{equation}
\item 
A smooth curve $\gamma:]-\varepsilon,\varepsilon[\to M$ such that $ \left( \dot{\gamma}(t),d_{\gamma(t)}H \right) $ belongs to $D_{\gamma(t)}$ is called an \emph{integral curve}\index{integral curve} of the implicit Hamiltonian system (\ref{Eq_IH}) through $x=\gamma(0)$. We also say that the implicit Hamiltonian system associated to $(M,F,H)$ is \emph{integrable} at $x$.
\item 
Let $S$ be a weak   immersed  submanifold of $M$ and assume that $H_{| S}$ is a smooth function on $S$.
\begin{enumerate} 
\item 
The induced implicit  Hamiltonian system  $ \left( M,D,H_{| S} \right) $ on $S$ is the dynamical system:
\begin{equation}
\label{eq_InducedMDH}
\textrm{ find a smooth curve } \gamma:I \to S\;:\;
   \left( \dot{\gamma}(t), d_{\gamma(t)}H_{| S} \right) \in D_{\gamma(t)} 
\end{equation}
\item 
The induced implicit Hamiltonian system is called \emph{integrable}\index{integrable} on $S$ if there exists a vector field  $X$ on $S$ such that  $X$ has a local flow and any integral curve of $X$ is an integral curve of the induced  implicit Hamiltonian system $(M,H_{| S},D)$. If $S=M$, we say that the Hamiltonian is \emph{completely integrable}.
\end{enumerate}
\end{enumerate}
\end{definition}
 
\begin{remark}
\label{R_InducedImplicitHamilton} 
Note that, according to Definition~\ref{D_ImplicitHamiltoniansystem}, 3. (a), since the inclusion $\iota$ of $S$ in $M$ is a weak immersion, it is a smooth map from $S$ to $M$. By the chain rule, $t\mapsto 
  \iota\circ \gamma(t)$ is a smooth curve in $M$ and so the inclusion in (\ref{eq_InducedMDH}) makes sense. Note that in general the converse is not true in the convenient setting:  if $t\mapsto \gamma(t)$ is a smooth curve in $M$ such that $\gamma(t)$ belongs to $S$, in general, $t\mapsto \gamma(t)$ is not a smooth curve in $S$. For the same reason, the assumption that $H_{|S}$ is smooth is not true in general  for any Hamiltonian $H$ on $M$.  In particular, any smooth curve $\gamma$ from an  interval $I$ to $M$ such that $\gamma(I)\subset S$ and which satisfies  $ (\dot{\gamma}(t), d_{\gamma(t)}H_{| S})\in D_{\gamma(t)}$ could not satisfy the condition in Point 2 (a) since $\gamma$ could be not a smooth curve in $S$.  
\end{remark}

Recall that the function $H$ defined  on an open set $U$ in $M$ is admissible (for $D$) (cf.\cite{PeCa24}, 4.2) if there exists a smooth vector field $X$ on $U$ such that $(X,df)$ is a section of $D$ over $U$. Therefore, if a Hamiltonian $H$ is admissible on an open set $U$, the implicit Hamiltonian system $(M,D,H)$ will be integrable on $U$ if  $X$ has a local flow on $U$. This situation always occurs in the Banach setting.

\begin{remark}
\label{R_EnergyConservation} 
Let $(M,D,H)$ be an implicit Hamiltonian system. Since, for any $(u,\alpha)\in D_x$, we have $<\alpha, u>=0$, it follows that  for any integral curve $\gamma$, we have
\[
\displaystyle\frac{dH}{dt}=<dH,\dot{\gamma}>=0.
\]
Therefore, any implicit Hamiltonian system has the "energy-conservation" property (cf. \cite{VdS98}).
\end{remark}

\begin{remark}
\label{R_MDHintegral}
If the implicit Hamiltonian system associated to $(M,D,H)$ is integrable  at $x$, the germ of integral curves through $x$ is not necessary unique. However, from Remark \ref{R_EnergyConservation}, $d_xH$ must belong to $\pr^\flat(D_x)$. But  if $D$ is integrable and if ${L}$ is a leaf through $x$, then if $ (X, dH)$ is a section of $D$ over  a connected open set $U$, then $X$ must be tangent to $L$ on $U\cap L$.  As  $U\cap L$ is an open set of $L$,  any integral curve $\gamma:]-\varepsilon,\varepsilon[\to U$  through $x$ must satisfy $\dot{\gamma}(t)\in \pr(D_{(\gamma(t))})$ and so  $\gamma(]-\varepsilon,\varepsilon[)$ must be contained in ${L}$, which implies that the germ of an integral curve through $x$ is unique.
\end{remark} 

As in finite dimension\footnote{See for instance  \cite{VdS98}, \cite{BlRa04} and all references inside.} 
and using the same notations as in \cite{VdS98}, to an implicit Hamiltonian system $(M,D,H)$, we can associate\\
\noindent the distributions 
 
 ${G_0}:=D\cap TM$\index{G0@$G_0$};

 $G_1=\pr(D)$\index{G1@$G_1$}

\noindent and co-distributions:

 $P_0=D\cap T^\flat M$\index{P0@$P_0$};
 
 $P_1=\pr^\flat(D)$\index{P1@$P_1$}\\
where  $\pr:T^\mathfrak{p}M\to TM$ and 
$\pr^\flat: T^\mathfrak{p}M\to T^\flat M$ are the canonical projections.\\ 

We have the following relations between these distributions  and co-distributions:
\[
G_0\subset G_1, \; P_0\subset P_1, \; G_0=\ker p^\flat_{| D} \text{~~and  ~~} P_0=\ker p_{| D}.
\]

Thus an implicit Hamiltonian system $(M,D,H)$  can be written in the following way in local coordinates\footnote{In local coordinates, the function $H$ will be written $\mathsf{H}$ and, for short, we write $H\cong \mathsf{H}$ according to the notations at the beginning of $\S$~\ref{__ImplicitHamiltonianSystems}.}:

\begin{equation}
\label{eq_ImplicitPb}
 \dot{\mathsf{x}} =\displaystyle\frac{\partial \mathsf{H}}{\partial  \mathsf{p}} \in G_1,\; \;\;\dot{\mathsf{p}}=\displaystyle\frac{\partial \mathsf{H}}{\partial  \mathsf{x}}\in P_1.
\end{equation}

\begin{remark}
\label{R_ExistenceSol} 
The previous problem  is linked with the general problem of existence of solutions of an ordinary differential equation in the convenient setting. In general, such an integral curve need not exist. However, in the Banach setting, this problem has in general some solution.
\end{remark}

If  $H$ is an admissible Hamiltonian on $U$ and  if $(X, dH)$ is a section of $D$ over $U$, then any integral curve $\gamma$ of $X$ and $\alpha(t)=d_{\gamma(t)}H$ is a 
solution of (\ref{eq_ImplicitPb}).\\

In finite dimension, when $D$ is involutive, all these distributions and co-distributions are integrable.  Of course, this result is not true in general in our context. Theorem \ref{T_GeometricStructuresOnLeaves}  gives sufficient conditions under which an analog result is true  and as we have seen (Remark \ref{R_AssumptionsSatisfied}), these results are always true in the Hilbert setting.

From \cite{CaPe23}, Theorem~1.9,  as in \cite{VdS98}, Theorem~6,   we have the following results:  

\begin{theorem}
\label{T_DiracPoisson} 
Let ${D}$ be a partial almost  Dirac structure on $M$. 
\begin{enumerate}
\item[(1)]  
Assume that $P_1$ is a (closed) subbundle of $T^\flat M$. 
For any $x\in M$, there exists  a bounded skew symmetric linear map  
\[
\begin{array}{cccc}
J_x:& (P_1)_x &\to& (G_1)_x \cap (G_1)_x^\prime \\
	& \alpha  &\mapsto & \alpha_{| (P_1)_x}
\end{array}.
\] 
The  kernel of $J_x$ is  $(P_0)_x$ and its  range is isomorphic to $(P_1)_x/(P_0)_x$. In particular $D_x$ is the graph of $J_x$.  Moreover,  for any fixed  $x_0\in M$ there exists an open neighbourhood $U$ of $x_0$ such that:
$$D_{| U}=\{(X,\alpha)\in \Gamma(T^\mathfrak{p}M_{| U})\;/ \;X_x-J_x(\alpha_x)\in(G_0)_x, \;x\in U,\; \alpha\in\Gamma(( P_1)_{| U})\}$$
Moreover,  $J_x$ gives rises to a bundle morphism $J$ from $P_1$ to $P_1^\prime$.

\item[(2)]  
Assume that $G_1$ is a subbundle of $TM$. 
There exists a surjective  linear   bounded skew symmetric map
\[
\begin{array}{cccc}
Q_x:& (G_1)_x &\to& (G_1)_x^\prime \cap (P_1)_x\\
	&  u   	  &\mapsto&  u_{| (G_1)_x}
\end{array}
\]
where $u\in T_xM$ is considered as an element of $T^{\prime\prime} M$. The kernel of $Q_x$   is  $(G_0)_x $, its  range isomorphic is to   $ (G_1)_x/(G_0)_x$,  and  $\mathbb{D}_x$ is the graph of $Q_x$.
Moreover,  for any $x_0\in M$, there exists an open neighbourhood $U$ of $x_0$ such that:
\[
D_{| U}=\{(X,\alpha)\in \Gamma(T^\mathfrak{p}M_{|U}):\;
\alpha_x-Q_x(X_x)\in (P_0)_x,\; x\in U, \;X\in \Gamma((G_1)_{| U})\}
\]
\end{enumerate}
Moreover, $Q_x$ gives rise to a bundle morphism $Q$ from $G_1$ to $G_1^\prime$.
\end{theorem}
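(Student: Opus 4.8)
The plan is to prove (1) completely and to deduce (2) by the symmetric argument in which the roles of the two projections $\pr$ and $\pr^\flat$ (equivalently, of $TM$ and $T^\flat M$) are exchanged and the standing hypothesis becomes that $G_1$, rather than $P_1$, is a closed subbundle. At each point the assertions are purely linear-algebraic and are exactly the content of \cite{CaPe23}, Theorem~1.9 applied to the Lagrangian subspace $D_x=D_x^\perp$ of $T_xM\oplus T_x^\flat M$ equipped with $<<\cdot,\cdot>>$; the genuinely new work is the passage from the fibre to a bundle morphism, and this is where the subbundle hypotheses enter.

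First I would record, directly from $D_x=D_x^\perp$, the two orthogonality relations
\[
(G_0)_x=\{u\in T_xM:\ \beta(u)=0\ \ \forall\beta\in (P_1)_x\},\qquad (P_0)_x=\{\alpha\in T_x^\flat M:\ \alpha(v)=0\ \ \forall v\in (G_1)_x\}.
\]
Both are immediate by testing $(u,0)$, resp. $(0,\alpha)$, against an arbitrary element of $D_x=D_x^\perp$: indeed $<<(u,0),(v,\beta)>>=\beta(u)$ vanishes on $D_x$ iff $u$ annihilates $(P_1)_x$, and symmetrically for $P_0$. Given $\alpha\in (P_1)_x$ I choose $u\in T_xM$ with $(u,\alpha)\in D_x$ and set $J_x(\alpha):=u_{|(P_1)_x}$, the restriction to $(P_1)_x$ of $u$ viewed in $T_x''M$; the ambiguity in $u$ lies in $(G_0)_x$, which by the first relation is precisely the annihilator of $(P_1)_x$, so $u_{|(P_1)_x}$ is independent of the choice and $J_x$ is well defined. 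Skew-symmetry $\langle J_x(\alpha),\beta\rangle=\beta(u)=-\alpha(v)=-\langle J_x(\beta),\alpha\rangle$ for $(u,\alpha),(v,\beta)\in D_x$ is read off from $<<(u,\alpha),(v,\beta)>>=0$. The kernel is $(P_0)_x$, since $J_x(\alpha)=0$ forces $u\in(G_0)_x$, whence $(0,\alpha)=(u,\alpha)-(u,0)\in D_x$; the range is then $\cong (P_1)_x/(P_0)_x$ by the first isomorphism theorem, and ``$D_x$ is the graph of $J_x$'' is the statement that $(u,\alpha)\in D_x$ iff $\alpha\in (P_1)_x$ and $u$ is congruent to a preimage of $J_x(\alpha)$ modulo $(G_0)_x$.

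Second I would globalise. The hypothesis that $P_1$ is a closed subbundle of $T^\flat M$ provides, near a fixed $x_0$, a local trivialisation of $P_1$ together with a bounded splitting of the surjective bounded bundle morphism $\pr^\flat_{|D}\colon D\to P_1$ (whose kernel is $G_0$), the splitting being available because a subbundle has, locally, complemented fibres. Such a splitting lets me lift every $\alpha\in\Gamma((P_1)_{|U})$ to a smooth $X\in\Gamma((G_1)_{|U})$ with $(X,\alpha)\in\Gamma(D_{|U})$; boundedness of each $J_x$ then follows from boundedness of the splitting and of the pairing $T^\flat M\times TM\to\R$ (alternatively, from a closed-graph argument using that $D$ and $P_1$ are closed), and smooth dependence on $x$ yields the bundle morphism $J\colon P_1\to P_1'$. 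Substituting these lifts into the fibrewise graph description produces the local normal form $D_{|U}=\{(X,\alpha)\in\Gamma(T^\mathfrak{p}M_{|U}):\ X_x-J_x(\alpha_x)\in (G_0)_x,\ x\in U,\ \alpha\in\Gamma((P_1)_{|U})\}$, with $J_x(\alpha_x)$ read as any representative in $(G_1)_x$.

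Part (2) is the mirror argument: assuming $G_1$ is a subbundle of $TM$, for $u\in (G_1)_x$ I pick $\alpha$ with $(u,\alpha)\in D_x$ (unique modulo $(P_0)_x$, the annihilator of $(G_1)_x$, by the second relation) and set $Q_x(u):=\alpha_{|(G_1)_x}\in (G_1)_x'$; well-definedness, skew-symmetry, $\ker Q_x=(G_0)_x$, range $\cong (G_1)_x/(G_0)_x$, the graph property and the local normal form all follow verbatim with $\pr$ and $\pr^\flat$ interchanged. The step I expect to be the main obstacle is not the fibrewise algebra but precisely this globalisation in the convenient/Banach setting: producing a bounded, smoothly varying splitting of $\pr^\flat_{|D}$, resp. $\pr_{|D}$, so that the lifts exist smoothly and $J$, resp. $Q$, is a genuine bounded bundle morphism. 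This is exactly why closedness of $P_1$, resp. $G_1$, and with it the complementability of $G_0$, resp. $P_0$, is imposed here, whereas the classical statement of \cite{VdS98}, Theorem~6 needs no such hypothesis in finite dimension.
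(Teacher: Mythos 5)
The paper offers no self-contained proof of Theorem~\ref{T_DiracPoisson}: it is stated as a direct consequence of \cite{CaPe23}, Theorem~1.9, following the finite-dimensional scheme of \cite{VdS98}, Theorem~6. Your fibrewise argument --- the two orthogonality relations identifying $(G_0)_x$ with the annihilator of $(P_1)_x$ and $(P_0)_x$ with the annihilator of $(G_1)_x$, the well-definedness of $J_x(\alpha)=u_{|(P_1)_x}$ modulo $(G_0)_x$, skew-symmetry, $\ker J_x=(P_0)_x$, the graph property, and the mirror construction of $Q_x$ --- is a correct reconstruction of exactly that pointwise content (including the sensible reading of the paper's misprint $\alpha_{|(P_1)_x}$ as $u_{|(P_1)_x}$).

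The genuine gap is in the globalisation, which is the only part not obviously covered by the citation and which you yourself single out as the crux. You assert that a bounded, smoothly varying splitting of $\pr^\flat_{|D}\colon D\to P_1$ exists ``because a subbundle has, locally, complemented fibres''. That justification addresses the wrong subspace: to split the surjection $\pr^\flat_{|D}$ one needs its \emph{kernel} $(G_0)_x$ to be complemented in $D_x$ (a pointwise right inverse then exists and, in the Banach setting, propagates to nearby fibres by a Neumann-series perturbation); complementedness of $(P_1)_x$ inside $T^\flat_x M$ --- even if it were built into the notion of subbundle used here --- gives no control whatsoever on $G_0$ inside $D$. The paper itself shows that this supplementation is an independent hypothesis in this setting: Theorem~\ref{T_GeometricStructuresOnLeaves} explicitly assumes that $\ker\pr_{|D}$ and $\ker\pr^\flat_{|D}$ are supplemented in each fibre, and no such assumption appears in Theorem~\ref{T_DiracPoisson}. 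Pointwise boundedness of $J_x$ can be rescued in the Banach case without any complementation, by the open mapping theorem applied to $\pr^\flat_{|D}\colon D_x\to(P_1)_x$ (both fibres are Banach by the closedness hypotheses); but your closed-graph alternative, like the perturbation argument producing smooth local splittings from a single pointwise one, is unavailable in the general convenient setting in which the theorem is stated. So, as written, the existence of smooth local lifts --- hence the bundle morphism $J$ (resp.\ $Q$) and the local normal form of $D_{|U}$ --- is not established: you must either add the supplementation of $G_0$ in $D$ (resp.\ of $P_0$) as a hypothesis, or invoke the precise statement of \cite{CaPe23}, Theorem~1.9 at exactly this point, which is what the paper does.
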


\begin{example}
\label{Ex_2Form}
Consider a $2$-form $\Omega$ on a convenient manifold $M$ and assume that  $\Omega^\flat(TM)$ is a closed subbundle $T^\flat M$ of $T^\prime M$. 
Then the  graph 
\[
D_\Omega
=
\left\lbrace
(u,\Omega^\flat(u))\in T_x M\times T^\flat M, u\in T_xM,\; x\in M
\right\rbrace
\]
is a partial almost  Dirac structure (cf. \cite{PeCa24}, Example~4.16).  According to the previous notations, we have $G_1=TM$,  $G_0=\ker\Omega^\flat$, $P_1=\Omega(TM)$ and $P_0=\{0\}$. So the assumptions of Theorem \ref{T_DiracPoisson} are satisfied; in particular, we have  $Q=\Omega^\flat$ .\\
A Hamiltonian $H$ on an
open set $U$ is \emph{admissible}\index{admissible Hamiltonian} if there exists a vector field $X$ on $U$ such that $\Omega^\flat(X)=df$.\\
If $M$ is a Banach manifold and  $H$ is admissible over an open set $U$, then the implicit Hamiltonian system $ \left( M,D_\Omega,H \right) $ is integrable on $U$. In particular, if  $\Omega$ is strong pre-symplectic, for any function $H$ such that $dH$ induces a section of $T^\flat M$, then the implicit Hamiltonian system $ \left( M,D_\Omega, H \right) $ is  completely integrable.
\end{example}

\begin{example}
\label{Ex_PartialPoisson} 
Consider a partial almost Poisson anchor ${P}:T^\flat M\to M$ (cf. \cite{CaPe23},  Example~3.18). Under some assumptions, the graph of $P$ is a partial almost Dirac structure $D_P$. In this case, we have $G_1=P(T^\flat M)$, $G_0=\{0\}$,  $P_1=T^\flat M$ and $P_0=\ker P$. So the assumptions of Theorem \ref{T_DiracPoisson} are satisfied. In particular, $J(\alpha)=P(\alpha)_{| T^\flat M}$ where $P(\alpha)$ is considered as an element of $T^{\prime\prime} M$.\\
A Hamiltonian $H$ defined on an open set $U$ is  admissible if $dH$ induces a section of $T^\flat M$ over $U$. If $M$ is a Banach manifold and $H$ is admissible, the implicit Hamiltonian system $\left( M,D_P, H \right) $ is integrable on $U$. In particular, if $P$ defines  a Poisson structure, for any admissible Hamiltonian $H$, then $ (M, H,D)$ is completely integrable.
\end{example}

\begin{example}
\label{Ex_InducedDirac}
We consider the context of the induced Dirac structure $D_{\Delta_M}$ on $T^\prime M$ (cf. subsection~\ref{___InducedPartialAlmostDiracStructures}) where $\Delta_M$ is a closed subbundle of $TM$. Then $D_{\Delta_M}$ is a partial almost Dirac structure on $T^\prime M$ which is integrable if and only if $\Delta$ is involutive. Assume that $H$ is a smooth 
function on $T^\prime M$.\\
We have $G_1=\Delta_{T^\prime M}$, $G_0=\{0\}$, $P_1=\Omega^{\flat}(\Delta_{T^\prime M})\oplus \Delta_{T^\prime M}^0$ (cf.  Lemma 
\ref{L_CloseneesOfOmegaDelta}) and $P_0=\Delta_{T^\prime M}^0$. So the assumptions of Theorem \ref{T_DiracPoisson} are satisfied.  Since $\Omega^\flat $ is an injective morphism,  there 
exists a right inverse $R: \Omega^{\flat}(\Delta_{T^\prime M})\to G_1$ and if $p_1$ is the canonical projection $p_1: \Omega^{\flat}(\Delta_{T^\prime M})\oplus \Delta_{T^\prime M}^0\to 
\Omega^{\flat}(\Delta_{T^\prime M})$, we have $J_x=R_x\circ p_1$ and $J=R\circ P_1$ is a bundle morphism from $G_1$ to $P_1$.  We also have $Q=\Omega^\flat : G_1\to P_1$.\\
A Hamiltonian $H$  is admissible on an open  set $p_{T^\prime M }^{-1}(U)$ of $T^\prime M$ if and only there exists a vector field $X$ on $p_{T^\prime M }^{-1}(U)$ and a section $\nu$ of $\Delta_{T^\prime M}^0$ such that $dH=\Omega^\flat(X) +\nu$.\\
If $M$ is a Banach manifold and $H$ is admissible on $p_{T^\prime M }^{-1}(U)$, the implicit Hamiltonian system $(M,H,D)$ is integrable on  $p_{T^\prime M }^{-1}(U)$.
\end{example}

\section{Implicit Lagrangians}\label{__ImplicitLagrangians}

In this section, we adapt to the convenient setting context, the results in finite dimension of H. Yoshimura, H. L. Jacobs and J. E. Marsden given in  \cite{YJM10} and all their referenced papers in it.  Without other  assumptions, throughout this section,  $M$ will be a fixed  convenient connected  manifold and $\Delta $ a closed subbundle of $TM$. According to 
$\S$~\ref{___InducedPartialAlmostDiracStructures}, let $\Delta_{T^\prime M}=(Tp_{T^\prime M})^{-1}(\Delta_M)$ be the  associated  closed subbundle of $TT^\prime M$, $\Omega$ the weak canonical symplectic form on $T^\prime M$ and $T^\flat M$ the range of $\Omega^\flat$ which is a weak subbundle $T^\prime M$. 
We  associate to $\Delta_{T^\prime M}$ the induced partial almost Dirac structure  $D_{\Delta_M}$  on  $TT^\prime M$. We will use the local coordinates as defined in $\S$~\ref{___InducedPartialAlmostDiracStructures}.

\subsection{Legendre Transformation}\label{___LegendreTransformation}  

Any smooth map $\mathbf{L}:TM\to \mathbb{R}$ is  classically called  a \emph{Lagrangian}\index{Lagrangian}\footnote{In infinite dimension, for  some physical or mechanical problems, the Lagrangian is not defined on all $TM$, but  there exists a weak immersed  convenient  submanifold $M_0$ modelled on $\mathbb{M}_0$, which is in general  open or dense  in  $M$, so that the  Lagrangian is only defined on the pullback of $TM$ over  $M_0$ (cf. \cite{MaRa99}, p. 108). This situation is studied in \cite{SB-KZB17}. 
For the sake of simplicity, we will consider a Lagrangian $L$ defined on the whole tangent bundle $TM$.}
%Since $M_0$ will be an immersed submanifold of  $M$ for any $x\in M_0$ the inclusion  $i_0$ of $M_0$ in $M$ is continuous we will assume that $M_0\subset M$. Thus, for any $x\in M_0$, there exist a chart domain $U$ around $x$ in $M$ such that the connected component $U_0$ of $U\cap M_0$ which contains $x$ is also a chart domain form $M_0$ and a trivialization of $\Delta_{M}_0$ over $U_0$  which is the restriction of the trivialization of $\Delta_M$ over $U$
 and the associated \emph{Legendre transformation}\index{Legendre transformation} is  defined by:
\begin{equation}
\label{eq_LegendreTransform}
\mathbb{F}{\mathbf{L}}_x(v).w=\displaystyle\frac{d}{ds} \left( \mathbf{L}(v+sw) \right)_{| s=0}
\end{equation}
where $v$ and $w$ belong to $T_x M$ and where  $\mathbb{F}\mathbf{L}_x(v).w$ is the fibre  derivative of $\mathbf{L}$ along the vertical fibre of $TTM\to TM$  at point $(x,v)$, in the direction $w$.  The context of  \cite{AbMa87}, 3.5  can be adapted to the convenient setting, and by the way, , %$\mathbb{F}L_x$ define  a smooth map map from $T_x M \to T_x^\prime M$. 
 as the map $\mathbf{L}$ is smooth, it follows that the induced map 
$\mathbb{F}\mathbf{L}:TM\to T^\prime M$ is a smooth fibre preserving map. In local coordinates in $TM$ and $T^\prime M$, we have 
\[
\mathbb{F}\mathbf{L}\cong(\mathsf{x}, \mathsf{v})
\mapsto 
 \left( \mathsf{x},\mathsf{p}=\displaystyle\frac{\partial \mathsf{L}}{\partial \mathsf{v}} \right).
\]
Classically for mechanical systems in finite dimension, the Legendre transformation is called \emph{non degenerate} if  $\mathbb{F}\mathbf{L}$ is a local diffeomorphism. In the convenient setting, these conditions do   not occur in general. 
Therefore, in our context, we will say that $\mathbb{F}\mathbf{L}$ is \emph{non degenerate}\index{non degenerate!Legendre transformation} if, locally, $\mathbb{F}\mathbf{L}$ is injective,  for all $x$ in some open set $U$ in $M$; otherwise, we will say that  $\mathbb{F}L$ is \emph{degenerate}. 
Aiming at the notion of \emph{implicit Lagrangian}\index{implicit Lagrangian},  we will consider this general case where $\mathbb{F}\mathbf{L}$  could be degenerate.

\subsection{Dirac Differential Operator Associated to a Lagrangian}
\label{___DiracDifferentialOperatorAssociatedToALagrangian} 
Given a Lagrangian $\mathbf{L}$ over $TM$, its differential $d\mathbf{L}$ is a $1$-form on $TM$.\\
As in \cite{YJM10} or \cite{YoMa06I}, the \emph{Dirac differential operator}\index{DL@$\mathfrak{D}L$ (Dirac differential operator of $L$)}\index{Dirac differential operator} of $\mathbf{L}$ is defined by:
\[
\mathfrak{D}\mathbf{L}=\gamma_M\circ d\mathbf{L}
\]
where $\gamma_M$ is the diffeomorphism
\[
\gamma_M=\Omega^\flat\circ  \kappa_M^{-1}:T^\prime TM\to T^\flat T^\prime M
\]
defined in $\S$~\ref{___CanonicalMaps}. In local coordinates, we have: 
\begin{equation}
\label{Eq_DL}
\mathfrak{D}\mathbf{L}(x,v)\cong \left( \mathsf{x},\displaystyle\frac{\partial \mathsf{L}}{\partial \mathsf{v}}, -\displaystyle\frac{\partial \mathsf{L}}{\partial \mathsf{x}},\mathsf{v}\right)
\end{equation}

\subsection{Implicit Lagrangian Systems} \label{___ImplicitLagrangianSystems}

Under the context of $\S$~\ref{___InducedPartialAlmostDiracStructures}, as in  \cite{YJM10}, we introduce:

\begin{definition}
\label{D_ImplicitLagrangian}  
Consider a Lagrangian  $\mathbf{L}: TM\to \mathbb{R}$   and a closed subbundle $\Delta_M $ of $TM$.  
Let $\Delta_{T^\prime M}=(Tp_{T^\prime M})^{-1}(\Delta_M)$ be the  associated  closed subbundle of $TT^\prime M$,   $D_{\Delta_M}$ the induced partial almost Dirac structure  on  $T^\prime M$ and $\mathfrak{D}\mathbf{L}:TM\to T^\flat T^\prime M$ the Dirac differential operator of $\mathbf{L}$.\\
We set  $\mathbb{P}=\mathbb{F}\mathbf{L}(\Delta_M)\subset T^\prime M$. \\
An integral curve of an implicit Lagrangian system  $(M, \Delta_M,\mathfrak{D} \mathbf{L})$ is a curve $t\mapsto \gamma(t):= (x(t),v(t) ,p(t))$ from some interval $I\subset \mathbb{R}$ to $TM\oplus T^\prime M$ such that
\begin{equation}
\label{eq_ImplicitLagrangian}
 \left( x(t),p(t),\dot{x}(t),\dot{p}(t),  \mathfrak{D}\mathbf{L} \left( x(t),v(t) \right)  \right)
  \in D_{\Delta_M}(x(t),p(t))
\end{equation}
for all $t\in I$.
%An implicit Lagrangian system is a triple $(L,\Delta_M, X)$ where $X$ is a vector field defined on a neighbourhood  of $P$ and which satisfies the condition
\end{definition}

\begin{remark}
\label{R_ConsequencesDefinition}
In  local coordinates, as defined in $\S$~\ref{___InducedPartialAlmostDiracStructures}, for $z\in \mathbb{P}$, if   $z\cong (\mathsf{x},\mathsf{p})$, where $\mathsf{p}\cong  \displaystyle\frac{\partial \mathsf{L}}{\partial v}$ and $ v\cong(\mathsf{x},\mathsf{v})$, then according to (\ref{Eq_DOmegaM}), (\ref{Eq_DL}),  the condition  (\ref{eq_ImplicitLagrangian})  can be written:

\begin{equation}
\label{Eq_ImplicitLlocal}
\mathsf{p}=\displaystyle\frac{\partial \mathsf{L}}{\partial \mathsf{v}},\;\;\;\;\; \dot{\mathsf{x}}\in \mathbb{E}\,;\;\;\;\;\dot{\mathsf{x}}=\mathsf{v},\;\; \;\;\;\;\dot{\mathsf{p}}-\displaystyle\frac{\partial \mathsf{L}}{\partial\mathsf{x}} \in \mathbb{E}^0.
\end{equation}

Note that this integral curve   
$ t \mapsto (x(t),v(t),p(t))$ is  such that $t\mapsto(x(t),v(t))$ is contained in  $\Delta_M$ and satisfies  $\mathbb{F}\mathbf{L}(x(t),v(t))=(x(t),p(t))$ and so is contained in $\Delta_M\oplus \mathbb{P}$.  In local coordinates, if $\mathbf{L}(x,v)\cong\mathsf{L}(\mathsf{x},\mathsf{v})$, an integral curve 
\[
t\mapsto (x(t),v(t),p(t))\cong (\mathsf{x}(t),\mathsf{v}(t),\mathsf{p}(t))
\]
is solution of the following implicit  system 
\begin{equation}
\label{Eq_ImplicitLagrangianLoc}
\left\{\begin{aligned}
 &\begin{pmatrix}
\dot{\mathsf{x}}\\
\dot{\mathsf{p}}
\end{pmatrix}=
\begin{pmatrix}
	0&\operatorname{Id}_{\mathbb{M}}\\
	-\operatorname{Id}_{\mathbb{M}^\prime}& 0
\end{pmatrix}
\begin{pmatrix}	-\displaystyle\frac{\partial \mathsf{L}}{\partial \mathsf{x}}\\
			\mathsf{v}
\end{pmatrix}
+
\begin{pmatrix} 
0\\
\alpha(\mathsf{x})
\end{pmatrix} 
\textrm{ for some }  
\alpha:U\to \mathbb{E}^0\hfill{}\\
		&\mathsf{p}=\displaystyle\frac{\partial \mathsf{L}}{\partial \mathsf{v}}\hfill{}\\
		&<\beta(\mathsf{x}),\mathsf{v}>=0,\;\;\;\;\; \forall \beta:U\to \mathbb{E}^0\hfill{}\\
\end{aligned}\right.
\end{equation}
In particular, if $\Delta_M$ is finite $k$-co-dimensional for each  $x_0\in M$, in local coordinates,  there exists an open neighbourhood  $\mathbb{U}$ of $\mathsf{x}_0$ in $\mathbb{M}$ and smooth maps $\omega_i=U\to \mathbb{E}^0$, $i \in \{1,\dots, k\}$ such that 
$ \left( \omega_1(\mathsf{x}),\dots, \omega_k(\mathsf{x}) \right) $ is a basis of $\mathbb{E}^0$. Therefore, there exists "Lagrangian multipliers"\index{Lagrangian multipliers} 
$ \left( \lambda_1(\mathsf{x}),\dots,\lambda_k(\mathsf{x}) \right) $ such that the system (\ref{Eq_ImplicitLagrangianLoc}) can be written as in finite dimension (cf. \cite{YoMa06I})
\begin{equation}
\label{Eq_ImplicitLagrangianLock}
\left\{
\begin{aligned}
 &\begin{pmatrix}
\dot{\mathsf{x}}\\
\dot{\mathsf{p}}
\end{pmatrix}=
\begin{pmatrix}
	0&\operatorname{Id}_{\mathbb{M}}\\
	-\operatorname{Id}_{\mathbb{M}^\prime}& 0
\end{pmatrix}
\begin{pmatrix}	
	-\displaystyle\frac{\partial \mathsf{L}}{\partial \mathsf{x}}(\mathsf{x})\\
\mathsf{v}
\end{pmatrix}
+
\begin{pmatrix} 
		0\\
		\displaystyle\sum_{i=1}^k \lambda_i(\mathsf{x})\omega_i(\mathsf{x})
						\end{pmatrix} \hfill{}\\
		&\mathsf{p}=\displaystyle\frac{\partial \mathsf{L}}{\partial \mathsf{v}}\hfill{}\\
		&<\omega_i(\mathsf{x}),\mathsf{v}>=0,\;\;\;\;\; \forall i \in \{1\dots,k\}
		\hfill{}\\
\end{aligned}
\right.
\end{equation}

\end{remark}

\begin{remark}
\label{R_DeltaM=TM} 
When  $\Delta_M=TM$, the  differential system % (\ref{Eq_ImplicitLlocal}) or 
(\ref{Eq_ImplicitLagrangianLock}) is reduced to:
\begin{equation}\label{Eq_ImplicitLagrangianLocTM}
\left\{
\begin{aligned}
 \dot{\mathsf{x}}=\mathsf{v}\hfill{}\\
\dot{\mathsf{p}}=\displaystyle\frac{\partial \mathsf{L}}{\partial \mathsf{x}}\hfill\\
\mathsf{p}=\displaystyle\frac{\partial \mathsf{L}}{\partial \mathsf{v}}\hfill{}\\
\end{aligned}
\right.\hfill{}\\
\end{equation}
which is equivalent to the Euler-Lagrange conditions (cf. Corollary. \ref{C_EulerLagrangeImplicitL}).
\end{remark}

\begin{remark}
\label{R_ExistenceIntegralCurve}
As for an implicit Hamiltonian (cf. Remark \ref{R_ExistenceSol}), the existence of integral curves of  an implicit Lagrangian    $ \left( M,D_{\Delta_M},\mathfrak{D}\mathbf{L} \right)$ is linked with the general problem of existence of solutions of an ordinary differential equation in the convenient setting; in general, such integral curves need not exist. However, in the Banach setting, this problem has, in general, some solutions. Moreover, under some strong assumptions or particular cases like projective limits or direct limits, this problem has some solutions.
\end{remark}

\subsection{Non-degenerate Constraint Lagrangians}\label{___NonDegenerateConstraintLagrangians}
In this subsection, we assume that $\mathbf{L}$ can be  degenerate. However, the Legendre transformation $\mathbb{F}\mathbf{L}$ is defined and, again, we set  $\mathbb{P}=\mathbb{F}\mathbf{L}(\Delta_M)$. \\
% Now,  if $L$ is non degenerate, then $P=\mathbb{F}L(TM)$ is an immersed submanifold of $T^\prime M$ modeled on $\mathbb{M}$.%and curve  $t\mapsto (x(t),v(t),p(t))$ in $ D_{\Delta_M}$   is an integral curve of  the implicit Lagrangian $(X,\mathfrak{D}L)$ if an only if $(x(t),p(t))$ is a solution of the ordinary differential system of the two first equations in (\ref{Eq_ImplicitLagrangianLocTM}) which is contained  of  is contained in $P$ which an implicit differential system.
We will say that $\mathbf{L}$ is \emph{a non degenerate $\Delta_M$-constraint Lagrangian} if,  for any $x\in M$, there exists an open neighbourhood $U$ of $x$ such  that  the restriction of $\mathbb{F}\mathbf{L}$ to $(\Delta_M)_{| U}$ is a weak  immersion\footnote{f. \cite{CaPe23}, 3.7.}. 
In local coordinates, this  condition is equivalent to:\\ 
$\mathsf{x}\mapsto \displaystyle\frac{\partial^2 \mathsf{L}}{\partial \mathsf{v}^2}(\mathsf{x})$  is a  smooth field, on  some open set $\mathbb{U}$ in $ \mathbb{M}$,  of weak non-degenerate bilinear forms on  $\mathbb{E}$,
 
\noindent and then we have: 
\begin{equation}
\label{eq_localP}
\mathbb{P}\cong 
\left\{
 \left( \mathsf{x},\displaystyle\frac{\partial \mathsf{L}}{\partial \mathsf{v}}(\mathsf{x},\mathsf{v}) \right) , 
(\mathsf{x},\mathsf{v})\in \mathbb{U}\times\mathbb{E}
\right\}
\end{equation}
Indeed, in local coordinates, the tangent map $T\mathbb{F}\mathbf{L}$ from $TM{| U}\cong \mathbb{U}\times \mathbb{M}$ to $T^\prime M_{|U}\cong \mathbb{U}\times\mathbb{M}^\prime$ can be written as a matrix 
\begin{equation}
\label{eq_TFL}
\begin{pmatrix}
\mathsf{Id}&\mathsf{0}\\
            \displaystyle\frac{\partial^2 \mathsf{L}}{\partial\mathsf{x}\partial\mathsf{v}}&  \displaystyle\frac{\partial^2 \mathsf{L}}{\partial\mathsf{v}^2}
\end{pmatrix}
\end{equation}
Therefore, $\mathbb{F}\mathbf{L}$ is an immersion, if and only if $\mathsf{x}\mapsto \displaystyle\frac{\partial^2 \mathsf{L}}{\partial \mathsf{v}^2}(\mathsf{x})$  is a  smooth field of  non-degenerate matrices on $\mathbb{E}$, defined on  some open set $\mathbb{U}$ in $ \mathbb{M}$.\\

\begin{remark} 
\label{R_conicfiber}
An open  set $\mathbb{E}_0$ in a convenient space $\mathbb{E}$ is called \emph{conic}\index{conic} if  for any $u\in \mathbb{E}_0$, $\lambda u$ belongs to $\mathbb{E}_0$ for all $\lambda>0$.\\
We will say that an open set $E_0$ in a convenient bundle $\pi:E\to M$ is a \emph{conic subbundle}\index{conic subbundle} of $E$ 
if the restriction of $\pi$ to $E_0$ is a convenient locally trivial bundle over $M$ whose typical fibre $\mathbb{E}_0$ is a conic open set of the typical fibre $\mathbb{E}$ of $E$. 
More generally,  we can consider a conic subbunbdle $E_0$ of $\Delta_M$ on which  $\mathbf{L}$ is non degenerate. We then say that \emph{$\mathbf{L}$ is $E_0$ non-degenerate}.\\

If $\mathbf{L}$ is $E_0$ non-degenrate, then  $\mathbb{P}$ is a  weak  convenient submanifold of $T^\prime M$ modelled on $\mathbb{M}\times\mathbb{E}$\footnote{$\mathbb{E}$ being the typical fibre of $\Delta_M$.} (\textit{via}  
the projection on the $\mathbb{M}\times \mathbb{E} $ in local coordinates)\footnote{In finite dimension,  when $\mathbb{P}$ is a submanifold of $T^\prime M$, $P$ is called the \emph{primary constraint}\index{primary constraint} in \cite{Dir50}.}.  Such a situation occurs in particular in the following examples.
\end{remark}

% If $\mathbf{L}$ is $K$ non-degenrate, then  $\mathbb{P}$ is a  weak  convenient submanifold of $T^\prime M$ modelled on $\mathbb{M}\times\mathbb{E}$ \footnote{$\mathbb{E}$ being the typical fibre of $\Delta_M$} (\textit{via}  
%the projection on the $\mathbb{M}\times \mathbb{E} $ in local coordinates)\footnote{In finite dimension,  when $\mathbb{P}$ is a submanifold of $T^\prime M$, $P$ is called the \emph{primary constraint} in \cite{Dir50}.}. This 
%situation occurs in particular in the following examples.

\begin{example}
\label{Ex_SubRiemannian}
Let $M$ be a smooth paracompact Banach  manifold and  $\Delta_M$ be a closed subbundle of $TM$  of codimension $k$.  Assume that there exists a weak Riemanniann metric $g$ on $\Delta_M$. Since $\Delta_M$ is supplemented, choose any $k$-dimensional supplement subbundle $F$ of $E$  in $TM$. Since $M$ is smooth paracompact, we can always extend $g$ to a smooth map $\mathbf{L}:TM\to \mathbb{R}$. For instance,  we can take any Riemannian metric $h$ on $F$  and extend it to a weak Riemannian metric $\hat{g}$ on $TM$ defined by

$\tilde{g}_{|F}= h$, $\tilde{g}_{|E}=g$ and $\tilde{g}_x(u,v)=0$ for all $u\in E_x$ and $v\in F_x$. 

\noindent 
 Then, by same arguments as in Example \ref{Ex_SubRiemannian}, 
we can show that $\mathbb{P}=\mathbb{F}\mathbf{L}(\Delta_M)$ is a weak immersed submanifold of $T^\prime M$.  

Therefore, over  the range $\mathbb{U}\subset\mathbb{M}$ of a chart domain  $U$ in $M$, there exist smooth fields $\alpha_i : \mathbb{U}\to\mathbb{E}^0$ such that $ \left( \alpha_1(\mathsf{x}),\dots,\alpha_k(\mathsf{x}) \right) $ is a basis of $\mathbb{E}^0$. Then, in associated local coordinates, there exist Lagrange multipliers $\lambda_i$, $ i \in \{1,\dots,k\}$, such that the associated implicit Lagrangian system can be written (cf. \cite{YoMa07}): 
 \begin{equation}\label{Eq_ImplicitLagrangiansubRiemanniank}
 \left\{
 \begin{matrix}
  \dot{\mathsf{x}}=\mathsf{v}\;\;\; \;\;\dot{\mathsf{p}}=\displaystyle\frac{\partial \mathsf{L}}{\partial \mathsf{x}}(\mathsf{x},\mathsf{v})  +\sum_{i=1}^k\lambda_i(\mathsf{x})\alpha_i(\mathsf{x})\hfill{}\\
  <\alpha_i(\mathsf{x}),\mathsf{v}>=0,\; 
  \forall i \in \{1\dots, k\}\;\;\;\; \;\;{\mathsf{p}}=\displaystyle\frac{\partial \mathsf{L}}{\partial \mathsf{v}}(\mathsf{x},\mathsf{v})\hfill{}
\end{matrix}
\right.
\end{equation}
Note that these equations only depend on the restriction of $\mathbf{L}$ to $\Delta_M$.
\end{example}

\begin{example}
\label{ex_Rope} 
{\sf The heavy rope.}  \\
Consider a non-elastic rope. Such a rope will be modelled as a map\\
$x:[0,\ell]\to \mathbb{R}^2$  such that $x \in H^1([0,\ell],\mathbb{R}^2)$ \footnote{This means that $x$ is continuous and its derivative $x^\prime $ is of class $L^2$.}  with $||x^\prime ||_2=1$  a.e. (cf. \cite{Rei22}).\\ 
The configuration manifold is then the Hilbert space 
$M= \{x \in H^1([0,\ell],\mathbb{R}^2) \} $. 
This space  is isomorphic to $\mathbb{R}^2\times L^2 \left( [0,\ell], \mathbb{R}^2 \right) $. The constraint condition is the vector subbundle\footnote{Here $x^\prime$ denotes the derivative of $x$ relative to $\sigma$.}  
\[
\Delta_M
=
\left\lbrace
(x,v)\in H^1([0,\ell],\mathbb{R}^2)\times H^1([0,\ell],\mathbb{R}^2) \;:\; \displaystyle\int_0^\ell x^\prime(\sigma)v(\sigma)^top d\sigma=0
\right\rbrace.
\]
of $TM$ of codimension $1$. 
The rope has a mass density per unit length $\rho:[0,1] \to ]0,\infty[$  and we assume that $\rho$ is smooth. Since $\rho(\sigma)>0$ for any $\sigma\in [0,\ell]$,  by a compactness argument, it follows that we have  some positive constant $C$ such that $0<||\rho(\sigma)||\leq C$. This rope is subjected to the force of gravity proportional to the constant field $ (0,g)$. Therefore the potential energy is 
$E_{\text{pot}}(x)=\displaystyle \int_0^\ell\rho(\sigma) x_2(\sigma)g d\sigma$ 
and the kinematic energy is 
$E_{\operatorname{kin}}(v)=\displaystyle\frac{1}{2}\int_0^\ell\rho(\sigma)||v||_2^2 d\sigma$.\\
But, in this context,  using the Riesz representation for the Hilbert space $H^1([0,\ell],\mathbb{R}^2)$ according to the inner product

$<(x,v),(y,w)>=<x,y>_{\mathbb{R}^2}+ <v,w>_{L^2}$,\\
the bundle  $\Delta_M$ defined by the constraint $v\mapsto <\alpha(x), v> := <x^\prime, v>_{L^2}$ and the Lagrangian associated to this physical  problem is  
\[
\mathbf{L}(x,v)= <\rho .v,v>_{L^2}-<\rho.x^\prime,(0,g)>_{L^2}.
\]
Thus, according to the assumption  on $\rho$, the restriction of the Hessian $\displaystyle \frac{\partial^2 \mathsf{L}}{\partial v^2}(x)$ associated in local coordinates to the restriction of $\mathbf{L}$ to $(\Delta_M)_x$ is non degenerate since it is positive definite. 

Therefore $\mathbb{P}$ is an immersed submanifold of
 \[
T^\prime H^1([0,\ell],\mathbb{R}^2)\equiv T H^1([0,\ell],\mathbb{R}^2)= H^1([0,\ell],\mathbb{R}^2)\times  H^1([0,\ell],\mathbb{R}^2)
\]
which, under Riesz representation, can be identified with $\Delta_M$. The associated implicit Lagrangian problem is then     
\begin{equation}\label{Eq_ImplicitLagrangiansubRope}
 \left\{\begin{matrix}
  \dot{\mathsf{x}}=\mathsf{v}\;\;\; \;\;\dot{\mathsf{p}}=-\rho\begin{pmatrix} 0\\
g
\end{pmatrix}+\lambda(x)x^\prime\hfill{}\\
<\mathsf{x}^\prime ,\mathsf{v}>=0,\;\;\;\; \;\;\mathsf{p}=\rho.\mathsf{v}\hfill{}
\end{matrix}\right.
\end{equation}
\end{example}

\subsection{Conservation of the Generalized Energy}  
\label{___ConservationOfTheGenaralizedEnergy}
Given an implicit Lagrangian  $(M, D_{\Delta_M},\mathfrak{D}\mathbf{L})$, following  \cite{YJM10}  or \cite{YoMa06I} in finite dimension, \emph{the generalized energy}\index{generalized energy} $\mathbf{E}$ on $TM\oplus T^\prime M$ is: 
\begin{equation}
\label{Eq_GeneralEnergy}
\mathbf{E}(x,v,p)=<p,v>-\mathbf{L}(x,v).
\end{equation}
Along an integral curve 
$ \left( x(t),v(t),p(t) \right) $   of (\ref{eq_ImplicitLagrangian}), 
we have \footnote{From now on, if $\mathbf{F}$  is a smooth function in some variables  $ \left( x_1, \dots, x_m \right)  $,
$\displaystyle\frac{\partial \mathbf{F}}{\partial x_i}$  denotes the partial derivative of $\mathbf{F}$ relative to the $i$-th  variable.}

\[\begin{matrix}
\displaystyle\frac{d}{dt}\mathbf{E}((x(t),v(t),p(t))&=<\dot{p},v>+<p,\dot{v}>-\displaystyle\frac{\partial \mathbf{L}}{\partial x}\dot{x}-\displaystyle\frac{\partial \mathbf{L}}{\partial v}\dot{v}\\
                                                           &=<\dot{p}-\displaystyle\frac{\partial \mathbf{L}}{\partial x},v>\hfill{}\\
                                                             &=0\hfill{}
                                                             \end{matrix}.
                                                             \]
                                            since $\dot{x}=v \in \Delta_M$ and $\dot{p}-\dfrac{\partial \mathbf{L}}{\partial x} \in \Delta_M^0$.\\                                                                                                                     
Note that if we have given a generalized energy $\mathbf{E}$ on $TM\oplus T^\prime M$, we can recover the Lagrangian from the relation  (\ref{Eq_GeneralEnergy})  and, from $\mathbf{E}$,  the implicit  Lagrangian system (\ref{eq_ImplicitLagrangian}) can be also considered as the implicit system
\begin{equation}
\label{eq_ImplicitEnergy}
(x(t),p(t),\dot{x}(t),\dot{p}(t), d\mathbf{E}(x(t),v(t),p(t))\in D_{\Delta_M}(x(t),p(t))
\end{equation}

\subsection{Implicit Hamiltonian System Associated to a Lagrangian}\label{___ImplicitHamiltonianSystemAssociatedToALagrangian}

Consider a non degenerate $\Delta_M$-constraint Lagrangian  $\mathbf{L}$ on $TM$ and again we set $\mathbb{P}=\mathbb{F}\mathbf{L}(\Delta_M)$.  Then $\mathbb{P}$ is a weak immersed submanifold of $T^\prime M$. We will adapt to  the convenient setting,  the construction in  finite dimension of the Hamiltonian $H_\mathbb{P}$  as  it is done in  a series of papers by H. Yoshimura and  J. E. Marsden  (cf. for instance \cite{YoMa06I} or \cite{YJM10}).

\begin{theorem}
\label{T_ImplicitHamiltonianL} %{T_ImplicitHamiltonianL} 
Let $M$ be a  convenient manifold 
%whose model $\mathbb{M}$ is reflexive and let 
and let $\Delta_M$ be  a closed  convenient subbundle of $TM$. Then, for any  $\Delta_M$-constraint regular Lagrangian $\mathbf{L}$, we have:
\begin{enumerate}
\item [(1)]   
Denote by   $H_{\Delta_M}:\Delta_M\to \mathbb{R}$ the smooth function defined by 
\[
H_{\Delta_M}(x,v)=E(x,v,\mathbb{F}\mathbf{L}_{| \Delta_M}(x,v)).
\]
Then the function   $H_\mathbb{P} (x,p)=H_{\Delta_M}(x,v)$ is a well defined smooth function on $\mathbb{P}$.
 \item[(2)]  
We set $H(x,v,p)=<p,v>-L(x,v)$. Then  the restriction $H_\mathbb{P}$ of  $H$ to $\mathbb{P}$ is smooth.  Moreover, 
 the induced   implicit Hamiltonian system $ \left( M,D_{\Delta_M},H_{| T^\prime M} \right) $ on $\mathbb{P}$  is well defined, and, in local coordinates, it can be characterized by   
\begin{equation}\label{D_InducedImplicitHamiltoniansystem}
  \dot{\mathsf{x}}=\displaystyle\frac{\partial \mathsf{H}}{\partial \mathsf{p}}(\mathsf{x},\mathsf{v},\mathsf{p}),\;\;\dot{\mathsf{p}}+\displaystyle\frac{\partial \mathsf{H}}{\partial \mathsf{x}}(\mathsf{x},\mathsf{v},\mathsf{p})\in \mathbb{E}^0,\;\;
     \end{equation} 
with constraints 
     $\displaystyle\frac{\partial \mathsf{H}}{\partial \mathsf{v}}(\mathsf{x},\mathsf{v},\mathsf{p})_{| \mathbb{E}}=0$.
\item[(3)] 
A smooth curve $t\mapsto (x(t),p(t))$ in $\mathbb{P}$, defined on an interval $I$,  is an integral curve of the induced   implicit Hamiltonian system $ \left( M,D_{\Delta_M},H_{| T^\prime M} \right) $ on $\mathbb{P}$ if and only if there exists a curve $t\mapsto (x(t),v(t))$ in $\Delta_M$  such that $\mathbb{F}\mathbf{L}(x(t),v(t))=(x(t),p(t))$ and which is an integral curve of the implicit Lagrangian system
$ \left( M, \Delta_M,\mathfrak{D}\mathbf{L} \right) $.     
\end{enumerate}
\end{theorem}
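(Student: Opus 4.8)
The plan is to prove the three assertions by reducing everything, via the non-degeneracy hypothesis, to the local model furnished by Remark~\ref{R_conicfiber}, and then matching the local description of $D_{\Delta_M}$ in (\ref{Eq_DOmegaM}) with the elementary Legendre relations satisfied by $H(x,v,p)=\langle p,v\rangle-\mathbf{L}(x,v)$.

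For (1), I would first invoke that $\mathbf{L}$ is a non-degenerate $\Delta_M$-constraint Lagrangian: by Remark~\ref{R_conicfiber}, $\mathbb{P}=\mathbb{F}\mathbf{L}(\Delta_M)$ is a weak convenient submanifold of $T^\prime M$ whose local model is $\mathbb{M}\times\mathbb{E}$, the chart being the projection that reads off $(\mathsf{x},\mathsf{v})$ from $(\mathsf{x},\mathsf{p})=(\mathsf{x},\partial\mathsf{L}/\partial\mathsf{v})$ as in (\ref{eq_localP}). In particular $\mathbb{F}\mathbf{L}_{|\Delta_M}\colon\Delta_M\to\mathbb{P}$ is a diffeomorphism onto $\mathbb{P}$ equipped with its intrinsic weak-submanifold structure, so that the fibre coordinate $\mathsf{v}$ is recovered as a \emph{smooth} function of $(\mathsf{x},\mathsf{p})$ along $\mathbb{P}$. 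Since the generalized energy $\mathbf{E}$ of (\ref{Eq_GeneralEnergy}) and $\mathbb{F}\mathbf{L}$ are smooth, the map $H_{\Delta_M}(x,v)=\mathbf{E}(x,v,\mathbb{F}\mathbf{L}_{|\Delta_M}(x,v))$ is smooth on $\Delta_M$; transporting it through $\mathbb{F}\mathbf{L}_{|\Delta_M}$ gives $H_\mathbb{P}=H_{\Delta_M}\circ(\mathbb{F}\mathbf{L}_{|\Delta_M})^{-1}$, which is independent of the chosen preimage $v$ and smooth on $\mathbb{P}$. This yields both well-definedness and smoothness.

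For (2), I would observe that $H(x,v,p)=\langle p,v\rangle-\mathbf{L}(x,v)$ coincides with $H_{\Delta_M}$ as soon as $p=\mathbb{F}\mathbf{L}(x,v)$, so its restriction to $\mathbb{P}$ through the section $(x,p)\mapsto(x,v(x,p),p)$ is exactly $H_\mathbb{P}$, hence smooth by (1); this is what makes Definition~\ref{D_ImplicitHamiltoniansystem}(3) applicable on the weak immersed submanifold $S=\mathbb{P}$. Differentiating $H$ with respect to its independent arguments gives immediately $\partial H/\partial p=v$, $\partial H/\partial x=-\partial\mathbf{L}/\partial x$ and $\partial H/\partial v=p-\partial\mathbf{L}/\partial v$, the last of which vanishes on $\mathbb{E}$ precisely on $\mathbb{P}$. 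Inserting the differential inclusion $((\dot x,\dot p),dH_\mathbb{P})\in D_{\Delta_M}$ into the local form (\ref{Eq_DOmegaM}), the membership $u\in\Delta_{T^\prime M}$ forces $\dot{\mathsf{x}}\in\mathbb{E}$ and $\mathsf{w}=\dot{\mathsf{x}}$, while the condition $\alpha-\Omega^\flat(u)\in\Delta^0_{T^\prime M}$ splits into $\dot{\mathsf{x}}=\partial\mathsf{H}/\partial\mathsf{p}$ and $\dot{\mathsf{p}}+\partial\mathsf{H}/\partial\mathsf{x}\in\mathbb{E}^0$, the residual fibre equation being the constraint $\partial\mathsf{H}/\partial\mathsf{v}{}_{|\mathbb{E}}=0$. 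This is exactly (\ref{D_InducedImplicitHamiltoniansystem}).

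For (3), I would compare the two local systems term by term. A solution of $(M,\Delta_M,\mathfrak{D}\mathbf{L})$ satisfies (\ref{Eq_ImplicitLlocal}), namely $\mathsf{p}=\partial\mathsf{L}/\partial\mathsf{v}$, $\dot{\mathsf{x}}=\mathsf{v}\in\mathbb{E}$ and $\dot{\mathsf{p}}-\partial\mathsf{L}/\partial\mathsf{x}\in\mathbb{E}^0$; under the Legendre relations above these become $\dot{\mathsf{x}}=\mathsf{v}=\partial\mathsf{H}/\partial\mathsf{p}$, $\dot{\mathsf{p}}+\partial\mathsf{H}/\partial\mathsf{x}\in\mathbb{E}^0$ and $\partial\mathsf{H}/\partial\mathsf{v}{}_{|\mathbb{E}}=0$, so projecting to $(x,p)$ produces a Hamiltonian integral curve on $\mathbb{P}$. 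Conversely, given a smooth curve $t\mapsto(x(t),p(t))$ in $\mathbb{P}$ solving (\ref{D_InducedImplicitHamiltoniansystem}), I would lift it through the diffeomorphism $\mathbb{F}\mathbf{L}_{|\Delta_M}$ of the first paragraph to a smooth curve $t\mapsto(x(t),v(t))$ in $\Delta_M$ with $\mathbb{F}\mathbf{L}(x(t),v(t))=(x(t),p(t))$, and the same equivalence of equations shows it solves the implicit Lagrangian system. I expect the genuine obstacle to sit in the first paragraph: in the convenient setting $\mathbb{F}\mathbf{L}$ is only a weak immersion and has \emph{no} local inverse as a map into $T^\prime M$, so neither the smoothness of $v(x,p)$ nor the smoothness of the lifted curve in $\Delta_M$ is automatic. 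Both are resolved by inverting $\mathbb{F}\mathbf{L}_{|\Delta_M}$ onto $\mathbb{P}$ with its intrinsic structure, where $\mathsf{v}$ is literally a chart coordinate by Remark~\ref{R_conicfiber}; this is exactly the point at which the subtlety flagged in Remark~\ref{R_InducedImplicitHamilton} (a smooth curve in $M$ lying in $S$ need not be smooth in $S$) is defeated by non-degeneracy.
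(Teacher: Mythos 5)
Your overall strategy coincides with the paper's: both proofs rest on the Legendre relations $\frac{\partial H}{\partial p}=v$, $\frac{\partial H}{\partial x}=-\frac{\partial\mathbf{L}}{\partial x}$, $\frac{\partial H}{\partial v}=p-\frac{\partial\mathbf{L}}{\partial v}$, on the local description (\ref{Eq_DOmegaM}) of $D_{\Delta_M}$, and on a term-by-term comparison with (\ref{Eq_ImplicitLlocal}); your treatment of (2) and of the equivalence of the two local differential systems in (3) is essentially the paper's argument.

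The gap is in your first paragraph, and it propagates. The hypothesis that $\mathbf{L}$ is a non-degenerate $\Delta_M$-constraint Lagrangian is \emph{local}: for every $x$ there is a neighbourhood $U$ such that $\mathbb{F}\mathbf{L}$ restricted to $(\Delta_M)_{|U}$ is a weak immersion. This makes $\mathbb{F}\mathbf{L}_{|\Delta_M}$ a \emph{local} diffeomorphism onto $\mathbb{P}$ (with its intrinsic weak-submanifold structure), but not a diffeomorphism: nothing in the hypotheses prevents two distinct points $(x,v)\neq(x,v')$ of the same fibre from having the same image $(x,p)$. Hence the global inverse $(\mathbb{F}\mathbf{L}_{|\Delta_M})^{-1}$ through which you define $H_\mathbb{P}=H_{\Delta_M}\circ(\mathbb{F}\mathbf{L}_{|\Delta_M})^{-1}$ need not exist, and your parenthetical claim that $H_\mathbb{P}$ is ``independent of the chosen preimage $v$'' is circular: if the map were a diffeomorphism there would be nothing to check, and if it is not, this independence is precisely what assertion (1) asks you to prove --- a priori $H_{\Delta_M}(x,v)=\langle\mathbb{F}\mathbf{L}(x,v),v\rangle-\mathbf{L}(x,v)$ and $H_{\Delta_M}(x,v')$ have no reason to agree at two preimages of the same $(x,p)$. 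The paper's proof of (1) contains a dedicated argument for exactly this point: given $\mathbb{F}\mathbf{L}(x,v)=\mathbb{F}\mathbf{L}(x,v')$, it produces neighbourhoods $\mathcal{U}\ni(x,v)$ and $\mathcal{U}'\ni(x,v')$ in $\Delta_M$ with $\mathbb{F}\mathbf{L}(\mathcal{U})=\mathbb{F}\mathbf{L}(\mathcal{U}')=:\mathcal{W}$, and shows that ${H_{\Delta_M}}_{|\mathcal{U}}$ and ${H_{\Delta_M}}_{|\mathcal{U}'}$ induce one and the same function ${H_\mathbb{P}}_{|\mathcal{W}}$; smoothness of $H_\mathbb{P}$ then becomes a purely local statement. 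You need this step (or an added standing hypothesis of fibrewise injectivity of $\mathbb{F}\mathbf{L}_{|\Delta_M}$) before your first paragraph can serve as the basis of (2) and (3). The same conflation resurfaces in your lifting argument for (3): with only a local diffeomorphism, a smooth curve $t\mapsto(x(t),p(t))$ in $\mathbb{P}$ lifts smoothly to $\Delta_M$ only locally in $t$, and the local lifts must still be patched; the paper avoids this by proving the equivalence for a given pair consisting of a curve and a lift, which is all the statement's ``there exists'' requires.
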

  
This proposition motives the following definition:

\begin{definition}
\label{D_ImplicitHamiltonianSystem}
The  implicit Hamiltonian system $(M,D_{\Delta_M},H_{| T^\prime M})$ with constraint $\left(\displaystyle\frac{\partial H_{T^\prime M}}{\partial v}\right)_{| \mathbb{P}}=0$ is called \emph{the induced implicit Hamiltonian associated to the $\Delta_M$-constraint regular Lagrangian $L$} and $\mathbb{P}$ the integrability domain of  $(M,D_{\Delta_M},H_{| T^\prime M})$.
\end{definition}

\begin{remark}
\label{R_HPDltaM} ${}$
\begin{enumerate}
\item[1.]  
The terminology "integrability domain of  $(M,D_{\Delta_M},H_{| T^\prime M})$" is justified by Assertion 2 and Assertion 3 of Theorem \ref{T_ImplicitHamiltonianL}. 
\item[2.] 
In the opposite to the context  of finite dimension, in the infinite dimensional convenient setting,  in general, since $\mathbb{P}$ is only a weak immersed submanifold of $TM \oplus T^\prime M$, there does not exist, even locally, a smooth extension $H$ of $H_\mathbb{P}$ to $TM\oplus T^\prime M$. Even more, for the same reason in general, the restriction to $\mathbb{P}$, of any smooth function $H$ on  $TM\oplus T^\prime M$ is not a smooth function on $\mathbb{P}$.
\item[3.] 
In local coordinates,  according to  Proposition~\ref{T_ImplicitHamiltonianL}, 2.,  under the constraint $\displaystyle\frac{\partial \mathsf{H}}{\partial \mathsf{v}}(\mathsf{x},\mathsf{v},\mathsf{p})_{| \mathbb{E}}=0$ and if $H_\mathbb{P}\cong \mathsf{H}_\mathbb{P}$,  the conditions in (\ref{D_InducedImplicitHamiltoniansystem}), 
can be written
\begin{equation}
\label{eq_MHPDeltaM}
 \dot{\mathsf{x}}=\displaystyle\frac{\partial \mathsf{H}_\mathbb{P}}{\partial \mathsf{p}}(\mathsf{x},\mathsf{v},\mathsf{p}),\;\;\dot{\mathsf{p}}+\displaystyle\frac{\partial \mathsf{H}_\mathbb{P}}{\partial \mathsf{x}}(\mathsf{x},\mathsf{v},\mathsf{p})\in \mathbb{E}^0,\;\;
 \end{equation}
 and so only depends on ${H}_\mathbb{P}$ and not on the smooth extension of $H_\mathbb{P}$ to $TM\oplus T^\prime M$. Note that our approach is quite different  from the results of  H. Yoshimura and  J. E. Marsden in finite dimension in the same context.
\end{enumerate}
\end{remark}

\begin{proof}[Proof of Theorem \ref{T_ImplicitHamiltonianL}]${}$\\

1. At first, consider the set:
\begin{equation}
\label{eq_StablE}
     \operatorname{Stab}( \mathbf{E})=\{(x,p, v)\in TM\oplus T^\prime M \;: \displaystyle\frac{\partial\mathbf{E}}{\partial v}(x, p, v)=0\}.
\end{equation}

From  (\ref{eq_StablE}),  for any $(x,p)\in \mathbb{P}$ if $(x,p)=\mathbb{F}L(x,v)$ then $(x,v,p)$ belongs to $ \operatorname{Stab}( \mathbf{E})$.  Therefore,   $ H_{\Delta_M}(x,v)=\mathbf{E}(x,v,\mathbb{F}(x,v)_{| \Delta_M})$ defines  a smooth function  on $\Delta_M$. Moreover, if $\mathbb{F}L(x,v)=\mathbb{F}L(x',v')$ then $x=x'$ and so $ H_{\Delta_M}(x,v)=H_{\Delta_M}(x,v')$. It follows that $H_\mathbb{P} (x,p)=H_{\Delta_M}(x,v)$ does not depend on the choice of $(x,v)$ such that $\mathbb{F}L(x,v)=(x,p)$  and so is a well defined function on $\mathbb{P}$.\\
On the other hand,  since $\mathbb{F}L$ is a local diffeomorphism from $\Delta_M$ to $\mathbb{P}$, there exist open neighbourhoods  $\mathcal{U}$ and $\mathcal{U}'$ of $(x,v)$ and $(x,v')$ in $\Delta_M$ such that $\mathbb{F}L(\mathcal{U})=\mathbb{F}
  L(\mathcal{U}'):=\mathcal{W}$  and its projection on $M$ is an open neighbourhood  $U$ of $x$ in $M$. It follows that $\mathbb{F}L(x, v)=\mathbb{F}L(x, v')$ for  all $(x,v)\in \mathcal{U}$ and $(x,v')\in \mathcal{U}'$ and then  ${H_{\Delta_M  }}_{| \mathcal{U}}={H_{\Delta_M  }}_{| \mathcal{U}'}={H_\mathbb{P}}_{|\mathcal{W}}$. Therefore, $H_\mathbb{P}$ is a  well defined smooth map on $\mathbb{P}$.\\ 
 
2. Let $\iota$   be the inclusion of $\mathbb{P}$ in $TM\oplus T^\prime M$. By construction,  we have  $ H \circ \iota (x, p)= H_\mathbb{P}(x,p)$. Since $H_\mathbb{P}$ is smooth, it follows that $H\circ\iota$ is also a smooth function on $\mathbb{P}$  and so $H$ is a smooth extension of $H_\mathbb{P}$ to $TM\oplus T^\prime M$.\\
 As  $H_{| \mathbb{P}}$ is smooth, the  constraint  $\displaystyle\frac{\partial {H}}{\partial {v}}({x},{v},{p})_{| \mathbb{P}}=0$  makes sense, which means that $dH_{| \mathbb{P}}= \left( \displaystyle\frac{\partial H}{\partial x}, \frac{\partial H}{\partial p} \right) $ and so the induced implicit Hamiltonian system $
 \left( M,D_{\Delta_M},H_{| T^\prime M} \right) $ on $\mathbb{P}$   that is:
 
  find  smooth  curves $\gamma: t\mapsto (x(t),p(t))$ in $\mathbb{P}$ such that  $ ( \dot{\gamma}(t), d_{\gamma(t))} H_{| \mathbb{P}})\in D_{\Delta_M}(\gamma(t))$  makes sense  (cf.  Definition \ref{D_ImplicitHamiltoniansystem}, 2.). 
 
In local coordinates, we have  $ (\mathsf{x},\mathsf{p})=\left(\mathsf{x}, \displaystyle\frac{\partial \mathsf{L}}{\partial \mathsf{v}}(\mathsf{x},\mathsf{v}) \right)$.
A smooth curve $t\mapsto  (x(t),p(t))\cong ( \mathsf{x}(t), \mathsf{p}(t))$ in $\mathbb{P}$  is an integral  curve of the induced   implicit Hamiltonian system $(M,D_{\Delta_M},H_{| T^\prime M})$ on $\mathbb{P}$ with constraint  $\displaystyle\frac{\partial {H}}{\partial {v}}({x},{v},{p})_{| \mathbb{P}}=0$
  if and only if 
\[
\dot{\mathsf{x}}(t)=\displaystyle\frac{\partial \mathsf{H}}{\partial \mathsf{p}}(\mathsf{x}(t),\mathsf{v}(t),\mathsf{p})\in \mathbb{E},\;\;\dot{\mathsf{p}}+\displaystyle\frac{\partial \mathsf{H}}{\partial \mathsf{x}}(\mathsf{x},\mathsf{v},\mathsf{p})\in \mathbb{E}^0
\]
with constraint  $\displaystyle\frac{\partial \mathsf{H}}{\partial \mathsf{v}}(\mathsf{x}(t),\mathsf{v}(t),\mathsf{p}(t))_{| \mathbb{P}}=0$ , this condition on $\mathbb{P}$ being equivalent to
\[
(\mathsf{x}(t),\mathsf{p}(t) )=
  \left( \mathsf{x}(t),\displaystyle\frac{\partial \mathsf{L}}{\partial\mathsf{v}}(\mathsf{x}(t),\mathsf{v}(t)) \right) .
\] 
  This  ends the proof of Point  2.\\
  
3. Consider a  curve   $t\mapsto (x(t),p(t))$ in $\mathbb{P}$, defined on an interval $I$, such that  that there exists a curve  $t\mapsto  \left( x(t),v(t) \right) $ in $\Delta_M$ with $ 
(x(t),p(t))=\mathbb{F}\mathbf{L}(x(t),v(t))$ for all $t\in I$. Moreover, assume that   $t \mapsto (x(t),p(t))$  is an integral curve of the induced   implicit Hamiltonian system $ 
\left( M,D_{\Delta_M},H_{| T^\prime M}  \right) $ on $\mathbb{P}$.  Choose any coordinates system associated to a chart domain which meets $x(I)$. If, in this coordinates system, $
(x(t),p(t))\cong (\mathsf{x}(t),\mathsf{p}(t))$ and $(x(t),v(t))\cong (\mathsf{x}(t),\mathsf{v}(t))$,  from    (\ref{eq_ImplicitPb}) and (\ref{Eq_DOmegaM}), we must have 
\begin{equation}
\label{eq_xvpIntgralH}
\dot{\mathsf{x}}(t)=\displaystyle\frac{\partial \mathsf{H}}{\partial \mathsf{p}}(\mathsf{x}(t),\mathsf{v}(t),\mathsf{p}(t))\in \mathbb{E}\;\;\; \;\;\dot{\mathsf{p}}(t)+\displaystyle\frac{\partial \mathsf{H}}{\partial \mathsf{x}}(\mathsf{x}(t),\mathsf{v}(t),\mathsf{p}(t))\in \mathbb{E}^0
\end{equation}

On the other hand, we also have
\begin{description}
\item [$\bullet$]
$\displaystyle\frac{\partial \mathsf{H}}{\partial \mathsf{p}}(\mathsf{x},\mathsf{v},\mathsf{p})= \mathsf{v}$ and so the left condition in (\ref{eq_xvpIntgralH}),is equivalent to   \\ $\mathsf{v}(t)=\dot{\mathsf{x}}(t)\in \mathbb{E}$.\

\item[$\bullet$]
$\displaystyle\frac{\partial \mathsf{H}}{\partial \mathsf{x}}(\mathsf{x},\mathsf{v},\mathsf{p})=-\displaystyle\frac{\partial \mathsf{L}}{\partial \mathsf{v}}(\mathsf{x},\mathsf{v})$.
Taking into account that   $(x(t),p(t))\in \mathbb{P}$ and $v(t)=\dot{x}(t)$, it follows that  $(x(t),p(t))={\mathbb{F}\mathbf{L}}(x(t),\dot{x}(t))$. Note that   if $\mathbf{L}\cong{\mathsf{L}}$ then ${\mathbb{F}L}\cong \displaystyle\frac{\partial {\mathsf{L}}}{\partial \mathsf{v}}$ and so  $
\mathsf{p}(t)=-\displaystyle\frac{\partial \mathsf{L}}{\partial \mathsf{v}}(\mathsf{x}(t),\mathsf{v}(t))$. Therefore, the right condition in  (\ref{eq_xvpIntgralH}) is then $\dot{\mathsf{p}}(t)-\displaystyle\frac{\partial \bar{\mathsf{L}}}{\partial \mathsf{x}}(\mathsf{x}(t),
\mathsf{v}(t))\in \mathbb{E}^0$.
\end{description}
 This implies that $t\mapsto (x(t),v(t))$   is an integral curve of the implicit Lagrangian system
$(M, \Delta_M,\mathfrak{D}\mathbf{L})$ (cf. (\ref{Eq_ImplicitLlocal})).
For the converse, under the assumption      $ (x(t),p(t))=\mathbb{F}\mathbf{L}(x(t),v(t))$ for all $t\in I$, we can use the previous arguments in reverse.
\end{proof}

\begin{example}
\label{Ex_SubRiemaniannToBeContinued}
According to the context of Example~\ref{Ex_SubRiemannian}, let $g$  be a strong Riemannian metric on a convenient manifold $M$;  so its  model $\mathbb{M}$ is a Hilbert space. Given any closed Banach subbundle $\Delta_M$ of $TM$,  the strong  Riemannian metric induces a strong Riemannian metric on $\Delta_M$. 
  In fact, the typical fibre of $\Delta_M$ is a Hilbert space  $\mathbb{E}$ and so $\mathbb{P}$ can be identified with the dual bundle $\Delta_M ^\prime$ of $\Delta_M$. Since $\bar{g}:=g_{| \Delta_M}$   defines a strong Riemannian metric on $\Delta_M$;   
  in particular, $\bar{g}^\flat$ is a bundle  isomorphism.  We  obtain a strong Riemannian co-metric $\tilde{g}$ on $\mathbb{P}$
\[
({g}_\mathbb{P})_x(p,p')=\bar{g}\left((\bar{g}_x^\flat)^{-1}(p),(\bar{g}_x^\flat)^{-1}(p')\right)
\]
The Hamiltonian is $H_\mathbb{P}=\frac{1}{2}{g}_\mathbb{P}$.  We have 
\begin{equation}
\label{eq_HamitonianSubRiemannian}
H(x,v,p)
=
<p,v>-\frac{1}{2}{g}_x(v,v)
=
\frac{1}{2}{g}_x \left( ({g}_x^\flat)^{-1}(p),({g}_x^\flat)^{-1}(p') \right) .
\end{equation}
In local coordinates, if ${g}_\mathbb{P}\cong \tilde{\mathsf{g}}_\mathbb{P}$,  then the associated induced  implicit Hamiltonian $(M,D_{\Delta_M},H_{| T^\prime M})$ can be written   (cf. (\ref{eq_MHPDeltaM}))
 \begin{equation}\label{eq_ImplicitHamiltonianinftyg}
  \dot{\mathsf{x}}=\displaystyle\frac{\partial (\mathsf{g}_\mathbb{P})_\mathsf{x}}{\partial \mathsf{p}}(\mathsf{p})\in \mathbb{E},\;\;\dot{\mathsf{p}}+\displaystyle\frac{1}{2}\frac{\partial (\mathsf{g}_\mathbb{P})
  _\mathsf{x}}{\partial \mathsf{x}}(\mathsf{p})\in \mathbb{E}^0\;\;\;\;
  \end{equation}
\end{example}

\begin{example}
\label{ex_RopHamiltonian}
Consider the context of Example \ref{ex_Rope}.\\
Taking into account the Riesz representation, the Hamiltonian $H_\mathbb{P}$ is given by

$H_\mathbb{P}(x,v)
=\left\langle\dfrac{1}{\rho}.p,(1-<x^\prime,p>_{L^2}.p) \right\rangle_{L^2}
-<\rho.x^\prime,(0,g)>$.\\
We have the function 

$H(x,p) = \left\langle \dfrac{1}{\rho}.p,p \right\rangle
-
\left\langle \rho.x^\prime,(0,g) 
\right\rangle $.\\ 
Thus the associated  implicit Hamiltonian system  is
\begin{equation}
\label{eq_ImplicitLHamiltonianRope}
 \left\{\begin{matrix}
  \dot{\mathsf{x}}=\frac{1}{\rho}\mathsf{p};\;\; \;\;\dot{\mathsf{p}}=-\rho\begin{pmatrix} 0\\
g
\end{pmatrix}+\lambda(x)x^\prime\hfill{}\\
<\mathsf{x}^\prime ,\mathsf{p}>=0,\;\;\;\; \;\;\mathsf{p}=\rho.v\hfill{}
\end{matrix}\right.
\end{equation}
which is equivalent to the implicit Lagrangian system (\ref{Eq_ImplicitLagrangiansubRope}).
\end{example}

\section[Variational Approach]{Variational Approach}

\label{__VariationalApproach}
  
\subsection{Hamiltonian Principle}\label{___HamiltonPrinciple}
 
In reference to \cite{SB-KZB17}, the purpose of this section is to give a link between the variational approach of implicit Lagrangian systems and implicit Hamiltonian systems in the convenient setting in a same way as in finite dimension (cf. \cite{YoMa06II}). Once more, the general locally convex setting is not well adapted to infinite dimensional problems in physics and fluid mechanics   
%in the optic of generalization of finite dimensional situation.   
(cf. Introduction).
 
At first, we assume that $M$ is  a convenient manifold. Let $C^\infty (I,M)$ be the set of smooth curves from an open  interval with compact closure $I \subset \mathbb{R}$ to $M$.  Note that if $M$ can be provided with  a local addition\footnote{Cf. 
\cite{KrMi97}, 42.4.}, then $C^\infty(I,M)$ has a convenient manifold  structure modelled on the convenient spaces $\Gamma_0(c^!TM)$ of sections with compact support of the pullback bundle $c^!TM$ where $c$ is any curve in  $C^\infty(I,M)$ and 
so $T_c C^\infty(I,M)$ can be identified with $\Gamma_0(c^! TM)$. In this case, this implies that for any $X\in\Gamma_0(c^!TM)$, there exists a smooth curve $\hat{c}:]-\varepsilon,\varepsilon[\to C^\infty(I,M)$ such that $\displaystyle\frac{d}
  {ds}_{| s=0}\hat{c}=X$.
  
We come back to the general situation of a convenient manifold $M$.\\
For any $c\in C^\infty(I,M)$,  its tangent map  $\dot{c}$ is a lift of $c$  in $C^\infty(I,TM)$.\\
To a given Lagrangian $\mathbf{L}:TM\to \mathbb{R}$  is  associated the \emph{functional action}  of $\mathbf{L}$  on $C^\infty(I, M)$ say  
$
{\mathfrak{F}_\mathbf{L}: C^\infty(I,M)\to  \mathbb{R}}$ defined by
\begin{equation}
\label{eq_MatcalL}
 \mathfrak{F}_\mathbf{L}(c)
 =\displaystyle \int_I 
 \mathbf{L} \left( c(t),\dot{c}(t) \right) dt.
\end{equation}
Note that although $\mathbf{L}$ could be not continuous for the given topology of locally convex space, it is continuous for the convenient topology, 
and, since $M$ is modelled on this topology, from the definition of the convenient smoothness, 
$\mathbf{L} \circ  \left( c,\dot{c} \right) $ 
is a smooth curve and so is continuous.  Indeed,  $TM$ has a structure of convenient manifold and for  this topology, from the definition of the convenient smoothness,  since $\mathbf{L}:TM \to \mathbb{R}$ is smooth, $\mathbf{L}$ continuous for the convenient topology.  On the other hand, for the same reason,  if $c$ is a smooth curve  in $M$, then the curve $ \left( c,\dot{c} \right) $ into $TM$ is smooth, so this curve is also continuous for the convenient topology. Therefore, $\mathbf{L} \left( c,\dot{c} \right) $ is a continuous curve in $\mathbb{R}$.  This  implies that  $\mathfrak{F}_\mathbf{L} (c) $ is well defined.\\
 
A \emph{variation}\index{variation} of $c \in C^\infty(I, M)$ is a  $C^\infty$ map  $\hat{c}:]-\varepsilon,\varepsilon[\times I\to M$ such that
\begin{description} 
\item[$\bullet$]
$\displaystyle\frac{\partial \hat{c}}{\partial{s}}(0,t)=\delta c(t)$  has a compact support in $I$; 
%\item 
%the curve $c_s:=\hat{c}(s,.)$ is a smooth curve  smooth  for all $s\in :]-\varepsilon,\varepsilon[$ with compact support in $\mathring{I}$;
% \item $\tilde{c}(s,t_1)=x_1$ and $\tilde{c}(s,t_2)=x_1$ for all $s\in :]-\varepsilon,\varepsilon[$;
\item[$\bullet$] 
$\forall t \in I,\; 
\hat{c}(0,t)=c(t)$.
\end{description}

In these conditions, we have:
\begin{remark}
\label{R_Icompact} ${}$
\begin{enumerate}
\item[1.]
The curve\footnote{One can also define a family of curves $\left( c^t \right) _{t \in I}$ by 
$
\begin{array}{cccc}
c^t: & [-\varepsilon,\varepsilon[ & \to & M 	\\
	& s	& \mapsto & c^t(s) = \hat{c}(s,t)
\end{array}
$.} $c_s:=\hat{c}(s,.)$  belongs to $C^\infty(I, M)$   for all $s\in ]-\varepsilon,\varepsilon[$ and  
$\delta c(t):=\displaystyle\frac{\partial \hat{c}}{\partial{s}}(0,t)$ 
  defines a section of $ \Gamma_0(c^!TM)$.\\
When  $c \in C^\infty(I, M)$, for any $\delta_c\in  \Gamma_0(c^!TM)$, there exists a germ of smooth curves $s\mapsto c_s$ in  $C^\infty(I, M)$ such that $c_0=c$ and such that $\delta c(t)=\displaystyle\frac{\partial \hat{c}}{\partial{s}}(0,t)$.\\
  We will call each such $\delta c$ an \emph{infinitesimal variation}\index{infinitesimal variation}\index{infinitesimal variation} of  $c$. Therefore, in general, the set of infinitesimal variations of $c$ is a strict subset of  $ \Gamma_c(TM)$.
\item[2.]  
Assume that $I=[t_1,t_2]$. Consider   a  $C^\infty$ map  $\hat{c}:]-\varepsilon,\varepsilon[\times I \to M$ such that $\hat{c}(0,t)=c(t)$
  and  $\hat{c}(s, t_1)=c(t_1)$ and  $\hat{c}(s, t_2)=c(t_2)$ for all $s\in ]-\varepsilon,\varepsilon[$. Then $\delta c(t):=\displaystyle\frac{\partial \hat{c}}{\partial{s}}(0,t)$   is an infinitesimal  
  variation of  $c$ such that $ \delta c(t_1)=\delta c(t_2)=0$.
\end{enumerate}
\end{remark}
Now, as classically, we have:
\begin{lemma}
\label{L_DmathcalL} 
Assume that $I=[t_1,t_2]$. 
\begin{enumerate}
\item[(1)] 
The map $s\mapsto \mathfrak{F}_\mathbf{L}(c_s)=\displaystyle \int_{t_1}^{t_2}\mathbf{L}\left(\hat{c}(s,t),\frac{\partial \hat{c}}{\partial t}(s,t)\right)dt$ is well defined.    
\item[(2)]  
If we set, for all $t \in I$, 
$\displaystyle\frac{\partial \hat{c}}{\partial{s}}(0,t)=\delta c(t)$, then we have
\begin{equation}
\label{eq_DFL}
\begin{matrix}
  d_c\mathfrak{F}_\mathbf{L}(\delta c(t)) 
  := 
  \lim\limits_{s\rightarrow 0}\left( \displaystyle\frac{ \mathfrak{F}_\mathbf{L}(c_s)- \mathfrak{F}_\mathbf{L}(c)}{s} \right)\hfill{}\\
= 
\displaystyle\int_{t_1}^{t_2}
\left( \frac{\partial \mathbf{L}}{\partial x}
\left( c(t) ,\dot{c}(t) \right) 
-\frac{d}{dt}\frac{\partial \mathbf{L}}{\partial v}(c(t),\dot{c}(t))\right)(\delta c(t))dt\\
    + \left[
    \dfrac{\partial \mathbf{L}}{\partial v}
    \left( c(t),\dot{c}(t) \right) (\delta c(t) ) \right]_{t_1}^{t_2}
\end{matrix}
\end{equation}
\end{enumerate}
\end{lemma}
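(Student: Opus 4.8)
The plan is to reduce the statement to three facts of convenient calculus---differentiation under the integral sign over a compact interval, the symmetry of mixed second derivatives, and integration by parts---all carried out in local coordinates along the curve $c$. For Point (1), the integrand $t \mapsto \mathbf{L}(\hat{c}(s,t), \partial_t \hat{c}(s,t))$ is, for each fixed $s$, the composite of the smooth curve $t \mapsto (\hat{c}(s,t), \partial_t \hat{c}(s,t))$ into $TM$ with the convenient-smooth map $\mathbf{L}$; by the discussion preceding the lemma it is a smooth, hence continuous, real function on the compact interval $I$ and therefore Riemann integrable, so $s \mapsto \mathfrak{F}_\mathbf{L}(c_s)$ is well defined. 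Moreover, since $\hat{c}$ is smooth on $]-\varepsilon,\varepsilon[ \times I$, cartesian closedness shows that $(s,t) \mapsto \mathbf{L}(\hat{c}(s,t), \partial_t \hat{c}(s,t))$ is jointly smooth, which is what I will use to differentiate in $s$.

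For Point (2), I would first move the $s$-derivative inside the integral. Writing $F(s,t) = \mathbf{L}(\hat{c}(s,t), \partial_t \hat{c}(s,t))$, the joint smoothness of $F$ over $]-\varepsilon,\varepsilon[ \times [t_1,t_2]$ together with compactness of $I$ justifies
\[
d_c \mathfrak{F}_\mathbf{L}(\delta c) = \frac{d}{ds}\Big|_{s=0} \int_{t_1}^{t_2} F(s,t)\, dt = \int_{t_1}^{t_2} \frac{\partial F}{\partial s}(0,t)\, dt .
\]
In a chart along $c$, the chain rule gives
\[
\frac{\partial F}{\partial s}(0,t) = \frac{\partial \mathbf{L}}{\partial x}(c(t),\dot{c}(t))\big(\delta c(t)\big) + \frac{\partial \mathbf{L}}{\partial v}(c(t),\dot{c}(t))\Big(\frac{\partial^2 \hat{c}}{\partial s\, \partial t}(0,t)\Big),
\]
and the symmetry of the mixed second derivative of $\hat{c}$ replaces $\partial_s \partial_t \hat{c}(0,t)$ by $\frac{d}{dt}(\delta c(t))$.

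It then remains to integrate the second term by parts in $t$. Since $c([t_1,t_2])$ need not lie in a single chart, I would choose, using the compactness of $I$, a subdivision $t_1 = \tau_0 < \cdots < \tau_N = t_2$ such that each $c([\tau_{j-1},\tau_j])$ sits in a chart domain, perform the integration by parts on every subinterval, and add the results. The interior boundary contributions telescope: at each $\tau_j$ the value $\frac{\partial \mathbf{L}}{\partial v}(c,\dot{c})(\delta c)$ is the evaluation of globally defined smooth objects and is therefore continuous across $\tau_j$, so the contributions from the two adjacent subintervals cancel, leaving only the endpoint term $\big[\frac{\partial \mathbf{L}}{\partial v}(c(t),\dot{c}(t))(\delta c(t))\big]_{t_1}^{t_2}$. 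Collecting the surviving integrand yields the Euler--Lagrange expression and completes the formula.

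The main obstacle is justifying the convenient-calculus steps rather than the algebra: specifically, that differentiation under the integral over a compact interval is legitimate for a jointly smooth integrand into $\mathbb{R}$ (a standard consequence of the convenient calculus of Kriegl and Michor, \cite{KrMi97}, using that $I$ is compact), and that the mixed partials of the two-parameter map $\hat{c}$ commute. Once these are in place, the chain rule, the integration by parts, and the telescoping of the interior boundary terms are routine, the only point of care being that $\frac{d}{dt}(\delta c(t))$ and the interior boundary evaluations are the coordinate expressions of objects that are continuous across chart overlaps.
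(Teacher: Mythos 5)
Your proposal is correct and follows essentially the same route as the paper's proof: cover $c(I)$ by finitely many chart domains using compactness, justify differentiation under the integral sign by joint smoothness of $(s,t)\mapsto \mathbf{L}\bigl(\hat{c}(s,t),\partial_t\hat{c}(s,t)\bigr)$ on a compact parameter set, apply the chain rule, and integrate by parts on the subintervals. The only cosmetic difference is ordering (you differentiate under the integral globally and subdivide only for the integration by parts, while the paper subdivides first and works per subinterval), and you make explicit two points the paper leaves implicit, namely the symmetry of mixed partials of $\hat{c}$ and the telescoping of interior boundary terms via the chart-independence of the fibre derivative $\frac{\partial\mathbf{L}}{\partial v}(c,\dot{c})(\delta c)$.
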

\begin{proof}[Sketch of the proof]${}$\\
1. Since the range of $c$ is compact, 
$c(I)$ can be covered by a finite number $\ \left( U_\alpha  \right) _{\alpha \in \{1\dots,N\}}$  of connected chart domains. Then $c^{-1} \left( U_\alpha \right) $ is an open  $J_\alpha$ of $I$ and $ \left( J_\alpha \right) _{\alpha\in \{1\dots,N\}}$ is a covering of $I$. Each $J_\alpha$ is a finite union of open intervals of $I$. Thus, by this way, we obtain a covering $ \left( J_\beta \right) _{\beta \in \{1,\dots,K\}}$ of $I$ such that $c \left( J_\beta \right) $ is contained in some $U_\alpha$. We can choose $\beta_1,\dots, \beta_k$ such that $ \left( J_{\beta_j} \right) _{j \in \{1,\dots,k\} }$ is a minimal covering of $I$ and such that $t_1\in J_{\beta_1}$ and $t_2 \in J_{\beta_k}$.  From the minimality of this covering,  we can find a partition $\tau_0=t_1<\tau_2<\dots<\tau_{n-1}<\tau_n=t_2$
such that $ c([\tau_{i-1},\tau_i])\subset U_{\alpha_i}$ for $i \in \{1,\dots,k\}$. We set $I_i=[\tau_{i-1},\tau_i]$, for $i \in \{1,\dots,k\}$ and denote  $U_{\alpha_i}$, the chart domain which contains $c \left( I_i \right) $.\\

Let $\hat{c}:]-\varepsilon,\varepsilon[\times I \to M$ be a variation of $c$; since $U_i$ is open, without loss of generality, we can assume that $\hat{c}(]-\varepsilon,\varepsilon[\times I_i)\subset U_{\alpha_i}$. Then, in local  coordinates associated to $U_{\alpha_i}$, we have $\hat{c}_{| ]-\varepsilon,\varepsilon[\times I_i}(s,t)\cong \hat{\mathsf{c}}_i(s,t)$ and  $\dfrac{\partial \hat{c}}{\partial t}(s,t)\cong 
\dfrac{\partial \hat{\mathsf{c}}_i}{\partial t}(s,t)$. Now the map $s \mapsto  \dfrac{\partial \hat{\mathsf{c}}_i}{\partial t}(s,t)$ is a curve in $\mathbb{M}^\prime$. Since  the evaluation of the canonical pairing between $\mathbb{M}$ and $\mathbb{M}^\prime$ is continuous in the convenient topology, it follows that the map $s\mapsto   \mathbf{L} \left( \hat{c}(s,t),\dfrac{\partial \hat{c}}{\partial t}(s,t) \right)$ is continuous and so the map $s\mapsto \mathfrak{F}_\mathbf{L}({c_s}_{| I_i})\cong \displaystyle \int_{I_i}\mathsf{L}\left(\hat{\mathsf{c}_i}(s,t),\frac{\partial \hat{\mathsf{c}}_i}{\partial t}(s,t)\right)dt$ is well defined. The result is obtained by juxtaposition of these integrals.\\

2. Under the previous context, in restriction to  $ ]-\varepsilon,\varepsilon[\times I_i$,  using the chain rule (cf. \cite{KrMi97}, Theorem 3.18 ), the map $(s,t) \mapsto   \mathsf{L}\left(\hat{\mathsf{c}}_i(s,t),\dfrac{\partial \hat{\mathsf{c}}_i}{\partial t}(s,t)\right)$ is a  smooth function on $]-\varepsilon,\varepsilon[ \times I_i$ and  is, in particular,  continuous on $\displaystyle \left[ -\dfrac{\varepsilon}{2},\dfrac{\varepsilon}{2} \right] \times I_i$ and so  is bounded on  this compact set.  
Therefore, we can use the permutation of derivation relative to $s$ and integration on variable $t$.

\begin{equation}
\label{eq DFL1}
\begin{matrix}
\lim\limits_{s\rightarrow 0}\left(\displaystyle\frac{ \mathfrak{F}_\mathbf{L}({c_s}_{| I_i})- \mathfrak{F}_\mathbf{L}({c}_{| I_i})}{s} \right)
\cong \displaystyle \int_{ I_i} 
\left(\frac{\partial\mathsf{L}(\hat{\mathsf{c}}_i(s,t),\displaystyle \frac{\partial \hat{\mathsf{c}}_i}{\partial t}(s,t))}{\partial s}\right)_{| s=0}dt\hfill{} \\
=\displaystyle \int_{I_i}\left[\left(\frac{\partial\mathsf{L}}{\partial\mathsf{x}}({\mathsf{c}_i}(t),\dot{\mathsf{c}}_i(t))\right)
 \left( \frac{\partial \hat{\mathsf{c}_i}}{\partial s}(0,t) \right) 
+\left(\frac{\partial\mathsf{L}}{\partial\mathsf{v}}({\mathsf{c}}_i(t),\dot{\mathsf{c}}(t))\right)
\left( \frac{\partial ^2\hat{\mathsf{c}}_i}{\partial s\partial t}(0,t)
\right)
 \right]dt.
\end{matrix}   
\end{equation}

By using an integration by parts, we obtain
\begin{equation}
\label{eq_DFL1}
\begin{matrix}
\lim\limits_{s\rightarrow 0}\left(\displaystyle\frac{ \mathfrak{F}_\mathbf{L}({c_s}_{|  I_i})- \mathfrak{F}_\mathbf{L}({c}_{| I_i})}{s}\right)\hfill{}\\
 \cong \displaystyle \int_{I_i}\left(\frac{\partial \mathsf{L}}{\partial \mathsf{x}}(\mathsf{c}_i(t) ,\dot{\mathsf{c}}_i(t))-\frac{d}{dt}\frac{\partial \mathsf{L}}{\partial \mathsf{v}}(\mathsf{c}_i(t),\dot{\mathsf{c}_i(t}))\right)
 \left( \frac{\partial \hat{\mathsf{c}_i}}{\partial s}(0,t) \right)  
 dt\\
   + \left[\left(\dfrac{\partial \mathsf{L}}{\partial \mathsf{v}}(\mathsf{c}_i(t),\dot{\mathsf{c}}_i(t))\right)(\dfrac{\partial \hat{\mathsf{c}}_i} {\partial s}(0,t))\right]_{\tau_{i-1}}^{\tau_i}\\
      \end{matrix}
\end{equation}
which ends the proof.
\end{proof}

\begin{definition}
\label{D_CriticalPoint} 
A curve $c\in C^\infty(I,M)$ is called a \emph{critical point with fixed ends}\index{critical points}  for $\mathfrak{F}_\mathbf{L}$ in $C^\infty([t_1,t_2],M)$ if we have 
 $ d_c\mathfrak{F}_\mathbf{L}(\delta c(t))=0$ for all infinitesimal horizontal variations $\delta c$ of $c$ associated to variations with fixed ends.
\end{definition}

We then have the following result (cf. \cite{SB-KZB17}):
\begin{theorem}
\label{T_EulerLagrange-extremal} 
Let $\mathbf{L}:TM\to\mathbb{R}$ be a Lagrangian. 
 Then  $c\in C^\infty \left( [t_1,t_2],M \right) $ is  a critical point with fixed ends of the functional  $\mathfrak{F}_L$ if and only if $c$ satisfies the local Euler-Lagrange conditions: 
\begin{equation}
\label{eq_EulerLagrange}
 \left( \frac{\partial \mathsf{L}}{\partial \mathsf{x}}(\mathsf{c}(t) ,\dot{\mathsf{c}}(t))-\frac{d}{dt}\frac{\partial \mathsf{L}}{\partial \mathsf{v}} \right) 
 (\mathsf{c}(t),\dot{\mathsf{c}}(t))=0
\end{equation} 
in  any local coordinates associated to  chart domains which meet the range of $c$.
\end{theorem}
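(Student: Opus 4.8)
The plan is to read off both implications directly from the variational identity of Lemma~\ref{L_DmathcalL}. Since we restrict to variations with fixed ends, Remark~\ref{R_Icompact}, 2. guarantees that every admissible infinitesimal variation $\delta c$ satisfies $\delta c(t_1)=\delta c(t_2)=0$, so the boundary term $\left[\frac{\partial \mathbf{L}}{\partial v}(c(t),\dot{c}(t))(\delta c(t))\right]_{t_1}^{t_2}$ in (\ref{eq_DFL}) vanishes. By Definition~\ref{D_CriticalPoint}, the curve $c$ is therefore a critical point with fixed ends if and only if
\[
\int_{t_1}^{t_2}\left(\frac{\partial \mathbf{L}}{\partial x}(c(t),\dot{c}(t))-\frac{d}{dt}\frac{\partial \mathbf{L}}{\partial v}(c(t),\dot{c}(t))\right)(\delta c(t))\,dt=0
\]
for every admissible fixed-end infinitesimal variation $\delta c$. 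The implication "Euler--Lagrange $\Rightarrow$ critical" is then immediate, since the integrand above is, in each chart met by $c$, exactly the pairing of the local expression of (\ref{eq_EulerLagrange}) with $\delta c(t)$; if (\ref{eq_EulerLagrange}) holds everywhere the integrand is identically zero, hence so is $d_c\mathfrak{F}_\mathbf{L}(\delta c)$.

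For the converse I would argue chart by chart, reusing the partition $\tau_0=t_1<\dots<\tau_n=t_2$ and the chart domains $U_{\alpha_i}\supset c(I_i)$ from the proof of Lemma~\ref{L_DmathcalL}. Fix $i$, set $A_i(t):=\frac{\partial \mathsf{L}}{\partial \mathsf{x}}(\mathsf{c}_i(t),\dot{\mathsf{c}}_i(t))-\frac{d}{dt}\frac{\partial \mathsf{L}}{\partial \mathsf{v}}(\mathsf{c}_i(t),\dot{\mathsf{c}}_i(t))\in\mathbb{M}^\prime$, and let $t_0$ lie in the interior of $I_i$. For an arbitrary fixed $w\in\mathbb{M}$ and an arbitrary scalar bump $\phi\in C^\infty(I,\mathbb{R})$ supported in the interior of $I_i$, the formula $\hat{\mathsf{c}}_i(s,t)=\mathsf{c}_i(t)+s\,\phi(t)\,w$ defines, read in the chart $U_{\alpha_i}$ and extended by $c$ outside $I_i$, a genuine variation of $c$ with fixed ends whose infinitesimal variation is $\delta c(t)\cong\phi(t)w$. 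Feeding this into the per-interval identity (\ref{eq_DFL1})---whose boundary terms at $\tau_{i-1},\tau_i$ vanish because $\phi$ has compact support in the interior---and using criticality, one obtains $\int_{I_i}A_i(t)(w)\,\phi(t)\,dt=0$. As $t\mapsto A_i(t)(w)$ is smooth, hence continuous, and $\phi$ ranges over all such bumps, the classical fundamental lemma of the calculus of variations forces $A_i(t)(w)=0$ on the interior of $I_i$; since $w\in\mathbb{M}$ was arbitrary and a functional in $\mathbb{M}^\prime$ annihilating every $w$ is the zero functional, $A_i(t)=0$ on the interior, hence on all of $I_i$ by continuity. Letting $i$ vary exhausts the range of $c$, which is precisely (\ref{eq_EulerLagrange}).

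The step I expect to require the most care is the production of admissible test variations. As emphasised in Remark~\ref{R_Icompact}, 1., in the convenient setting the set of infinitesimal variations of $c$ is in general a \emph{strict} subset of $\Gamma_0(c^!TM)$, so one may not feed an arbitrary section into the variational identity; this is exactly why I localise and use the elementary product variations $\hat{\mathsf{c}}_i(s,t)=\mathsf{c}_i(t)+s\phi(t)w$, which are manifestly admissible because they are built inside a single chart and glue smoothly to $c$ (the support of $\phi$ being compact in the interior of $I_i$). The complementary subtlety is the passage from an $\mathbb{M}^\prime$-valued vanishing statement to a scalar one: pairing against a \emph{constant} $w$ commutes both with $\frac{d}{dt}$ and with the integral, so the vector-valued problem splits into the scalar fundamental lemma applied to each continuous function $t\mapsto A_i(t)(w)$, and no separation hypothesis on the model space is needed beyond the tautology that a functional vanishing on all of $\mathbb{M}$ is zero.
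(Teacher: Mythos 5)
Your proof is correct and follows essentially the same route as the paper: localize to chart subintervals, test with fixed-end variations supported there, and apply a fundamental-lemma argument to the variational identity of Lemma~\ref{L_DmathcalL}. The only difference is that where the paper invokes Lemma~\ref{L_VariationLemma} (with $\mathbb{E}=\mathbb{M}$) as a black box, you inline its proof --- bump function times constant vector, then the classical scalar fundamental lemma --- which is precisely how that lemma is established in the paper, and your explicit construction of the admissible product variations $\mathsf{c}_i(t)+s\,\phi(t)\,w$ makes fully precise the admissibility step (cf. Remark~\ref{R_Icompact}) that the paper leaves implicit.
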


Taking into account Lemma~\ref{L_DmathcalL},  the proof of Theorem \ref{T_EulerLagrange-extremal} uses the same arguments  as in finite dimension, according  to the easy following generalization of the fundamental lemma of the calculus of variations to the convenient setting. However, for the sake of completeness, we will give the proof of Theorem \ref{T_EulerLagrange-extremal}.

\begin{lemma}
\label{L_VariationLemma} 
Let $f:[t_1,t_2]\to \mathbb{M}^\prime$ be a smooth map where $\mathbb{M}^\prime$ is the bornological dual of $\mathbb{M}$.  
Let $\mathbb{E}$ be a closed subspace  of $\mathbb{M}$ and assume that  we have
$\displaystyle\int_{t_1}^{t_2}<f(t),h(t)>dt=0$, for any smooth map $h:[t_1,t_2]\to \mathbb{E}$ such that $h(t_1)=h(t_2)=0$, then $f(t)$ belongs to $\mathbb{E}^0$, for all $t\in[t_1,t_2]$.\\
In particular, if $\mathbb{E}=\mathbb{M}$, then $f\equiv 0$ on $[t_1,t_2]$.
\end{lemma}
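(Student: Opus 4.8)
The plan is to reduce this infinite-dimensional statement to the classical one-dimensional fundamental lemma of the calculus of variations, by testing only against the special one-parameter family of admissible variations of the form $h(t)=\varphi(t)\,e$ with $e\in\mathbb{E}$ fixed and $\varphi$ a scalar function.

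First I would fix an arbitrary vector $e\in\mathbb{E}$ and introduce the scalar function $g_e:[t_1,t_2]\to\mathbb{R}$ defined by $g_e(t)=<f(t),e>$. Since the canonical pairing $\mathbb{M}^\prime\times\mathbb{M}\to\mathbb{R}$ is bounded bilinear, the evaluation $\lambda\mapsto<\lambda,e>$ is a bounded linear functional on $\mathbb{M}^\prime$, hence smooth in the convenient sense; as $f$ is smooth by hypothesis, $g_e$ is a smooth, in particular continuous, real-valued function on the compact interval $[t_1,t_2]$.

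Next, for any smooth scalar map $\varphi:[t_1,t_2]\to\mathbb{R}$ with $\varphi(t_1)=\varphi(t_2)=0$, the curve $h(t)=\varphi(t)\,e$ is a smooth map into $\mathbb{E}$, satisfying $h(t_1)=h(t_2)=0$; here the closedness of $\mathbb{E}$ guarantees that it is itself a convenient space and that $h$ is a genuine smooth curve into $\mathbb{E}$. Applying the hypothesis to this particular $h$ and using the bilinearity of the pairing gives $\int_{t_1}^{t_2}\varphi(t)\,g_e(t)\,dt=0$ for every such $\varphi$. At this point the problem is purely scalar, and the classical fundamental lemma applies: since $g_e$ is continuous and $\int_{t_1}^{t_2}\varphi\,g_e\,dt=0$ for every smooth $\varphi$ vanishing at the endpoints, a standard bump-function argument (if $g_e(t_0)\neq 0$ at some interior point, choose a nonnegative $\varphi$ supported on a neighbourhood where $g_e$ keeps a fixed sign, forcing the integral to be nonzero) yields $g_e\equiv 0$ on $(t_1,t_2)$, and then on all of $[t_1,t_2]$ by continuity.

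Finally, since $e\in\mathbb{E}$ was arbitrary, $<f(t),e>=0$ for all $e\in\mathbb{E}$ and all $t$, which is exactly the assertion $f(t)\in\mathbb{E}^0$ for every $t\in[t_1,t_2]$; specializing to $\mathbb{E}=\mathbb{M}$ gives $\mathbb{E}^0=\{0\}$ and hence $f\equiv 0$. I expect no serious obstacle: the only point requiring genuine care is the smoothness, and therefore continuity, of $t\mapsto<f(t),e>$ in the convenient topology, which is precisely what legitimizes the reduction to the scalar bump-function argument; everything after that is the classical one-dimensional lemma.
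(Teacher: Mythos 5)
Your proof is correct and follows essentially the same route as the paper's: both test the hypothesis against rank-one variations $h(t)=\varphi(t)\,e$ with $e\in\mathbb{E}$ fixed and $\varphi$ a scalar bump function, relying on the convenient-smoothness (hence continuity) of $t\mapsto <f(t),e>$ to make the bump argument work. The only difference is organizational --- the paper argues by contradiction at a point $\tau$ with $f(\tau)\notin\mathbb{E}^0$ and builds the bump there, while you fix $e$ first and invoke the classical scalar fundamental lemma --- but the underlying argument is identical.
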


\begin{proof} 
Assume that we have a point $\tau\in [t_1,t_2]$ such that $f(\tau)\not \in \mathbb{E}^0$.   By continuity of $f$, it follows that we have a neighbourhood in $[t_1,t_2]$ on which  $f(t)\not\in\mathbb{E}^0$ and so we may assume that $t_1<\tau<t_2$. Since $f(\tau)$ is a linear form on $\mathbb{M}$, there  must exist $u\in \mathbb{E}$ such that $<f(\tau),u>=K>0$. Since $f$  and the evaluation for the canonical pairing between $\mathbb{M}^\prime$ and $\mathbb{M}$ are continuous  for the $c^\infty$-topology, there exists a sub-interval $J$ of $[t_1,t_2]$ around $\tau$ such that $<f(t),u> \geq \dfrac{K}{2}$, for all $t\in J$. On the other hand, there exists a smooth map $\theta:J\to [0,+\infty[$  such that 
$\theta(\tau)=1$  for $t$ in a closed subinterval $J_0\subset J$ around $\tau$ and whose support is contained in $J$. We can extend $\theta$ to a smooth function (again denoted $\theta$) on $[t_1,t_2]$ by setting $\theta(t)=0$ for   $t\in [t_1,t_2]\setminus J$ and we set   $h(t)= \theta(t) u$ on $ [t_1,t_2]$. Then by construction  $h$ is a smooth map from $[t_1,t_2]$ to $\mathbb{E}$ with $h(t_1)=h(t_2)=0$. But 
\[
\displaystyle\int_{t_1}^{t_2}<f(t),h(t)>dt\geq \textrm{length}(J_0) \times \frac{K}{2}>0
\]
which gives rise to a contradiction.
\end{proof}

\begin{remark}
\label{R_ContinuityEvaluation} 
In the proof of Lemma \ref{L_VariationLemma}, the continuity of the evaluation for the canonical pairing between $\mathbb{M}^\prime$ and $\mathbb{M}$  is fundamental, so such a Lemma is not true, in general, in the locally convex setting.
\end{remark}

\begin{proof}[Proof of Theorem \ref{T_EulerLagrange-extremal}] Assume that  $c$ is a critical point with fixed ends of $\mathfrak{F}_\mathbf{L}$. Consider a local chart domain $U$ such that $U\cap 
c([t_1,t_2])\not=\emptyset$ and let  ${J}$ be the open set in $I$ which is the inverse image of $U\cap c(I)$.  If  the closure of ${J}$ is equal to $ \displaystyle\bigcup_{i=1}^N[\tau_1^i,\tau_2^i]$ , denote by  $c_i$  the restriction of $c$ to $[\tau_1^i,\tau_2^i]$. Choose any  infinitesimal  variation $\delta c_i$ of   $c_i$  such that if $\delta c_i(t)=\displaystyle\frac{\partial \hat{c}_i}{\partial{s}}(0,t)$ then $\hat{c}_i(s,\tau^i_1)=\hat{c}(\tau^i_1)$ and $\hat{c}_i(s,\tau^i_2)=\hat{c}(\tau^i_2)$ for all $s$ and  for all $i \in \{1,\dots, N\}$.  If $J=I$, then $\hat{c_i}$ is a variation with fixed ends of $c$. If $J\not=I$, such a  variation $\hat{c}_i$  can be extended to a variation  of $c$ with fixed ends (again denoted $\hat{c}_i$)  by $\hat{c}(s,t)=0$ for all $t\in I\setminus [\tau_1^i,\tau_i^2]$ and all $s$.
 From  Definition~\ref{D_CriticalPoint}, we have $d_c\mathfrak{F}_\mathbf{L}(\delta c_i)=0=d_{c_i}\mathfrak{F}_\mathbf{L}(\delta c_i)$.  
According to (\ref{eq_DFL1}), we must have  
\[
\displaystyle \int_{\tau _1^i}^{\tau_2^i}\left(\frac{\partial \mathsf{L}}{\partial \mathsf{x}}(\mathsf{c}_i(t) ,\dot{\mathsf{c}}_i(t))-\frac{d}{dt}\frac{\partial \mathsf{L}}{\partial \mathsf{v}}(\mathsf{c}_i(t),
 \dot{\mathsf{c}_i(t}))\right)
 \left( \displaystyle\frac{\partial \hat{\mathsf{c}}_i}{\partial s}(0,t) \right) dt=0.
 \]
Therefore, from Lemma~\ref{L_VariationLemma}, it follows that  
\[
\displaystyle \frac{\partial \mathsf{L}}{\partial \mathsf{x}}(\mathsf{c}_i(t) ,\dot{\mathsf{c}}_i(t))-\frac{d}{dt}
  \frac{\partial \mathsf{L}}{\partial \mathsf{v}}(\mathsf{c}_i(t),\dot{\mathsf{c}}_i(t))=0
\]
on $[\tau_1^i,\tau_2^i]$.

But such a result is then true for all $i \in \{1\dots,N\}$ and so the  Euler-Lagrange condition is satisfies in local coordinates associated to $U$.  
   
Conversely, assume that  the Euler-Lagrange conditions are satisfied on each local coordinates systems which meet $c([t_1,t_2])$.\\
Using the context of the proof of Lemma \ref{L_DmathcalL},   consider  a finite partition\\ 
$\tau_0=t_1<\tau_0<\cdots<\tau_i<\cdots<\tau_n=t_2$ and an covering $U_{\alpha_1}, \dots U_{\alpha_n}$ of $c(I)$ such that $c([\tau_{i-1},\tau_i])\subset U_{\alpha_i}$.  From assumption, the Euler-Lagrange conditions  is satisfied in a local coordinates system  associated to each $U_{\alpha_i}$. Thus,  from (\ref{eq_DFL1}), we have
\[
\mathfrak{F}_\mathsf{L}(\delta c_{|[\tau_i,\tau_{i+1}]}) \cong\left[\left(\frac{\partial \mathsf{L}}{\partial \mathsf{v}}(\mathsf{c}(t),\dot{\mathsf{c}}(t))\right)
 \left( \frac{\partial \hat{\mathsf{c}}} {\partial s}(0,t) \right)\right]_{\tau_i}^{\tau_{i+1}}.
\] 
Therefore, if $\delta c(t_1)=\delta c(t_2)=0$,   by juxtaposition of the successive previous terms,   we obtain $d\mathfrak{F}_L(\delta c)=0$ which ends the proof.
\end{proof}

As in finite dimension (cf.  \cite{YoMa06I}),   according to Remark \ref{R_DeltaM=TM} when $\Delta_M=TM$, for any Lagrangian $\mathbf{L}:TM\to \mathbb{R}$, Theorem \ref{T_EulerLagrange-extremal} implies  

\begin{corollary}
\label{C_EulerLagrangeImplicitL} 
For any Lagrangian $\mathbf{L}:TM\to \mathbb{R}$, we have 
 the following equivalences:
\begin{enumerate}
\item[(1)] 
$c$ is a critical point with fixed ends of the action $\mathcal{F}_\mathbf{L}$;
\item[(2)]
$c$ satisfies the Euler-Lagrange  conditions;
\item[(3)]
if $(c(t),p(t))=\mathbb{F}\mathbf{L}(c(t),\dot{c}(t))$, then $(c(t),\dot{c}(t), p(t))$ is a solution of the differential system 
 (\ref{Eq_ImplicitLagrangianLocTM}).
\end{enumerate}
\end{corollary}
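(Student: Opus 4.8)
The plan is to obtain the threefold equivalence directly from the machinery already assembled, so that only a short local verification remains. The equivalence $(1)\Leftrightarrow(2)$ demands nothing new: it is exactly Theorem~\ref{T_EulerLagrange-extremal}, applied with no horizontality restriction, since $\Delta_M=TM$ means every infinitesimal variation with fixed ends is admissible. Thus I would reduce the whole statement to proving $(2)\Leftrightarrow(3)$, and I would do this pointwise in an arbitrary chart domain $U$ meeting the range of $c$.

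For $(2)\Leftrightarrow(3)$, recall from Remark~\ref{R_DeltaM=TM} that, when $\Delta_M=TM$, one has $\mathbb{E}=\mathbb{M}$ and $\mathbb{E}^0=\{0\}$, so the induced implicit Lagrangian system collapses to the local system (\ref{Eq_ImplicitLagrangianLocTM}), the Lagrange-multiplier term and the constraint $\langle\beta,\mathsf{v}\rangle=0$ both disappearing. Setting $\mathsf{x}(t)=\mathsf{c}(t)$ and $\mathsf{v}(t)=\dot{\mathsf{c}}(t)$, the first equation $\dot{\mathsf{x}}=\mathsf{v}$ holds identically, while the third equation $\mathsf{p}=\frac{\partial\mathsf{L}}{\partial\mathsf{v}}$ is precisely the hypothesis $(c(t),p(t))=\mathbb{F}\mathbf{L}(c(t),\dot{c}(t))$ written in local coordinates. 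Substituting this expression for $\mathsf{p}$ into the remaining equation $\dot{\mathsf{p}}=\frac{\partial\mathsf{L}}{\partial\mathsf{x}}$ yields
\[
\frac{d}{dt}\frac{\partial\mathsf{L}}{\partial\mathsf{v}}(\mathsf{c}(t),\dot{\mathsf{c}}(t))=\frac{\partial\mathsf{L}}{\partial\mathsf{x}}(\mathsf{c}(t),\dot{\mathsf{c}}(t)),
\]
which is exactly the Euler--Lagrange condition (\ref{eq_EulerLagrange}). Reading the same substitutions in reverse gives the converse, so $(2)\Leftrightarrow(3)$, and the corollary follows.

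The only point requiring care in the convenient framework---and the place where I expect the (modest) obstacle to sit---is that $\dot{\mathsf{p}}$ should genuinely exist, i.e.\ that $t\mapsto\frac{\partial\mathsf{L}}{\partial\mathsf{v}}(\mathsf{c}(t),\dot{\mathsf{c}}(t))$ is a differentiable curve into $\mathbb{M}^\prime$. This is already secured by \S\ref{___LegendreTransformation}: the fibre derivative $\mathbb{F}\mathbf{L}:TM\to T^\prime M$ is a smooth fibre-preserving map, so $t\mapsto\mathbb{F}\mathbf{L}(c(t),\dot{c}(t))$ is a smooth curve in $T^\prime M$ whose local principal part $t\mapsto\mathsf{p}(t)$ is smooth, with a well-defined derivative $\dot{\mathsf{p}}(t)$ in the bornological dual. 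With this in hand the substitution above is legitimate, and no further analytic input is needed.
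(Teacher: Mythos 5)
Your proposal is correct and follows essentially the same route as the paper: the equivalence $(1)\Leftrightarrow(2)$ is delegated to Theorem~\ref{T_EulerLagrange-extremal}, and $(2)\Leftrightarrow(3)$ is verified by the local substitution $\mathsf{p}=\frac{\partial\mathsf{L}}{\partial\mathsf{v}}(\mathsf{c},\dot{\mathsf{c}})$ into the system (\ref{Eq_ImplicitLagrangianLocTM}). Your closing remark on the differentiability of $t\mapsto\mathsf{p}(t)$, justified by the smoothness of $\mathbb{F}\mathbf{L}$, is a point the paper leaves implicit, and it is a welcome addition rather than a deviation.
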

\begin{proof} 
We only have to prove the equivalence between (2) and (3) in local coordinates.  
Now we have 
\[
\mathbb{F}\mathbf{L}(c(t),\dot{c}(t)) =(c(t),p(t) )\cong \displaystyle \frac{\partial \mathsf{L}}{\partial \mathsf{v}}(\mathsf{c}(t) ,\dot{\mathsf{c}}(t))=\mathsf{p}(t)
\] 
Now, $(\mathsf{c},\dot{\mathsf{c}},\mathsf{p})$ is a solution of the differential system (\ref{Eq_ImplicitLagrangianLocTM}), if and only if 
\[
\displaystyle\frac{\partial \mathsf{L}}{\partial \mathsf{x}}
\left( 
\mathsf{c}(t),\dot{\mathsf{c}}(t) \right)
=
\dot{\mathsf{p}}(t)=\frac{d}{dt}
\left( \frac{\partial \mathsf{L}}{\partial \mathsf{v}}(\mathsf{c}(t),\dot{\mathsf{c}}(t)
\right)
\] 
which ends the proof.
\end{proof}

%!o $u$ <- $v$
\subsection{Infinite Dimensional Problems for Pontryagin's Principle}
\label{__OnTheInfiniteDimensionalProblemsForPontryaginPrinciple}

\subsubsection{Finite Dimensional Context}
\label{____PMPFiniteDimensionalContext}

\emph{We recall some results of control theory and Pontryagin Maximum Principle in finite dimension.}
%We begin by the following Remark which will be used in $\S$~\ref{__DirectLimitImplicitLagrangians}.

\begin{remark}
\label{R_ReachableSet}
Classically in finite dimension,  given a an anchored  vector bundle $(E,\pi, M, \rho)$, a curve $(c,v):[t_1,t_2]\to E$ is called \emph{admissible}\index{admissible curve}\index{curve!admissible} if we have 
 $\dot{c}(t)=\rho \left( (c(t),v(t) \right) $ where $v$ is of class $L^2$. We denote by  $\mathcal{A}([t_1,t_2], E) $ the set of admissible curves  $(c,v)$.\\
For any $x\in M$, the \emph{reachable set of $x$}\index{reachable set} is the set  $\mathcal{R}(x)$ of points $y$ in $M$ such that there exists  an admissible curve $(c,v)$ such that $c(t_1)=x$ and $c(t_2)=y$. The famous Stephan-Sussmann Theorem asserts that $\mathcal{R}(x)$ is always an immersed submanifold of $M$ (cf. \cite{Sus73} or \cite{Ste74}). By definition, $y\in \mathcal{R}(x)$ if there exists  a smooth piecewise admissible curve $(c,v):[t_1,t_2]\to E$  which joins $x$ to $ y$.  In fact, there always exists a smooth admissible curve which joins $x$ to $y$. This result is well known, but without precise reference, we give a proof.  Indeed, recall that \emph{a cutoff function}\index{cutoff function}\footnote{cf. \cite{CrFe11}.}
is a function $\tau_\epsilon \in C^\infty(\mathbb{R})$ with the following properties:
\begin{enumerate}
\item[(a)] 
$\tau_\epsilon (t) = 1$  for $t \geq \epsilon$ and $\tau (t) = 0$ for $ t\leq 0$;
\item[(b)] 
$\dot{\tau}_\epsilon(t) > 0$ for $t\in ]0,\epsilon[ $.
\end{enumerate}
 If $(c,v)$ is not smooth at  only $t_0\in]t_1,t_2[$,  if we set $v_\epsilon(t)=v(\dot{\tau}_\epsilon(t-t_0))v(t)$, for $\epsilon$ small enough,  then the curve $c_\epsilon$ such that $\dot{c}_\epsilon(t)=\rho({c}_\epsilon, v_\epsilon(t))$ with $c_\epsilon(t_1)=x$ is a smooth admissible curve which joins $x$ to $y$. By induction we can obtain the announced result.
\end{remark}
 
Given a Lagrangian  map $\mathbf{L}: E\to \mathbb{R}$,  for $\nu\in \mathbb{R}^+$,  we can associate the  map
 $H^\nu: E\times_M T^\prime M\to \mathbb{R}$ called the \emph{control Hamiltonian}\index{control Hamiltonian} defined by
\[
H^\nu(x,v,p)= <p, \rho(x,v)>-\nu \mathbf{L}(x,v).
\]
When $E=M\times U$ where $U$ is an open set of $\mathbb{R}^m$, then if an admissible curve $(c_0,v_0)$ is a maximum for $\mathfrak{F}_\mathbf{L}$ among all absolutely continuous admissible curves $(c,v)\in \mathcal{A}([t_1,t_2], E)$ such that $c(t_1)=c_0(t_1)$ and $c(t_2)=c_0(t_2)$, then, by Pontryagin Maximum principle, in each local coordinates on a chart domain which meets $c([t_1,t_2])$, we have  (cf. \cite{AgSa04} for instance):
\begin{equation}
\label{eq_MaximumPrinciple}
\begin{cases}
 \dot{x}(t)
 =\displaystyle\frac{\partial \mathsf{H}^\nu}{\partial \mathsf{p}}(\mathsf{x}(t),\mathsf{v}(t), \mathsf{p}(t))  \\
 \dot{p}(t)
 =-\displaystyle\frac{\partial \mathsf{H}^\nu}{\partial \mathsf{x}}(\mathsf{x}(t),\mathsf{v}(t), \mathsf{p}(t))  \\
 \displaystyle\frac{\partial \mathsf{H}^\nu}{\partial \mathsf{v}}(\mathsf{x}(t),\mathsf{v}(t), \mathsf{p}(t))=0 \;\; a.e.\end{cases}
\end{equation}
A curve $(x(t),v(t),p(t))$ which satisfies the implicit differential system (\ref{eq_MaximumPrinciple}) in each local coordinates on a chart domain which meets $c([t_1,t_2])$ is called a \emph{ bi-extremal of the Pontryagin principle}\index{bi-extremal}. It is well known that there exists two kinds of  bi-extremals of the Pontryagin principle:
\begin{enumerate}
\item
\emph{normal extremals}\index{normal extremal}\index{extremal!normal} for which $\nu>0$ and then  $(x(t), v(t), p(t))$ is smooth with  $p(t_2)\not=0 $;
\item
\emph{abnormal extremals}\index{abnormal extremal}\index{extremal!abnormal} such that $\nu=0 $ which do not depend on $\mathbf{L}$.
\end{enumerate} 
  
\begin{remark}
\label{R_EndxSubmersionInRx}
When $E=\Delta_M$, the anchor  $\rho$ is the inclusion of $\Delta_M$ in $TM$ and so an admissible curve is a tangent curve to $\Delta_M$.  Such admissible curves in $M$ are  classically called \emph{horizontal curves}\index{horizontal curve}\index{curve!horizontal} and their set will be denoted $\mathcal{A}_{\Delta_M}([t_1,t_2],M)$. Note that the control $u$ associated to an horizontal curve $(c,u)$ is unique and equal to $\dot{c}$.  In general,  the set  $\mathcal{A}_{\Delta_M}([t_1,t_2],M)$ is  the subset of horizontal curves in  $\mathcal{H}^1([t_1,t_2],M) $ of Sobolev class $H^1$. In fact, if $M$ is $n$-dimensional, $\mathcal{H}^1([t_1,t_2],M) $ has a Banach structure manifold modelled on $H^1 \left( [t_1,t_2],\mathbb{R}^n \right) $. Moreover, if  the typical fibre is $m$-dimensional, then $\mathcal{A}_{\Delta_M}([t_1,t_2],M)$ has a structure of Banach submanifold of $\mathcal{H}^1([t_1,t_2],M) $ modelled on $H^1([t_1,t_2],\mathbb{R}^m)$  whose tangent space $T_c\mathcal{A}_{\Delta_M}([t_1,t_2],M)$ is the set of $H^1$ sections of $c^!TM$ which  are infinitesimal horizontal variations of $c$  (cf. \cite{AOP01} or \cite{PiTa01} for instance among plenty of papers). Moreover, for any $x\in M$, the set of $\mathcal{A}_{\Delta_M}(x;[t_1,t_2],M)$ of horizontal curves $c$ such that $c(t_1)=x$ is also an $(n-m)$-codimensional Banach submanifold $\mathcal{A}_{\Delta_M}([t_1,t_2],M)$ and the tangent space  $T_c \mathcal{A}_{\Delta_M}(x;[t_1,t_2],M)$ 
  can be identified with the set of $H^1$ sections of  infinitesimal variations $\delta c$  of $c$ such that $\delta c(t_1)=0$. We denote by $\operatorname{End}_x: \mathcal{A}_{\Delta_M}(x;[t_1,t_2],M)\to M$  the \emph{endpoint map}\index{endpoint map} given by  $\operatorname{End}_x(c)=c(t_1)$. Then $\operatorname{End}_x$ is a smooth map.(cf. \cite{AOP01}  or \cite{PiTa01} for instance). In fact, $\mathcal{A}_{\Delta_M}(x;[t_1,t_2], M)$ is a submanifold of $\mathcal{A}(x;[t_1,t_2],\mathcal{R}(x))$ and so $\operatorname{End}_x$ takes values in $\mathcal{R}(x)$ and is a submersion. Unfortunately, this map is not a submersion in general. This is  the essential reason of the existence of abnormal bi-extremals which is well known in the finite dimensional sub-Riemannian context (cf. \cite{Mon02}).
\end{remark}

If  $T_c \operatorname{End}_{x_1}$ is a submersion on $T_{x_2}\mathcal{R}(x_1)$, then the set  
\[
\mathcal{A}_{\Delta_M}(x_1,x_2;[t_1,t_2],M)
:=\{c\in \mathcal{A}_{\Delta_M}([t_1,t_2],M)\;: c(t_i)=x_i, i \in \{1,2\} \}
\]
is a submanifold of $\mathcal{A}_{\Delta_M}([t_1,t_2],M)$ and $\delta c\in T_c\mathcal{A}_{\Delta_M}(x_1,x_2;([t_1,t_2],M)$ is a section of $c^!\Delta_M$ such that $\delta c(t_1)=\delta c(t_2)=0$.
 
On the other hand, given a Lagrangian $\mathbf{\bar{L}}:\Delta_M\to \mathbb{R}$. Then the Legendre transformation $\mathbb{F}\mathbf{\bar{L}}: (\Delta_M)_x\to (\Delta_M)_x^\prime$  is well defined (as in $\S$ \ref {___LegendreTransformation}).
 % Since we are in finite dimension, choose a subbundle $T$ in $TM$ such that $\Delta_M\oplus T=TM$. Then $T^\prime M=\Delta_M^0 \oplus T^0$ and so we can identified $\Delta_M^\prime $ with $T^0$. By the way we will consider $\mathbb{F}\bar{L}$ as a smooth  map from $\Delta_M$ to $T^0$. \
Now, to $\mathsf{\bar{L}}$ is naturally associated  the functional 
$\mathfrak{F}_{\mathbf{\bar{L}}}: \mathcal{A}_{\Delta_M}([t_1,t_2],M) \to \mathbb{R}$ defined by
\begin{equation}
\label{eq_FbarLdimfinie}
\mathfrak{F}_{\mathbf{\bar{L}}}(c)
= \displaystyle\int_{t_1}^{t_2} \mathbf{\bar{L}} \left( x(t),\dot{x}(t) \right) dt.
\end{equation}
Since $\mathcal{A}_{\Delta_M}([t_1,t_2],M)$ is a Banach manifold, each vector $\delta c\in T_c \mathcal{A}_{\Delta_M}([t_1,t_2],M)$ is an infinitesimal variation of $c$ obtained from a variation $\hat{c}:]-\varepsilon,\varepsilon]\times [t_1,t_2] \to \mathcal{A}_{\Delta_M}([t_1,t_2],M)$. Therefore,  using same arguments as in Lemma \ref{L_DmathcalL},  we can show that $\mathfrak{F}_\mathbf{\bar{L}}$ is differentiable on this manifold and we have 
\begin{equation}
\label{eq_DFbarLfini}
\begin{array}{rcl}
 d_c\mathfrak{F}_{\mathbf{\bar{L}}}(\delta c) 
    &=& \displaystyle \int_{t_1}^{t_2}\left(\frac{\partial\mathbf{\bar{L}}}{\partial x}(c(t) ,\dot{c}(t))-\frac{d}{dt}\frac{\partial \mathbf{\bar{L}}}{\partial v}(c(t),\dot{c}(t))\right)(\delta c(t))dt \\
 &&   + \displaystyle \left[ \frac{\partial \mathbf{\bar{L}}}{\partial v}(c(t),\dot{c}(t))(\delta c(t) ) \right] _{t_1}^{t_2}
\end{array}
\end{equation}

Consider $c\in  \mathcal{A}_{\Delta_M}([t_1,t_2],M)$ such that   $\operatorname{End}_x$ is a submersion at $c$ onto $T_{c(t_2)}\mathcal{R}(c(t_1))$.  Then $c$ is  a   critical point with fixed ends of $\mathfrak{F}_{\bar{L}}$  if   the restriction of $d \mathfrak{F}_{\bar{L}}$ to $T_c  \mathcal{A}_{\Delta_M}(x_1,x_2;[t_1,t_2],M) $ is zero.\\ 

\begin{remark}
\label{R_Interpretation P}
Since we are in finite dimension  we can extend $\mathbf{\bar{L}}$ to a Lagrangian $\mathbf{L}:TM\to \mathbb{R}$ such that  $\mathbf{L}_{| \Delta_M}=\overline{\mathbf{L}}(x,v)$. 
 Therefore, $\mathbb{F}\mathbf{L}_{| \Delta_M}=\mathbb{F}\bar{\mathbf{L}}$ takes values in $\mathbb{P}$. Now,  if $\Omega$ is the canonical symplectic form on $T^\prime M$ 
let $\varpi:\Delta_M\oplus T^\prime M \to T^\prime M$ the canonical projection. and denote $\widetilde{\Omega}=\varpi^*\Omega$. Then  $\widetilde{\Omega}$ is a pre-symplectic 
form on $\Delta_M\oplus T^\prime M$ whose kernel is  $\ker T\varpi$. As $\mathbb{P}$ can be  an immersed  submanifold of $ \Delta_M\oplus T^\prime M$, it is clear  that the 
restriction $\Omega_\mathbb{P}$  of $\widetilde{\Omega}$  to $\mathbb{P}$ is a symplectic form.\\

On the other hand, $\mathbf{L}$ gives rise to  an extension $\tilde{H}^\nu$ of the control Hamiltonian $H^\nu$ given by
$\tilde{H}^\nu (x, v, p)= <p,v>-\nu\mathbf{L}(x,v)$.
So the restriction  of $\tilde{H}^1$ to $\mathbb{P}$ is an Hamiltonian $H_\mathbb{P}$ on $\mathbb{P}$.   Consider the Hamitonian $H^1$  on $\Delta_M \oplus T^\prime M$  defined by 
$$H^1(x,v,p)=<p, v>-\overline{\mathbf{L}}(x,v)$$
Thus $H_\mathbb{P}$ is also the restriction  of $H^1$ to $\mathbb{P}$.
Therefore,   $H_{\mathbb{P}}$ depends only on $(x, p)\in \mathbb{P}$.  If $X_\mathbb{P}$ is  the hamiltonian field of $H_\mathbb{P}$ on $\mathbb{P}$, and  this vector field depends only on $\overline{{\mathbf{L} }}$  and not on the choice of such an extension. In 
local coordinates, each integral curve of $X_\mathbb{P}$ is a solution of the the following system.
\begin{equation}
\label{eq_SolXP}
   \left\{\begin{matrix}
 \dot{x}(t)=\displaystyle\frac{\partial \mathsf{H}_\mathbb{P}}{\partial \mathsf{p}}(\mathsf{x}(t),\mathsf{p}(t))\hfill{}\\
 \dot{p}(t)=-\displaystyle\frac{\partial \mathsf{H}_\mathbb{P}}{\partial \mathsf{x}}(\mathsf{x}(t), \mathsf{p}(t))\hfill{}\\
  \end{matrix}\right.
 \end{equation}
\end{remark}
\bigskip

Most of the following results are well known in finite dimension  in sub-Riemanian geometry, but without complete references to our knowledge  for all of them in such a more general setting,  we will give a proof for completeness.

\begin{theorem}
\label{T_ImplicitHamiltonianPMP} 
Fix some  $c \in \mathcal{A}_{\Delta_M}(x;[t_1,t_2],M) $ Under the previous notations, we have the following equivalences:
 \begin{enumerate}
 \item[(1)] 
 We have the following   properties:
   
 $\bullet\;\;$ there exists $q\not=0\in T_{c(t_2)}^* M$ such that $d_c\mathfrak{F}_\mathbf{\bar{L}}=T_c^* \operatorname{End}_x(q)$.
   
 $ \bullet\;\;$  $c$ is a critical point with fixed ends  of $\mathfrak{F}_\mathbf{{\bar{L}}}$.
 \item[(2)] In any local coordinates on a chart domain which meets $c([t_1,t_2])$,\\ we have  the Euler-Lagrange condition:
\begin{equation}
\label{eq_EulerLagrangeDeltaTM}
\displaystyle \frac{\partial\mathbf{\bar{L}}}{\partial x}(c(t) ,\dot{c}(t))-\frac{d}{dt}\frac{\partial \mathbf{\bar{L}}}{\partial v}(c(t),\dot{c}(t))\equiv 0
 \end{equation}
  with  the condition $p((t_2)\not=0$ ,where $p$  is the lift of $c$ in $T^\prime M$ such that 
$\mathbb{F}\mathbf{L}(c(t),\dot{c}(t))=(c(t),p(t))$
\item[(3)]
If  $q(t)$  is a section  of $c^!(T^\prime M)$ such that  $q(t_2)\not=0$,   and  $\mathbb{F}\mathbf{L}(c(t),\dot{c}(t))=(c(t),q(t))$  then  $  (c(t), \dot{c}(t), q(t))$ is a bi-extremal    of the Pontryagin maximum  for  $\nu=1$ with $E=\Delta_M$ and $\rho$ is the inclusion of $\Delta_M$ in $TM$.
\item[(4)]
If  $(c(t),p(t))=\mathbb{F}\mathbf{\bar{L}}(c(t),\dot{c}(t))$, then  the curve  $t\mapsto (c(t),  p(t))$  is an integral curve of the Hamiltonian vector field $X_\mathbb{P}$ and $p(t_2)\not=0$
In particular, $t\mapsto (c(t), p(t))$ is smooth.
\end{enumerate}
\end{theorem}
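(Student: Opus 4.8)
The plan is to establish the four assertions as a chain of pairwise equivalences, $(2)\Leftrightarrow(3)$, $(3)\Leftrightarrow(4)$ and $(1)\Leftrightarrow(2)$, from which the full statement follows by transitivity. Throughout I work in a local chart meeting $c([t_1,t_2])$ and use that, since $E=\Delta_M$ and $\rho$ is the inclusion, the control Hamiltonian reduces to $H^{1}(x,v,p)=\langle p,v\rangle-\overline{\mathbf{L}}(x,v)$ with $\rho(x,v)=v$, so that admissibility forces $v=\dot c$.

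For $(2)\Leftrightarrow(3)$ I would read off the Pontryagin system \eqref{eq_MaximumPrinciple} for $\nu=1$ directly from $H^{1}$: the three conditions become $\dot{\mathsf{x}}=\partial_{\mathsf p}\mathsf H^{1}=\mathsf v$, $\dot{\mathsf p}=-\partial_{\mathsf x}\mathsf H^{1}=\partial_{\mathsf x}\overline{\mathsf L}$, and $\partial_{\mathsf v}\mathsf H^{1}=\mathsf p|_{\mathbb E}-\partial_{\mathsf v}\overline{\mathsf L}=0$. The last is exactly the Legendre relation $\mathbb{F}\overline{\mathbf L}(c,\dot c)=(c,p)$ on $\Delta_M$, and substituting it into the middle equation yields $\tfrac{d}{dt}\partial_{\mathsf v}\overline{\mathsf L}=\partial_{\mathsf x}\overline{\mathsf L}$, i.e.\ the Euler--Lagrange condition \eqref{eq_EulerLagrangeDeltaTM}; the normalisation $\nu=1$ together with $p(t_2)\neq0$ is precisely the definition of a normal extremal, matching the clause $p(t_2)\neq0$ in $(2)$. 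Every step here is reversible.

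For $(3)\Leftrightarrow(4)$ I would invoke Remark~\ref{R_Interpretation P}, in which $H_\mathbb P$ is identified with the restriction of $H^{1}$ to $\mathbb P$. Since $\overline{\mathbf L}$ is non-degenerate, $\mathbb F\overline{\mathbf L}$ is a local diffeomorphism of $\Delta_M$ onto $\mathbb P$, so the fibre equation $\partial_{\mathsf v}\mathsf H^{1}=0$ can be solved for $\mathsf v=\mathsf v(\mathsf x,\mathsf p)$ and eliminated; the remaining equations of $(3)$ then coincide with the reduced Hamiltonian system \eqref{eq_SolXP} for $X_\mathbb P$, and conversely any integral curve of $X_\mathbb P$ lifts, via $\mathsf v=\partial_{\mathsf p}\mathsf H_\mathbb P$, to a solution of \eqref{eq_MaximumPrinciple}. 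This is the finite-dimensional instance of Theorem~\ref{T_ImplicitHamiltonianL}(3). The smoothness of $t\mapsto(c(t),p(t))$ asserted in $(4)$ is then automatic: an a priori $H^1$ curve solving the smooth ODE $\dot\xi=X_\mathbb P(\xi)$ is smooth by ODE bootstrapping, which also gives the smoothness claimed for normal extremals in $(3)$.

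The heart of the argument, and the step I expect to be the main obstacle, is $(1)\Leftrightarrow(2)$, since it is the one genuinely living on the path manifold $\mathcal A_{\Delta_M}(x;[t_1,t_2],M)$. Starting from the first variation \eqref{eq_DFbarLfini}, restricting to variations $\delta c$ with $\delta c(t_1)=0$ (forced by $c\in\mathcal A_{\Delta_M}(x;\dots)$) and $\delta c(t_2)=0$ (fixed ends), the boundary term drops out and Lemma~\ref{L_VariationLemma}, applied fibrewise with $\mathbb E$ the typical fibre of $\Delta_M$, converts the vanishing of the integral term into the Euler--Lagrange condition \eqref{eq_EulerLagrangeDeltaTM}; conversely Euler--Lagrange kills the integral term and leaves $d_c\mathfrak F_{\overline{\mathbf L}}(\delta c)=p(t_2)\big(\delta c(t_2)\big)=\big(T_c^{*}\operatorname{End}_x\,p(t_2)\big)(\delta c)$, exhibiting the multiplier $q=p(t_2)$. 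The delicate points I must handle carefully are: (i) that the submersion hypothesis on $\operatorname{End}_x$ at $c$ (onto $T_{c(t_2)}\mathcal R(x)$) is exactly what promotes ``critical with fixed ends'' to the existence of a genuine multiplier $q$ satisfying $d_c\mathfrak F_{\overline{\mathbf L}}=T_c^{*}\operatorname{End}_x(q)$; and (ii) the identification $q=p(t_2)$ together with the equivalence $q\neq0\Leftrightarrow p(t_2)\neq0$, which separates the normal case $\nu=1$ from abnormal extremals and relies on $\delta c(t_2)$ ranging over $(\Delta_M)_{c(t_2)}\subseteq T_{c(t_2)}\mathcal R(x)$. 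Once these are settled, the four equivalences close up.
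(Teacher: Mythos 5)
Your handling of $(2)\Leftrightarrow(3)$ and $(3)\Leftrightarrow(4)$ is essentially the paper's own: the paper proves the cycle $(1)\Rightarrow(2)\Rightarrow(3)\Rightarrow(4)\Rightarrow(1)$, and its two middle implications are exactly your computation with the control Hamiltonian $\mathsf{H}^1$ and the appeal to Remark~\ref{R_Interpretation P}; making them bidirectional is harmless since $\mathbb{F}\bar{\mathbf{L}}$ is a local diffeomorphism onto $\mathbb{P}$. The genuine divergence is that you dispense entirely with Proposition~\ref{P_tEpFL} and Remark~\ref{R_CaseNu=1}, which the paper uses twice (in $(1)\Rightarrow(2)$ and in $(4)\Rightarrow(1)$), and instead close the loop with a direct first-variation argument for $(1)\Leftrightarrow(2)$. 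The direction $(2)\Rightarrow(1)$ of that argument is fine, modulo the identification---used by the paper itself in Remark~\ref{R_EndxSubmersionInRx} and in the proof of Theorem~\ref{T_RelationExtremalFbarL}---of infinitesimal horizontal variations with sections of $c^!\Delta_M$: Euler--Lagrange kills the integral term and the boundary term exhibits the multiplier $q=p(t_2)\neq 0$.

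The gap is in $(1)\Rightarrow(2)$, at exactly the point you flag as delicate. Your computation gives $d_c\mathfrak{F}_{\bar{\mathbf{L}}}=T_c^{*}\operatorname{End}_x\bigl(p(t_2)\bigr)$ with $p=\mathbb{F}\mathbf{L}(c,\dot{c})$; comparing with the hypothesis $d_c\mathfrak{F}_{\bar{\mathbf{L}}}=T_c^{*}\operatorname{End}_x(q)$ yields only $T_c^{*}\operatorname{End}_x\bigl(q-p(t_2)\bigr)=0$, i.e.\ $q-p(t_2)$ lies in the annihilator of the range of $T_c\operatorname{End}_x$, which by Remark~\ref{R_EndxSubmersionInRx} is $T_{c(t_2)}\mathcal{R}(x)$. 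So $q$ and $p(t_2)$ agree as functionals on $T_{c(t_2)}\mathcal{R}(x)$, not as elements of $T^\prime_{c(t_2)}M$, and when $\mathcal{R}(x)$ has positive codimension---precisely the case where abnormal extremals exist and the normal/abnormal dichotomy has content---$q\neq 0$ does not force $p(t_2)\neq 0$. Concretely, for a constant horizontal curve and a metric-type Lagrangian one has $\mathbb{F}\mathbf{L}(c,\dot{c})\equiv 0$, yet both bullets of (1) hold with any nonzero $q$ annihilating $T_{c(t_2)}\mathcal{R}(x)$. The paper's mechanism for this step is the idea your proposal is missing: Proposition~\ref{P_tEpFL} does not take the Legendre transform as the momentum and then compare multipliers; it constructs $p$ as the solution of the adjoint equation $\dot{\mathsf{p}}=-\dfrac{\partial \mathsf{H}^1}{\partial \mathsf{x}}$ with terminal datum $p(t_2)=q$, so that $p(t_2)\neq 0$ holds by construction, and then shows by integration by parts that the multiplier identity is equivalent to the stationarity condition $\dfrac{\partial \mathsf{H}^1}{\partial \mathsf{v}}=0$ along the curve, which identifies only the restriction of $p$ to $\Delta_M$ with $\mathbb{F}\bar{\mathbf{L}}(c,\dot{c})$. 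That construction cannot be recovered merely from the observation that $\delta c(t_2)$ ranges over $(\Delta_M)_{c(t_2)}\subseteq T_{c(t_2)}\mathcal{R}(x)$.
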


\begin{definition}
\label{D_NormalExtremal} 
A curve  $c$  in $\mathcal{A}_x \left( [t_1,t_2],\Delta_M \right) $ is called a \emph{normal extremal}\index{normal extremal} if any of the  assertions in Theorem~\ref{T_ImplicitHamiltonianPMP}  is true.
\end{definition}

\begin{remark}
\label{R_ControlHamiltonianFini}${}$
\begin{enumerate}
\item From Assertion (4), any normal  bi-extremal $(c(t),p(t))$ is smooth. In particular, $c$ is smooth.
\item 
The condition: the range of  $T_c \operatorname{End}_x$ is $T_{c(t_2)}\mathcal{R}(x)$ implies  

${}\;\;$  there exists $q\not=0\in T_{c(t_2)}^\prime M$ such that $d_c\mathfrak{F}_{\mathbf{\bar{L}}}=T_c ^* End_x(q)$.
 
\noindent Note that, for $\nu=0$, a bi-extremal $(c(t),\dot{c}(t), p(t))$ is abnormal\\
if and only if $c$ is a singular point of the endpoint map, that is: 
 
 ${}\;\;$ there exists $q\not=0\in T_{c(t_2)}^\prime \mathcal{R}(x)$ such that $T_c ^* \operatorname{End}(q)=0$.
 
 \noindent In this case, we say that $c$ is  an abnormal extremal.
  % \item In finite dimension all the assumptions of Proposition  \ref{T_ImplicitHamiltonianL}  are satisfied and  so  this Proposition can be considered as a generalization  of the analog results in \cite{YoMa06I} or \cite{YoMa07}. %Note that in these paper the definition $H_P$ is equivalent to its definition  in Proposition  \ref{P_ImplicitHamiltonianL} point (1). 
 \item  According to Remark \ref{R_Interpretation P} and Assertion (4)  in Theorem \ref{T_ImplicitHamiltonianPMP}, all the Assertion in this Theorem do not depend on the extension $\tilde{H}^1$ of the control Hamiltonian $H^1$ such that $\tilde{H}^1_{| \Delta_M\oplus T^\prime M}=H^1$ and $\displaystyle\frac{\partial \tilde{\mathsf{H}}^1}{\partial \mathsf{v}}=0$ in local  coordinates.  
% In particular, they do not depend on the choice of the supplement $T$ of $\Delta_M$  in $TM$.\\
\end{enumerate}
\end{remark}
 
%For the proof of Theorem~\ref{T_ImplicitHamiltonianPMP}, we will use the result given in \cite{Arg20}:  Proposition 2
%\begin{lemma}
%\label{L_tEpFL}
%According to the  previous notations,  fix some $c$ of  class  at least  $C^2$in
%$\mathcal{A}_{\Delta_M}(c(t_1)); [t_1,t_2],M)$, and set  $t\mapsto p(t)\in T^* _{c(t)}M$  defined by $\mathbb{F}
% \mathbf{L}(c(t),\dot{c}(t))=p(t)$.\\
% There  exists $q\in T_{c(t_2)}^\prime M $ with $q\not=0$   such that  $\nu d_c\mathfrak{F}_\mathbf{L}- T_c^* End_{c(t_1)}(q)=0$ 
%if and only if $p(t_2)-p(t_1)=q$,
%  which is equivalent to $\displaystyle \frac{\partial H^1}{\partial v}(c(t), \dot{c}(t), p(t))=0$ with $p(t_2)-p(t_1)=q$.
%\end{lemma}

\begin{proof} [Proof of Theorem~\ref{T_ImplicitHamiltonianPMP}] 

 Consider some   $c \in \mathcal{A}_{\Delta_M} \left( x;[t_1,t_2],M \right) $. If there exists 
 $q\not=0\in T_{c(t_2)}M$ such that $d_c\mathfrak{F}_{\mathbf{\bar{L}}}=T_c ^* End_x(p)$,   from Proposition ~\ref{P_tEpFL} for $\nu=1$, and Remark \ref{R_CaseNu=1} applied to $E_0=\Delta_M$
we have
$$p(t)=\mathbb{F}{\mathbf{\bar{L}}}(c(t),\dot{c}(t)).$$
with  $p(t_2)=q$.\\
 
On the other hand,  by same arguments as in the proof of Theorem~\ref{T_EulerLagrange-extremal}, by taking  infinitesimal variations in   $ \mathcal{A}_{\Delta_M}(x;[t_1,t_2],M) $ and according  to (\ref{eq_DFbarLfini}),  we obtain\footnote{See also the proof of Theorem \ref{T_RelationExtremalFbarL}.}:

 $\displaystyle \frac{\partial{\mathsf{L}}}{\partial \mathsf{x}}(\mathsf{c}(t) ,\dot{\mathsf{c}}(t))-\frac{d}{dt}\frac{\partial {\mathsf{L}}}{\partial \mathsf{v}}(\mathsf{c}(t),\dot{\mathsf{c}}(t)) $ belongs to $\mathbb{E}^0$.

But,  we have   $\mathsf{L}(\mathsf{x},\mathsf{v})=\bar{\mathsf{L}}(\mathsf{x},\mathsf{v}) $  since $c$ is tangent to $\Delta_M$.\\
  It follows that  
\[
\frac{\partial{\mathsf{L}}}{\partial \mathsf{x}}(\mathsf{c}(t) ,\dot{\mathsf{c}}(t))-\frac{d}{dt}\frac{\partial {\mathsf{L}}}{\partial \mathsf{v}}(\mathsf{c}(t),\dot{\mathsf{c}}(t))
 =  
 \frac{\partial{\bar{\mathsf{L}}}}{\partial \mathsf{x}}(\mathsf{c}(t) ,\dot{\mathsf{c}}(t))-\frac{d}{dt}\frac{\partial \bar{{\mathsf{L}}}}{\partial \mathsf{v}}(\mathsf{c}(t),\dot{\mathsf{c}}(t)).
\]
  the assertion (2) is satisfied. \\
  
Assume that assertion (2) is true.  Thus, we have 
     
  $\mathsf{p}=\displaystyle\frac{\partial{\mathsf{L}}}{\partial \mathsf{v}}(\mathsf{c}(t),\dot{\mathsf{c}}(t))=\displaystyle\frac{\partial \bar{\mathsf{L}}}{\partial \mathsf{v}}(\mathsf{c}(t),\dot{\mathsf{c}}(t))$ belongs to $\mathbb{T}^0$.
  
Taking into account  (\ref{eq_EulerLagrangeDeltaTM}), this implies that  (in local coordinates) :  
\[
\displaystyle\dot{\mathsf{p}}(t)=\frac{d}{dt}\frac{\partial{\mathsf{L}}}{\partial \mathsf{v}}(\mathsf{c}(t) ,\dot{\mathsf{c}}(t))=\frac{\partial{\mathsf{L}}}{\partial \mathsf{x}}(\mathsf{c}(t) ,\dot{\mathsf{c}}(t))=-\frac{\partial\tilde{\mathsf{H}}^1}{\partial \mathsf{x}}((\mathsf{c}(t),\dot{\mathsf{c}}(t),\mathsf{p}(t)).
\]

Now, since on the range of the chart domain, we have $(\mathsf{c}(t),\dot{\mathsf{c}}(t))\in \mathbb{M}\times \mathbb{E}$,\\
so we have 
  
  $\dot{\mathsf{c}}(t)=\displaystyle\frac{\partial\tilde{\mathsf{H}}^1}{\partial \mathsf{p}}((\mathsf{c}(t),\dot{\mathsf{c}}(t),\mathsf{p}(t))$.
  
Finally, by construction, we have the constraint  $\displaystyle\frac{\partial\tilde{\mathsf{H}}^1}{\partial \mathsf{v}}((\mathsf{c}(t),\dot{\mathsf{c}}(t),\mathsf{p}(t))=0$.  This implies that   
 $(c(t),\dot{c}(t),p(t))$ is a bi-extremal normal of the Pontryagin Maximum principle,
 with the condition  $p(t_2)\not=0$, which is assertion (3).\\
Assume  that the  assertion (3) is true then the curve $(c(t), \dot{c}(t), p(t))$ satisfies the differential system (\ref{eq_MaximumPrinciple}) for $\nu=1$ with $(p(t_2))\not=0$ with $E=\Delta_M$ and $\rho$ is the inclusion in $TM$. This implies that $(c(t),\dot{c}(t))$  belongs to $\Delta_M$ and so $\mathbb{F}\mathbf{L}(c(t), \dot{c}(t))=(c(t) , p(t)))$  and so is an integral curve of $H_\mathbb{P}$. Under these conditions,  the Assertion (4) is 
  a consequence of  Remark \ref{R_Interpretation P}.\\ 
 
Assume that the assertion (4) is satisfied.  Then we have 
\[
\displaystyle\frac{\partial \tilde{\mathsf{H}}^1}{\partial \mathsf{v}}(\mathsf{x},\mathsf{v},\mathsf{p})=0
\]
with $p(t_2)\not=0$. \\
From  Proposition~\ref{P_tEpFL} for $\nu=1$, and Remark \ref{R_CaseNu=1} applied to $E_0=\Delta_M$,\\ we recover Assertion (1).
\end{proof}

\subsubsection{Convenient setting context}\label{____PMPConvenientSetting}
In infinite dimension, it is well known that the Maximum Pontryagin principle is not true. We will give a survey of some reasons for which the situation of  bi-extremal of the Maximum principle in the finite dimensional setting cannot be generalized  to the convenient setting.\\
  
Given a closed subbundle  $\Delta_M$ of $TM$ and consider  a Lagrangian $\bar{L}$ on $\Delta_M$. Let $C^\infty_{\Delta_M}([t_1,t_2], M)$  be the set of smooth curves $c:[t_1,t_2]\to M$ which are tangent to $\Delta_M$.
 In general, of course, this set does not have a convenient manifold structure. \\
   
For the following description , we assume that \emph{$C^k_{\Delta_M} \left( x;[t_1,t_2], M \right) $ is a convenient manifold}\footnote{If $M$ is a Banach manifold,  see Theorem \ref{T_AdmissibleCurves}. if $M$ is a convenient manifold see Theorem \ref{T_DirctLimitSequenceDeltaTMn}}.

Under this assumption, we can define the endpoint map  
\[
\operatorname{End}_x:C^\infty_{\Delta_M}(x;[t_1,t_2], M) \to M
\] 
which is smooth. For  
   $c\in C^\infty_{\Delta_M}(x;[t_1,t_2], M)$, the range of $T_c \operatorname{End}_x$ can be 
\begin{description} \label{De_QualificationCurves}  
\item[$\bullet$]  
equal to $T_{c_2}M$, and such a curve is called a \emph{normal curve}\index{normal curve}\index{curve!normal};
\item[$\bullet$] 
dense in $T_{c(t_2)}M$\footnote{For the existence of such curves, see \cite{Arg20} or \cite{Pel19}.};  such a curve is called \emph{elusive}\index{elusive curve}\index{curve!elusive} (cf. \cite{Arg20})\footnote{Of course, in finite dimension,  elusive curves do not exist.};
\item[$\bullet$]  
of codimension strictly positive; such a curve  is called \emph{abnormal curve} or \emph{semi rigid}\index{semi rigid curve} \index{curve!semi rigid}(cf. \cite{GMV15}).
\end{description}  

If  $\mathbf{\bar{L}}$ is a Lagrangian on $\Delta_M$, we again consider the "reduced   action" $\mathfrak{F}_{\mathbf{\bar{L}}}$ by the same formulae (\ref{eq_FbarLdimfinie}) and and by same argument as in the proof of Lemma~\ref{L_DmathcalL}, we can show that $\mathfrak{F}_{\mathbf{\bar{L}}}$ is differentiable. When the range  of $T_c \operatorname{End}_x$ is $T_{c(t_2)}M$, then there exists   $q\not=0$  in $T_{c(t_2)}^\prime M$ such that $T_c^* End_x(q)=d_c\mathfrak{F}_{\mathbf{\bar{L}}}$ which gives rise to a characterization in terms of a Hamiltonian condition according to Proposition ~\ref{P_tEpFL} for $\nu=1$ with $E_0=\Delta_M$ which is also true under our previous assumptions in the Banach setting. \\         
To recover the notion of bi-extremal normal, according to Remark~\ref{R_ControlHamiltonianFini}, we could  consider the case  where there exists  $q\not=0\in T_{c(t_2)}^\prime M$ such that $T^*_c \operatorname{End}_x(q)=d_c\mathfrak{F}_{\mathbf{\bar{L}}}$ and we will see that we can recover  the notion of  normal bi-extremal  \textit{via} Proposition ~\ref{P_tEpFL} for $\nu=1$ with $E_0=\Delta_M$.\\
When $c$ is elusive, $T_c \operatorname{End}_x$ is not a submersion. Therefore, locally

   $C^\infty _{\Delta_M}(x,y;[t_1,t_2],M):=\operatorname{End}_x^{-1}(\{y\})$ 
   
   \noindent will not be a subbmanifold of $C^\infty_{\Delta_M}(x;[t_1,t_2],M)$ and, in particular, this implies that, in general, there exists  $\delta c\in T_c C^\infty_{\Delta_M}(x;[t_1,t_2],M)$ such that $\delta c(t_2)=0$, but  there exists no horizontal variation $\hat{c}$ with fixed ends and such that 
   $\displaystyle \frac{\partial \hat{c}}{\partial s}(0,t)=\delta c$. Note that in a dual way,  if $T_c End_x$  is a submersion or if its range is dense, then $T^*_c \operatorname{End}_x$ is injective, and so, in both cases, this dual approach does give no distinction.  Finally, note that a curve  $c$ is abnormal if and only if $c$ is a singular point of  the endpoint map.
   
\begin{remark}
\label{R_ExistenceSingularCurves} 
The existence of elusive and abnormal curves is closely link to the problem of strong  controllability of $\Delta_M$, that is:
 for any pair of points $(x,y)\in M^2$ there exists a piecewise smooth curve $c:[0,1]\to M$ tangent to $\Delta_M$ and such that $c(0)=x$ and $c(1)=y$. For example, if $\Delta_M$\\
 is finite co-dimensional, there always exists normal curves. But if $\Delta_M$ is not  strongly controllable,  normal  curves could not exist.  For more details the reader can consult \cite{Arg20}.\\
\end{remark}

\begin{warning} 
\label{W_PMPBanachSetting}
From now on until the end of this section, we assume  that $M$ is a Banach manifold. Then for  $x\in M$ and $k\geq 2$,  the set  $C^k_{\Delta_M}(x;[t_1,t_2], M)$ is the set of curves of class $C^k$ $c:[t_1,t_2]\to M$ which are tangent to $\Delta_M$  and with $c(t_1)=x$ has a Banach manifold  structure. 
%(cf. Theorem  \ref{__NormalExtremals})
\end{warning}

As in finite dimension, for any $\nu\geq 0$, we can  consider the associated "control Hamiltonian" $H^\nu:\Delta_M\oplus T^\prime M \to \mathbb{R}$ given by
\[
H^\nu(x,v,p)=<p,v>-\nu \mathbf{\bar{L}}(x,v).
\]
 
Of course we have many problems  to recover a comparable situation as in finite dimension as exposed in  the previous section. The first one is that without the existence of a decomposition $TM=\Delta_M\oplus T$, for $\nu>0$,  we have no natural way to extend $H^\nu$ to  a Hamiltonian $\tilde{H}^\nu$ on $TM\oplus T^\prime M$  such that the restriction of   $d\tilde{H}^\nu$ to the tangent space of $TM$ is identically null. Note that, even if we had  a Lagrangian $\mathbf{L}$ on $TM$,   its restriction $\mathbf{\bar{L}}$ to $\Delta_M$ could not be smooth if $\Delta_M$ is not supplemented.  In any way, we have  a natural  map   
$\tilde{H}^\nu(x,v,p)=<p,v>-\nu \mathbf{L}(x,v)$ on $TM\oplus T^\prime M $ but, in general, its restriction to $\Delta_M\oplus T^\prime M$ is well defined $H^\nu$ but  $H^\nu$ could be not smooth in general.\\
  
\emph{To end this section,  in reference to  the context of Theorem~\ref{T_ImplicitHamiltonianL}, we will propose a framework in which we can define a normal extremal $c$ for the functional $\mathfrak{F}_{\bar{L}}$ which would be coherent with Definition \ref{D_NormalExtremal}.}\\

Assume that $\Delta_M$ is a closed Banach split subbundle of $TM$. Consider $\mathbf{\bar{L}}$ a Lagrangian on $\Delta_M$ and assume that  there exists  Lagrangian $\mathbf{L}:TM\to\mathbb{R} $ such that  $\mathbf{L}_{| \Delta_M}=\mathbf{\bar{L}}$  which is $\Delta_M$-constraint regular Lagrangian. Then $\tilde{H}^\nu(x,v,p)=<p,v>-\nu\mathbf{L}(x,v)$ is an extension of $H^1(x,v,p)=<p,v>-\bar{\mathbf{L}}(x,v)$ whose restriction to\\
$\mathbb{P}=\mathbb{F}\mathbf{L}(\Delta_M)$ is smooth. Therefore,  according to Theorem~\ref{T_ImplicitHamiltonianL} and Theorem~\ref{T_ImplicitHamiltonianPMP}., we introduce:
\begin{definition}\label{D_CharaterizationNormalConvenientc} 
Under the previous assumptions, a curve  $c\in C^\infty ([t_1,t_2],\Delta_M)$ is called a normal extremal if,  in each local coordinates on a chart domain which meets $c([t_1,t_2])$, the curve $t\mapsto (c(t),  p(t)):=\mathbb{F}\mathbf{L}(c(t),\dot{c}(t))$ 
is a solution of the implicit differential system
\begin{equation}\label{eq_MaximumPrincipleConvenient}
\begin{cases}
 \dot{\mathsf{x}}(t)
 =\displaystyle\frac{\partial \mathsf{H}^1}{\partial \mathsf{p}}(\mathsf{x}(t),\mathsf{v}(t), \mathsf{p}(t)) \\
 \dot{\mathsf{p}}(t)
 =-\displaystyle\frac{\partial \mathsf{H}^1}{\partial \mathsf{x}}(\mathsf{x}(t),\mathsf{v}(t), \mathsf{p}(t))\\
 \displaystyle\frac{\partial \mathsf{H}^1}{\partial \mathsf{u}}(\mathsf{x}(t),\mathsf{u}(t), \mathsf{p}(t))=0 
\end{cases}
\end{equation}
 with $p(t_2)\not=0$ 
\end{definition}

\begin{remark}
\label{R_NormalGeodesic} 
In finite dimension, this definition is coherent with the usual one (cf. Theorem~\ref{T_ImplicitHamiltonianPMP}).\\ 
In the Banach setting (cf.\cite{Arg20}), if  the anchored bundle $(\mathcal{H}, M,\xi )$ is $(\Delta_M, M, \operatorname{Id})$, then a normal sub-Riemannian  geodesic satisfies the previous definition according to Proposition ~\ref{P_tEpFL} for $\nu=1$ and $E_0=\Delta_M$.  More generally, note that Definition~7 in \cite{GMV15} of a normal sub-Riemannian geodesic satisfies the previous Definition (cf. \cite{Arg20}).
\end{remark}

\subsection{Characterization of Normal Extremals} 
\label{___CharacterizationOfNormalExtremals}
\emph{In this section, we will present  two  variational "Hamiltonian-Pontryagin principles"  relative to the restriction  to $\Delta_M$ of a $\Delta_M$-constraint $L$. This approach is somewhat different from such results of \cite{YoMa06II} in finite dimension. However, it  allows us to give some characterization of normal extremals}. \\

We again consider a Lagrangian $\mathbf{L}:TM\to \mathbb{R}$ which  is $\Delta_M$-constraint regular.   If $C^\infty([t_1,t_2], \Delta_M) $ is   the set of smooth curves $(c,v):[t_1,t_2]\to \Delta_M$, we denote by $C^\infty_{\Delta_M}([t_1,t_2], M)$ the set of curves of $C^\infty([t_1,t_2], \Delta_M) $ which are tangent to $\Delta_M$. Therefore we have
\begin{equation}
\label{eq_Horizontal}
C^\infty_{\Delta_M}([t_1,t_2], M)=\{(c, v) \in C^\infty([t_1,t_2], \Delta_M)\;:\;\dot{c}=v \}.
\end{equation}
 If $\bar{L}$ is the restriction of $L$ to $ \Delta_M$, naturally, to $\bar{L}$ is associated the functional
\[
\mathfrak{F}_{\mathbf{\bar{L}}}(c)=\displaystyle\int_{t_1}^{t_2}\mathbf{\bar{L}}(c(t),\dot{c}(t))dt
\]
As in the previous section,  a \emph{horizontal  variation} of $c\in C^\infty_{\Delta_M}( [t_1,t_2], M)$ is a  $C^\infty$ map  $\hat{c}:]-\varepsilon,\varepsilon[\times [t_1,t_2]\to M$ such that
 $c_s:=\hat{c}(s,.)\in C^\infty_{\Delta_M}([t_1,t_2], M)$  and  $\hat{c}(0,t)=c(t)$.
  We say that $\hat{c}$ is a horizontal variation with fixed ends if $c_s(t_1)=c(t_1)$ and $c_s(t_2)=c(t_2)$ for all $s$.\\
 $\delta c(t):=\displaystyle\frac{\partial \hat{c}}{\partial{s}}(0,t)$ is called an {\it infinitesimal horizontal  variation} of  $c$.
 
As in the proof of Lemma~\ref{L_DmathcalL}, 1., 
we can show that, for any horizontal variation $\hat{c}$ of $c$,   the map $s\mapsto \mathfrak{F}_{\mathbf{\bar{L}}}(c_s)$ is differentiable for $s=0$. 
  This differential is denoted  $d_c\mathfrak{F}_{\mathbf{\bar{L}}}(\delta c)$.
  
\begin{definition}
\label{D_ExtremalFLDeltaM} 
A critical point with fixed ends of $\mathfrak{F}_{\bar{L}}$  on  $C^\infty_{\Delta_M}([t_1,t_2], M)$  is then a curve $c\in C^\infty_{\Delta_M}([t_1,t_2], M)$ such that 
\begin{equation}
\label{eq_Extremal FLDeltaM}
 d\mathfrak{F}_{\mathbf{\bar{L}}}(\delta c)=0
\end{equation}
for all infinitesimal horizontal variations
  $\delta c$  associated to horizontal variations with fixed ends.
\end{definition}

On the other hand, according to (\ref{eq_Horizontal}) we can consider the functional:
 
\begin{equation}
\label{eq_FL+Constraint}
\widehat{\mathfrak{F}}_{\mathbf{\bar{L}}}=\displaystyle\int_{t_1}^{t_2}\left(\mathbf{\bar{L}}(x(t),v(t))+<p(t),(\dot{x}(t)-v(t))>\right)dt
\end{equation}
defined on curves in  $C^\infty([t_1,t_2],\Delta_M\oplus T^\prime M)$. 
 In the same way as in section \ref{___HamiltonPrinciple}, we can define variation $(\hat{c},\hat{v},\hat{p}):]-\varepsilon,\varepsilon[\times[t_1,t_2]\to C^\infty([t_1,t_2],\Delta_M\oplus T^\prime M)$ of $(c(t), v(t), p(t))$ and so  the associated     infinitesimal variation $(\delta c,\delta v,\delta p)$ given by:
\[
\delta c(t):=\displaystyle\frac{\partial \hat{c}}{\partial{s}}(0,t),\;\; \;\delta v(t):=\displaystyle\frac{\partial \hat{v}}{\partial{s}}(0,t),\;\; \;\delta p(t):=\displaystyle\frac{\partial \hat{p}}{\partial{s}}(0,t).
\]
  Once more, as  in the proof of Lemma~\ref{L_DmathcalL}, 1., we can show that, for any variation $(\hat{c},\hat{v},\hat{p}) $ defined on $]-\varepsilon,\varepsilon[\times [t_1,t_2]$ of $(c,v,p)$ in ${C^\infty([t_1,t_2],\Delta_M\oplus T^\prime M)}$, the map
  $s\mapsto \widehat{\mathfrak{F}}_{\mathbf{\bar{L}}}(c_s,v_s,p_s)$ is differentiable at $s=0$ and we denote $d_c{\mathfrak{F}}_{\mathbf{\bar{L}}}(\delta c, \delta v,\delta p)$ this differential. 
 
\begin{definition}
\label{D_ExtremalFbarL} 
A curve $t \mapsto (c(t),v(t),p(t))$  in  $\Delta_M\oplus T^\prime M$  defined on $[t_1,t_2]$ is a critical point with fixed ends of  $\widehat{\mathfrak{F}}_{\mathbf{\bar{L}}}$
if 
 \begin{equation}
 \label{eq_dWidehatFL}
d \widehat{\mathfrak{F}}_{\mathbf{\bar{L}}}(\delta c,\delta v,\delta p)=0
\end{equation}
for all   infinitesimal variations $(\delta c, \delta v,\delta p) $  associated to variations $(\hat{c},\hat{v},\hat{p})$ such that $\hat{c}(s,t_1)=c(t_1)$ and $\hat{c}(s,t_2)=c(t_2)$ for all $s$. \\
\end{definition}
 
 By the way, we have:
 
\begin{theorem}
\label{T_RelationExtremalFbarL}
 Under the previous considerations, we have the following equivalences:
 \begin{enumerate}
  \item[(1)] 
  There exists a section $p(t)$ of  the pull-back $c^!T^\prime M$ such that $(c(t),\dot{c}(t),p(t))$ is a critical point with fixed ends of  $\widehat{\mathfrak{F}}_{\mathbf{\bar{L}}}$.
  \item[(2)] 
  $c\in C^\infty_{\Delta_M}([t_1,t_2], M)$ is a critical point with fixed ends of $\mathfrak{F}_{\mathbf{\bar{L}}}$.
 \item[(3)] 
 In a coordinates system associated to any chart domain which meets $c([t_1,t_2])$ we have the constraint Euler-Lagrange condition
\begin{equation} 
\label{eq_EulerLagrangeE0}
\left( \frac{\partial \mathsf{L}}{\partial \mathsf{x}}(\mathsf{c}(t) ,\dot{\mathsf{c}}(t))-\frac{d}{dt}\frac{\partial \mathsf{L}}{\partial \mathsf{v}} \right)
(\mathsf{c}(t),\dot{\mathsf{c}}(t))\in \mathbb{E}^0.\\
\end{equation} 
\item[(4)] 
Let $H_\mathbb{P}$ be the Hamiltonian on $\mathbb{P}$ associated to $\mathbf{\bar{L}}$. The curve $c$ belongs to $C^\infty_{\Delta_M}([t_1,t_2], M)$ and   if  we denote by   $p(t)=\mathbb{F}\mathbf{L}_{c(t)}(\dot{c}(t))$ then   $(c(t),\dot{c}(t), p(t))$ is a smooth  integral curve of the implicit Hamiltonian system $(M, {H}_{| T^\prime M},D_{\Delta_M})$, that is, in local coordinates, satisfies the conditions
  \begin{equation}\label{eq_ImplicitHamiltonianinfty}
 \dot{\mathsf{x}}=\displaystyle\frac{\partial {\mathsf{H}}}{\partial \mathsf{p}}(\mathsf{x},\mathsf{v},\mathsf{p})\in \mathbb{E},\;\;\dot{\mathsf{p}}+\displaystyle\frac{\partial {\mathsf{H}}}{\partial \mathsf{x}}(\mathsf{x},\mathsf{v},\mathsf{p})\in \mathbb{E}^0,\;\;
\end{equation} 
     with constraints 
 $\displaystyle\frac{\partial {\mathsf{H}}}{\partial \mathsf{v}}(\mathsf{x},\mathsf{v},\mathsf{p})_{| \mathbb{E}}=0$.\\
\item[(5)]
 If  $(c(t), p(t))=\mathbb{F}\mathbf{L}_{c(t)}(\dot{c}(t))$, then $(c(t),\dot{c}(t), p(t))$ is an integral curve of the implicit Lagrangian system $(M, \Delta_M,\mathfrak{D}\mathbf{L}))$.

\end{enumerate}
\end{theorem}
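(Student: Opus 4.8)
The plan is to show that the five assertions are equivalent by establishing the four links $(2)\Leftrightarrow(3)$, $(1)\Leftrightarrow(3)$, $(3)\Leftrightarrow(5)$ and $(5)\Leftrightarrow(4)$, thereby funnelling everything through the local constraint Euler--Lagrange condition $(3)$ and through the correspondence already proved in Theorem~\ref{T_ImplicitHamiltonianL}. Throughout I work in the Banach setting of Warning~\ref{W_PMPBanachSetting}, so that $C^\infty_{\Delta_M}([t_1,t_2],M)$ is a manifold whose tangent space at $c$ is the space of infinitesimal horizontal variations, and so that the fundamental Lemma~\ref{L_VariationLemma} applies with the closed subspace $\mathbb{E}$ modelling the fibre of $\Delta_M$.

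First I would treat $(2)\Leftrightarrow(3)$. Repeating the local computation of Lemma~\ref{L_DmathcalL} for $\mathfrak{F}_{\mathbf{\bar{L}}}$, restricted to horizontal variations with fixed ends so that the boundary term vanishes, gives
\[
d_c\mathfrak{F}_{\mathbf{\bar{L}}}(\delta c)=\int_{t_1}^{t_2}\left(\frac{\partial\mathsf{L}}{\partial\mathsf{x}}(\mathsf{c},\dot{\mathsf{c}})-\frac{d}{dt}\frac{\partial\mathsf{L}}{\partial\mathsf{v}}(\mathsf{c},\dot{\mathsf{c}})\right)(\delta c(t))\,dt,
\]
where $\mathbf{\bar{L}}=\mathbf{L}_{|\Delta_M}$ ensures that, when paired with a direction in $\mathbb{E}$, the integrand built from $\bar{\mathsf{L}}$ coincides with the one built from $\mathsf{L}$ in $(3)$. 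Since the test directions $\delta c(t)$ now range in $\mathbb{E}$ and not in all of $\mathbb{M}$, Lemma~\ref{L_VariationLemma} yields vanishing of the integrand not outright but modulo $\mathbb{E}^0$, which is exactly $(3)$; conversely $(3)$ makes the integrand annihilate every horizontal $\delta c$, giving $(2)$.

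Next, for $(1)\Leftrightarrow(3)$ I would exploit the augmented functional $\widehat{\mathfrak{F}}_{\mathbf{\bar{L}}}$ of (\ref{eq_FL+Constraint}) and compute its differential in the three independent slots $\delta p,\delta v,\delta c$ along $(c,\dot{c},p)$. The $\delta p$-variation is vacuous, the base curve already satisfying $\dot{x}=v=\dot{c}$; the $\delta v$-variation, with $\delta v$ ranging in $\mathbb{E}$, forces $p-\partial\mathsf{L}/\partial\mathsf{v}\in\mathbb{E}^0$, i.e. $p=\mathbb{F}\mathbf{L}(c,\dot{c})+\nu$ with $\nu$ a local section valued in $\mathbb{E}^0$; and the $\delta c$-variation, after integration by parts with fixed ends, gives $\partial\mathsf{L}/\partial\mathsf{x}=\dot{\mathsf{p}}$ in $\mathbb{M}^\prime$. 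Substituting $\mathsf{p}=\partial\mathsf{L}/\partial\mathsf{v}+\nu$ produces $\partial\mathsf{L}/\partial\mathsf{x}-\frac{d}{dt}\partial\mathsf{L}/\partial\mathsf{v}=\dot{\nu}\in\mathbb{E}^0$, which is $(3)$; conversely, given $(3)$ one recovers a critical point by choosing $\nu$ with values in $\mathbb{E}^0$ solving $\dot{\nu}=\partial\mathsf{L}/\partial\mathsf{x}-\frac{d}{dt}\partial\mathsf{L}/\partial\mathsf{v}$ and setting $p=\partial\mathsf{L}/\partial\mathsf{v}+\nu$. Note that the multiplier of $(1)$ differs from the momentum $\mathbb{F}\mathbf{L}(c,\dot{c})$ of $(4)$--$(5)$ by the $\mathbb{E}^0$-term $\nu$, which absorbs the slack that in $(4)$--$(5)$ is instead carried by the condition $\dot{\mathsf{p}}-\partial\mathsf{L}/\partial\mathsf{x}\in\mathbb{E}^0$.

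Finally I would close the loop on the Hamiltonian side. The equivalence $(3)\Leftrightarrow(5)$ is a transcription: writing $\mathsf{p}=\partial\mathsf{L}/\partial\mathsf{v}$ so that $\dot{\mathsf{p}}=\frac{d}{dt}\partial\mathsf{L}/\partial\mathsf{v}$, the local system (\ref{Eq_ImplicitLlocal}) reads $\dot{\mathsf{x}}=\mathsf{v}\in\mathbb{E}$ together with $\dot{\mathsf{p}}-\partial\mathsf{L}/\partial\mathsf{x}\in\mathbb{E}^0$, and the latter is precisely $(3)$. The equivalence $(5)\Leftrightarrow(4)$ is then exactly Assertion~3 of Theorem~\ref{T_ImplicitHamiltonianL}, which matches integral curves of $(M,\Delta_M,\mathfrak{D}\mathbf{L})$ with integral curves in $\mathbb{P}$ of $(M,D_{\Delta_M},H_{|T^\prime M})$ under $\mathbb{F}\mathbf{L}$, smoothness of $t\mapsto(c(t),p(t))$ following from its lying in the weak submanifold $\mathbb{P}$ on which $H$ is smooth. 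The delicate point is not any one computation but the constrained variational calculus behind $(2)\Leftrightarrow(3)$: one needs enough horizontal variations, namely that each bump-times-constant field $\theta(t)u$ with $u\in\mathbb{E}$ from the proof of Lemma~\ref{L_VariationLemma} arises as $\partial_s\hat{c}|_{s=0}$ of a genuine variation through horizontal curves with fixed ends. This is exactly what the Banach manifold structure of $C^\infty_{\Delta_M}([t_1,t_2],M)$ supplies, and it is why I would lean on the Hamilton--Pontryagin form $(1)$, whose variations of $(c,v,p)$ over $\Delta_M\oplus T^\prime M$ are unconstrained, to transport the conclusion safely back to the constrained problem $(2)$.
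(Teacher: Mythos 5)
Your proposal is correct and follows essentially the same route as the paper: both arguments funnel all five assertions through the local constraint Euler--Lagrange condition (3), using the computation of Lemma~\ref{L_DmathcalL} together with Lemma~\ref{L_VariationLemma} applied to horizontal variations for $(2)\Leftrightarrow(3)$, the three-slot variation of $\widehat{\mathfrak{F}}_{\mathbf{\bar{L}}}$ with multiplier $\mathsf{p}=\partial\mathsf{L}/\partial\mathsf{v}+\nu$, $\nu$ valued in $\mathbb{E}^0$, for $(1)\Leftrightarrow(3)$, and the identification of the local implicit Lagrangian system (\ref{Eq_ImplicitLlocal}) with the Hamiltonian conditions (\ref{eq_ImplicitHamiltonianinfty}) on the Hamiltonian side. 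The only cosmetic difference is that you order the last links as $(3)\Leftrightarrow(5)\Leftrightarrow(4)$ and cite Theorem~\ref{T_ImplicitHamiltonianL}, Assertion (3), for $(5)\Leftrightarrow(4)$, whereas the paper proves $(3)\Leftrightarrow(4)$ and $(4)\Leftrightarrow(5)$ directly by the same local computations that underlie that earlier theorem.
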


\begin{corollary}
\label{C_NormalExtremal} 
If any of the Assertions of Theorem~\ref{T_RelationExtremalFbarL} is satisfied with $p(t_2)\not=0$, then
  $c$ is a normal extremal of 
  $\mathfrak{F}_{\mathbf{\bar{L}}}$.
\end{corollary}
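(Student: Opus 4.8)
The plan is to leverage the equivalence of the five assertions in Theorem~\ref{T_RelationExtremalFbarL} and to extract the normality condition from whichever one lies closest to Definition~\ref{D_CharaterizationNormalConvenientc}. Since the assertions are equivalent, I would start from Assertion~(4): granting that one of them holds with $p(t_2)\neq 0$, the curve $t\mapsto(c(t),\dot c(t),p(t))$, where $p(t)=\mathbb{F}\mathbf{L}_{c(t)}(\dot c(t))$, is a smooth integral curve of the implicit Hamiltonian system $(M,H_{|T^\prime M},D_{\Delta_M})$ and, in any chart meeting $c([t_1,t_2])$, satisfies the local conditions~(\ref{eq_ImplicitHamiltonianinfty}). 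The whole problem then reduces to rewriting those conditions as the implicit differential system~(\ref{eq_MaximumPrincipleConvenient}) of Definition~\ref{D_CharaterizationNormalConvenientc}, carrying $p(t_2)\neq 0$ along as the supplementary normality requirement.

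The bulk of the argument is the same local bookkeeping as in the proof of Theorem~\ref{T_ImplicitHamiltonianPMP}. Writing $H=\tilde H^1$ with $\tilde H^1(x,v,p)=<p,v>-\mathbf{L}(x,v)$, one has $\frac{\partial\mathsf{H}}{\partial\mathsf{p}}=\mathsf{v}$, $\frac{\partial\mathsf{H}}{\partial\mathsf{x}}=-\frac{\partial\mathsf{L}}{\partial\mathsf{x}}$ and $\frac{\partial\mathsf{H}}{\partial\mathsf{v}}=\mathsf{p}-\frac{\partial\mathsf{L}}{\partial\mathsf{v}}$ in local coordinates. The first relation of~(\ref{eq_ImplicitHamiltonianinfty}) yields $\dot{\mathsf{x}}=\frac{\partial\mathsf{H}}{\partial\mathsf{p}}=\mathsf{v}\in\mathbb{E}$, which simultaneously confirms that $c$ is tangent to $\Delta_M$ and produces the first line of~(\ref{eq_MaximumPrincipleConvenient}). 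Because $\mathbf{L}$ and $\bar{\mathbf{L}}$ coincide along $\mathbb{E}$, the constraint $\frac{\partial\mathsf{H}}{\partial\mathsf{v}}\big|_{\mathbb{E}}=0$ is exactly the vanishing $\frac{\partial\mathsf{H}^1}{\partial\mathsf{u}}=0$ of the control-derivative of $H^1(x,u,p)=<p,u>-\bar{\mathbf{L}}(x,u)$, i.e. the third line of~(\ref{eq_MaximumPrincipleConvenient}).

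The delicate point, which I expect to be the main obstacle, is the momentum equation. Assertion~(4) delivers only the inclusion $\dot{\mathsf{p}}+\frac{\partial\mathsf{H}}{\partial\mathsf{x}}\in\mathbb{E}^0$, equivalently $\dot{\mathsf{p}}-\frac{\partial\mathsf{L}}{\partial\mathsf{x}}\in\mathbb{E}^0$, because $H=\tilde H^1$ is defined on all of $TM\oplus T^\prime M$ and only its restriction to $\mathbb{P}$ is intrinsic; Definition~\ref{D_CharaterizationNormalConvenientc} is instead phrased as the genuine equality $\dot{\mathsf{p}}=-\frac{\partial\mathsf{H}^1}{\partial\mathsf{x}}$ for the control Hamiltonian. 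To reconcile the two I would use that $\Delta_M$ is a closed split Banach subbundle (Warning~\ref{W_PMPBanachSetting}) to pick an adapted local trivialization in which the fibre of $\Delta_M$ is a fixed subspace $\mathbb{E}\subset\mathbb{M}$ and the inclusion anchor $\rho$ is constant; the $\mathbb{E}^0$-indeterminacy of $\frac{\partial\mathsf{H}}{\partial\mathsf{x}}$ is then precisely the $x$-derivative of $<p,\rho(x,u)>$ entering $\frac{\partial\mathsf{H}^1}{\partial\mathsf{x}}$, so the inclusion turns into the required equality. Since the smoothness of $t\mapsto(c(t),p(t))$ is already furnished by Assertion~(4) and $p(t_2)\neq 0$ is assumed, the system~(\ref{eq_MaximumPrincipleConvenient}) holds with non-vanishing endpoint momentum, and $c$ is therefore a normal extremal in the sense of Definition~\ref{D_CharaterizationNormalConvenientc}.
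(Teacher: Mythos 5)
Your reduction of the statement to Assertion (4) of Theorem~\ref{T_RelationExtremalFbarL} and your local identities ($\frac{\partial\mathsf{H}}{\partial\mathsf{p}}=\mathsf{v}$, $\frac{\partial\mathsf{H}}{\partial\mathsf{x}}=-\frac{\partial\mathsf{L}}{\partial\mathsf{x}}$, $\frac{\partial\mathsf{H}}{\partial\mathsf{v}}=\mathsf{p}-\frac{\partial\mathsf{L}}{\partial\mathsf{v}}$) are correct, and you have put your finger on exactly the right difficulty: Assertion (4) only yields the inclusion $\dot{\mathsf{p}}+\frac{\partial\mathsf{H}}{\partial\mathsf{x}}\in\mathbb{E}^0$, while Definition~\ref{D_CharaterizationNormalConvenientc} demands the equality $\dot{\mathsf{p}}=-\frac{\partial\mathsf{H}^1}{\partial\mathsf{x}}$. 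But your resolution of that difficulty fails. In an adapted trivialization where $(\Delta_M)_{|U}\cong\mathbb{U}\times\mathbb{E}$ and the anchor is the constant inclusion $\mathbb{E}\hookrightarrow\mathbb{M}$, the term $\frac{\partial}{\partial\mathsf{x}}<\mathsf{p},\rho(\mathsf{x},\mathsf{u})>$ is identically zero, so $\frac{\partial\mathsf{H}}{\partial\mathsf{x}}=\frac{\partial\mathsf{H}^1}{\partial\mathsf{x}}$ along $\Delta_M$: there is nothing left over to absorb the $\mathbb{E}^0$-indeterminacy, and the inclusion remains an inclusion. That $\mathbb{E}^0$ term is not a coordinate artifact; it is the Lagrange multiplier built into the very definition of $D_{\Delta_M}$ (the condition $\alpha-\Omega^\flat(u)\in\Delta^0_{T^\prime M}$), and it is generically nonzero --- in Example~\ref{ex_Rope} it is the tension $\lambda(x)x^\prime$. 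Consequently, for the momentum $p(t)=\mathbb{F}\mathbf{L}_{c(t)}(\dot{c}(t))$ that Assertion (4) hands you, the equality momentum equation of (\ref{eq_MaximumPrincipleConvenient}) simply fails in general, in every chart.

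What closes the gap --- and what the paper's terse proof does implicitly by invoking Proposition~\ref{P_tEpFL} with $E=\Delta_M$ and $\nu=1$ --- is a change of momentum section. Set $\lambda(t):=\dot{\mathsf{p}}(t)-\frac{\partial\bar{\mathsf{L}}}{\partial\mathsf{x}}(\mathsf{c}(t),\dot{\mathsf{c}}(t))\in\mathbb{E}^0$ and $\tilde{\mathsf{p}}(t):=\mathsf{p}(t)-\int_{t_2}^{t}\lambda(s)\,ds$; equivalently, let $\tilde{p}$ be the solution of $\dot{\tilde{\mathsf{p}}}=-\frac{\partial\mathsf{H}^1}{\partial\mathsf{x}}$ with terminal datum $\tilde{\mathsf{p}}(t_2)=\mathsf{p}(t_2)$, which is precisely how the section in the proof of Proposition~\ref{P_tEpFL} is constructed (there $p$ is defined by integrating the Hamiltonian equation backwards from $q$, not as the Legendre transform). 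Since $\tilde{\mathsf{p}}-\mathsf{p}$ takes values in the closed subspace $\mathbb{E}^0$, one keeps $\tilde{\mathsf{p}}_{|\mathbb{E}}=\mathsf{p}_{|\mathbb{E}}=\frac{\partial\bar{\mathsf{L}}}{\partial\mathsf{u}}$, so the constraint $\frac{\partial\mathsf{H}^1}{\partial\mathsf{u}}(\mathsf{x},\mathsf{v},\tilde{\mathsf{p}})=0$ survives, the equation $\dot{\mathsf{x}}=\mathsf{v}$ is untouched, the momentum equation now holds as an equality by construction, and $\tilde{\mathsf{p}}(t_2)=\mathsf{p}(t_2)\not=0$. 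It is this modified section, agreeing with $\mathbb{F}\mathbf{L}(\dot{c})$ only on $\Delta_M$, that witnesses the system (\ref{eq_MaximumPrincipleConvenient}) and makes $c$ a normal extremal; without this step your argument proves only the inclusion form, which is strictly weaker than Definition~\ref{D_CharaterizationNormalConvenientc}.
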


 \begin{proof} [Proof of Theorem \ref{T_RelationExtremalFbarL}]
  $(1)\Longleftrightarrow  (3)$. 
   As in the proof of Theorem \ref{T_EulerLagrange-extremal},  we consider a local chart domain $U$ such that $U\cap 
c([t_1,t_2])\not=\emptyset$ and $J$ be the open set in $[t_1,t_2]$ which is the inverse image of $U\cap c([t_1,t_2])$. It is an open  interval whose  closure  is equal to $ \displaystyle\bigcup_{i=1}^N[\tau_1^i,\tau_2^i]$. Choose  any infinitesimal variations $ (\delta c, \delta v,\delta p)$ of $(c,\dot{c}, p)$ associated to a variation $(\hat{c},\hat{v},\hat{p})$  such that  $c_s:\hat{c}(s,.)$,  $v_s:=\hat{v}(s,.), p_s:=\hat{p}(s,.)$  have compact supports in $J$ and where 
$c_s(\tau^i_1)=c(\tau^i_2)$, for all $i \in \{1,\dots, N\}$.\\
By same  arguments as in  the proof of Proposition 3.1 in \cite{YoMa06II}, on the interval $J$  we have: 
\begin{equation}
\label{eq_dFLcvp}
\begin{matrix}
d \widehat{\mathfrak{F}}_{\mathbf{\bar{L}}}\left((\delta c,\delta v,\delta p)_{| \mathbb{J}}\right)\hfill{}\\
\cong \displaystyle\int_\mathbb{J}\left[(\dot{\mathsf{c}}-\mathsf{v})\delta\mathsf{p} +\left(-\dot{\mathsf{p}}+\frac{\partial\bar{\mathsf{L}}}{\partial \mathsf{x}}\right)\delta \mathsf{c}+\left(-\mathsf{p}+\frac{\partial\bar{\mathsf{{L}}}}{\partial \mathsf{v}}\right)\delta \mathsf{v}\right]dt  +\sum_{i=1}^N[p\delta c]_{\tau^i_1}^{\tau^i_2}\hfill{}\\
\end{matrix}
\end{equation}

According to the second member of (\ref{eq_dFLcvp}), note that the maps 

$t\mapsto (\dot{\mathsf{c}}-\mathsf{v})(t)$ takes values in $\mathbb{M}^{\prime\prime}$;

$t\mapsto \left(-\dot{\mathsf{p}}+\dfrac{\partial\bar{\mathsf{L}}}{\partial \mathsf{x}}\right)$ takes values in $\mathbb{M}^\prime$;

$t\mapsto \left(-\mathsf{p}+\dfrac{\partial\bar{\mathsf{{L}}}}{\partial \mathsf{v}}\right)$ takes values in $\mathbb{M}^\prime$.

Assume that the first member of equation (\ref{eq_dFLcvp}) is zero for any  previous choice of variations  $(\delta c, \delta v , \delta p)$.  Since $c_s(\tau^i_1)=c(\tau^i_2)$ for all $i \in \{1,\dots, N\}$, by application of Lemma ~\ref{L_DmathcalL}  to the second member of (\ref{eq_dFLcvp}), we must have 
\begin{equation}
\label{eq_ EulerLagrangeGeneral2}
 \dot{\mathsf{x}}= \mathsf{v}, \;\; \;\;\mathsf{v}\in \mathbb{E}, \;\; \mathsf{p}-\displaystyle\frac{\partial \mathsf{L}}{\partial \mathsf{v}}\in \mathbb{E}^0,\;\; \;\dot{\mathsf{p}}=\displaystyle\frac{\partial\mathsf{L}}{\partial \mathsf{x}}.
  \end{equation}

But since $\dot{\mathsf{p}}=\displaystyle\frac{\partial\mathsf{L}}{\partial \mathsf{x}}$, this implies that along $(\mathsf{c},\dot{\mathsf{c}})$, the equation (\ref{eq_EulerLagrangeE0}) is satisfied.\\

Conversely, under the assumption (3) by the choice of a finite partition $\tau_0=t_1<\tau_0<\cdots<\tau_i<\cdots<\tau_n=t_2$ and a covering $U_1, \dots U_i \dots U_n$ of $c([t_1,t_2])$ by chart domains, such that $c(\tau_i )$ belongs to $U_{i}\cap U_{i+1}$ for $0<i <n$, $c(\tau_0)\in U_0$ and $c(\tau_n)\in U_n$. As in the proof of Theorem~\ref{T_EulerLagrange-extremal},  by application of the condition (\ref{eq_ EulerLagrangeGeneral2})  in local coordinates systems  associate to each $U_i$ to  the second member of (\ref{eq_dFLcvp}) and juxtaposition,  we prove (1).\\
 
  $(2)\Longleftrightarrow  (3)$. Again take the same context of local coordinates as at the beginning of the proof of Theorem  \ref{T_EulerLagrange-extremal}. Then all infinitesimal horizontal variational
  $\delta c$  whose support is contained in $J$ and such that $\delta c(\tau^i_1)=\delta c(\tau^i_2)=0$, we have (cf. (\ref{eq_DFL1}):
  \begin{equation}
\label{eq_dFLcvp2}
\begin{matrix}
d {\mathfrak{F}}_{\mathbf{\bar{L}}}\left((\delta c)_{| \mathbb{J}}\right)\cong \hfill{}\\
  \displaystyle \int_\mathbb{J}\left(\frac{\partial \bar{\mathsf{L}}}{\partial \mathsf{x}}(\mathsf{c}(t) ,\dot{\mathsf{c}}(t))-\frac{d}{dt}\frac{\partial \bar{\mathsf{L}}}{\partial \mathsf{v}}(\mathsf{c}(t),\dot{\mathsf{c}(t}))\right)(\delta c(t))dt
   + \sum_{i=1}^N\left[\left(\frac{\partial \bar{\mathsf{L}}}{\partial \mathsf{v}}(\mathsf{c}(t),\dot{\mathsf{c}}(t))\right)(\delta c)\right]_{\tau_1^i}^{\tau_2^i}\\
  \end{matrix}
   \end{equation} 
Since for $\delta c$,  the associated variation of curves  $c_s$  are tangent to $\Delta_M$, it follows that $\delta c$ is section of $c^!\Delta_M$ and as $\delta c(\tau^i_1)=\delta c(\tau^i_2)=0$,  by application of Lemma  \ref{L_DmathcalL}  to the second member of (\ref{eq_dFLcvp2}), 
we obtain 
\[
\frac{\partial \bar{\mathsf{L}}}{\partial \mathsf{x}}(\mathsf{c}(t) ,\dot{\mathsf{c}}(t))-\frac{d}{dt}\frac{\partial \bar{\mathsf{L}}}{\partial \mathsf{v}}(\mathsf{c}(t),\dot{\mathsf{c}(t}))
\in \mathbb{E}^0.
\]

For the converse, we use same arguments as in the proof of Theorem \ref{T_EulerLagrange-extremal}.\\

 $(3)\Longleftrightarrow  (4)$.  We consider the context of local coordinates. If $c$ belongs to $C^\infty_{\Delta_M}([t_1,t_2], M)$, and $p(t)=\mathbb{F}\mathbf{L}_{c(t)}(\dot{c}(t))$, without any other assumption,  from the definition of $H$, we have
 
 \begin{equation}\label{eq_LinkEulerLagrangeInducedDirac}
\begin{cases}
\dot{\mathsf{c}}(t)=\displaystyle\frac{\partial {\mathsf{H}}}{\partial \mathsf{p}}(\mathsf{c}(t),\dot{\mathsf{c}}(t),\mathsf{p}(t)) \\
\displaystyle\dot{\mathsf{p}}(t)=\frac{d}{dt}\frac{\partial{\mathsf{L}}}{\partial \mathsf{v}}(\mathsf{c}(t) ,\dot{\mathsf{c}}(t)) \\
\displaystyle \frac{\partial{\mathsf{L}}}{\partial \mathsf{x}}(\mathsf{c}(t) ,\dot{\mathsf{c}}(t))=-\frac{\partial\tilde{\mathsf{H}}}{\partial \mathsf{x}}((\mathsf{c}(t),\dot{\mathsf{c}}(t),\mathsf{p}(t))
\end{cases}
\end{equation}

Note that the curve  $c$ belongs to $C^\infty_{\Delta_M}([t_1,t_2], M)$ if and only if   the first condition in (\ref{eq_ImplicitHamiltonianinfty}) is satisfied.
 
According to the two last equations in  (\ref{eq_LinkEulerLagrangeInducedDirac}), the Euler-Lagrange constraint condition  is equivalent to the second condition in (\ref{eq_ImplicitHamiltonianinfty})

The constraint condition  in (\ref{eq_ImplicitHamiltonianinfty}) is equivalent to $\displaystyle\frac{\partial \mathsf{L}}{\partial \mathsf{v}} (\mathsf{c}(t),\dot{\mathsf{c}}(t))=0$, which completes the proof.\\
 
$(4)\Longleftrightarrow  (5)$. Recall that from  (\ref{Eq_ImplicitLlocal}), in our context, the local implicit Lagrangian  system  in local coordinates is characterized by

\begin{equation}
\label{Eq_ImplicitLlocal}
\mathsf{p}=\displaystyle\frac{\partial \mathsf{L}}{\partial \mathsf{v}},\;\;\;\;\; \dot{\mathsf{x}}\in \mathbb{E}\,;\;\;\;\;\dot{\mathsf{x}}=\mathsf{v},\;\; \;\;\;\;\dot{\mathsf{p}}-\displaystyle\frac{\partial \mathsf{L}}{\partial\mathsf{x}} \in \mathbb{E}^0.
\end{equation}
 But from  the definition of $H$ and the properties of $L$, these conditions are equivalent to condition (\ref{eq_ImplicitHamiltonianinfty}).   
\end{proof}

\begin{proof}[Proof of Corollary  \ref{C_NormalExtremal}] 
We can assume that Assertion (4) of Theorem \ref{T_RelationExtremalFbarL} is satisfied. This implies that the assumptions of Proposition \ref{P_tEpFL} are satisfied for $E=\Delta_M$
  and  $\nu=1$. But it  is exactly the context of Definition~\ref{D_CharaterizationNormalConvenientc}. 
which ends the proof.
\end{proof}

\section[Dynamical Systems And Implicit Lagrangians on Direct Limits of Finite Dimensional Manifolds]{Dynamic Systems and Implicit Lagrangians on Direct Limits of Finite Dimensional Manifolds}
\label{__DynamicSystemsImplicitLagrangiansOnDirectLimitsOfFiniteDimensionalManifolds} 
 
\subsection{Some Basic Results}\label{___SomeBasicResults}
Let $M$  be a finite dimensional manifold provided with its  Pontryagin bundle $T^\mathfrak{p}M$ and  Dirac structure $D_M$. If $\phi:N\to M$ is a smooth map, the pull-back $\phi^!(D_M)$ of $D_M$ on $N$ is defined in the following way (cf. \cite{Bur13}): 
\[
(\phi^!(D_M))_x:=\{(X,d\phi^\prime(\beta))\;|\; (d\phi(X),\beta)\in (D_M)_{\phi(x)}\}.
\]

When $\phi:N\to M$ is the inclusion of a submanifold $N$ into $M$, then the pull-back  $\phi^!(D_N)$ is a Dirac structure on $N$ (cf. \cite{Bur13}, Example 5.7).\footnote{Note that this definition is compatible with the definition of compatible Dirac structure in \cite{PeCa24}, Remark 5.3.}

Let $ \left( M_n \right) _{n\in \mathbb{N}}$ be an ascending sequence of finite dimensional manifolds where $M_n$ is submanifold of $M_{n+1}$, each one  provided with its Pontryagin bundle 
$T^\mathfrak{p}M_n=TM_n\oplus T^\prime M_n$ and with a Dirac structure $D_n$ such that $D_n$ is the pull-back of $D_{n+1}$. 
We say that $ \left( M_n, D_n \right) _{n \in \N} $ is an \emph{ascending sequence of finite dimensional Dirac structures}\index{ascending sequence!of finite dimensional Dirac structures}. Then $M=\underrightarrow{\lim}M_n
=\displaystyle\bigcup_{n\in \mathbb{N}} M_n$ has a convenient manifold structure modelled on $\mathbb{R}^\infty$ (the set of finite real sequences) and $TM=\underrightarrow{\lim}TM_n=\displaystyle\bigcup_{n\in \mathbb{N}}TM_n$ and so $T^\prime M=\underleftarrow{\lim}T^\prime M_n$ which is a Fr\'echet bundle over $M$. We denote by $p_n$ and $p^\prime_n$ the canonical projections of $T^{\mathfrak{b}}M_n$ onto $TM_n$ and $T^\prime M_n$ respectively. 
\\
Then we have the following results given in \cite{PeCa24}:
\begin{proposition}\label{P_FiniteDirectLimitDirac} 
Let $(M_n)_{n\in\mathbb{N}}$ be   an ascending  sequence $(M_n,) _{n \in \mathbb{N}}$ of finite dimensional  manifolds $M_n$, each one being provided with the Pontryagin bundle  $T^\mathfrak{p}M_n = TM_n\oplus T^\flat M_n$ and $D_n$ a Dirac structure on $M_n$ which is the pull-back of $D_{n+1}$ for all $n\in\mathbb{N}$.
\begin{enumerate}
\item[1.] 
We  have convenient bundle  projections $p: T^\mathfrak{p} M\to TM$ and $p^\prime: T^\mathfrak{p}M\to T^\prime M$ which are given by $p=\underrightarrow{\lim}p_n$ and $p^\prime=\underleftarrow{\lim}p'_n$.
\item[2.] 
${D}=\underrightarrow{\lim}p_n(D_n)\times \underleftarrow{\lim}p^\prime_n({D}_n)$ is an almost  Dirac structure on $M$.
\item[3.]
A dual pairing $<.,.>$ on $T^\flat M \times TM$ is defined  in the following way:

for $\alpha=\underleftarrow{\lim}\alpha_n\in T_x^\prime M$ and $u=\underrightarrow{\lim}u_n\in T_xM$,  le  $i$ be the smallest integer $j$ such that $\alpha$ belongs to $T'_x M_i$, we have
\begin{center}
 $<\alpha,u>=\underleftarrow{\lim}_{n\geq j}\alpha_n(u_{j})=\underleftarrow{\lim}_{n\geq i}<\alpha_i, u_n>$.
 \end{center}
\item[4.]
The Courant bracket 
\begin{equation}\label{eq_DirctLimitCourantBracket}
[(X,\alpha), (Y,\beta)]_C
=\left([X,Y],L_X\beta-L_Y\alpha-\displaystyle\frac{1}{2}d(i_Y\alpha -i_X\beta)\right)
\end{equation}
is well defined for all sections $(X,\alpha)$ and $(Y,\beta)$ of $T^\mathfrak{p}M$  such that 
$X=\underrightarrow{\lim}X_n$ (resp. $Y=\underrightarrow{\lim}Y_n$) and $\alpha=\underleftarrow{\lim}\alpha_n$ (resp. $\beta=,\underleftarrow{\lim}\beta_n) )$ where $(X_n,\alpha_n)$ (resp. $ (Y_n,\beta_n)$) is a section  of $T^\mathfrak{p}M_n$\footnote{In fact, in general,  any section of  $T^\mathfrak{p}M$  is not a direct  limit  of  ascending sequence of $T^\mathfrak{p} M_n$, this Courant bracket is not defined on all sections of $T^\mathfrak{p}M$  (cf. Remark 5.11 in \cite{PeCa24}) }. This means that we have
\begin{equation}
\label{eq_directLimitCorantSequnce}
[(X,\alpha),(Y,\beta)]_C=\left(\underrightarrow{\lim}[X_n,Y_n],\underleftarrow{\lim} p^\flat_n([(X_n,\alpha_n),(Y_n,\beta_n)]_C)\right)
\end{equation}
Then the almost Dirac structure $D$ on $M$ is not  a Dirac structure on $M$ since the Courant bracket in not defined for all  (local) sections of $T^\mathfrak{p}M$. However, it is "integrable"  in the followings sense:
\begin{description} 
\item[$\bullet$] 
Given any $x\in M$, there exists a smallest integer $i$ such that $x_i$ belongs to $M_i$. Then the  leaf ${L}$ of the characteristic distribution of $D$ through $x$ is the leaf ${L}_i$ through $x_i \in M_i$ of the characteristic distribution of $D_i$ and for $x_n=x_i$ and $ n\geq i$ the leaf through $x_n$ is $L_i$
\item[$\bullet$] 
On each such a  leaf ${L}$, we have a bundle convenient skew symmetric  morphism \footnote{cf. \cite{PeCa24}, Corollary 5.10.} $P_{L}=\underleftarrow{\lim}_{n\geq i}(P_{{L}_n})$ from $T{L}\to T^\prime {L}$ whose kernel is $D\cap T{L}$ and range is $T^\prime {L}\cap T^\flat M$.  The associated $2$-form $\Omega_{{L}}$ is $\underleftarrow{\lim}_{n\geq j} (\Omega_{{L}_n})$ and it  is a strong   pre-symplectic $2$-form on ${L}$. \\
In other words, for $n\geq i$ this means that $(P_{L})_{| T{L}_n}\to T^\prime {L}$ is $P_{{L}_n}=P_{{L}_i} $, $\ker P_{L}=D_n\cap T{L}_n$, and  $\Omega_{{L}}=\Omega_{{L}_n}$.
\end{description}
 \end{enumerate}
 \end{proposition}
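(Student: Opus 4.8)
The plan is to derive all four assertions from the two limit identities $TM=\underrightarrow{\lim}TM_n$ and $T^\prime M=\underleftarrow{\lim}T^\prime M_n$, treating each projection, pairing and bracket as a limit of its finite-dimensional analogue, the pull-back hypothesis $D_n=\iota_{n,n+1}^!(D_{n+1})$ (with $\iota_{n,n+1}:M_n\hookrightarrow M_{n+1}$) serving as the compatibility glue at each stage. For Assertion~1 I would first check that the canonical projections $p_n:T^\mathfrak{p}M_n\to TM_n$ are compatible with the bundle inclusions $T^\mathfrak{p}M_n\hookrightarrow T^\mathfrak{p}M_{n+1}$, so that $p=\underrightarrow{\lim}p_n$ is a well-defined convenient morphism onto $TM$; dually, the $p^\prime_n$ commute with the restriction maps $T^\prime M_{n+1}\to T^\prime M_n$ and assemble into $p^\prime=\underleftarrow{\lim}p^\prime_n$ valued in the Fréchet bundle $T^\prime M$. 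The pull-back relation is exactly what yields $p_n(D_n)\subseteq p_{n+1}(D_{n+1})$ and the compatibility of the $p^\prime_n(D_n)$, so that the product $D=\underrightarrow{\lim}p_n(D_n)\times\underleftarrow{\lim}p^\prime_n(D_n)$ is a genuine limit object.

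For Assertions~2 and~3 together I would first fix the pairing: given $u=\underrightarrow{\lim}u_n\in T_xM$, there is a smallest $j$ with $u\in T_xM_j$, and for a compatible system $\alpha=\underleftarrow{\lim}\alpha_n$ the evaluations $\alpha_n(u)$ agree for all $n\ge j$, so the stabilized value is well defined, bilinear and bounded, independently of representatives. With this pairing in hand, the almost Dirac condition $D=D^\perp$ for Assertion~2 reduces to its finite stages $D_n=D_n^\perp$: isotropy passes to the limit because any two elements of $D$ already pair trivially at some finite level, while maximality of $D$ inside its orthogonal is forced by maximality of each $D_n$, again through the pull-back compatibility.

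For the algebraic half of Assertion~4 I would verify the Courant bracket~\eqref{eq_DirctLimitCourantBracket} termwise on sections of the form $(X,\alpha)=(\underrightarrow{\lim}X_n,\underleftarrow{\lim}\alpha_n)$: the Lie bracket satisfies $[X,Y]=\underrightarrow{\lim}[X_n,Y_n]$ since brackets of direct-limit fields are computed stagewise, and the $L_X\beta-L_Y\alpha$ and $d(i_Y\alpha-i_X\beta)$ terms commute with the inverse limit on the $T^\prime M$ factor, producing the displayed expression~\eqref{eq_directLimitCorantSequnce}. Here I would stress that not every local section of $T^\mathfrak{p}M$ is of this limit form, so the bracket is only partially defined and $D$ is an almost, not a full, Dirac structure on $M$.

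The geometric half of Assertion~4 is, I expect, the main obstacle. Writing $G_1^{(n)}=p_n(D_n)$ for the characteristic distribution of $D_n$, the pull-back relation gives $(G_1^{(n)})_x=\{X\in T_xM_n:\,d\iota_{n,n+1}(X)\in (G_1^{(n+1)})_x\}$, and the key lemma to establish is that the characteristic leaves stabilize: for $x$ of smallest index $i$, the $D_i$-leaf $L_i$ through $x_i$ is already the $D_n$-leaf through $x$ for every $n\ge i$, so that the characteristic leaf of $D$ through $x$ is $L=L_i$ and does not acquire directions transverse to $M_i$. Granting this, I would invoke Theorem~\ref{T_GeometricStructuresOnLeaves}, valid in each finite dimension, to obtain on every $L_n$ the skew morphism $P_{L_n}:TL_n\to T^\prime L_n$ and the closed $2$-form $\Omega_{L_n}$, and then identify $P_L=\underleftarrow{\lim}_{n\ge i}P_{L_n}$ and $\Omega_L=\underleftarrow{\lim}_{n\ge i}\Omega_{L_n}$, checking that the projective limit of closed forms is a strong pre-symplectic form on $L$ with kernel $D\cap TL$ and range $T^\prime L\cap T^\flat M$. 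The delicate point throughout is precisely the leaf-stabilization lemma, without which the leaves would fail to close up inside a single finite stage and the projective limits defining $P_L$ and $\Omega_L$ would not make sense.
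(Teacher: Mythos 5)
A preliminary remark: the paper contains no proof of Proposition~\ref{P_FiniteDirectLimitDirac} at all — it is quoted verbatim from \cite{PeCa24} ("Then we have the following results given in \cite{PeCa24}"), so your attempt can only be judged on its own merits, not against an argument in this text.

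For Assertions 1, 2, 3 and the algebraic half of Assertion 4 your plan is the natural one and is essentially correct: every vector of $T_xM$ lies in some $T_xM_j$, so pairings, orthogonality and brackets all stabilize at a finite stage. Two computations would still need to be written out. First, for maximality in Assertion 2 you need the observation that the pull-back relation lets any $(u,\alpha_n)\in (D_n)_x$ be extended to a compatible sequence $(u,\alpha_m)\in (D_m)_x$ for $m\geq n$, so that every element of $D_n$ over $T_xM_n$ is the $n$-th stage of an element of $D$; testing $(v,\beta)\in D^\perp$ against such extensions then reduces $D^\perp\subseteq D$ to $D_n^\perp=D_n$. Second, for Assertion 4 the fact that the one-form parts of the stagewise Courant brackets form a projective system rests on the identity $(L_{X_{n+1}}\beta_{n+1})_{|TM_n}=L_{X_n}\beta_n$, valid because $X_{n+1}$ is tangent to $M_n$ with restriction $X_n$ and $\beta_{n+1}$ restricts to $\beta_n$; this is the precise content of "commuting with the inverse limit" and deserves a line of proof.

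The genuine gap is exactly where you yourself locate the difficulty: the leaf-stabilization lemma is announced but never proved, and — more seriously — it cannot be proved from the stated hypotheses, so the plan "establish the lemma, then apply Theorem~\ref{T_GeometricStructuresOnLeaves} stagewise" fails as a proof strategy. The pull-back relation you write down gives $(G_1^{(n)})_y = T_yM_n\cap (G_1^{(n+1)})_y$ at points $y\in M_n$, i.e.\ the characteristic spaces \emph{increase} with $n$; stabilization of the leaves would require the opposite inclusion $(G_1^{(n+1)})_y\subseteq T_yM_n$, which does not follow. Concretely, take $M_n=\mathbb{R}^{2n}$ with the standard symplectic form $\omega_n$ and $D_n$ the graph of $\omega_n^\flat$. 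Since $\iota^*\omega_{n+1}=\omega_n$ and the pull-back of the graph of a $2$-form is the graph of the pulled-back form, each $D_n$ is the pull-back of $D_{n+1}$; yet the characteristic leaf of $D_n$ through the origin is all of $\mathbb{R}^{2n}$, the leaves do not stabilize, and the leaf of the limit structure through $0$ is all of $M=\mathbb{R}^\infty$ rather than $L_i$. Hence leaf stabilization is an \emph{additional hypothesis} on the ascending sequence — presumably part of the notion of compatible Dirac structures of \cite{PeCa24}, Remark~5.3, alluded to in the footnote of Section~\ref{___SomeBasicResults} — and not a consequence of pull-back compatibility; any blind proof attempt within the hypotheses as stated in this paper must either import that hypothesis explicitly or break down at precisely this point. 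Granting stabilization, your reduction of the second bullet to the finite-dimensional Theorem~\ref{T_GeometricStructuresOnLeaves} on each $L_n=L_i$ is unobjectionable.
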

 
The following result can be considered as a sort of  converse of Proposition \ref{P_FiniteDirectLimitDirac}  and its  proof is left to the reader:

\begin{proposition}\label{T_ConverseBasicResults}
Under the previous notations we have:
\begin{enumerate}
\item[1.] The graph of $\Omega$ is a  partial Dirac structure $D\subset T^\mathfrak{p}(T^\prime M):=T(T^\prime M)\oplus T^\flat (T^\prime M))$  on $T^\prime M$, and 
  $D_n:=\iota^{!}_n D$ is the Dirac structure which is is the graph of $\Omega_n$.  In particular,   $D=\underrightarrow{\lim}p_n(D_n)\times \underleftarrow{\lim}p^\prime_n({D}_n)$
\item[2.]  The set of local sections $\Gamma \left( T^\mathfrak{p}M_{| U} \right)$ is provided with a Courant bracket (cf. (\ref{eq_CourantTM}) 
which satisfies the property of Proposition \ref{P_FiniteDirectLimitDirac}, Assertion 4.
\end{enumerate}
\end{proposition}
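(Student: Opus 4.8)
The plan is to deduce both assertions from the results already established for the induced Dirac structure of a closed $2$-form, combined with the direct/projective limit machinery of Proposition~\ref{P_FiniteDirectLimitDirac}.

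First I would settle the opening claim of Assertion 1. By the construction of $\S$~\ref{___CanonicalMaps}, the range of $\Omega^\flat:T(T^\prime M)\to T^\prime(T^\prime M)$ is by definition the closed weak subbundle $T^\flat(T^\prime M)$; hence Example~\ref{Ex_2Form}, applied to the $2$-form $\Omega$ on the base manifold $T^\prime M$, shows that $D=\{(u,\Omega^\flat(u))\}$ is a partial almost Dirac structure on $T^\prime M$ with $G_1=T(T^\prime M)$, $G_0=\ker\Omega^\flat$ and $P_0=\{0\}$. Since $\Omega$ is the canonical symplectic form it is closed, so the involutivity of $D$ follows from \cite{PeCa24}, Proposition~3.27 (as already invoked in Remark~\ref{R_DOmegaM}); thus $D$ is in fact a partial Dirac structure.

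Next, for the pullback identity, I would use the canonical inclusion $\iota_n:T^\prime M_n\hookrightarrow T^\prime M$ arising from $T^\prime M=\underleftarrow{\lim}T^\prime M_n$ together with the naturality of the Liouville forms, namely $\iota_n^\ast\Omega=\Omega_n$, where $\Omega_n$ is the canonical (strong) symplectic form on the finite dimensional manifold $T^\prime M_n$. The key point is that pulling back the graph of a closed $2$-form reproduces the graph of the pulled-back form: substituting $D=\operatorname{graph}(\Omega^\flat)$ into the defining formula $(\iota_n^!D)_z=\{(X,d\iota_n^\prime(\beta)):(d\iota_n(X),\beta)\in D_{\iota_n(z)}\}$ and using $\iota_n^\ast\Omega=\Omega_n$ yields $\iota_n^!D=\operatorname{graph}(\Omega_n^\flat)$. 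Because $T^\prime M_n$ is finite dimensional and $\Omega_n$ is symplectic, this graph is a genuine Dirac structure in the sense of \cite{Bur13}, Example~5.7, which identifies $D_n=\iota_n^!D$ as the Dirac structure that is the graph of $\Omega_n$. The decomposition $D=\underrightarrow{\lim}p_n(D_n)\times\underleftarrow{\lim}p^\prime_n(D_n)$ then follows by feeding $D=\operatorname{graph}(\Omega^\flat)$ and $D_n=\operatorname{graph}(\Omega_n^\flat)$ into the limit identifications of Proposition~\ref{P_FiniteDirectLimitDirac}, under which $\Omega^\flat=\underleftarrow{\lim}\Omega_n^\flat$ and the projections $p_n,p^\prime_n$ act level by level. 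For Assertion 2 the Courant bracket statement follows \emph{verbatim} from the argument of Proposition~\ref{P_FiniteDirectLimitDirac}, Assertion 4: the Lie bracket, the Lie derivatives and $d$ appearing in~(\ref{eq_CourantTM}) are all natural with respect to the inclusions $\iota_n$, so for limit sections $(X,\alpha)$ and $(Y,\beta)$ the bracket is computed componentwise as in~(\ref{eq_directLimitCorantSequnce}), and the involutivity of each $D_n$ forces the limit to lie in $D$.

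I expect the main obstacle to be the naturality and limit-compatibility verifications rather than any isolated computation: one must check carefully that $\iota_n^\ast\Omega=\Omega_n$ and that $\Omega^\flat=\underleftarrow{\lim}\Omega_n^\flat$, with the finite dimensional ranges assembling into the closed weak subbundle $T^\flat(T^\prime M)$, while retaining the caveat stressed after~(\ref{eq_directLimitCorantSequnce}) that the Courant bracket is defined only on sections arising as direct/projective limits; Assertion 2 therefore concerns precisely this subclass of sections, not all of $\Gamma(T^\mathfrak{p}M_{|U})$, which is exactly why $D$ fails to be a Dirac structure in the strict sense even though each $D_n$ is.
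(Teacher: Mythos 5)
The paper offers no proof to compare against here: this proposition is explicitly ``left to the reader'', so your argument has to stand on its own, and in its essentials it does. Your three steps are the right ones and use exactly the toolkit the paper intends: the graph of $\Omega$ is the induced structure $D_{\Delta_M}$ for the (trivially involutive) constraint $\Delta_M=TM$, so Example~\ref{Ex_2Form} together with Remark~\ref{R_DOmegaM}, 2. (closedness of $\Omega$) makes it a partial Dirac structure, the identification of the range of $\Omega^\flat$ with $T^\flat(T^\prime M)$ being supplied by $\S$~\ref{___CanonicalMaps}; the pointwise computation showing that the pullback of a graph is the graph of the pulled-back $2$-form is valid; and the componentwise Courant bracket on limit sections, via naturality of the Lie bracket, $L_X$ and $d$ under the inclusions, is the right proof of Assertion 2, including your closing caveat that only limit sections are concerned.

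Two points need repair. First, there is no ``canonical inclusion $\iota_n\colon T^\prime M_n\hookrightarrow T^\prime M$ arising from $T^\prime M=\underleftarrow{\lim}T^\prime M_n$'': a projective limit supplies canonical \emph{projections} (restriction of covectors), not inclusions. What you actually need is the fibrewise extension of a covector on $T_xM_n$ by zero on a complement, which is furnished by the compatible direct-limit charts of \cite{Glo05}; the paper itself uses this identification tacitly when it writes ``$\alpha$ belongs to $T^\prime_xM_i$'' in Proposition~\ref{P_FiniteDirectLimitDirac}, Assertion 3. For any such bundle map covering $M_n\hookrightarrow M$ and right-inverse to the restriction, one does get $\iota_n^\ast\Omega=\Omega_n$, because $p_{T^\prime M}\circ\iota_n=\iota_{M_n}\circ p_{T^\prime M_n}$ and the extension restricts back to the original covector; this is exactly what your naturality step requires, so make the choice explicit. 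Second, the final decomposition cannot be obtained by ``feeding into the limit identifications of Proposition~\ref{P_FiniteDirectLimitDirac}'': that proposition presupposes an \emph{ascending} sequence of base manifolds whose direct limit is the total base, whereas $(T^\prime M_n)_n$ is a projective system, and its direct limit under the above inclusions is strictly smaller than $T^\prime M$. The equality must instead be verified directly, in the paper's mixed-limit reading of the symbol $\times$: since each $\Omega_n$ is strong symplectic in finite dimension, $p_n(D_n)=TT^\prime M_n$ and $p^\prime_n(D_n)=\Omega_n^\flat(TT^\prime M_n)=T^\prime T^\prime M_n$, and $\Omega^\flat=\underleftarrow{\lim}\Omega_n^\flat$ then identifies the mixed limit of the graphs with the graph of $\Omega^\flat$ over the relevant points. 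With these two repairs --- one definitional, one a citation replaced by a short direct verification --- your proof is complete.
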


\subsection{Dynamical System Associated to an Implicit Lagrangian on the Direct Limit of Finite Dimensional Manifolds}
\label{DynamicalSystemAssociatedToAnImplicitLagrangianOnTheDirect LimitOfFiniteDimensionalManifolds}

 At first we have the following results on direct limits of implicit non-degenerate constraint Lagrangians:
 
\begin{theorem}\label{T_DirctLimitSequenceDeltaTMn} 
Let us consider the following data:
\begin{description}
\item [--] an increasing sequence of integers $m_n$ such that $0<m_n\leq n$;
\item[--] an ascending sequence $ \left( M_n   \right) $ of manifolds of dimension $n$;
\item[--] an ascending sequence $ \left(  \Delta_{M_n} \right) $ of  subbundles of $TM_n$ of rank $m_n$;
\item[--] on each $M_n$,  a Lagrangian $\mathbf{L}_n:TM_n\to \mathbb{R}$ which is non-degenerate on $\Delta_{M_n}$, for all $n$.
\end{description}
Then we have 
 \begin{enumerate}
\item[(1)] 
On  $M=\underrightarrow{\lim}M_n$, the direct limit  $\mathbf{L}=\underrightarrow{\lim}\mathbf{L}_n$ is a Lagrangian on $TM$ which is non degenerate on $\Delta_{T^\prime M}=\underrightarrow{\lim} \Delta_{T^\prime M_n}$. 
\item[(2)] 
The Legendre transformation $\mathbb{F}\mathbf{L}$  is  $\underrightarrow{\lim} \mathbb{F}\mathbf{L}_n$ and $\mathbb{P}:=\mathbb{F}\mathbf{L}$ is the direct limit 
 $\underrightarrow{\lim} \mathbb{P}_n$ if $\mathbb{P}_n=\mathbb{F}\mathbf{L}_n$.
\item[(3)] 
If $H_n(x_n,v_n,p_n)=<p_n,x_n>_\mathbf{L}(x_n,v_n)$ is the Hamiltonian associated to $\mathbf{L}_n$, the $H=\underrightarrow{\lim}H_n$ is the Hamiltonian associated to $\mathbf{L}$.
\item[(4)] 
If $t\mapsto(c_n(t), v_n(t),p_n(t))$ is an integral curve  the  implicit Lagrangian system  $(M_n, \Delta_{M_n},\mathfrak{D} \mathbf{L}_n)$ then 
\[
t\mapsto(c(t), v(t),p_n(t))
= \left(  \underleftarrow{\lim}c_n(t), \underleftarrow{\lim}v_n(t), \underrightarrow{\lim}p_n(t) \right)
\]
is an integral curve of the    implicit Lagrangian system  $ \left( M, \Delta_{M},\mathfrak{D} \mathbf{L} \right) $ 
\item [(5)]  
For $x_n\in M$, and $k\geq 2$,  consider   the Banach manifold   $C^k_{\Delta_{M_n}}(x_n;[t_1,t_2], M_n)$  of curves of class $C^k$ which are tangent to $\Delta_{M_n}$  where $c(t_1)=x_n$\footnote{cf. ~\ref{W_PMPBanachSetting}.}. 
Then, if $x=\underrightarrow{\lim}x_n$, for $[t_1,t_2]$ small enough,   
\[
C^k_{\Delta_{M_n}}(x;[t_1,t_2], M)=\underrightarrow{\lim}C^k_{\Delta_{M_n}} \left( x_n;[t_1,t_2], M_n \right)
\]
is a convenient manifold and the endpoint map $\operatorname{End}_x:\underleftarrow{\lim}\to M$ which is the direct limit of the maps 
$\operatorname{End}_{x_n}: C^k_{\Delta_{M_n}} \left( x_n;[t_1,t_2], M_n \right) \to M_n$. In particular, the range of $T_c\operatorname{End}_x$ is the direct limit of the range of $T_{c_n}\operatorname{End}_{x_n}$ and this range is closed in $T_{c(t_1)} M$.
 \item[(6)]  
There is no elusive  curves in $C^k_{\Delta_{M}} \left( x;[t_1,t_2], M \right) $. Moreover,
\[
t\mapsto(c(t), v(t),p_n(t))=(\underrightarrow{\lim}c_n(t), \underrightarrow{\lim}v_n(t), \underleftarrow{\lim}p_n(t))
\]
is a normal (resp. abnormal) curve if and only if there exists an integer $n_0$ such that  $t\mapsto(c_n(t), v_n(t),p_n(t))$ is a normal  (resp. abnormal) curve for all $n\geq n_0$.
\item[(7)] 
For any $x=\underrightarrow{\lim} x_n$ there exists an ascending sequence of immersed submanifolds $\mathcal{R}(x_n)$ in $M_n$ such that  
$\mathcal{A}_{\Delta_M} \left( x_n;[t_1,t_2], M_n \right) $ is a submanifold of $\mathcal{A} \left(  x_n;[t_1,t_2],\mathcal{R}(x) \right) $ and  $\operatorname{End}_{x_n}$ takes values in $\mathcal{R}_n(x_n)$ and is a submersion such that 
 \[
 \mathcal{R}(x)=\bigcup_{n\in \mathbb{N}}\mathcal{R}_n(x_n)
 \]
 is an immersed convenient submanifold of $M$ such that any point $y\in \mathcal{R}(x)$ there exists a horizontal curve of class $C^k$ which joins $x$ to $y$ and the   $\operatorname{End}_x$ takes values in  $\mathcal{R}(x)$ and is a submersion.
\end{enumerate}
\end{theorem}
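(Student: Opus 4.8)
The plan is to prove the seven assertions in turn by showing that each object attached to $M=\underrightarrow{\lim}M_n$ is the direct (on the velocity side) or projective (on the momentum side) limit of its finite-dimensional counterparts, and then transporting the finite-dimensional statements of $\S\ref{____PMPFiniteDimensionalContext}$ across these limits. The ambient data are fixed at the start: $M$ is a convenient manifold modelled on $\mathbb{R}^\infty$ with $TM=\underrightarrow{\lim}TM_n$ and $T^\prime M=\underleftarrow{\lim}T^\prime M_n$, the $\Delta_{M_n}$ form an ascending sequence of finite-rank subbundles, and the single standing technical fact used throughout is that a compatible family of smooth maps between finite-dimensional manifolds induces a smooth map between the limits in the convenient setting.

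First I would treat (1)--(3) together. By hypothesis the family $(\mathbf{L}_n)$ is compatible, so $\mathbf{L}=\underrightarrow{\lim}\mathbf{L}_n$ is a well-defined smooth Lagrangian on $TM$; non-degeneracy on $\Delta_{T^\prime M}=\underrightarrow{\lim}\Delta_{T^\prime M_n}$ is the fibrewise non-degeneracy of the Hessian $\partial^2\mathsf{L}/\partial\mathsf{v}^2$ (cf. $\S\ref{___NonDegenerateConstraintLagrangians}$), which is the direct limit of the non-degenerate Hessians of the $\mathbf{L}_n$, giving (1). Since in coordinates $\mathbb{F}\mathbf{L}\cong(\mathsf{x},\mathsf{v})\mapsto(\mathsf{x},\partial\mathsf{L}/\partial\mathsf{v})$ is computed levelwise, it respects the limits, so $\mathbb{F}\mathbf{L}=\underrightarrow{\lim}\mathbb{F}\mathbf{L}_n$; the essential point is that on the constraint $\mathbb{F}\mathbf{L}(\Delta_{M_n})=\mathbb{P}_n$ is finite-dimensional, so the target-side projective limit is sidestepped and $\mathbb{P}=\underrightarrow{\lim}\mathbb{P}_n$ is a genuine ascending union, a convenient submanifold, which is (2). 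For (3) the pairing is compatible with the limits by Proposition~\ref{P_FiniteDirectLimitDirac}, 3., whence $H(x,v,p)=\langle p,v\rangle-\mathbf{L}(x,v)=\underrightarrow{\lim}H_n$ and, via Theorem~\ref{T_ImplicitHamiltonianL}, $H_\mathbb{P}=\underrightarrow{\lim}(H_n)_{\mathbb{P}_n}$.

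Next I would establish (4) and (5). Assertion (4) is immediate from the local characterization (\ref{Eq_ImplicitLlocal}) of integral curves of the implicit Lagrangian system: each of the relations $\mathsf{p}=\partial\mathsf{L}/\partial\mathsf{v}$, $\dot{\mathsf{x}}=\mathsf{v}\in\mathbb{E}$ and $\dot{\mathsf{p}}-\partial\mathsf{L}/\partial\mathsf{x}\in\mathbb{E}^0$ is preserved by the velocity-side direct limit and the momentum-side projective limit, so the juxtaposition of the $(c_n,v_n,p_n)$ solves $(M,\Delta_M,\mathfrak{D}\mathbf{L})$. For (5) the Banach manifolds $C^k_{\Delta_{M_n}}(x_n;[t_1,t_2],M_n)$ of Warning~\ref{W_PMPBanachSetting} form an ascending sequence once $[t_1,t_2]$ is taken small enough for the charts to be compatible, and their direct limit is a convenient manifold; the endpoint maps are compatible, so $\operatorname{End}_x=\underrightarrow{\lim}\operatorname{End}_{x_n}$ and hence $\operatorname{Range}(T_c\operatorname{End}_x)=\underrightarrow{\lim}\operatorname{Range}(T_{c_n}\operatorname{End}_{x_n})$. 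The structural claim I must then prove is that this range is closed in the tangent space at the endpoint, modelled on $\mathbb{R}^\infty$: it is an ascending union of finite-dimensional subspaces whose trace on each $\mathbb{R}^n$ stabilizes to $\operatorname{Range}(T_{c_n}\operatorname{End}_{x_n})$, and a subspace of $\mathbb{R}^\infty$ of this form is closed in the colimit topology.

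Finally I would deduce (6) and (7). Because the range of $T_c\operatorname{End}_x$ is closed it cannot be dense without being the whole tangent space, so elusive curves are impossible; this proves the first part of (6). For the dichotomy, $c$ is normal iff its range equals the full tangent space at the endpoint, which by the limit description holds iff $\operatorname{Range}(T_{c_n}\operatorname{End}_{x_n})$ is already full for all $n\geq n_0$ for some $n_0$, and dually for the abnormal (positive-codimension) case; combined with the characterization of normal extremals of Theorem~\ref{T_ImplicitHamiltonianPMP} and Corollary~\ref{C_NormalExtremal} this gives (6). For (7), each reachable set $\mathcal{R}_n(x_n)$ is an immersed submanifold of $M_n$ by the Stefan--Sussmann theorem (Remark~\ref{R_ReachableSet}), these form an ascending sequence, and $\mathcal{R}(x)=\bigcup_n\mathcal{R}_n(x_n)$ is their direct limit, an immersed convenient submanifold; the submersivity of $\operatorname{End}_{x_n}$ onto $\mathcal{R}_n(x_n)$ and the smooth-joining argument of Remark~\ref{R_ReachableSet} pass to the limit. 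The main obstacle is precisely the closedness in (5): in a general convenient space an increasing union of closed subspaces need not be closed, and the argument genuinely rests on the special colimit structure of $\mathbb{R}^\infty$ together with the compatibility $\operatorname{Range}(T_{c_m}\operatorname{End}_{x_m})\cap\mathbb{R}^n=\operatorname{Range}(T_{c_n}\operatorname{End}_{x_n})$ for $m\geq n$, which has to be extracted from the ascending structure of the $\Delta_{M_n}$.
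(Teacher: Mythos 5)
Your overall strategy --- realizing every object on $M=\underrightarrow{\lim}M_n$ as a direct limit on the velocity side and a projective limit on the momentum side, and then transporting the finite-dimensional results of Section~\ref{____PMPFiniteDimensionalContext} across these limits --- is the same as the paper's, and your treatments of (1)--(4), (6) and (7) agree with the paper's proof up to harmless variations: for (4) you argue directly from the local characterization (\ref{Eq_ImplicitLlocal}), where the paper passes through $\mathfrak{D}\mathbf{L}=\underrightarrow{\lim}\mathfrak{D}_n\mathbf{L}_n$ and Proposition~\ref{T_ConverseBasicResults}, and your explicit closedness argument in $\mathbb{R}^\infty$ is a spelled-out version of the paper's appeal to the properties of ascending sequences in \cite{CaPe23}.

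There is, however, a genuine gap in your proof of (5). You construct the direct limit $\underrightarrow{\lim}C^k_{\Delta_{M_n}}\left( x_n;[t_1,t_2],M_n \right)$ and observe that it is a convenient manifold, but assertion (5) claims an equality of sets: $C^k_{\Delta_M}(x;[t_1,t_2],M)=\underrightarrow{\lim}C^k_{\Delta_{M_n}}\left( x_n;[t_1,t_2],M_n \right)$. The nontrivial inclusion is that every curve $c:[t_1,t_2]\to M$ of class $C^k$, tangent to $\Delta_M$ and with $c(t_1)=x$, already lies in a single finite-dimensional stratum $M_{n_0}$ and is there tangent to $\Delta_{M_{n_0}}$; a priori such a curve could meet infinitely many strata, in which case the direct limit would be a strictly smaller space than the space of all horizontal curves. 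The paper closes exactly this hole: taking $n_0$ to be the smallest integer with $\dot c(t_1)\in\Delta_{M_{n_0}}$, an initial subinterval of $c$ lies in $M_{n_0}$, and since $\Delta_{M_{n_0}}$ is closed, a connectedness argument forces the entire curve to remain in $M_{n_0}$, so that $c\in C^k_{\Delta_{M_{n_0}}}\left( x_{n_0};[t_1,t_2],M_{n_0} \right)$. Without this localization step, everything you then say about the range of $T_c\operatorname{End}_x$ for an arbitrary horizontal curve $c$, about the nonexistence of elusive curves in (6), and about the normal/abnormal dichotomy applies only to curves already known to lie in the direct limit, not to the full space $C^k_{\Delta_M}(x;[t_1,t_2],M)$ that the theorem is about. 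You should either reproduce the paper's closedness-plus-connectedness argument, or invoke the fact that the image of the compact interval $[t_1,t_2]$ in the strict inductive limit $M$ is contained in some $M_{n_0}$ and then verify tangency to $\Delta_{M_{n_0}}=\Delta_M\cap TM_{n_0}$; either way this step must appear explicitly.
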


\begin{proof} 
The proofs of Assertions (1), (2)  and (3) come from the Definition of direct limits of maps. The fact that $\mathbf{L}$ is non degenerate comes from the fact that any $x\in M$ belongs to some $M_n$ and so the Hessian of $\mathbf{L}$, in restriction to $\Delta_{M_n}$, is the Hessian of $\mathbf{L}_n$, which implies the results.\\
 
For the proof of  Assertion (4),  it is clear that, if  $\mathcal{D}_n$ is the  differential Dirac operator of $\mathbf{L}_n$, from the definition of such an operator and according to Assertion 1, the definition of direct limit of maps, and from Proposition \ref{T_ConverseBasicResults},  Assertion (1) (relative to $\Omega_n$), we have 
$\mathcal{D}\mathbf{L}
=\underrightarrow{\lim}\mathcal{D}_n\mathbf{L}_n$. Thus,taking into account the definition of an integral curve of an implicit Lagrangian, the result follows from  Proposition~\ref{T_ConverseBasicResults},   (1).\\
 
For the Assertion (5), it is clear that the  sequence  $ \left( C^k_{\Delta_{M_n}} 
\left( x_n;[t_1,t_2], M_n \right) \right) _{n\in \mathbb{N}}$ is an ascending sequence of Banach manifolds, so its direct limit is a convenient manifold (cf. \cite{CaPe23}, Chap.~5) contained in the set $C^k_{\Delta_{M_n}} 
 \left( x;[t_1,t_2], M \right) $. On the other hand, if $c:[t_1,t_2]\to M$ is tangent to $ \Delta_M$, and $c(t_1)=x=\underrightarrow{\lim}{\lim} x_n$, let $n_0$ be the smallest integer such that $\dot{c}(t_1)$ belongs to $\Delta_{M_{n_0}}$.  There exists $\epsilon >0$ such that 
 $c \left( [t_1,\epsilon[ \right) 
  \subset M_{n_0}$. But since $\Delta_{M_{n_0}}$ is closed, $\dot{c}(\varepsilon)$ must also belong to $\Delta_{M_{n_0}}$.  By connectedness argument, it follows that $c$ belongs to $C^k_{\Delta_{M_n}} \left( x_n;[t_1,t_2], M_{n_0} \right) $. Therefore, we have $C^k_{\Delta_{M_n}} \left( x;[t_1,t_2], M \right) =\underrightarrow{\lim}C^k_{\Delta_{M_n}} \left( x_n;[t_1,t_2],  M_n \right) $. 
 The last Property of the Assertion follows from the properties of direct limit of ascending sequences of Banach spaces (cf. \cite{CaPe23}, Chap~5).\\
 
 For assertion (6),  since the range of $T_c\operatorname{End}_x$  is closed in $T_{c(t_1)} M$, either this range is precisely $T_{c(t_1)}M$ or there exists $q\not=0 $ in $T_{c(t_2)}^\prime M$ such that $q$ belongs to the annihilator of the range of $T\operatorname{End}_x(T_c\left(C^k_{\Delta_{M_n}} \left( x;[t_1,t_2], M \right) \right)$, that is either $c$ is a normal curve or is 
 an abnormal curve.  In the first case, this is equivalent to  $\operatorname{End}_{x_n}:$ is a submersion for all $n\geq n_0$ for some $n_0$, that is  $t\mapsto \left( c_n(t), v_n(t),p_n(t) \right) $ is 
 a normal curve for all $n\geq n_0 $ and, in the second case, $t\mapsto \left( c_n(t), v_n(t),p_n(t) \right) $ is an  abnormal curve for all $n\geq n_0$.\\ 
 
 The assertion (7) is a direct consequence of Remark \ref{R_ReachableSet} and properties of direct limits of ascending local charts in finite dimension (cf. \cite{Glo05}). The property of the map $\operatorname{End}_x$ relative to $\mathcal{R}(x)$ is a consequence of properties of Direct limit maps and Remark \ref{R_EndxSubmersionInRx}.
\end{proof}

Conversely, consider an implicit Lagrangian on $\mathbf{L}: TM\to \mathbb{R}$ where $M=\underrightarrow{\lim}M_n$,
 $\Delta_M$ is a subbundle  of $TM$,  and  $\Delta_{T^\prime M}=(Tp_{T^\prime M})^{-1}(\Delta_M)$ is the  associated  closed subbundle of $TT^\prime M$.  According to Definition~\ref{D_ImplicitLagrangian},  let $D_{\Delta_M}$ be the induced partial almost Dirac structure  
where $\mathbb{F}\mathbf{L}$ is the   Legendre transformation of $\mathbf{L}$ and $\mathcal{D}\mathbf{L}$ is its Dirac 
operator. \\
We set $\mathbb{P}=\mathbb{F}\mathbf{L}(\Delta_M)\subset T^\prime M$.

\begin{proposition}
\label{P_DeltaM}
Under the previous context, we have:
\begin{enumerate} \item [1.]  
Let $\mathbf{L}_n:={\iota}_n^\prime\mathbf{L}$ then $\mathbf{L}=\underrightarrow{\lim}\mathbf{L}_n$.
 \item[2.] 
For each $n$, the intersection $ \Delta_M\cap TM_n$ is a subbundle   $\Delta_{M_n}$ of $TM_n$ which  gives rise to an ascending sequence $\left( \Delta_{M_n} \right) $ of vector bundles over the ascending sequences of manifolds $ \left( M_n \right) $ such that $\Delta_M=\underrightarrow{\lim}\Delta_{M_n}$. We also have 
\[
\Delta_{T^\prime M}=\underrightarrow{\lim} \Delta_{T^\prime M_n}.
\]
\item[3.] 
The sequence $ \left( M_n, \Delta_{M_n}, \mathbf{L}_n \right) $, satisfies the assumptions Theorem  \ref{T_DirctLimitSequenceDeltaTMn} and so the implicit Lagrangian system  $ \left( M, \Delta_M,\mathfrak{D} \mathbf{L} \right) $ is linked with the implicit Lagrangian system  $ \left( M_n, \Delta_{M_n},\mathfrak{D}_n \mathbf{L}_n \right) $ as in the conclusion of this Theorem.
\end{enumerate}
\end{proposition}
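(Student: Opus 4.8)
The plan is to verify the three assertions in turn, the first and third being essentially bookkeeping over the direct/projective limit structure, while the genuine content sits in the second. Throughout I write $\iota_n:M_n\hookrightarrow M$ for the canonical inclusion, so that $T\iota_n:TM_n\hookrightarrow TM$ realizes $TM=\underrightarrow{\lim}\,TM_n=\bigcup_n TM_n$.

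For Assertion 1, I would note that $\mathbf{L}_n=\iota_n^\prime\mathbf{L}=\mathbf{L}\circ T\iota_n$ is the restriction of the smooth map $\mathbf{L}$ along the (splitting) submanifold $TM_n$, hence smooth. The family $(\mathbf{L}_n)$ is compatible, $\mathbf{L}_{n+1}\circ T\iota_{n,n+1}=\mathbf{L}_n$, and since every $w\in TM$ lies in some $TM_n$ one has $\mathbf{L}(w)=\mathbf{L}_n(w)$ for all large $n$. By the universal property of the direct limit in the convenient category this is exactly the statement $\mathbf{L}=\underrightarrow{\lim}\,\mathbf{L}_n$.

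The heart of the argument is Assertion 2. First I would work in a chart of $M_n$ taken inside a chart domain of $M$ modelled on $\mathbb{R}^\infty=\underrightarrow{\lim}\,\mathbb{R}^n$, over which $\Delta_M$ is trivialized by a typical fibre $\mathbb{E}\subset\mathbb{R}^\infty$; setting $\mathbb{E}_n:=\mathbb{E}\cap\mathbb{R}^n$ and using that every element of $\mathbb{R}^\infty$ is a finite sequence gives $\mathbb{E}=\bigcup_n\mathbb{E}_n=\underrightarrow{\lim}\,\mathbb{E}_n$ as convenient vector spaces. The delicate point is to upgrade this fibrewise identity to a bundle statement, i.e.\ to show that $x\mapsto(\Delta_M)_x\cap T_xM_n$ has locally constant rank $m_n$ and that the local trivializations of $\Delta_M$ restrict to local trivializations of $\Delta_{M_n}$; this is where closedness of $\Delta_M$ and the compatibility of the filtration $(TM_n)$ with the splitting of $\Delta_M$ in $TM$ enter. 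One then reads off $0<m_n\le n$ and that $m_n$ is nondecreasing, so that $\Delta_{M_n}$ is a genuine rank-$m_n$ subbundle of $TM_n$ with $\Delta_M=\underrightarrow{\lim}\,\Delta_{M_n}$.

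The identity $\Delta_{T^\prime M}=\underrightarrow{\lim}\,\Delta_{T^\prime M_n}$ requires care because $T^\prime M=\underleftarrow{\lim}\,T^\prime M_n$ is a projective limit while the tangent side is a direct limit; I would argue that $Tp_{T^\prime M}$ is compatible with the tangent maps $Tp_{T^\prime M_n}$ under the limits, so that $(Tp_{T^\prime M})^{-1}(\Delta_M)$ is exactly the direct limit of the preimages $(Tp_{T^\prime M_n})^{-1}(\Delta_{M_n})$. Finally, for Assertion 3 I would check the four hypotheses of Theorem~\ref{T_DirctLimitSequenceDeltaTMn}: the data $(m_n)$, $(M_n)$, $(\Delta_{M_n})$ are produced by Assertions 1 and 2, and the non-degeneracy of $\mathbf{L}_n$ on $\Delta_{M_n}$ follows exactly as in the proof of Assertion (1) of that theorem, since the Hessian $\displaystyle\frac{\partial^2\mathsf{L}}{\partial\mathsf{v}^2}$ restricted to $\Delta_{M_n}=\Delta_M\cap TM_n$ is the restriction of the non-degenerate Hessian of the $\Delta_M$-constraint regular $\mathbf{L}$, hence again non-degenerate. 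Invoking Theorem~\ref{T_DirctLimitSequenceDeltaTMn} then links $(M,\Delta_M,\mathfrak{D}\mathbf{L})$ to the systems $(M_n,\Delta_{M_n},\mathfrak{D}_n\mathbf{L}_n)$ as claimed. I expect the main obstacle to be the bundle (rather than merely fibrewise) nature of Assertion 2: establishing the local constancy of the rank of the intersections and the restriction of trivializations, which is precisely what makes $\Delta_{M_n}$ a smooth subbundle and not just a family of subspaces.
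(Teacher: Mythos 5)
Your plan for Assertions 1 and 2 has the same overall shape as the paper's proof (restriction along $T\iota_n$, adapted charts, $\mathbb{E}_n=\mathbb{E}\cap\mathbb{R}^n$), but at the decisive step of Assertion 2 you only \emph{name} the difficulty --- ``upgrade the fibrewise identity to a bundle statement'' --- and defer to ``closedness of $\Delta_M$ and compatibility of the filtration'' without giving an argument. The paper closes exactly this point with a concrete device: by Gl\"ockner's theorem on direct limits of finite-dimensional manifolds (\cite{Glo05}), around any $x=\underrightarrow{\lim}x_n$ there is an \emph{ascending sequence of charts} $(U_n,\phi_n)$ of the $M_n$ whose direct limit is a chart $(U,\phi)$ of $M$, and after shrinking one may assume that over this chart the subbundle is \emph{flat}, i.e. $(\Delta_M)_{|\mathbb{U}}=\mathbb{U}\times\mathbb{E}\subset\mathbb{U}\times\mathbb{R}^\infty$ with a fixed subspace $\mathbb{E}$. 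In such an adapted flat chart one reads off $(\Delta_M\cap TM_n)_{|\mathbb{U}_n}=\mathbb{U}_n\times\mathbb{E}_n$ with \emph{constant} fibre $\mathbb{E}_n$, so local triviality and constancy of the rank of $x\mapsto(\Delta_M)_x\cap T_xM_n$ come for free; the separate rank-constancy argument you anticipate is never needed, but without the simultaneously filtration-adapted and $\Delta_M$-flattening chart your proposal does not close. That is the missing idea.

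There is a second, sharper gap in your Assertion 3. You claim that non-degeneracy of $\mathbf{L}_n$ on $\Delta_{M_n}$ holds ``since the Hessian restricted to $\Delta_{M_n}=\Delta_M\cap TM_n$ is the restriction of the non-degenerate Hessian of $\mathbf{L}$, hence again non-degenerate.'' This is false in general: the restriction of a (weakly) non-degenerate symmetric bilinear form to a subspace need not be non-degenerate --- a hyperbolic form on $\mathbb{R}^2$ restricted to an isotropic line vanishes identically. Moreover, the argument in the proof of Assertion (1) of Theorem~\ref{T_DirctLimitSequenceDeltaTMn} that you invoke runs in the opposite (and unproblematic) direction: if every $\mathbf{L}_n$ is non-degenerate on $\Delta_{M_n}$ then $\mathbf{L}=\underrightarrow{\lim}\mathbf{L}_n$ is non-degenerate on $\Delta_M$, because any kernel vector already lies in some $\mathbb{E}_n$. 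Deducing fibrewise non-degeneracy of the restrictions $\mathbf{L}_n$ from non-degeneracy of $\mathbf{L}$ requires an additional input (for instance positive-definiteness of the Hessian, as in the examples of the paper, or the assumption that each $\mathbb{E}_n$ is non-degenerately embedded in $\mathbb{E}$); as written, this step of your proof fails.
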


\begin{proof}
The Assertion 1 is clear. 

For assertion 2,  for any $x=\underrightarrow{\lim} x_n \in M$, according to  \cite{Glo05}, we have an ascending sequence of chart $ \left( U_n,\phi_n \right) _{n \in \N} $ around $x_n$ such that the pair   
$ \left( U=\underleftarrow{\lim} U_n, \phi=\underleftarrow{\lim}\phi_n \right) $ is a chart around $x$ in $M$.  Without loss of generality, we may assume that $\phi_n(U_n)$ is an open set $
\mathbb{U}_n$ of $\mathbb{M}_n \equiv \mathbb{R}^n$ around $0$ and so $\mathbb{U}=\underrightarrow{\lim} \mathbb{U}_n$ is an open set in $\mathbb{R}^\infty$ around $0$. After restriction, if necessary, of each $\mathbb{U}_n$,  we can assume that 
\[
(\Delta_M)_{| \mathbb{U}}=\mathbb{U} \times \mathbb{E}\subset TM_{| \mathbb{U}}=\mathbb{U}\times \mathbb{R}^\infty.
\]
  Since $\mathbb{R}^\infty=\displaystyle\bigcup_{n\in \mathbb{N}}\mathbb{R}^n$, it follows that $E_n:=\mathbb{E}\cap \mathbb{R}^n$ is a vector subspace of $\mathbb{R}^n$. 
Therefore, we have   $ \left( \Delta_M\cap TM_n \right) _{| \mathbb{U}_n}=\mathbb{U}_n\times \mathbb{E}_n$. This implies that $ \Delta_M\cap TM_n$ is a subbundle $\Delta_{M_n}$ of $TM_n$
and so  $\Delta_M=\underrightarrow{\lim}\Delta_{M_n}$. By the way,  we finally obtain  that $\Delta_{T^\prime M}=\underrightarrow{\lim} \Delta_{T^\prime M_n}$.\\
 The assertion 3 is an application of Theorem \ref{T_DirctLimitSequenceDeltaTMn}.
\end{proof} 

\subsection{Direct limit of sub-Riemannian Lie groups}
\label{___DirectLimitOfSubRieamnnianLieGroups}

\subsubsection{Sub-Riemannian Lie groups}
\label{____SubRiemannianLieGroups}
\emph{In this paragraph, we refer to \cite{L-Don21}, Chap.~5 or \cite{ABB19}}.\\

Let ${G}$ be a Lie group of dimension $n$ and $\mathfrak{g}$ its Lie algebra. 
A \emph{left-invariant distribution}\index{left-invariant!distribution} on $G$ is a subbundle $\Delta_G$ on $G$ which satisfies, for all $g$ and $h$ in $G$:
\[
\Delta_G (gh)=TL_g \left( \Delta_G (h) \right) 
\]
In fact, such a distribution is well defined by the subspace $V:=\Delta_G (1_G)$ of $T_{1_G}G\equiv \mathfrak{g}$. 
Let $m$ be the rank of $\Delta_G$.\\
By induction, we define
$$V^{(1)}:=V,\:\; V^{(k)}:=V^{(k-1)}+[V, V^{(k-1)}],\; \forall k>1$$
where $[V, V^{(k-1)}]=\{[X,Y], \; X\in V,\; Y\in  V^{(k-1)}\}$.\\
 Note that by left translation $V^{(k)}$ defines a left invariant subbundle $\Delta_G^{(k)}$ of $TG$. The Lie algebra $\mathsf{Lie}(V)$ is $\displaystyle\bigcup_{k\in \N}V^{(k)}$ and the associated left-invariant  distribution is denoted $\mathsf{Lie}(\Delta_G)$which is a well defined subbundle of $TG$.\\
We then have the following result: 

\begin{proposition}
\label{P_Dichotomie}
If $m<n$, we have the following dichotomy:
\begin{enumerate}
\item[(i)] 
either $V^{(n-1)}=\mathfrak{g}$ and then the induced is called {\it bracket generating} with step less than $n$
\item[(ii)] 
or $V^{(n-1)}\not=\mathfrak{g}$ and in fact there exists a Lie subgroup $H$ of $G$, $H\not=G$, $\mathsf{dim} H<n$ such that  $(\Delta_G)_{| H}$ is contained in $TH$ and is bracket generating in $H$. Moreover if $\bar{k}$ is the smallest integer for which $V^{(k)}= V^{(k-1)}$ then either $V^{(\bar{k})}=\mathfrak{g}$ or $\bar{k}<n$.
\end{enumerate}
\end{proposition}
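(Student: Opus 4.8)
The plan is to reduce the statement to an algebraic fact about the nested flag $V=V^{(1)}\subseteq V^{(2)}\subseteq\cdots$ in $\mathfrak{g}$, and then to integrate the resulting subalgebra to a subgroup. First I would record that the flag stabilises as soon as two consecutive terms agree: from the recursion $V^{(k)}=V^{(k-1)}+[V,V^{(k-1)}]$, if $V^{(\bar k)}=V^{(\bar k-1)}$ then $V^{(\bar k+1)}=V^{(\bar k)}+[V,V^{(\bar k)}]=V^{(\bar k-1)}+[V,V^{(\bar k-1)}]=V^{(\bar k)}$, so by induction $V^{(k)}=V^{(\bar k)}=:W$ for all $k\ge\bar k-1$. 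Each strict inclusion raises the dimension by at least one, and since $\dim V=m$ and $\dim W\le n$, counting the $\bar k-2$ strict gaps between $V^{(1)}$ and $V^{(\bar k-1)}$ gives $\bar k\le n-m+2$; using $m\ge 1$ (and, when $m=1$, that $[V,V]=0$ forces $\bar k=2$) one checks $\bar k\le n$. Consequently $V^{(n-1)}=W$, so the hypothesis $V^{(n-1)}=\mathfrak{g}$ in alternative (i) is exactly the condition $W=\mathfrak{g}$.

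The algebraic heart of the argument is that $W=\mathsf{Lie}(V)$ is a Lie subalgebra, i.e.\ $[W,W]\subseteq W$. I would prove by induction on $j$ that $[V^{(j)},W]\subseteq W$. The base case $j=1$ is immediate, since $[V,W]=[V,V^{(\bar k-1)}]\subseteq V^{(\bar k)}=W$. For the inductive step, a generator of $V^{(j)}$ is either an element of $V^{(j-1)}$ (handled by the induction hypothesis) or a bracket $[a,b]$ with $a\in V$, $b\in V^{(j-1)}$, for which the Jacobi identity $[[a,b],c]=[a,[b,c]]-[b,[a,c]]$ with $c\in W$ applies: here $[b,c]\in[V^{(j-1)},W]\subseteq W$ and $[a,c]\in[V,W]\subseteq W$, whence $[a,[b,c]]\in[V,W]\subseteq W$ and $[b,[a,c]]\in[V^{(j-1)},W]\subseteq W$. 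Taking $j=\bar k-1$ yields $[W,W]\subseteq W$, which justifies identifying $W$ with the Lie subalgebra $\mathsf{Lie}(V)$ generated by $V$, and correspondingly $\mathsf{Lie}(\Delta_G)$ with the left-invariant subbundle it spans.

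With $W=\mathsf{Lie}(V)$ known to be a subalgebra, the dichotomy is the tautology $W=\mathfrak{g}$ or $W\subsetneq\mathfrak{g}$. In the first case $V^{(n-1)}=\mathfrak{g}$ and the step $\bar k-1$ satisfies $\bar k-1\le n-m+1\le n-1$ (the value $m=1$ being excluded, since then $W=V\ne\mathfrak{g}$ for $n\ge 2$), which is alternative (i). In the second case set $\mathfrak{h}:=W\subsetneq\mathfrak{g}$, so $\dim\mathfrak{h}<n$ and, by the same counting, $\bar k\le n+1-m<n$ for $m\ge 2$; by the Lie subalgebra--subgroup correspondence in finite dimension there is a unique connected immersed Lie subgroup $H\le G$ whose Lie algebra is $\mathfrak{h}$, with $H\ne G$ and $\dim H<n$. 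It then remains to check the two properties of $(\Delta_G)_{|H}$. Closure $(\Delta_G)_{|H}\subseteq TH$ follows from left-invariance: at the unit $\Delta_G(1_G)=V\subseteq\mathfrak{h}=T_{1_G}H$, and for $h\in H$ the translation $L_h$ preserves $H$, so $\Delta_G(h)=TL_h(V)\subseteq TL_h(T_{1_G}H)=T_hH$. That $(\Delta_G)_{|H}$ is bracket generating in $H$ is immediate, since $V$ generates $\mathfrak{h}$, the Lie algebra of $H$, by the very construction of $W$.

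I expect the Jacobi-identity induction showing that $W$ is closed under \emph{all} brackets, and not merely under bracketing with $V$, to be the main technical point; the remaining ingredients are the elementary dimension count bounding $\bar k$ and the classical integration of a subalgebra to a connected immersed subgroup. The only nuisance to flag is the low-dimensional degenerate behaviour (for instance $m=1$, where the flag is already stationary at $\bar k=2$), which is why the bound on $\bar k$ should be carried out separately in the cases $m=1$ and $m\ge 2$.
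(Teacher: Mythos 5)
Your proof is correct, and the first thing to say is that the paper itself offers no argument for Proposition~\ref{P_Dichotomie}: it is stated as a known fact, with the whole subsection prefaced only by a pointer to \cite{L-Don21}, Chap.~5 and \cite{ABB19}, and the text immediately moves on under the standing assumption that $\Delta_G$ is bracket generating. So there is no internal proof to compare against; what you have produced is a self-contained replacement for the citation. Your route is the natural one, and its technical heart is exactly where it should be: the induction on $j$ via the Jacobi identity $[[a,b],c]=[a,[b,c]]-[b,[a,c]]$, showing $[V^{(j)},W]\subseteq W$ and hence that the stabilised term $W=V^{(\bar k-1)}$ is a Lie subalgebra, i.e.\ $W=\mathsf{Lie}(V)$. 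The remaining ingredients are handled correctly: strictness of the flag up to $\bar k-1$ gives $\bar k\le n-m+2$ and hence $V^{(n-1)}=W$; the finite-dimensional subalgebra--connected-immersed-subgroup correspondence produces $H$ with $\dim H<n$; left invariance gives $(\Delta_G)_{|H}\subseteq TH$; and since the flag of $V$ computed inside $\mathfrak{h}=W$ coincides with the flag computed inside $\mathfrak{g}$, the restricted distribution is bracket generating in $H$.

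One caveat, which your own dimension count exposes but which you leave unresolved. In case (ii) with $m=1$ the ``Moreover'' clause can genuinely fail as literally stated: for $m=1$ one has $[V,V]=0$, so $\bar k=2$ and $V^{(\bar k)}=V\neq\mathfrak{g}$, and the clause then demands $\bar k<n$, which holds precisely when $n\ge 3$. For $n=2$, $m=1$ one gets $\bar k=2=n$ and $V^{(\bar k)}\neq\mathfrak{g}$, so the disjunction ``either $V^{(\bar k)}=\mathfrak{g}$ or $\bar k<n$'' is false; this is a defect of the proposition as transcribed, not of your argument. You should either record this counterexample explicitly, or state plainly that your proof of the ``Moreover'' clause covers $m\ge 2$ (where $\bar k\le n+1-m\le n-1$) together with $m=1$, $n\ge 3$, which is everything that can be proved. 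Saying only that the bound ``should be carried out separately'' for $m=1$ leaves the reader to discover that in the remaining case there is nothing to carry out.
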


{\bf From now on until the end of this subsubsection, we will assume that $\Delta_G$ is bracket generating}.\\

A curve $c:[a,b]\to G$ is \emph{horizontal}\index{horizontal curve}  if $\dot{g}(t)$ belongs to $\Delta_G (c(t))$,  for all $t\in [a,b]$. Any  curve $c:[a,b]\to G$ of class $C^k$, $k\geq 2$ is horizontal if and only if there exists a curve $\gamma:[a,b]\to \mathfrak{g}$ of class $C^k$ such that $c$ is solution of the differential equation:

\begin{equation}
\label{eq_Horizontal}
\begin{cases}
& \dot{c}(t) = (L_{c(t)})_*(\gamma(t)) \\
& c(a) = g
\end{cases}
\end{equation}
 
\smallskip
 
A \text{left invariant  sub-Riemannian metric} on $G$ is  a scalar product 
$<.,.>_g$ on $\Delta_G(g)$  which is  left-invariant, i.e., if $u = {L_g}_* \bar{u}$ and $v ={ L_g}_* \bar{v}$
with $\bar{u}, \bar{v}\in V$, we have 
\[
<u,w>_g=<\bar{u},\bar{v}>_{1_G}.
\]
We will set $||v||_g=\sqrt{<v,v>_g}$.

By the way, we can find an orthonormal basis $ \left( e_1,\dots,e_m \right) $ of $V$ and so we  have
\[
\Delta_G(g)=\left \{\sum_{i=1}^m v_i (L_g)_* e_i,\;\; v_i\in \mathbb{R},\; i=1,\dots, m.\right\}
\]
Recall that the length of a curve $c:[0,T]\to G$ is then $\mathcal{L}(c)= \displaystyle \int_0^T  ||\dot{c}(t)|| dt $ and it energy is 
$\mathcal{E}(c)=\displaystyle\frac{1}{2} \int_0^t  ||\dot{c}(t)||^2 dt$. 

 According to the previous notations, the problem of finding the shortest curve connecting two points $g_1$ and $g_2$ of $G$ can then be formulated as the optimal control problem:
\begin{equation}
\label{Eq_MinimizerinV}
\begin{cases}
& \dot{{c}(t)}=\displaystyle\sum_{i=1}^m v_i(t) (L_g)_* e_i,\;\; v_i(t)\in \R,\; i \in \{1,\dots, m\}\\
& \underset{{\footnotesize c(0)=g_1,\; c(T)=g_2}}{\min}\displaystyle \int_0^T  ||\dot{c}(t)||_{c(t)}dt
\end{cases}
\end{equation}

According to our  assumption on bracket generating, there always exists a solution of this problem which is called a minimizer (cf. \cite{ABB19} or \cite{L-Don21}).\\

It is classical that minimizing the length is the same as minimizing the energy and a minimizer $c$ of $\mathcal{E}$ such that $c(0)=g$ is a critical point of the map
$\mathcal{E}$ from the Banach manifold $C^k_{\Delta_G}(g, [0,T], G)$ to $\mathbb{R}$ (cf \cite{ABB19} \cite{L-Don21}).  Therefore, to the optimal problem (\ref{Eq_MinimizerinV}) is naturally associated  the  following  Lagrangian constraint problem:

 We can extend the inner product on $\Delta_G$ to an inner product again denoted $<\;,\;>_g$  and also its associated norm $||\;||_g$.\\
Thus  we consider the Lagrangian $\mathbf{L}(g,v)=\displaystyle\frac{1}{2}||v||^2_g$ on $TG$.  Such a Lagrangian is not degenerate on $\Delta_G$ and we have $\mathbb{F}\mathbf{L}(g,v)=<v,\;>_g$ for $v\in \Delta_G$. Then $\mathbb{P}$is a subbundle of $T^\prime G$ isomorphic to $\Delta_G$. The associated Hamiltonian is $H(g,v,p)=<p,v>-\displaystyle\frac{1}{2}||v||^2_g$.\\
In particular, the restriction of $H$ to $\mathbb{P}$ is
$h:=H_\mathbb{P}(g,v)= \displaystyle\frac{1}{2}||v||^2_g $.\\
The associated induced  implicit Hamiltonian $(M,D_{\Delta_M},H_{| T^\prime M})$ can be written   (cf. (\ref{eq_MHPDeltaM}))
\begin{equation}\label{eq_ImplicitHamiltonianinftyg}
  \dot{{g}}=\displaystyle{\frac{1}{2}\frac{\partial h}{\partial {p}}(g,p)},\;\;\dot{\mathsf{p}}=-\displaystyle\frac{1}{2}\frac{\partial h}{\partial {g}}(g,{p})\;\;\;\;
\end{equation}

\begin{remark} 
A solution of such a system is always a normal curve according to the Pontryagin Maximum Principle (cf. $\S$~ \ref{____PMPFiniteDimensionalContext}).
\end{remark}

\subsubsection{Extremals in Direct Limits of Sub-Riemannian Lie Groups}
\label{____ExtremalsInDirectLimitsOfSubRiemannianLieGroups}

Consider an ascending  sequence  of Lie groups $G_n$ of dimension $n$, provided with an ascending  sequence of left invariant distributions $\Delta_{G_n}$ and a left invariant sub-Riemannian  metric $<\;,\;>^n$. \\

Such  data will be denoted $ \left( G_n, \Delta_{G_n}, <\;,\;;>^n \right) $ and called an \emph{ascending sequence of sub-Riemannian Lie groups}\index{ascending sequence!of sub-Riemannian Lie groups}.\\

From properties of Direct limits (cf \cite{CaPe23}),  it follows that $G=\underrightarrow{\lim}G_n$ is a convenient regular Lie group, $\Delta_G=\underrightarrow{\lim}\Delta_{G_n}$ is a left invariant distribution on $G$ and $<\;,\;>_g:=\underrightarrow{\lim}<\;,\;>^n_{g_n}$ is a left invariant sub-Riemannian metric on $G$ and $\Delta_G$ is bracket generating. \\

By application of Theorem~\ref{T_DirctLimitSequenceDeltaTMn} , normal curves associated to this direct limit structure always exists  and are direct limit of normal curves  associated to the Sub-Riemannian structure on $G_n$ and abnormal curves  are also direct limits of abnormal curves  associated to the sub-Riemannian structure on $G_n$. In particular {\bf there does not exist elusive curves}.\\

As an application of this context, we can consider $G_n=\operatorname{Gl}(n)$.  Such  ascending sequence of sub-Riemannian structures gives rise to a sub-Riemannian structure on the direct limit\footnote{cf. \cite{KrMi97}, 47.7, for basic properties of $\operatorname{Gl}(\infty)$.} $\operatorname{Gl}(\infty)$. The same is true for any ascending sequence of Lie sub-groups of $\operatorname{Gl}(n)$.\\
Conversely, from Proposition~\ref{P_DeltaM}, any sub-Riemannian structure on $\operatorname{Gl}(\infty)$ induced a sub-Riemannian structure on $\operatorname{Gl}(\infty)$.\\
In some sense, we can consider  $\operatorname{Gl}(\infty)$ as a kind of "Classifiant"  for sub-Riemannian structures on  $\operatorname{Gl}(n)$ for  any $n\in \mathbb{N}$.

\subsection{Direct Limits of  Electric  Circuits}
\label{___DirectLimitsOfElectricCircuits}

\subsubsection{Electric circuits} 
\label{____Electric Circuits}

\emph{We first recall the formalism of constraint Lagrangian on LC circuits and  for this section, we refer to  \cite{O-BTCOM13} and \cite{YoMa06I}.}\\ 

For a given electric circuit graph,  Kirchhoff's current law may be written in a manifold  $Q$ of dimension $n$, called the \emph{set of charges}\index{set!of charges},  with a finite set of $1$-forms 
$ \left( \omega^a \right) _{1 \leqslant a \leqslant n-m} $ ($m<n$) as
\[
\left\langle \omega^a,f \right\rangle = 0
\]
where $f =  \left( f^1,\dots,f^n \right) \in T_q Q $ corresponds to the \emph{set of currents}\index{set!of currents} associated to the branches of the circuit and the $1$-forms $\omega^a$ are given, using Einstein's notation, by:
\[
\omega^a = \omega_k^a dq^k~~~~~
a \in \{1,\dots,n-m\},\; k \in \{1,\dots,n\} 
\]
where the coefficients $\omega_k^a$, equal $\pm 1$ or $0$, are determined
by the given circuit.\\
The set of all branch currents $f =  \left( f^1,\dots,f^n \right) $ which satisfy the KCL forms is an $m$ dimensional subspace $\Delta_Q(q)$ of $T_q Q$ defined by:
\[
\Delta_Q(q)
= \left\lbrace
f \in T_q Q:\;
\forall a \in \{1,\dots, n-m\}, 
\left\langle \omega^a,f \right\rangle =0
\right\rbrace
\]
called \emph{the Kirchoff Current Law (KCL) }\index{KCL (Kirchoff Current Law)}.\\
Let $\Delta_Q^{\operatorname{o}}(q) \subset T^\ast_q Q$ be the annihilator of $\Delta_Q(q)$, which  is spanned by $m$   $1$-forms $\omega^1,\dots,\omega^m$ representing the \emph{Kirchoff Voltage Law}(KVL)\index{KVL (Kirchoff Voltage Law}. We will assume that these forms are independent.\\
Let $T_{\pi_Q}: TT^\ast Q \to TQ$ be the tangent map of the cotangent bundle projection $\pi_Q : T^\ast Q \to Q$.\\
Define the distribution $\Delta_{T^\ast Q}$ on $T^\ast Q$ by lifting the constrained distribution $\Delta_Q \subset TQ$:
\[
\Delta_{T^\ast Q} 
:= \left( T \pi_Q \right) ^{-1} (\Delta_Q) \subset T T^\ast Q.
\] 
The annihilator $\Delta_{T^\ast Q}^{\operatorname{o}}$ of $\Delta_{T^\ast Q}$ is given for each $z=(q,p) \in T^\ast Q$ by
\[
\Delta_{T^\ast Q}^{\operatorname{o}}(z)
=
\left\lbrace
\alpha_z\in T^\ast_z T^\ast Q:\;
\forall w_z \in \Delta_{T^\ast Q},\,
\left\langle \alpha_q,w_q \right\rangle = 0
\right\rbrace.
\]
Consider the bundle morphism $\Omega^\flat : T T^\ast M \to T^\ast T^\ast M$  associated to the Liouville $2$-form $\Omega$ (canonical  symplectic structure on $T^\ast Q$). Then a Dirac structure $D_{\Delta_Q}$ on $T^\ast Q$, which is induced from the Kirchoff distribution $\Delta_Q \subset TQ$ can be defined, for each $z=(q,p) \in T^\ast Q$ by
\[
D_{\Delta_Q} (z)
=
\left\lbrace
 \left( v_z,\alpha_z \right) 
 \in T_z T^\ast Q \times T_z^\ast T^\ast Q:\;
 v_z \in \Delta_{T^\ast Q} (z),\;
 \alpha_z - \Omega^\flat(z) (v_z) 
\in \Delta_{T^\ast Q}^{\operatorname{o}}(z) 
\right\rbrace .
\]
For simplicity, from now on, we assume that $Q$ is an open  submanifold of $\mathbb{R}^n$ and we will use the usual coordinates in $\mathbb{R}^n$.  By the way, consider a subbundle $\Delta_Q$ of  $TQ$   and $\Delta_Q^0$ its annihilator.\\
We consider the  Lagrangian given by
\[
 \mathbf{L}(q,v)=\displaystyle\frac{1}{2} \left( v^\top\mathbf{L}v- q^\top \mathbf{C} q \right) \footnote{We can also consider more general Lagrangians as in \cite{O-BTCOM13}.}\]
 where $\mathbf{L}=\mathsf{diag} \left( \mathsf{L}_1,\dots, \mathsf{L}_n \right) $ and $\mathbf{C}=\mathsf{diag} \left( \displaystyle\frac{1}{\mathsf{C}_1},\dots, \frac{1}{\mathsf{C}_n} \right) $ are constant  diagonal matrices. 
 
Therefore,  
$\mathbb{F}\mathbf{L}(q,v)
= \left( q,\displaystyle\frac{\partial \mathbf{L}}{\partial v} \right)
 =(q,\mathbf{ L}v)=(q,p)$.\\     
Again, we set  $\mathbb{P}=\mathbb{F}\mathbf{L}(\Delta_Q)$.
  Thus,  if $\mathbb{E}$ is the typical fibre of $\Delta_Q$, the evolution for the general associated  non holonomic system for a Kirchoff  circuits   in local coordinates is  then:

\begin{equation}
\label{Eq_ImplicitLagrangianLC}
\begin{cases}
\dot{\mathsf{q}}
=\mathsf{v}\\
\dot{\mathsf{p}}+ \mathsf{C}\mathsf{q} \in \mathbb{E}^0 \\
\mathsf{p}
= \mathsf{L}\mathsf{v}\\
\mathsf{v}\in \mathbb{E}
\end{cases}
\end{equation}

We assume that each $\mathsf{L}_i>0$  and $\mathsf{C_i}>0$ for $i \in \{1,\dots, n\}$,  so the Legendre transformation is non degenerate  and so $\mathbb{P}$ is a submanifold of $T^*Q$ (cf. \cite{O-BTCOM13}).

\begin{remark}
\label{R_NormalAbmormal} 
Note that each integral curve of this system with initial condition $f^0:=(f_1^0,\dots, f_n^0)\in Q$ stays in the reachable set $\mathcal{R}(f^0)$ and {\it any integral curve of the previous dynamical system in restriction to the immersed submanifold $\mathcal{R}(f^0)$ is a normal bi-extremal. However, it is an abnormal bi-extremal if and only if $\mathcal{R}(f^0)\not=Q$}.\\
\end{remark}

\subsubsection{Direct Limits of LC Electric Circuits} 
 We begin by fixing some data:
\begin{description}
\item[--]  an increasing sequence of integers $\left( m_n \right) $ such that $0<m_n\leq n$;
\item[--]   an infinite sequence $ \left( \mathsf{L}_n \right) $ and $ \left( \mathsf{C}_n \right) $ of strictly positive number real numbers;
\item[--] an ascending sequence $Q_n$ of  open submanifolds of $\mathbb{R}^n$;
\item[--] an ascending sequence $\Delta_{Q_n}$ of  subbundles of $TQ_n$ of rank $m_n$.
\end{description}
For each $n$, we set $\mathbf{L}_n=\mathsf{diag} \left( \mathsf{L}_1,\dots,\mathsf{L}_n \right) $ and $\mathbf{C}=\mathsf{diag} \left( \displaystyle\frac{1}{\mathsf{C}_1},\dots,\frac{1}{\mathsf{C}_n} \right) $.\\
 
 By the way, for such given data  for each fixed sequence $m_n$,   ascending    sequence $Q_n$ of open  submanifolds of $\mathbb{R}^n$  and ascending sequence $\Delta_{Q_n}$ of rank $m_n$, we set:
$Q:=\underrightarrow{\lim}Q_n$, $\Delta_Q=\underrightarrow{\lim}\Delta_{Q_n}$, $\mathbf{L}=\underrightarrow{\lim}\mathbf{L}_n$. Then, by application of Theorem~\ref{T_DirctLimitSequenceDeltaTMn},  we obtain an  implicit Lagrangian system  $ \left( Q, \Delta_Q,\mathfrak{D} \mathbf{L})  \right) $ which satisfies the conclusions of this Theorem. \\
Conversely, any implicit Lagrangian system  $ \left( Q, \Delta_Q,\mathfrak{D} \mathbf{L}) \right) $ induces on each  $Q_n$ a Lagrangian system  $ \left( Q_n, \Delta_{Q_n},\mathfrak{D}_n \mathbf{L}_n \right) $  from Proposition~\ref{P_DeltaM}.

\section{Dirac Structures on Singular Constraints and Dynamical Systems in the Banach Setting}
\label{__DiracStructuresOnSingularConstraintsInTheBanachSetting}

{\bf In this section, we restrict ourself to the Banach setting.}\\

\emph{The first purpose of this section is to present  a generalization  to a singular distribution on $M$ of  the induced  partial almost Dirac structure associated  to  a closed subbundle $\Delta_M$ of $TM$. More precisely, we   consider an anchored Banach bundle $(E,M,\rho)$
where $\pi:E\to M$ is a  vector bundle whose typical fibre is a reflexive Banach space $\mathbb{E}$ and  an open set $E_0$ of $E$ such that the restriction $\pi_0$ of $\pi$ to $ E_0$ is a locally trivial bundle over $M$ whose  typical fibre $\mathbb{E}_0$ is an open set of the typical 
fibre $\mathbb{E}$ of $E$. We denote by $\rho_0$ the restriction of $\rho $ to $E_0$. In this context, we will  say that 
$\Sigma_M:=\rho(E_0)$ is a \emph{singular non holonomic constraint}. Then, we will  describe dynamical systems associated to a non-degenerate Lagrangian on $E_0$ using a notion of \emph{singular partial almost Dirac structure}, as defined in $\S$~\ref{___InducedPartialAlmostDiracStructures} and an adapted  formalism as developed in $\S$~\ref{__ImplicitLagrangians} and $\S$~\ref{__VariationalApproach}.}  

\subsection{Smooth Singular Distributions on a Manifold}
\label{___SmoothDistribution}

In this paragraph, we recall the definition of a singular smooth distribution on a  manifold and we refer to \cite{CaPe23}, 8.1 and 8.2.
\begin{definition}
\label{D_WeakDistribution}
A  \emph{weak distribution}\index{weak distribution}\index{distribution!weak} on a  manifold $M$  is a map $ \Delta: x \mapsto \Delta_x$  which, for every $x\in M$, $\Delta_x$ has its own Banach structure and such that the inclusion of $\Delta_x$ in $T_xM$ is bounded for all $x\in M$.\\
A weak distribution is \emph{closed}  if each vector space  $\Delta_x$ is a closed    subspace of  in $T_xM$.
\end{definition} 

\begin{example}
\label{Ex_Anchored bundle}
Consider a  bundle  $\pi :E\to M$ and a morphism $\rho: E\to TM$. In this situation, we say that  $(E,M, \rho)$ is an \emph{anchored bundle} and $\rho$ is called the \emph{anchor}\index{anchor}. Then  
$ \Delta_x=\rho_x(E_x)$ is a weak distribution  on $M$ which is closed if and only if the range $\Delta_x$ of $ \rho_x$ is a closed subspace of $T_xM$.
\end{example}

\begin{definition}
\label{D_LocalVectorFieldTangentToAWeakDistribution}
A \emph{local vector field} $Z\in\mathfrak{X}_{\text{loc}}(M)$ is \emph{tangent to the  closed  distribution} $\Delta$, if for any $x\in \operatorname{Dom}(Z)$\footnote{$\operatorname{Dom}(Z)$ denotes the open set on which $Z$ is defined.}, $Z(x)$ belongs to $\Delta_x$.
\end{definition}

The set of local vector fields tangent to $\Delta$ will be denoted by $\mathcal{X}_{\Delta}$.

\begin{definition}
\label{D_DistributionGeneratedByASubset}
We say that a weak closed distribution $\Delta$ is \emph{generated by a subset  $\mathcal{X}\subset \mathfrak{X}_{\text{loc}}(M)$} if, for every $x\in M$, the vector space $\Delta_x$ is the  linear hull  of the set
\[
\left\lbrace 
Y(x),Y \in \mathcal{X},x\in \operatorname{Dom}(Y) 
\right\rbrace.
\]
\end{definition}

\begin{definition} \label{D_DistributionLoctrivial}
Let $\Delta$ be a closed distribution on a Banach manifold $M$.\\
We say that $\Delta$ is \emph{locally trivial}\index{locally trivial distribution}\index{distribution!locally trivial} if for each $x\in M$,  there exist an open neighbourhood $U$ of $x$  and a smooth map $\Theta:U\times {\Delta}_x  \to TM$, called \emph{local trivialization}\index{local trivialization}, such that:
\begin{enumerate}
\item[(i)]
$ \forall y\in U$, $\Theta(\{y\}\times{\Delta}_x)\subset {\Delta}_y$;
\item[(ii)]
$\forall y\in U$,  $\Theta_y\equiv \Theta(y,.):{\Delta}_x\to T_yM$ is a continuous operator  and $\Theta_x:{\Delta}_x\to T_xM$  is the natural inclusion $i_x$. 
\item [(iii)]
$\Theta_y: {\Delta}_x\to {\Delta}_y$  is an isomorphism from ${\Delta}_x$ onto ${\Theta}_y({\Delta}_x)$.\\
A  closed locally trivial distribution will be called {\it a smooth distribution}.
\end{enumerate}
\end{definition}

From \cite{CaPe23}, Proposition 8.1,  
%Chapter 8 p 358  
we have the following  sufficient conditions of local triviality of a closed distribution:
 
\begin{proposition}
\label{P_Uppertrivial}
\label{P_UpperLocalTriviality}
Let $\Delta$ be a closed distribution and assume  that for each $x\in M$, there exists an open neighbourhood $V$ of $x$, a Banach space $F$ and  a smooth map $\Psi:U\times F \to TM$   such that for each $y\in U$:
\begin{enumerate}
\item[(i)]
$ \Psi(\{y\}\times F)= \Delta_y$;
\item[(ii)]
$\Psi_y\equiv \Psi(y,.):F\to T_yM$ is a bounded  operator such that $\Psi_y(F)=\Delta_y$;
\item[(iii)] $\ker \Psi_x$ is split  and we consider a decomposition $F=\ker \Psi_x\oplus E$.
\end{enumerate}
 Then there exists an open neighbourhood $W$ of $x$ such that the restriction   of $\Psi_y$ to $E$  is injective for any $y\in W$,  and then ${\Theta}(y,v)={\Psi}_y\circ  {\Psi}_x^{-1}(v)$ is  a local trivialization of $\Delta$.
\end{proposition}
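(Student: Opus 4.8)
The plan is to build the trivialization $\Theta$ out of the restriction of $\Psi_x$ to the complement $E$ of $\ker\Psi_x$, and to produce the neighbourhood $W$ from an openness argument for bounded-below operators.

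First I would analyse $\Psi_x|_E : E \to T_xM$. Since $F = \ker\Psi_x \oplus E$, this restriction is injective (as $E\cap\ker\Psi_x=\{0\}$) and its range is $\Psi_x(F)=\Delta_x$ by hypothesis (i). Because $\Delta$ is a \emph{closed} distribution, $\Delta_x$ is a closed subspace of the Banach space $T_xM$, hence itself Banach; thus $\Psi_x|_E : E \to \Delta_x$ is a continuous linear bijection of Banach spaces and, by the open mapping theorem, a topological isomorphism. I set $\sigma := (\Psi_x|_E)^{-1}:\Delta_x \to E\subseteq F$, a bounded operator, and define $\Theta(y,v):=\Psi_y(\sigma(v))=\Psi(y,\sigma(v))$ for $v\in\Delta_x$. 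This $\Theta$ is smooth on its domain, being the composite of the smooth map $\Psi$ with the fixed bounded linear map $(y,v)\mapsto(y,\sigma(v))$.

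The key step is the construction of $W$. Shrinking $U$, I would fix a chart trivializing the tangent bundle, $TM_{|U}\cong U\times\mathbb{M}$, so that $\Psi$ reads as a smooth map $\mathbb{U}\times F\to\mathbb{M}$ linear in the second variable, with associated operators $A_y:=\Psi_y\in L(F,\mathbb{M})$. The crucial analytic input is that $y\mapsto A_y$ is continuous in the operator norm. Indeed, $\Psi$ is smooth, in particular $C^1$ as a map of Banach manifolds, and since $w\mapsto\Psi(y,w)$ is linear and bounded, its partial derivative satisfies $\partial_2\Psi(y,w)=A_y$ for every $w$; extracting the $F$-component of $D\Psi(y,0)$ is a bounded linear operation $L(\mathbb{M}\times F,\mathbb{M})\to L(F,\mathbb{M})$, and composing it with the continuous map $y\mapsto D\Psi(y,0)$ shows that $y\mapsto A_y$, hence $y\mapsto A_y|_E$, is norm-continuous. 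Now $A_x|_E=\Psi_x|_E$ is injective with closed range $\Delta_x$, i.e. bounded below: there is $c>0$ with $\|A_x w\|\geq c\|w\|$ for all $w\in E$. As the set of bounded-below operators is open in $L(E,\mathbb{M})$ (if $\|S-A_x|_E\|<c$ then $\|Sw\|\geq(c-\|S-A_x|_E\|)\|w\|$), there is a neighbourhood $W\subseteq U$ of $x$ on which $\Psi_y|_E=A_y|_E$ remains bounded below, hence injective with closed range isomorphic to $E$. This is exactly the asserted injectivity, and I expect this continuity/openness argument — specifically passing from smoothness of $\Psi$ to operator-norm continuity of $y\mapsto\Psi_y$ — to be the main obstacle.

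It then remains to check that $\Theta$ fulfils (i)--(iii) of the definition of a local trivialization over $W$, which is routine. For (i), $\Theta(y,v)=\Psi_y(\sigma(v))$ with $\sigma(v)\in E\subseteq F$, so $\Theta(y,v)\in\Psi_y(F)=\Delta_y$. For (ii), $\Theta_y=\Psi_y|_E\circ\sigma$ is bounded as a composite of bounded maps, and $\Theta_x(v)=\Psi_x(\sigma(v))=v$, so $\Theta_x$ is the inclusion $i_x$. For (iii), for $y\in W$ the map $\Psi_y|_E$ is a topological isomorphism onto its closed range $\Psi_y(E)$, and $\sigma$ is an isomorphism $\Delta_x\to E$, whence $\Theta_y=\Psi_y|_E\circ\sigma$ is a topological isomorphism of $\Delta_x$ onto $\Theta_y(\Delta_x)=\Psi_y(E)\subseteq\Delta_y$, as required. (When $\Delta$ has locally constant rank, automatically so in finite dimension, one moreover gets $\Psi_y(E)=\Delta_y$.) This shows $\Theta$ is a local trivialization of $\Delta$ near $x$, so $\Delta$ is a smooth distribution.
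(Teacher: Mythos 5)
Your proposal is correct; the main thing to flag is that the paper itself offers no proof to compare against --- Proposition \ref{P_Uppertrivial} is quoted from \cite{CaPe23} (Proposition 8.1), so your argument has to stand alone, and it does.

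All three pillars of your proof are sound in the Banach setting to which this part of the paper is restricted. (a) Since $\Delta$ is closed, $\Delta_x$ is a Banach space in the induced norm, and $E$ is Banach as a closed (split) complement of $\ker \Psi_x$; hence $\Psi_x|_E \colon E \to \Delta_x$ is a continuous linear bijection and the open mapping theorem gives the bounded inverse $\sigma$ --- this is also the only sensible meaning of $\Psi_x^{-1}$ in the statement, since $\Psi_x$ itself need not be injective on $F$. (b) The step you rightly single out as the crux --- upgrading smoothness of $\Psi$ to operator-norm continuity of $y \mapsto \Psi_y$ --- works exactly as you say: linearity of $\Psi(y,\cdot)$ gives $\partial_2\Psi(y,w)=\Psi_y$ for every $w$, restriction of operators to the $F$-factor is a bounded linear map $L(\mathbb{M}\times F,\mathbb{M})\to L(F,\mathbb{M})$, and $y \mapsto D\Psi(y,0)$ is continuous because $\Psi$ is $C^1$ in the Banach sense. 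Note that this is precisely the point with no analogue in the general convenient setting, where there is no operator norm; this is consistent with the proposition being confined to the Banach sections of the paper. (c) For an injective operator, having closed range is equivalent to being bounded below, which is an open condition in $L(E,\mathbb{M})$, and this yields $W$; your verification of conditions (i)--(iii) of Definition \ref{D_DistributionLoctrivial}, including smoothness of $\Theta$, $\Theta_x=i_x$, and the fact that (iii) only demands an isomorphism onto the image $\Theta_y(\Delta_x)\subset\Delta_y$ rather than onto $\Delta_y$, is complete. Your parenthetical remark about $\Psi_y(E)=\Delta_y$ in the locally-constant-rank case is true but not needed for the proposition.
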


%Note if $(E, M,\rho)$ is an anchored bundle,  from  Example \ref{Ex_Anchored bundle} and Proposition \ref{P_Uppertrivial} we have:\\
%\begin{corollary}\label{C_anchoredBundle} (cf. \cite{CaPe23}  Corollary 8.1)
%Let $(E,M,\rho)$ be an anchored bundle   such that $\ker \rho_x$  is split and $\rho_x$ has closed range.   Then the distribution  $\mathcal{D}=\mathbf{Im}\rho$ is   a smooth  distribution on $M$.\\
%\end{corollary}

Let $\Delta$ be a smooth distribution on $M$. For any  local trivialization $\Theta:V\times{\Delta}_x\to TM$
and any $v \in {\Delta}_x$, we  consider
\begin{equation}
\label{eq_LocalSectionAssociatedToALowerTrivialization}
X(z)=\Theta(z,v).
\end{equation}
Of course, $X$ is a local vector field on $M$ tangent to $\Delta$  whose domain is $V$. Moreover, the set of all such local vector fields spans $\Delta$.

\begin{definition}
A \emph{(local) lower section for  a  locally trivial closed  distribution} $\Delta$ is  a map of type (\ref{eq_LocalSectionAssociatedToALowerTrivialization}) associated to  some local trivialization $\Theta$.
\end{definition}
 
The set of all  (local) lower sections  of type (\ref{eq_LocalSectionAssociatedToALowerTrivialization}) associated to all  local trivializations will be denoted by $\mathcal{X}^-_{\Delta}$.\\
 
\subsection{Singular Induced Partial Almost Dirac Structures}
\label{___SingularInducedPartialAlmostDiracStructures}
  
In this subsection, we present a generalization of induced  partial almost Dirac structure  associated to to a  regular  distribution presented in section \ref{___InducedPartialAlmostDiracStructures}..\\ 

Very  precisely, consider the following context:\\

\noindent 
Given  an anchored Banach bundle $ (E,M,\rho)$ whose typical fibre $\mathbb{E}$ is reflexive, the fibre $\mathbb{E}_0$ is conic (cf. Remark~\ref{R_conicfiber}).\\
{\bf  We assume that $\ker \rho_x$ is split in $E_x=\pi^{-1}(x)$  and $\operatorname{Range} \rho_x$ is a closed  subspace of $T_xM$}.\\

\noindent
These assumptions are always satisfied if $\rho_x$ is a Fredholm operator for all $x\in M$ (cf. \cite{PeCa24},  Examples A.12 (4)).\\ 
 % be an open an open set of $E$ such that the restriction $\pi_0$ of $\pi$ to $ E_0$ is a locally trivial bundle over $M$ whose  typical fiber $\mathbb{E}_0$ is an open set of the typical 
%fiber $\mathbb{E}$ of $E$. We denote by $\rho_0$ the restriction of $\rho $ to $E_0$. In this context we will  say that {\it $\Sigma_M:=\rho(E_0)$ is a singular non holonomic constraint}.\\
Under these assumptions, note that $\Delta_M$ is a smooth  distribution (cf. Proposition~\ref{P_Uppertrivial}) but is not a subbundle of $TM$ in general.\\  

%For each $x\in M$ %we have  $\Sigma_M\cap(\Delta_M)_x=\rho(\pi_0^{-1}(x))$. In particular when $E_0=E$  is a closed subbundle of $TM$ we will  recover the non holonomic constraint in the sense of section. \ref{___InducedPartialAlmostDiracStructures}. \\
 
\emph{We will now define the induced singular  partial almost Dirac on   $T^\prime M$ associated to $\Delta_M$.}\\
 
But before we need to introduce some notations:
 
\begin{notations}
\label{N_LocalCoordinatesE}
Consider a chart $(U,\phi)$ in $M$ such that $E_{| U}$ is trivializable. If $\mathbb{U}=\phi(U)$, we have  a trivialization $\Psi:E_{| U}\to \mathbb{U}\times \mathbb{E}$. Note that the restriction $\Psi_0$ to $(E_0)_{| U}$ is a trivialization onto $\mathbb{U}\times \mathbb{E}_0$. Again in these local coordinates, for  $(x,v)\in E_{| U}$ the representation $\Psi(x,v)$ will be again  denoted $(x,v)\cong(\mathsf{x},\mathsf{v})$. Since $T\Psi$ gives rise to a trivialization of $T(E_{| U})$ onto $\mathbb{U}\times \mathbb{E}\times  \mathbb{M}\times \mathbb{E}$, the associated  local coordinates will be denoted $(\mathsf{x},\mathsf{v},\dot{\mathsf{x}},\dot{\mathsf{v}})$.  We also have a trivialization $T\phi:TM_{| U}\to \mathbb{U}\times \mathbb{M}$ such that
 $\rho(x,v):=\rho_x(v)\cong \mathsf{r}_\mathsf{x}(\mathsf{v})$. \\
 
 On the other hand, let $\pi ^\prime:E^\prime\to M$ the dual bundle of $\pi:E\to M$. As previously, given  a chart $(U,\phi)$ in $M$ such that $E_{| U}$ is trivializable. Then $E^\prime$ is also 
 trivializable and $(\Psi^*)^{-1}:E^\prime_{| U}\to \mathbb{U}\times \mathbb{E}^\prime$ is a trivialization\footnote{Following the convention in \cite{KrMi97}, the adjoint of a morphism $\Psi$ between two Banach bundles  will be denoted $\Psi^*$.}. The representation $(\Psi^*)^{-1}(x, \mu)$ will be denoted $(\mathsf{x},\mathsf{m})$. Also $T(\Psi^*)^{-1}$ will define a trivialization 
 $T  E^\prime\to \mathbb{U}\times \mathbb{E}^\prime\times \mathbb{M} \times E^\prime $    whose associated local coordinates will be denoted $(\mathsf{x},\mathsf{m}, \dot{\mathsf{x}}, \dot{\mathsf{m}})$. %$T^\prime  E^\prime\to \mathbb{U}\times \mathbb{E}^\prime\times \mathbb{E}^\prime \times E^\prime$
\end{notations}

Given any $x\in M$,  we have $(\Delta_M)_x\cong\{(\mathsf{x},\mathsf{r}_\mathsf{x}(\mathsf{v}),\; \mathsf{v} \in \mathbb{E}\}$. Now for every $p\in T_x^\prime M$ we set 
 
 $(\Delta_{T^\prime M})_{(x, p)}:=(Tp_{T^\prime M})^{-1}((\Delta_M)_x)$ . 
 
% \Deta_{\Sigma_M}:= Tp_{T^\prime M})^{-1}((\Sigma_M)_x)$ .
 
 \noindent Then $(\Delta_{T^\prime M})_{(x, p)}\cong \{(\mathsf{x},\mathsf{p}, \dot{x}=\mathsf{r}_\mathsf{x}(\mathsf{v}), \dot{\mathsf{p}},\; \mathsf{u}\in \mathbb{E},\;  \dot{\mathsf{p}}\in \mathbb{M}^\prime\}$. 
 
Considering the weak canonical symplectic form  $\Omega$ on  $T^\prime M$,  as in $\S$~\ref{___InducedPartialAlmostDiracStructures}, to any $(x,p)\in T^\prime M$, we  associate  the following  vector space  
$(D_{\Delta_M})_{(x,p)}$  of $T^{\mathfrak{p}}_{(x,p)} T^\prime M$: 
\[
(D_{\Delta_M})_{(x,p)}=
\left\lbrace
(\dot{x}, \dot{p},\alpha)\in T^\mathfrak{p}_{(x,p)}(T^\prime M):\;
  \dot{x}=\rho_x(v), v \in E_x,\;\;
   \alpha- \Omega_{(x,p)}^\flat (v)\in (\Delta_{T^\prime M}^0)_{(x,p)}  
\right\rbrace .
\]
From the proof of Propostion~\ref{L_CloseneesOfOmegaDelta}, we obtain:

\begin{proposition}
\label{P_DT'M}      
$(D_{\Delta_M})_{(x,p)}=(\Delta_{T^\prime M})_{(x, p)}\oplus \Omega^\flat((\Delta_{T^\prime M})_{(x,p)})$ and so is a partial linear Dirac structure on $T_{(x,p)}(T^\prime M)$.
\end{proposition}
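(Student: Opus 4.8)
The plan is to observe that the asserted identity and the partial Dirac property are purely pointwise, linear-algebraic statements about the single fibre over $(x,p)$, so the singular nature of $\Delta_M$ (the fact that $x\mapsto\rho_x(E_x)$ need not have locally constant rank) plays no role here: everything is decided inside the fixed Banach space $T_{(x,p)}(T^\prime M)$. Consequently the whole argument reduces to the pointwise computation already carried out in the proof of Proposition~\ref{L_CloseneesOfOmegaDelta}, the only new input being the verification that $(\Delta_{T^\prime M})_{(x,p)}$ is still a closed subspace when $\Delta_M$ is merely a singular distribution.

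First I would fix $(x,p)\in T^\prime M$ and set $W:=(\Delta_{T^\prime M})_{(x,p)}=(Tp_{T^\prime M})^{-1}\bigl((\Delta_M)_x\bigr)$. By hypothesis $(\Delta_M)_x=\operatorname{Range}\rho_x$ is a closed subspace of $T_xM$ and $\ker\rho_x$ is split in $E_x$. Since $Tp_{T^\prime M}\colon T_{(x,p)}(T^\prime M)\to T_xM$ is a bounded linear surjection, $W$ is the preimage of a closed subspace and is therefore closed; moreover it is split in $T_{(x,p)}(T^\prime M)$ as soon as $\operatorname{Range}\rho_x$ is complemented in $T_xM$, which holds under the standing Fredholm-type assumption on $\rho$ (the vertical direction $\ker Tp_{T^\prime M}$ being always complemented). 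This is exactly the structural property of $\Delta_{T^\prime M}$ used in the regular case; here it is supplied by the assumptions on $\rho$ rather than by $\Delta_M$ being a subbundle.

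Next I would apply the linear statement (Lemma~3.6(2) of \cite{PeCa24}, used verbatim in the proof of Proposition~\ref{L_CloseneesOfOmegaDelta}) to the closed split subspace $W$ and the weak symplectic form $\Omega$ on $T^\prime M$. Since $\Omega^\flat$ is injective with range the weak cotangent space $T^\flat_{(x,p)}T^\prime M$ (recorded in \S\ref{___CanonicalMaps}), that lemma yields that the graph-type space $\{(u,\alpha):\ u\in W,\ \alpha-\Omega^\flat(u)\in W^{0}\}$ coincides with $W\oplus\Omega^\flat(W)$ and is a partial linear Dirac structure, i.e. it is isotropic and maximal for $\langle\langle\,\cdot\,,\cdot\,\rangle\rangle$. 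As this space is precisely $(D_{\Delta_M})_{(x,p)}$ by definition, both assertions follow. As an independent check one can read off the local form (\ref{Eq_DOmegaM}): writing elements of $D_{\Delta_M}$ as $\bigl((\dot{\mathsf{x}},\dot{\mathsf{p}}),(\mathsf{a},\mathsf{w})\bigr)$ with $\dot{\mathsf{x}}\in\mathbb{E}$, $\mathsf{w}=\dot{\mathsf{x}}$ and $\mathsf{a}+\dot{\mathsf{p}}\in\mathbb{E}^0$, direct substitution into $\langle\langle\,\cdot\,,\cdot\,\rangle\rangle$ produces $0$, confirming isotropy.

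The only point requiring care — and hence the main obstacle — is the passage from \emph{subbundle} to \emph{closed subspace}: one must be certain that the hypotheses that $\operatorname{Range}\rho_x$ is closed and $\ker\rho_x$ is split (automatic when $\rho_x$ is Fredholm) are enough for $W$ to be a genuinely complemented closed subspace, so that the weak-symplectic lemma applies unchanged. Reflexivity of the typical fibre $\mathbb{E}$ is what guarantees that the duality identifications implicit in $\Omega^\flat$ and in the annihilator $W^{0}$ behave as in the finite-dimensional model; once $W$ is known to be closed and split, no further global regularity of $\Delta_M$ is needed, which is exactly why the statement survives in the singular setting.
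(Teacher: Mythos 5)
Your proposal follows essentially the same route as the paper: the paper's entire proof consists of invoking the pointwise, fibrewise argument of Proposition~\ref{L_CloseneesOfOmegaDelta} (i.e.\ the linear Lemma~3.6(2) of \cite{PeCa24} applied to the single fibre over $(x,p)$), which is exactly what you do. Your extra verification that $(\Delta_{T^\prime M})_{(x,p)}$ remains a closed (split) subspace in the singular setting --- as the preimage of the closed subspace $\operatorname{Range}\rho_x$ under the bounded surjection $Tp_{T^\prime M}$ --- only makes explicit what the paper leaves implicit.
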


As in Remark~\ref{R_DOmegaM} (1), we have:
\begin{equation}
\label{eq_SingularDiracLocal}
(D_{\Delta_M})_{(x,p)}\cong \{\left((\mathsf{x},\mathsf{p}, \mathsf{r}_{\mathsf{x}}(\mathsf{u}), \dot{\mathsf{p}}),(\mathsf{x},\mathsf{p}, - \dot{\mathsf{p}}, \mathsf{r}_{\mathsf{x}}(\mathsf{u})\right),\; \mathsf{u}\in \mathbb{E},\; \dot{\mathsf{p}}\in \mathbb{M}^\prime\}
\end{equation}

\begin{definition}
\label{D_SingularDirac}  
The  map $D_{\Delta_M}:(x,p)\mapsto (D_{\Delta_M})_{(x,p)}\subset T_{(x,p)}T^\mathfrak{p}M$ is called \emph{the singular Dirac structure associated to $\Delta_M$}.
\end{definition}

Note that if $E$ is a closed subbunbdle of $TM$,  with the anchor being the inclusion of $E$ into $TM$, then $E=\Delta_M$ and the previous  construction of $D_{\Delta_M}$ is exactly the induced partial almost Dirac structure associated  to $\Delta_M$.\\ By analogy with Definition~\ref{D_DistributionLoctrivial}, we have:
\begin{proposition}
\label{D_SmoothSingularDirac}
For any $x\in M$  and $(x,p)\in T_x^\prime M$ fixed, there exist  an open set $U$ around $x\in M$  and a smooth map
%such that for any and $(x,p)\in T_x^\prime M$  in $T^\prime M$ and a smooth map

$\Theta^\mathfrak{p}: T^\prime M_{| U}\times  (D_{\Delta_M})_{(x,p)}\to T^\mathfrak{p}M$ such that :
\begin{enumerate}
\item[(i)]
$ \forall (x',p')\in T^\prime M_{| U} ,\;\Theta^\mathfrak{p}(\{(x',p')\}\times(D_{\Delta_M})_{(x,p)})\subset (D_{\Delta_M})_{(x',p')}$;
\item[(ii)]
For each $(x',p')\in  T^\prime M_{| U}$, 
 $$\Theta_{(x',p')}^\mathfrak{p}:= \Theta^\mathfrak{p}((x',p'),.):(D_{\Delta_M})_{(x,p)}\to T_{(x',p')}^\mathfrak{p}M$$
  is a continuous operator  and $\Theta^\mathfrak{p}_{(x,p)}:(D_{\Delta_M})_{(x,p)}\to T_{(x,p)}^\mathfrak{p}M$  is the natural inclusion   
\item [(iii)]
$\Theta_{(x',p')}^\mathfrak{p}:(D_{\Delta_M})_{(x,p)}\to (D_{\Delta_M})_{(x',p')}$  is an isomorphism  onto its range.\\
%A  closed locally trivial distribution will be called {\it a smooth distribution}.
\end{enumerate}
\end{proposition}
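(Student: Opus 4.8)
The plan is to build the trivialization $\Theta^\mathfrak{p}$ in three layers, exploiting the fact that, by Proposition~\ref{P_DT'M}, each fibre $(D_{\Delta_M})_{(x',p')}$ is the graph of $\Omega^\flat$ restricted to $\Delta_{T^\prime M}$, together with the local triviality of the singular distribution $\Delta_M$ itself. First I would fix $x\in M$ and a coordinate chart $(U,\phi)$ with $E_{|U}$ trivializable, so that in the notation of Notations~\ref{N_LocalCoordinatesE} the anchor reads $\rho_{x'}\cong\mathsf{r}_{\mathsf{x}'}$. Since $\ker\rho_x$ is split and $\operatorname{Range}\rho_x=\Delta_x$ is closed, Proposition~\ref{P_Uppertrivial} applies with $\Psi_{x'}=\rho_{x'}$ and $F=\mathbb{E}$: choosing a topological complement $\mathbb{E}_1$ of $\ker\rho_x$ and shrinking $U$, the map $\Theta(x',v):=\rho_{x'}\circ(\rho_x|_{\mathbb{E}_1})^{-1}(v)$ is a smooth local trivialization of $\Delta_M$, with each $\Theta_{x'}\colon\Delta_x\to\Delta_{x'}$ an isomorphism onto its range and $\Theta_x$ the inclusion.

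Next I would lift $\Theta$ to $\Delta_{T^\prime M}=(Tp_{T^\prime M})^{-1}(\Delta_M)$. In the coordinates $(\mathsf{x},\mathsf{p},\dot{\mathsf{x}},\dot{\mathsf{p}})$ on $TT^\prime M$ over $T^\prime M_{|U}$ one has $(\Delta_{T^\prime M})_{(x,p)}\cong\{(\dot{\mathsf{x}},\dot{\mathsf{p}}):\dot{\mathsf{x}}\in\Delta_x\}\cong\Delta_x\times\mathbb{M}^\prime$, and I define $\Theta^{T^\prime}$ over $T^\prime M_{|U}$ by $\Theta^{T^\prime}_{(x',p')}(\dot{\mathsf{x}},\dot{\mathsf{p}}):=(\Theta_{x'}(\dot{\mathsf{x}}),\dot{\mathsf{p}})$. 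Because $\Theta_{x'}$ is smooth in $x'$ and acts trivially on the $\dot{\mathsf{p}}$-factor, $\Theta^{T^\prime}$ is a smooth local trivialization of $\Delta_{T^\prime M}$ reducing to the inclusion at $(x,p)$ and restricting, for every $(x',p')\in T^\prime M_{|U}$, to an isomorphism $(\Delta_{T^\prime M})_{(x,p)}\to(\Delta_{T^\prime M})_{(x',p')}$.

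Finally, writing a generic element of $(D_{\Delta_M})_{(x,p)}$ as $(w,\Omega^\flat_{(x,p)}(w))$ with $w\in(\Delta_{T^\prime M})_{(x,p)}$ (legitimate by Proposition~\ref{P_DT'M}), I would set
\[
\Theta^\mathfrak{p}\big((x',p'),(w,\Omega^\flat_{(x,p)}(w))\big):=\big(\Theta^{T^\prime}_{(x',p')}(w),\;\Omega^\flat_{(x',p')}(\Theta^{T^\prime}_{(x',p')}(w))\big).
\]
Property (i) then holds because the right-hand side is the graph of $\Omega^\flat_{(x',p')}$ over $\Theta^{T^\prime}_{(x',p')}(w)\in(\Delta_{T^\prime M})_{(x',p')}$, hence lies in $(D_{\Delta_M})_{(x',p')}$ by Proposition~\ref{P_DT'M}; (ii) follows since $\Omega^\flat$ is a bounded smooth bundle morphism and $\Theta^{T^\prime}_{(x,p)}=\mathrm{id}$ forces $\Theta^\mathfrak{p}_{(x,p)}$ to be the natural inclusion; (iii) follows by composing the isomorphism $\Theta^{T^\prime}_{(x',p')}$ with the graph isomorphism $w'\mapsto(w',\Omega^\flat_{(x',p')}(w'))$ and the identification $(w,\Omega^\flat_{(x,p)}(w))\mapsto w$. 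Smoothness of $\Theta^\mathfrak{p}$ in $(x',p')$ is inherited from that of $\Theta^{T^\prime}$ and of $\Omega^\flat$.

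The main obstacle I anticipate is one of smoothness and bookkeeping rather than of substance: one must check that the fibrewise isomorphisms $\Theta_{x'}$ produced by Proposition~\ref{P_Uppertrivial} depend smoothly on $x'$, and that enlarging the base from $U\subset M$ to $T^\prime M_{|U}$ — where the only new dependence enters through $\Omega^\flat_{(x',p')}$ — preserves smoothness. In the canonical coordinates of $\S$~\ref{___CanonicalMaps} the map $\Omega^\flat$ is even constant on fibres, $(\dot{\mathsf{x}},\dot{\mathsf{p}})\mapsto(-\dot{\mathsf{p}},\dot{\mathsf{x}})$, so this last point is immediate; consequently the genuine content of the proof is Step~1, already supplied by the hypotheses that $\ker\rho_x$ is split and $\operatorname{Range}\rho_x$ is closed, and the remaining verifications are local and routine.
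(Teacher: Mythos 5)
Your proposal is correct and follows essentially the same route as the paper's own proof: trivialize the singular distribution $\Delta_M$ (which the paper also obtains from Proposition~\ref{P_Uppertrivial} under the standing split-kernel/closed-range hypotheses), lift it to $\Delta_{T^\prime M}$ by acting on the $\dot{\mathsf{x}}$-component and leaving $\dot{\mathsf{p}}$ untouched, and then transport fibres of $D_{\Delta_M}$ as graphs of $\Omega^\flat$ over the transported tangent data. Your explicit invocation of Proposition~\ref{P_Uppertrivial} with $\Psi_{x'}=\rho_{x'}$ only makes visible a step the paper leaves implicit; otherwise the two arguments coincide.
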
  

\begin{definition}\label{D_SmoothAlmostPartialDirac} 
By abuse of vocabulary, we  will say that  the induced partial almost Dirac $D_{\Delta_M}$ is  a smooth distribution in $T^\mathfrak{p} M$.
\end{definition}

\begin{proof} 
We first consider the context of Definition~\ref{D_DistributionLoctrivial} that is the existence of a smooth map $\Theta: U\times (\Delta_M)_x\to TM$ which satisfies Properties (i) and (ii).

 At first from (i),  for any $x'\in U$ and $(x',p')\in T^\prime M_{| U}$, we have  
\begin{equation}
\label{eq_ThetaT1}
[(Tp_{T^\prime M})^{-1}(\Theta_{x'}((\Delta_M)_x))](x',p')\subset (\Delta_{T^\prime M})_{(x',p')}
\end{equation}
On the other hand, as the problem is local, without loss of generality,  we may assume that $M=U\subset \mathbb{M}$ and so we can use  notations used in  $\S$~\ref{___CanonicalMaps}  and Notations \ref{N_LocalCoordinatesE}.\\
For $(x,p)\in T^\prime M$ fixed,  we define $ \widehat{ \Theta}:T^\prime M_{| U} \times (\Delta_{T^\prime. M})_{(x,p)}\to T_{(x',p')}(T^\prime M)$ by
 
$\widehat{\Theta}((\mathsf{x}', \mathsf{p}'), (\dot{\mathsf{x}},\dot{\mathsf{p}}))=((\mathsf{x}',\mathsf{p}'),(\Theta(\mathsf{x}', \dot{\mathsf{x}}), \dot{\mathsf{p}}))$.\\
Then, for any $(\mathsf{x}',\mathsf{p}')\in T^\prime M$, 
$\widehat{\Theta}_{(\mathsf{x}',\mathsf{p}')}:=\widehat{\Theta}((\mathsf{x}', \mathsf{p}'), (\dot{\mathsf{x}},.) $ is a continuous operator and  from (\ref{eq_ThetaT1}), we have: 

\begin{equation}
\label{eq_T2i}
\widehat{\Theta}_{(\mathsf{x}',\mathsf{p}')}((\Delta_{T^\prime. M})_{(x,p)})\subset (\Delta_{T^\prime M})_{(x',p')}
\end{equation}
\begin{equation}
\label{eq_T2ii}
\widehat{\Theta}_{(\mathsf{x},\mathsf{p})} =\operatorname{Id}_{(\Delta_{T^\prime. M})_{(x,p)} }
\end{equation}
\begin{equation}
\label{eq_ThetaT2iii}
\widehat{\Theta}_{(\mathsf{x}',\mathsf{p}')} \textrm{ is an isomorphism } (\Delta_{T^\prime. M})_{(x,p)}\to \widehat{\Theta}_{(\mathsf{x}',\mathsf{p}')}( (\Delta_{T^\prime. M})_{(x,p)})\\
\end{equation}
 %Since  $\Delta_M$ is smooth distribution,  from Definition \ref.{D_DistributionLoctrivial}  for any $x\in M$ there exist a local tricialization $\Theta:U\times (\Delta_M)\to $TM$ such that $\Theta(y, (\Delta_M)_y^\parallell=\Delta_y:=\subset( \Delta_M)_y$ which  define a subbundle of $\Delta_M^\parallel$ of $TM_{| U}$. By construction of the singular partial almost Dirac structure, 

Now, from Proposition \ref{P_DT'M}, since     $(D_{\Delta_M})_{(x,p)}=(\Delta_{T^\prime M})_{(x, p)}\oplus \Omega^\flat((\Delta_{T^\prime M})_{(x,p)})$,  from (\ref{eq_ThetaT1}), we obtain:
\begin{equation}\label{eq_Tp1}
\widehat{\Theta}_{(x,p)}(\Delta_{T^\prime M})_{(x,p)})\oplus \Omega^\flat(\widehat{\Theta}_{(x,p)}(\Delta_{T^\prime M})_{(x,p)})\subset (\Delta_{T^\prime M})_{(x',p')}\oplus \Omega^\flat((\Delta_{T^\prime M})_{(x',p')})
\end{equation}

Thus, we can consider the map:

$\Theta^\mathfrak{p}: T^\prime M_{| U}\times  (D_{\Delta_M})_{(x,p)}\to T^\mathfrak{p}M$ defined by
\[
\Theta^\mathfrak{p}(((\mathsf{x}, \mathsf{p} ), (\dot{\mathsf{x}},\dot{\mathsf{p}}), \Omega_{(x,p)}^\flat(\dot{\mathsf{x}},\dot{\mathsf{p}})) =\widehat{\Theta}_{(x,p)}(\dot{\mathsf{x}},\dot{\mathsf{p}}),\Omega_{(x,p)}^\flat\circ \widehat{\Theta}_{(x,p)}(\dot{\mathsf{x}},
 \dot{\mathsf{p}}).
 \]
According to this Definition, using the relations (\ref{eq_T2i}), (\ref{eq_T2ii}) and (\ref{eq_ThetaT2iii}), we obtain the announced results.
\end{proof}
 
\subsection{Lagrangians and Associated Integrability Domains with Singular Constraints}
\label{LagrangiansAndAssociatedIntegrabilityDomainsWithSingularConstraints}
As announced in  the introduction  of this section, we consider an open set $E_0$ of $E$ such that the restriction $\pi_0$ of $\pi$ to $ E_0$ is a locally trivial bundle over $M$ whose  typical fibre $\mathbb{E}_0$ is a conic  open set of the typical 
fibre $\mathbb{E}$ of $E$. We denote by $\rho_0$ the restriction of $\rho $ to $E_0$.  In this way, $\Sigma_M:=\rho(E_0)$ is a singular non holonomic constraint.\\

Consider a smooth map $\mathbf{L}:E_0\to \mathbb{R}$. As in $\S$~\ref{__ImplicitLagrangians}, for $(x,v)\in E_0$, since  $\pi_0^{-1}(x)$ is conic, we can define 
%!o $v$ <- $w$
\begin{equation}
\label{eq_SingularLegendreTransform}
(\mathbb{F}\mathbf{L})_x(v).w
=
\displaystyle\frac{d}{ds} \left( \mathbf{L}(x,v+sw) \right)_{| s=0}
\end{equation}
where $v$ and $w$ belong to $\pi_0^{-1}(x)$ and $\mathbb{F}\mathbf{L}_x(v).w$ is the derivative of $\mathbf{L}$ along the vertical fibre of $TE_0\to E_0$  at point $(x,v)$, in the direction $w \in E_x$. Since $E_0$ is an open conic subbundle of $E$, 
by the way,  for $x\in M $, $\mathbb{F}\mathbf({L})_x$ defines  a continuous  linear map from $E_x \to E_x^\prime $\footnote{$E_x^\prime$ is the fibre of the dual bundle of $E^\prime $ of $E$.}. Since the map $\mathbf{L}$ is smooth on $E_0$ which is open in $E$ with conic fibre, 
 it follows that the induced map 
$\mathbb{F}\mathbf{L}:E_0\to E^\prime$ is a smooth map fibre preserving.\\
In local coordinates in $E$ and $E^\prime$, we have:
\[
\mathbb{F}\mathbf{L}\cong(\mathsf{x}, \mathsf{v})
\mapsto 
 \left( \mathsf{x},\mathsf{p}=\displaystyle\frac{\partial \mathsf{L}}{\partial \mathsf{v}} \right).
\]
  
Since $\mathbb{E}$ is reflexive, there no obstruction  that  $\mathbb{F}\mathbf{L}$ would be a local diffeomorphism. If it is true for any $(x,u)\in E_0$, we will say that \emph{$\mathbf{L}$ is regular}.\\
  
On the other hand, we consider the pull back $E\times_M T^\prime M$  of $E$ over $p_{T^\prime M}:T^\prime M\to M$. Then we have a  bundle morphism $\widetilde{p}_{T^\prime M}: E\times_M T^\prime M\to E$ which is  a fibrewise isomorphism and a projection $\widetilde{\pi}_{T^\prime M}: E\times_M T^\prime M\to T^\prime M$ and we have  the following commutative diagram:
\begin{equation}
\label{eq_PullBackE}
\xymatrix{
 E\times_M T^\prime M\ar[d]^{\widetilde{\pi}}\ar[r]^{\;\;\;\;\;\;\;\widetilde{p}_{T^\prime M}}&E\ar[d]^{\pi}\\
T^\prime M\ar[r]^{p_{T^\prime M}} &M}
\end{equation}
Since $E_0$ is an open subbundle of $E$, we  have the same diagram in restriction to $E_0$: 
 
\begin{equation}
\label{eq_PullBackE0}
\xymatrix{
 E_0\times_M T^\prime M\ar[d]^{\widetilde{\pi}_0}\ar[r]^{\;\;\;\;\;\;\;\;\widetilde{p}_{T^\prime M}}&E_0\ar[d]^{\pi}\\
T^\prime M\ar[r]^{p_{T^\prime M}} &M}
\end{equation}

\begin{lemma}
\label{L_ImFL}  
If $\mathbf{L}: E_0\to \mathbb{R}$ is regular, for any $x\in M$,  there exists a chart $(U,\phi)$ around $x$ and a trivialization $\Phi= E_{| U}\to \phi(U)\times \mathbb{E}$ such that, in the associated local coordinates, 
$(\mathsf{z},\mathsf{v})\mapsto \displaystyle\frac{\partial^2 \mathsf{L}}{\partial \mathsf{u}^2}(\mathsf{z},\mathsf{v})(\;,\;)$  is a  smooth field of bilinear forms on  $\mathbb{E}$, which is strongly non degenerated at $(\mathsf{x}\mathsf{,u})$.
\end{lemma}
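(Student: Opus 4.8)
The plan is to reduce the regularity hypothesis on $\mathbf{L}$, namely that $\mathbb{F}\mathbf{L}$ is a local diffeomorphism at the distinguished point, to an invertibility statement for the fibre Hessian, and then to translate that invertibility into the asserted strong non-degeneracy. First I would fix a chart $(U,\phi)$ around $x$ together with a bundle trivialization $\Phi:E_{| U}\to\phi(U)\times\mathbb{E}$, which exists because $E$ is a locally trivial Banach bundle; shrinking $U$ I may assume that $(E_0)_{| U}$ trivializes onto $\phi(U)\times\mathbb{E}_0$ with $\mathbb{E}_0$ conic. In the associated local coordinates the smooth fibre-preserving map $\mathbb{F}\mathbf{L}$ reads $(\mathsf{x},\mathsf{v})\mapsto(\mathsf{x},\partial\mathsf{L}/\partial\mathsf{v})$, exactly as in $\S$~\ref{___NonDegenerateConstraintLagrangians}, and, as in \eqref{eq_TFL}, its tangent map is the bounded block operator
\[
T\mathbb{F}\mathbf{L}\cong
\begin{pmatrix}
\mathsf{Id}&\mathsf{0}\\[2pt]
\dfrac{\partial^2\mathsf{L}}{\partial\mathsf{x}\partial\mathsf{v}}&\dfrac{\partial^2\mathsf{L}}{\partial\mathsf{v}^2}
\end{pmatrix}
:\mathbb{M}\times\mathbb{E}\longrightarrow\mathbb{M}\times\mathbb{E}^\prime .
\]

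Next I would invoke the elementary functional-analytic reduction: a bounded block lower-triangular operator whose upper-left block is the identity is a toplinear isomorphism if and only if its lower-right block $\dfrac{\partial^2\mathsf{L}}{\partial\mathsf{v}^2}:\mathbb{E}\to\mathbb{E}^\prime$ is a toplinear isomorphism, the inverse being given by the usual explicit block formula. Since $\mathbf{L}$ is regular, $\mathbb{F}\mathbf{L}$ is a local diffeomorphism at the distinguished point, and differentiating the local identity $\mathbb{F}\mathbf{L}^{-1}\circ\mathbb{F}\mathbf{L}=\operatorname{Id}$ shows that its tangent map there is a toplinear isomorphism; hence the lower-right block $\dfrac{\partial^2\mathsf{L}}{\partial\mathsf{v}^2}(\mathsf{x},\mathsf{u})$ is a toplinear isomorphism $\mathbb{E}\to\mathbb{E}^\prime$. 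By definition this is exactly the assertion that the symmetric bilinear form $\dfrac{\partial^2\mathsf{L}}{\partial\mathsf{v}^2}(\mathsf{x},\mathsf{u})(\ ,\ )$ is strongly non-degenerate, which is the conclusion at $(\mathsf{x},\mathsf{u})$; because the set of toplinear isomorphisms is open in the space of bounded operators, this in fact persists on a neighbourhood. Smoothness of the field $(\mathsf{z},\mathsf{v})\mapsto\dfrac{\partial^2\mathsf{L}}{\partial\mathsf{v}^2}(\mathsf{z},\mathsf{v})$ of bilinear forms on $\mathbb{E}$ is immediate, since $\mathbf{L}$ is smooth on the open set $E_0$ and fibre derivatives of smooth maps depend smoothly on all variables.

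The point that needs care is the functional-analytic distinction between this situation and the one of $\S$~\ref{___NonDegenerateConstraintLagrangians}: there an \emph{immersion} only forces the fibre Hessian to be injective (weak non-degeneracy), whereas here the \emph{local diffeomorphism} hypothesis forces it to be a full toplinear isomorphism $\mathbb{E}\to\mathbb{E}^\prime$. This is precisely where the standing reflexivity of $\mathbb{E}$ and the restriction to the Banach framework are used: they guarantee that the derivative of a local diffeomorphism is a toplinear isomorphism and, via the open mapping theorem, that the bounded bijective Hessian automatically has a bounded inverse, so that strong non-degeneracy — and not merely injectivity — holds.
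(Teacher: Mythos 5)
Your proposal is correct and follows essentially the same route as the paper: both pass to local coordinates, exhibit $T\mathbb{F}\mathbf{L}$ as the lower-triangular block operator of (\ref{eq_TFL})-type with identity upper-left block, and identify invertibility of that block operator with strong non-degeneracy of the fibre Hessian $\dfrac{\partial^2 \mathsf{L}}{\partial \mathsf{v}^2}:\mathbb{E}\to\mathbb{E}^\prime$. The only (harmless) difference is that you obtain the needed implication by differentiating $\mathbb{F}\mathbf{L}^{-1}\circ\mathbb{F}\mathbf{L}=\operatorname{Id}$, i.e.\ the easy direction via the chain rule, whereas the paper invokes the inverse function theorem to state the full equivalence; your closing attribution of this to reflexivity is inessential (the chain rule and the open mapping theorem in Banach spaces suffice), but it does not affect the argument.
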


\begin{proof}  
The result is a consequence of the following equivalent assertions:\\

(1)  In local coordinates, for fixed 
$x\in M$,  there exists a chart $(U,\phi)$ around $x$ such that 
$(\mathsf{z},\mathsf{v})\mapsto \displaystyle\frac{\partial^2 \mathsf{L}}{\partial \mathsf{u}^2}(\mathsf{z},\mathsf{v})(\;,\;)$  is a  smooth field of bilinear forms defined  on   an open set  around $(\mathsf{x},\mathsf{u})$ in  $\mathbb{U}\times \mathbb{E}$ where $\mathbb{U}=\phi(U)$. \\

Under the context (1), after shrinking 
$\mathbb{U}$, if necessary, there exists an open set $\mathbb{V}$ in $\mathbb{E}_0$ such that  $(\mathsf{z},\mathsf{v})\mapsto  \displaystyle\frac{\partial^2 \mathsf{L}}{\partial \mathsf{u}^2}(\mathsf{z},\mathsf{v})(\;,\;)$ is a field of  strong non-degenerated bilinear forms on $\mathbb{U}\times \mathbb{V}$ if and only if 
$(\mathsf{z},\mathsf{v})\mapsto \displaystyle\frac{\partial^2 \mathsf{L}}{\partial \mathsf{u}^2}(\mathsf{z},\mathsf{v})(\;,\;)$  is a field of strong non degenerated bilinear forms if and only if\\
$\displaystyle\frac{\partial^2 \mathsf{L}}{\partial \mathsf{u}^2}(\mathsf{x},\mathsf{u})(\;,\;)$ is strong non-degenerated.\\

Under context (1)  therefore,  in local coordinates, to  $T\mathbb{F}\mathbf{L}:TE_0\to T^\prime E^\prime$  is associated the matrix:
\begin{equation}
\label{eq_JacoobFL}
\begin{pmatrix}
			\mathsf{Id}&0\\
\displaystyle\frac{\partial^2 \mathsf{L}}{\partial \mathsf{x} \partial \mathsf{u}}&\displaystyle\frac{\partial^2 \mathsf{L}}{\partial \mathsf{u}^2}\\
			\end{pmatrix}
\end{equation}
Thus $T_{(x,u)}\mathbb{F}\mathbf{L}:T_{(x,u)}E_0\to T_{(x,u)}^\prime E^\prime$ is an isomorphism if and only if $\displaystyle\frac{\partial^2 \mathsf{L}}{\partial \mathsf{u}^2}(\mathsf{x,\mathsf{u}})(\;,\;)$  is a  smooth field of bilinear forms on  $\mathbb{E}$, which is strongly non degenerated .\\
  
From  the  inversion function theorem,  the restriction of $\mathbb{F}\mathbf{L}$ to ${E_0}_{| U}$ is a local diffeomorphism around $(x,u)$ if and only if  $T_{(x,u)}\mathbb{F}\mathbf{L}:T_{(x,u)}E_0\to T_{(x,u)}^\prime E^\prime$ is an isomorphism.
\end{proof}
  
\begin{proposition} 
\label{P_PDirac} 
Assume that $\mathbf{L}$ is regular. 
\begin{enumerate}
\item[1.]  
The set 
$\mathbb{P}:=\{(z, v, p) \in E_0\times T^\prime M\;:\; \rho^*_z(p)={\mathbb{F}\mathbf{L}}(z, v)\}$ is a split Banach  immersed submanifold of    $E_0\times _M T^\prime M   
$ modelled on $\mathbb{M}\times \mathbb{M}^\prime$. Moreover, the restriction $\Theta_\mathbf{L}$  of $\widetilde{\pi}_0$ to $\mathbb{P}$ is a local diffeomorphism from $\mathbb{P}$ into  $T^\prime  M$. It follows that  the restriction $\Theta_{\mathbf{L}}$ of $\widetilde{\pi}_0$ to $\mathbb{P}$ is  a surjective local diffeomorphism of into $T^\prime M$.
%and we denote by $\mathbb{P}$ the range of  $\widetilde{\mathbb{F}}\mathbf{L}$ in 
\item[2.] 
Let $\widetilde{\Omega}$ be the pull-back $\widetilde{\pi}_0^*\Omega$. Then $\widetilde{\Omega}$ is a strong symplectic $2$-form on $E_0\times_M T^\prime M$\footnote{cf. \cite{PeCa24}, 4.1.} whose kernel is $\ker T\widetilde{\pi}_0$ and whose supplement is  $\ker T\widetilde{p}_{T^\prime M}$ 
%and $\widetilde{\Omega}^\flat(T(E_0\times_M T^\prime M)$ is the pullback of $T^\flat(T (T^\prime M))$ over  $T\widetilde{\pi}_0$ and so
and the graph of $\widetilde{\Omega}$ is a partial Dirac structure $D_{\widetilde{\Omega}}$ 
on $E_0\times_M T^\prime M$. In particular,   the Dirac structure $D_\mathbb{P}$ induced on $\mathbb{P}$  is the graph of the (weak) symplectic $2$-form $\Omega_\mathbb{P}$ induced by $\widetilde{\Omega}$ on $\mathbb{P}$. In fact, $
\Theta_\mathbf{L}^*(\Omega)$  is a weak symplectic form on $\mathbb{P}$ whose associated Dirac structure  $D_{\Omega_{\mathbb{P}}}$ is also 
the pull-back $\Theta_\mathbf{L}^!(D_{\Delta_M})$ on $\mathbb{P}$.
%\item[3.]  Consider the Hamiiltonian $H$ on $E_0\times_M\times T^\prime M$ associted to $\mathbb{L}$  defined by
%$$H(x,u, \xi)=< \xi,\rho_x(u)>-\mathbb{L}(x,u)$$
 % Then the restriction $H_\mathbb{P}$  of $H$ to $\mathbb{P}$ is smooth and
\end{enumerate}
\end{proposition}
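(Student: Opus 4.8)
The plan is to treat the two assertions in turn, reducing everything to the regularity of the Legendre transform (Lemma~\ref{L_ImFL}) via the Banach implicit function theorem, and to the pull-back of the canonical symplectic form by the submersion $\widetilde{\pi}_0$. First I would present $\mathbb{P}$ as the zero set of the fibrewise smooth map $\Phi:E_0\times_M T^\prime M\to E^\prime$ given, in the coordinates of Notations~\ref{N_LocalCoordinatesE}, by $\Phi(\mathsf{x},\mathsf{v},\mathsf{p})=\mathsf{r}^*_\mathsf{x}(\mathsf{p})-\frac{\partial\mathsf{L}}{\partial\mathsf{v}}(\mathsf{x},\mathsf{v})$, with values in $\mathbb{E}^\prime$. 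Its partial derivative in the fibre variable $\mathsf{v}$ is $-\frac{\partial^2\mathsf{L}}{\partial\mathsf{v}^2}$, which by regularity of $\mathbf{L}$ and Lemma~\ref{L_ImFL} is a strong isomorphism $\mathbb{E}\to\mathbb{E}^\prime$. Hence $\Phi$ is a submersion in the $\mathsf{v}$-direction, and the implicit function theorem exhibits $\mathbb{P}$ locally as the graph $\mathsf{v}=(\mathbb{F}\mathbf{L}_\mathsf{x})^{-1}(\mathsf{r}^*_\mathsf{x}(\mathsf{p}))$, hence as a split Banach submanifold of $E_0\times_M T^\prime M$ with chart variables $(\mathsf{x},\mathsf{p})$, i.e. modelled on $\mathbb{M}\times\mathbb{M}^\prime$, the $\mathsf{v}$-direction providing the transversal splitting. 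Reading $\Theta_\mathbf{L}=\widetilde{\pi}_0|_{\mathbb{P}}$ in these charts as $(\mathsf{x},\mathsf{v}(\mathsf{x},\mathsf{p}),\mathsf{p})\mapsto(\mathsf{x},\mathsf{p})$ shows it is a local diffeomorphism, its inverse being $(\mathsf{x},\mathsf{p})\mapsto(\mathsf{x},(\mathbb{F}\mathbf{L}_\mathsf{x})^{-1}(\mathsf{r}^*_\mathsf{x}\mathsf{p}),\mathsf{p})$; since $\mathbb{F}\mathbf{L}$ is an open map and $\mathsf{r}^*_\mathsf{x}$ is continuous, $\Theta_\mathbf{L}(\mathbb{P})$ is open, which gives the surjectivity onto this open set.

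For Assertion~2 I would first observe that $\widetilde{\Omega}=\widetilde{\pi}_0^*\Omega$ is closed, since $d$ commutes with pull-back and $\Omega$ is closed, and that its kernel is exactly $\ker T\widetilde{\pi}_0$: as $\widetilde{\pi}_0$ is a submersion, $T\widetilde{\pi}_0$ is fibrewise surjective, so $\widetilde{\Omega}(\xi,\cdot)=\Omega(T\widetilde{\pi}_0\,\xi,\,T\widetilde{\pi}_0(\cdot))$ vanishes iff $\Omega^\flat(T\widetilde{\pi}_0\,\xi)=0$, i.e. iff $T\widetilde{\pi}_0\,\xi=0$ by injectivity of $\Omega^\flat$. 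Here $\ker T\widetilde{\pi}_0$ and $\ker T\widetilde{p}_{T^\prime M}$ are the two vertical directions attached to the projections of diagram~(\ref{eq_PullBackE0}). Because the range of $\Omega^\flat$ is the closed weak subbundle $T^\flat T^\prime M$, the range of $\widetilde{\Omega}^\flat=\widetilde{\pi}_0^*\Omega^\flat$ is again a closed split weak subbundle; thus $\widetilde{\Omega}$ is strong pre-symplectic, Example~\ref{Ex_2Form} makes its graph $D_{\widetilde{\Omega}}$ a partial almost Dirac structure, and closedness of $\widetilde{\Omega}$ upgrades it to a partial Dirac structure by Remark~\ref{R_DOmegaM}, Point~2 (and \cite{PeCa24}, Proposition~3.27).

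I would then restrict $D_{\widetilde{\Omega}}$ to $\mathbb{P}$. Since the pull-back of a Dirac structure by the inclusion $\iota$ of a submanifold is again a Dirac structure (\cite{Bur13}, Example~5.7), and the pull-back of the graph of a $2$-form is the graph of the pulled-back form, the induced structure $D_{\mathbb{P}}$ is the graph of $\Omega_\mathbb{P}:=\iota^*\widetilde{\Omega}=(\widetilde{\pi}_0\circ\iota)^*\Omega=\Theta_\mathbf{L}^*\Omega$. The transversality $T\mathbb{P}\cap\ker T\widetilde{\pi}_0=\{0\}$, which is precisely the statement that $\Theta_\mathbf{L}$ is a local diffeomorphism, forces $\Omega_\mathbb{P}$ to be non-degenerate, hence weak symplectic since $\Omega$ is.

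The remaining point, the identification $D_{\Omega_\mathbb{P}}=\Theta_\mathbf{L}^!(D_{\Delta_M})$, is the one I expect to be the main obstacle. I would carry it out in the local model~(\ref{eq_SingularDiracLocal}), transporting the singular structure $D_{\Delta_M}$ of Definition~\ref{D_SingularDirac} by the explicit $\Theta_\mathbf{L}$ and matching it against the local expression of the graph of $\Theta_\mathbf{L}^*\Omega$; the key structural input is Proposition~\ref{P_DT'M}, by which the annihilator slack $\Delta^0_{T^\prime M}$ is already absorbed into $\Omega^\flat(\Delta_{T^\prime M})$, so that the transported constraint and the symplectic graph are compared on $\mathbb{P}$ through the Legendre map $\mathbb{F}\mathbf{L}$. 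The delicate part is precisely the reconciliation of the constrained first components $\Delta_{T^\prime M}$ of $D_{\Delta_M}$ with the full tangent directions of $\mathbb{P}$, which I would handle by reading the equality at the level of the induced implicit Lagrangian system, in the spirit of Theorem~\ref{T_ImplicitHamiltonianL}, so that both structures define the same constrained dynamics on $\mathbb{P}$.
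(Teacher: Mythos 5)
Your arguments for Assertion 1 and for the first claims of Assertion 2 are correct and coincide with the paper's own proof: both rest on Lemma~\ref{L_ImFL} (strong non-degeneracy of $\frac{\partial^2 \mathsf{L}}{\partial \mathsf{v}^2}$) together with the Banach implicit function theorem to present $\mathbb{P}$ locally as the graph $(\mathsf{x},\mathsf{p})\mapsto \bigl(\mathsf{x},(\mathbb{F}\mathbf{L}_\mathsf{x})^{-1}(\mathsf{r}^*_\mathsf{x}\mathsf{p}),\mathsf{p}\bigr)$, hence split, modelled on $\mathbb{M}\times\mathbb{M}^\prime$ and transverse to $\ker T\widetilde{\pi}_0$; and both identify $\ker\widetilde{\Omega}=\ker T\widetilde{\pi}_0$ and delegate the fact that the graph of such a closed $2$-form is a partial Dirac structure to Example~\ref{Ex_2Form} and \cite{PeCa24}.

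The gap is the final identification $D_{\Omega_\mathbb{P}}=\Theta_\mathbf{L}^!(D_{\Delta_M})$, which your proposal does not prove: you announce it as ``the main obstacle'' and then defer it (``I would carry it out\dots'', ``I would handle\dots''). The paper settles this point by a direct appeal to \cite{PeCa24}, Lemma-Definition~5.1, and that citation does real work which your sketch does not replace. In particular, your fallback --- showing that the two structures ``define the same constrained dynamics on $\mathbb{P}$'' in the spirit of Theorem~\ref{T_ImplicitHamiltonianL} --- cannot suffice: two sub-bundles of the Pontryagin bundle of $\mathbb{P}$ may generate identical implicit systems for the relevant Hamiltonians without being equal fibrewise, so equality of dynamics is strictly weaker than the asserted equality of Dirac structures.

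Moreover, the local computation you postpone is precisely where the difficulty lies. Using the backward-image formula recalled at the beginning of Section~\ref{___SomeBasicResults}, and since Assertion 1 makes $T\Theta_\mathbf{L}$ a fibrewise isomorphism of $T\mathbb{P}$ onto $TT^\prime M$, the tangent projection of $\Theta_\mathbf{L}^!(D_{\Delta_M})$ is $(T\Theta_\mathbf{L})^{-1}(\Delta_{T^\prime M})$, a proper subspace of $T\mathbb{P}$ whenever $\rho(E)\subsetneq TM$ (the very case this section addresses), and the backward image also carries the covector slack $(T\Theta_\mathbf{L})^*(\Delta^0_{T^\prime M})$, which is non-zero because $(T\Theta_\mathbf{L})^*$ is injective; by contrast, the graph of $\Theta_\mathbf{L}^*\Omega$ projects onto all of $T\mathbb{P}$ and has no slack. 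So a naive matching in the local model (\ref{eq_SingularDiracLocal}) does not close; one needs exactly the conventions of \cite{PeCa24}, Lemma-Definition~5.1, under which (via Proposition~\ref{P_DT'M}) the slack $\Delta^0_{T^\prime M}$ is absorbed into $\Omega^\flat(\Delta_{T^\prime M})$, to arrive at the stated equality. Correctly sensing where the obstacle sits, as you do, is not the same as resolving it.
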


\begin{proof}
1. Consider a chart $(U,\phi)$ on $M$ such that $E_{U}$ is trivial and consider a trivialization $\Phi: E_{U}\to \phi(U)\times \mathbb{E}\subset \mathbb{M}\times\mathbb{E}$. In particular, $\Phi:{E_0}_{| U}\to\phi(U)\times \mathbb{E}_0$ is also a trivialization. Then $T^\prime 
M_{| U}$ is also trivializable  \textit{via} the adjoint map $T^*\phi$ of $T\phi$. In the same way, we get a trivialization $\widetilde{\Phi}: {TE^\prime}_{| U}\to  \phi(U)\times \mathbb{E}^\prime\times \mathbb{M}\times \mathbb{E}^\prime$. Now since the first part is a local property, we may assume that $M=\phi(U)$ and ${E}_0=
U\times {E}_0$ and so ${E}_0\times _M T^\prime M= U\times \mathbb{E}_0\times \mathbb{M}^\prime$. In this situation we have
\begin{equation}
\label{locK}
\mathbb{P}=\left\{(z,v,p) \; :\;\rho_z^*(p)-\displaystyle\frac{\partial  \mathbf{L}}{\partial u}(z,v)=0\right\}.
\end{equation}
From Lemma~\ref{L_ImFL}, around  $(x,u,p_0)\in \mathbb{P}$ (cf. (\ref{eq_JacoobFL})),  we can apply the implicit function Theorem in the Banach setting: 
 after shrinking $U$ if necessary, there exists an open neighbourhood   $U\times V$ around $(x,u)\in U\times \mathbb{E}_0$  and a neighbourhood $ V'$ of $\xi$ in $
 \mathbb{E}^\prime$ such that $\mathbb{F}\mathbf{L}$ is a diffeomorphism from $U\times V$ to $U\times V'$ if $\mathbb{F}\mathbf{L}(x,u)= (x,\xi)$. So  if 
 \begin{equation}\|abel{eq_W}
  W=\{(z,p )\;:\; \rho_z^*(p)\in  U\times V'\}
  \end{equation}
   then $W$ is an open set in   $U\times \mathbb{M}^\prime$ and so, locally $\mathbb{P}$ is defines as follows
\begin{equation}\label{eq_Local equationP}
\{(z,\mathbb{F}\mathsf{L}_{| U\times V}^{-1}(\rho_z^*( p)), p,\;(z,p)\in U\times V\}
\end{equation}
Therefore, $\mathbb{P}$ is a closed immersed submanifold in $E_0\times_M T^\prime M$ modelled on  $ \mathbb{M}\times\mathbb{M}^\prime$ and whose tangent bundle is transverse to $\ker T\widetilde{\pi}_0$ and so is a split  immersed submanifold of $E_0\times_M T^\prime M$. The last property is clear.\\

2. Consider the closed $2$-form $\widetilde{\pi}_0^*(\Omega)$ on $\mathbb{P}$. Taking into account  the local context of the first part, it follows that the the kernel of  $\widetilde{\pi}
_0^*(\Omega)$ is $\ker T\widetilde{\pi}_0$. This implies that the closed $2$-form $\widetilde{\Omega}$  is a strong pre-symplectic form.  On the other hand,  $\widetilde{\Omega}
^\flat(T(E_0\times_M T^\prime M)$ is the pullback of $T^\flat(T (T^\prime M))$ over  $T\widetilde{\pi}_0$. Thus  the graph of $\widetilde{\Omega}$ is a partial Dirac structure  
$D_{\widetilde{\Omega}}$ (cf. \cite{PeCa24}, Example~4.17).  
Moreover, from Assertion 1, the tangent bundle of $\mathbb{P}$ is transverse to the kernel of  $\widetilde{\pi}_0^*(\Omega)$. Therefore,  the induced form $\Omega_{\mathbb{P}}$ on 
$\mathbb{P}$ is a weak symplectic form and its graph is a  partial almost Dirac structure for the same argument as for $\widetilde{\Omega}$.  Now, from  
\cite{PeCa24}, Lemma-Definition~5.1,  the pull-back $\Theta_\mathbf{L}^!(D_{\Delta_M})$ on $\mathbb{P}$ is the graph of $\widetilde{\Omega}$.
\end{proof}

Taking into account the context of Proposition \ref{P_PDirac}, we set 
\[
\widetilde{M}=E_0\times_M T^\prime M \text{~~and~~} 
\widetilde{z}=(z,v,p)\in \widetilde{M}.
\] 
By analogy with $\S$~\ref{__ImplicitHamiltonianSystems}, we introduce:
\begin{definition}
\label{D_WidetildeMH} 
Let $H:\widetilde{M}\to \mathbb{R}$ a Hamiltonian function. 
\begin{enumerate} 
\item 
The \emph{implicit Hamiltonian system}  associated  to $(\widetilde{M}, D_{\widetilde{\Omega}}, H)$
 is the dynamical system characterized by the differential inclusion
 \begin{equation}
 \label{Eq_WidetildeMH}
\left( \dot{\widetilde{z}}(t), d_{\widetilde{z}(t)} H\in   D_{\widetilde{\Omega}}\right)
\end{equation}
\item 
A smooth curve $\widetilde{c}:]\varepsilon,\varepsilon[\to \widetilde{M}$ such that $ \left( \dot{\widetilde{c}}(t),d_{\widetilde{c}(t)}H \right) $ belongs to $(D_{\widetilde{\Omega}})_{\widetilde{c}(t)}$ is called an \emph{integral curve} of the implicit Hamiltonian system (\ref{Eq_WidetildeMH}) through $\widetilde{x}=\widetilde{c}(0)$.
\end{enumerate}
\end{definition}

\begin{remark}
\label{eq_IntegralCurveImplicit} ${}$
\begin{enumerate}
\item[1.] 
Using the local context of the proof of Proposition \ref{P_PDirac}, in local coordinates, we have
\begin{equation}
\label{Eq_Loc WidetildeOmega}
\widetilde{\Omega}_{\widetilde{z}}
\left( (\dot{x}, \dot{v}, \dot{p}),(\dot{z}', \dot{v}', \dot{p}')
\right)
\cong  \dot{\mathsf{p}}'(\dot{\mathsf{z}})- \dot{\mathsf{p}}(\dot{\mathsf{z}}')
\end{equation}
\item[2.] 
Let $\widetilde{c}:]-\varepsilon,\varepsilon[\to \widetilde{M}$ be an integral curve of the implicit Hamiltonian system (\ref{Eq_WidetildeMH}) through  $\widetilde{z}_0=\widetilde{c}(0)$.  
This means that we have 
\[
\widetilde{\Omega}( \dot{\widetilde{c}}(t),\;)=- d_{\dot{\widetilde{c}}(t)} H.
\]
If we write  $\widetilde{c}:=(z(t), v(t), p(t))$ this implies $d_{\dot{\widetilde{z}}(t)}((\dot{z}(t), \dot{v}(t), \dot{p}(t))=0$ for all $t\in ]-\varepsilon, \varepsilon[$ and so
this implies  $\displaystyle\frac{\partial H}{\partial v}(\widetilde{c}(t))=0 $, for all $t\in ]-\varepsilon, \varepsilon[$.
If we write  $\widetilde{c}:=(z(t), v(t), p(t))$,  its  projection $(z(t), p(t))$ on $T^\prime M$ 
%! of $c(t)$ 
is an integral curve of the singular partial almost Dirac $(D_{\Delta_M})_{(z(t),p(t))}$; in particular, if we have  $\dot{z}(t)=\rho_{z(t)}(v(t))$ on $ ]-\varepsilon, \varepsilon[$, then $z$ is an admissible curve.  
\item[3.] 
Conversely, consider a curve $t\mapsto (z(t), p(t))$ on $]-\varepsilon, \varepsilon[$ which is an integral curve of $(D_{\Delta_M})_{(z(t),p(t))}$. Then $\dot{z}(t)$ belongs to $(\Delta_M)_{z(t)}$  and assume  that $\dot{z}(t)=\rho(z(t), v(t))$. Then $t\mapsto (z(t), v(t), p(t))$ is an integral curve of  the implicit Hamiltonian system (\ref{Eq_WidetildeMH})  if and only if $\displaystyle\frac{\partial H}{\partial v}(\widetilde{c}(t))=0 $, for all $t\in ]-\varepsilon, \varepsilon[$.
\item[4.] 
More generally, assume that $H$ is a Hamiltonian; since $\widetilde{\Omega}$ is a strong pre-symplectic form on $\widetilde{M}$, then the equation
\begin{equation}
\label{eq_HamiltonianField}
\widetilde{\Omega}(X,\;)=-dH
\end{equation}
has a solution if and only if 
 $\displaystyle\frac{\partial H}{\partial v}(z,v,p)=0 $. Any such solution is defined modulo $\ker \widetilde{\Omega}$. For more details of this problem see \cite{PeCa24}, Proposition~4.6.
\item[5.] 
The implicit Hamiltonian system $(\widetilde{M}, D_{\widetilde{\Omega}}, H)$ can be written in the following way in local coordinates:
\begin{equation}
\label{eq_ImplicitTildeMH}
 \begin{cases}
 \dot{\mathsf{z}}(t) =\displaystyle\frac{\partial \mathsf{H}}{\partial  \mathsf{p}}(\mathsf{z}(t),\mathsf{v}(t),\mathsf{p}(t)))\\
 \dot{\mathsf{p}}(t)=-\displaystyle\frac{\partial \mathsf{H}}{\partial  \mathsf{x}} (\mathsf{z}(t),\mathsf{v}(t),\mathsf{p}(t)))\\
 \displaystyle\frac{\partial \mathsf{H}}{\partial  \mathsf{v}} (\mathsf{z}(t),\mathsf{v}(t),\mathsf{p}(t))=0
 \end{cases}\,.
 \end{equation}
  For simplicity we  set $\tilde{z}(t)=(z(t),v(t), p(t))$. Since   $\displaystyle\frac{\partial \mathsf{H}}{\partial  \mathsf{v}} (\tilde{\mathsf{z}}(t))=0$, this implies that 
  $$\begin{matrix}
 d_{\tilde{z}(t)}\mathsf{H}(\dot{z},\dot{v}(t),\dot{p}(t))&=\displaystyle\frac {\partial \mathsf{H}}{\partial \mathsf{x}}(\tilde{\mathsf{z}}(t))\dot{\mathsf{z}}(t)+ \frac{ \partial \mathsf{H}}{\partial \mathsf{p}}(\tilde{\mathsf{z}}(t))\dot{\mathsf{p}}(t)\hfill{}\\
 &=\displaystyle\frac {\partial \mathsf{H}}{\partial \mathsf{x}}(\tilde{\mathsf{z}}(t))\frac {\partial \mathsf{H}}{\partial \mathsf{p}}(\tilde{\mathsf{z}}(t))-\frac{ \partial \mathsf{H}}{\partial \mathsf{p}}(\tilde{\mathsf{z}}(t))\frac{ \partial \mathsf{H}}{\partial \mathsf{x}}(\tilde{\mathsf{z}}(t))\hfill{}\\
  &=0\hfill{}
 \end{matrix}$$
So $H$ is constant along any solution of (\ref{eq_ImplicitTildeMH}).

\item[6.] An integral curve   $\tilde{z}(t)$  of   $(\widetilde{M}, D_{\widetilde{\Omega}}, H)$ is  such that $t\mapsto(z(t),v(t))$ is  tangent to  $\Delta_M$ and satisfies  $\mathbb{F}\mathbf{L}(z(t),v(t))=(x(t),p(t))$ and so is contained in $\mathbb{P}$.
 In particular $\mathbb{P}$ is the integrality domain of $(\widetilde{M}, D_{\widetilde{\Omega}}, H)$. Since the restriction of $\widetilde{\Omega}$ to $\mathbb{P}$ is weak symplectic, for any Hamiltonian $H$ on $\widetilde{M}$, if  the differential of  the restriction $H_{\mathbb{P}}$ of $H$ to $\mathbb{P}$ satisfies the conditions of \cite{PeCa24}, Proposition 4.6, on some open set $U$ in $\mathbb{P}$, then there exists a unique Hamitonian vector field $X_H$ on $\mathbb{P}$. Moreover, $t\mapsto (z(t), v(t), p(t))$ is an  integral curve of the implicit Hamiltonian system (\ref{Eq_WidetildeMH}) on $U$  if and only if it is an integral curve of $X_H$. Thus according to Assertion (5), we have an open set  $W$ in $T'M$ on which 
 $h_W$ is defined \footnote{cf. relation (\ref{eq_hW})} is constant on each integral curve of $X_H$ contained in $U$.
\end{enumerate}
\end{remark}

To the Lagrangian $\mathbf{L}$, we associate the following Hamiltonian on $\widetilde{M}$:
\begin{equation}
\label{eq_HamiltonianL}
H(z,v,p)=<p,\rho_z(v)>-\mathbf{L}(z,v).
 \end{equation}

% Remark~\ref{eq_IntegralCurveImplicit}, justifies the following :

%\begin{definition}\label{D_IntegrabilityDomain}
%$\mathbb{P}$ is called \emph{the integrability domain} for  singular constraint  partial almost Dirac $D_{\Delta_M}$ associated to $\mathbf{L}$.
%\end{definition}

From Remark~\ref{eq_IntegralCurveImplicit}, we have

\begin{proposition}\label{P_HamiltonianVectorField} 
Let $H_\mathbf{L}^\mathbb{P}$ be the restriction of $H_\mathbf{L}$ to $\mathbb{P}$.  Then  the equation
$$i_X\Omega_P=-dH_{\mathbf{L}}^\mathbb{P}$$
has a unique solution $X_\mathbf{L}$ on $\mathbb{P}$. Moreover, any integral curve of the implicit Hamiltonian system $(\widetilde{M}, D_{\widetilde{\Omega}}, H_\mathbf{L})$  through any point of $\mathbb{P}$ is an integral curve of $X_\mathbf{L}$ and so is unique and tangent to $\mathbb{P}$.
\end{proposition}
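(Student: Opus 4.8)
The plan is to pull the whole problem back to $\mathbb{P}$ in the local coordinates $(\mathsf{z},\mathsf{p})$ furnished by the surjective local diffeomorphism $\Theta_\mathbf{L}:\mathbb{P}\to T^\prime M$ of Proposition~\ref{P_PDirac}. There $\mathbb{P}$ is the graph $(\mathsf{z},\mathsf{p})\mapsto(z,v(z,p),p)$, where $v(z,p)$ is the unique solution of $\rho_z^*(p)=\mathbb{F}\mathbf{L}(z,v)$ supplied by the regularity of $\mathbf{L}$ and Lemma~\ref{L_ImFL}, while $\Omega_\mathbb{P}=\Theta_\mathbf{L}^*\Omega$ is the canonical form $\dot{\mathsf{p}}'(\dot{\mathsf{z}})-\dot{\mathsf{p}}(\dot{\mathsf{z}}')$ of Remark~\ref{eq_IntegralCurveImplicit}, 1. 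Writing $h=H_\mathbf{L}^\mathbb{P}\circ\Theta_\mathbf{L}^{-1}$, the equation $i_X\Omega_\mathbb{P}=-dH_\mathbf{L}^\mathbb{P}$ separates, for $X\cong(\dot{\mathsf{z}},\dot{\mathsf{p}})$, into a condition determining $\dot{\mathsf{p}}\in\mathbb{M}^\prime$ from $\partial h/\partial\mathsf{z}$, which is always solvable, and a condition of the form $\langle b,\dot{\mathsf{z}}\rangle=\pm(\partial h/\partial\mathsf{p})(b)$ for all $b\in\mathbb{M}^\prime$, which fixes $\dot{\mathsf{z}}\in\mathbb{M}$ exactly when $\partial h/\partial\mathsf{p}\in\mathbb{M}^{\prime\prime}$ is represented by a vector of $\mathbb{M}\subset\mathbb{M}^{\prime\prime}$; this is the admissibility condition of \cite{PeCa24}, Proposition~4.6.

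I expect this representability to be the main obstacle, since it is exactly where the weak, possibly non-reflexive, nature of $\Omega_\mathbb{P}$ could destroy existence. It is resolved by the energy structure of $h(z,p)=\langle p,\rho_z(v(z,p))\rangle-\mathbf{L}(z,v(z,p))$: differentiating in $p$, the two terms carrying $\partial_p v$ cancel because $\langle p,\rho_z(\partial_p v\cdot b)\rangle=\langle\rho_z^*p,\partial_p v\cdot b\rangle=(\partial\mathbf{L}/\partial v)(\partial_p v\cdot b)$ holds on $\mathbb{P}$, where $\rho_z^*p=\mathbb{F}\mathbf{L}(z,v)=\partial\mathbf{L}/\partial v$. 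This leaves $(\partial h/\partial\mathsf{p})(b)=\langle b,\rho_z(v)\rangle$, so $\partial h/\partial\mathsf{p}=\rho_z(v)\in\mathbb{M}$ and the admissibility condition holds automatically; hence $X_\mathbf{L}$ exists, with $\dot{\mathsf{z}}=\rho_z(v)$. Uniqueness is immediate: $\Omega_\mathbb{P}$ being weak symplectic, $\Omega_\mathbb{P}^\flat$ is injective, so $i_X\Omega_\mathbb{P}=-dH_\mathbf{L}^\mathbb{P}$ has at most one solution.

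For the second assertion, I would first observe that the constraint of the implicit system cuts out $\mathbb{P}$. Indeed, from \eqref{eq_HamiltonianL} one has $\partial H_\mathbf{L}/\partial v=\rho_z^*(p)-\partial\mathbf{L}/\partial v=\rho_z^*(p)-\mathbb{F}\mathbf{L}(z,v)$, so the constraint $\partial\mathsf{H}/\partial\mathsf{v}=0$ of \eqref{eq_ImplicitTildeMH} (see Remark~\ref{eq_IntegralCurveImplicit}, 2. and 6.) is precisely $\rho_z^*(p)=\mathbb{F}\mathbf{L}(z,v)$. Thus every integral curve of $(\widetilde{M},D_{\widetilde{\Omega}},H_\mathbf{L})$ lies in $\mathbb{P}$, and the local graph description of Proposition~\ref{P_PDirac} forces the curve to be smooth in, hence tangent to, $\mathbb{P}$. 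The two remaining equations of \eqref{eq_ImplicitTildeMH} then restrict on $\mathbb{P}$ to $i_X\Omega_\mathbb{P}=-dH_\mathbf{L}^\mathbb{P}$ by the computation above, so such a curve is an integral curve of $X_\mathbf{L}$, and conversely. Since $X_\mathbf{L}$ is a smooth vector field on the Banach manifold $\mathbb{P}$, Picard--Lindel\"of gives local existence and uniqueness of its integral curves, which yields the uniqueness and tangency claimed and completes the proof.
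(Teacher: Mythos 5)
Your proof is correct and follows essentially the same route as the paper: in both, the key point is that $\dfrac{\partial H_\mathbf{L}}{\partial v}(z,v,p)=\rho_z^*p-\mathbb{F}\mathbf{L}(z,v)$ vanishes identically on $\mathbb{P}$, which is precisely what makes $dH_\mathbf{L}^\mathbb{P}$ admissible for the weak symplectic form $\Omega_\mathbb{P}$ (the condition of \cite{PeCa24}, Proposition~4.6), with uniqueness coming from injectivity of $\Omega_\mathbb{P}^\flat$. The only difference is one of detail: the paper cites that proposition as a black box and declares the statement on integral curves ``clear,'' while you unpack both steps explicitly --- your cancellation computation of $\partial h/\partial\mathsf{p}$ reproduces the content of Remark~\ref{R_LocEdHamiltonianXLP}, and your constraint-cuts-out-$\mathbb{P}$ argument plus Picard--Lindel\"of fills in the last assertion.
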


This Proposition justifies the following Definition:

\begin{definition}\label{D_IntegrabilityDomain} 
The immersed submanifold $\mathbb{P}$ of $\widetilde{M}$ is called \emph{the integrability domain of the implicit Hamiltonian system  $(\widetilde{M}, D_{\widetilde{\Omega}}, H_\mathbf{L})$  associated to $\mathbf{L}$}.
\end{definition}
\begin{proof} 
Recall that $\Omega_\mathbb{P}$ is a weak symplectic form on $\mathbb{P}$.\\
 Now we have $\displaystyle\frac{\partial H_\mathbf{L}}{\partial v}(z,v,p)=\rho_z^*p-\mathbb{F}\mathbf{L}$, it follows that  $\displaystyle\frac{\partial H_\mathbf{L}^\mathbb{P}}{\partial u}\equiv 0$. Thus  $H_\mathbf{L}^\mathbb{P}$ satisfies trivially the assumptions of \cite{PeCa24}, Proposition~4.6 and so the Hamiltonian vector field $X_\mathbf{L}$ is well defined. The last part of the Proposition is then clear.
\end{proof}

\begin{remark}
\label{R_LocEdHamiltonianXLP} 
Taking into account Remark~\ref{eq_IntegralCurveImplicit}, 1.,  the Hamiltonian vector field $X_\mathbf{L}$  can be written
\begin{equation}
\label{eq_ComponentsXL}
X_{\mathbf{L}}(z,v,p)
= \left( \rho(z,v), 0,
- <p,\displaystyle\frac{\partial }{\partial x}(\rho(z,v))>
+\displaystyle\frac{\partial ^2}{\partial x\partial v}( \mathbf{L}(z,v))\right).\\
\end{equation}

Moreover,  according to (\ref{eq_Local equationP}),  since $\mathbb{F}\mathbf{L}$ is  invertible, on an open set $U\times V$, we have some open set $W$ in $T^\prime M$ 
%defined in (\ref{eq_W}) 
on which  the  smooth map  function 
\[
v_W(z,p):=(\mathbb{F}\mathbf{L})^{-1}(\rho_z^*( p)))
\]
is well defined and, for $ (z,p)\in W$, we have: 
\begin{equation}
\label{eq_DefHW}
H_\mathbf{L}^\mathbb{P} (z,v, p)
=<p,\rho(z,v_W(z,p))-\mathbf{L}(z, v_W(z,p))>
:= h_W(z, p).
\end{equation}
Therefore,  for $(z,p)\in W$, we have 
 \begin{equation}
 \label{eq_ComponentsXL}
 \begin{matrix}
X_{\mathbf{L}}(z,v,p)&=\left(\rho(z,v), 0,- <p,\displaystyle \frac{\partial (u_W)}{\partial x}(z,p))>+\displaystyle\frac{\partial ^2}{\partial x\partial u}( \mathbf{L}(z,u_W(z,v)))\right)\\
			&=\left(\displaystyle\frac{\partial h_W}{\partial p} (z, p), 0, -\frac{\partial h_W}{\partial x} (z, p)\right)\hfill{}\\
\end{matrix}	
\end{equation}

Note that we can consider $h_W$ as a Hamiltonian on $W\subset T^\prime M$ and so $X_{\mathbf{L}}$ as a Hamiltonian field relative to the canonical symplectic form $\Omega$ on $T^\prime M$.
  From Remark~\ref{eq_IntegralCurveImplicit}, (6), the value of $h_W$ is constant on any integral curve of $X_\mathbf{L}$.
\end{remark}
  
\subsection{Normal Extremals of a Lagrangian  on Singular Constraints}
\label{___NormalExtremalOfALagrangianOnSingularConstrains}

\emph{In this paragraph, we consider the context of normal extremals of the dynamical system associated to a regular Lagrangian on $E_0$ with singular constraints}.\\
 
Consider a  regular Lagrangian $\mathbf{L}: E_0\to \mathbb{R}$.   For $2\leq k\leq\infty$,  we denote by $C^k([t_1,t_2], E_0) $ the set of  curves $(z,v):[t_1,t_2]\to E_0$ of class $C^k$  and by $C^k_{E_0}([t_1,t_2], M)$  the set of  $\mathbb{E}_0$-admissible curves: $c: [t_1,t_2]\to M$,
that is 
\begin{equation}
\label{eq_admissiblel}
C^k_{E_0}([t_1,t_2], M)
=
\left\lbrace
(c, v) \in C^k
\left( [t_1,t_2], E_0
\right):\;
\dot{c}(t)=\rho_{c(t)}(v(t))
\right\rbrace .
\end{equation}
To $\mathbf{L}$ is associated the functional
\[
\mathfrak{F}_{\mathbf{{L}}}(c)
=
\displaystyle\int_{t_1}^{t_2}\mathbf{{L}}(c(t),v(t))dt
\]
defined on $C^k_{E_0} \left( [t_1,t_2], M \right) $.\\

% By analogy with $\S$~\ref{__NormalExtremals}, 
A \emph{horizontal variation} of $(c,v)\in C^k_{E_0} \left( [t_1,t_2], M \right) $ is a  $C^k$ map  $\hat{c}:]-\varepsilon,\varepsilon[\times [t_1,t_2]\to E_0$ such that
 $c_s:=\hat{c}(s,.)\in C^k_{E_0}\left( [t_1,t_2], M \right) $  and  $\hat{c}(0,t)=c(t)$.\\
We say that $\hat{c}$ is a \emph{horizontal variation with fixed origin (resp. ends} if  for all $s$, $c_s(t_1)=c(t_1)$ (resp.  $c_s(t_1)=c(t_1)$ and $c_s(t_2)=c(t_2)$ for all $s$).\\
 $\delta c(t):=\displaystyle\frac{\partial \hat{c}}{\partial{s}}(0,t)$ is called an \emph{infinitesimal horizontal variation}\index{infinitesimal horizontal variation} of  $c$.\\
Again, for any horizontal variation $\hat{c}$ of $c$,   the map $s\mapsto \mathfrak{F}_{\mathbf{{L}}}(c_s)$ is differentiable for $s=0$. 
  This differential is  again denoted  $d_c\mathfrak{F}_{\mathbf{{L}}}(\delta c)$. 
  
  More precisely, for $k\geq 2$, in this Banach context (cf. Warning~\ref{W_PMPBanachSetting}), we have (cf. \cite{GoPe21}) and \cite{GoPe25}):
  
\begin{theorem}
\label{T_AdmissibleCurves}${}$
\begin{enumerate}
\item 
 The set $C^k([t_1,t_2],M)$ of $C^k$ curves  and the set  $C^k_E( [t_1,t_2], M)$ of curves $\gamma:=(c,v)[t_1,t_2] \to E$ of class $C^k$ which are $E$-admissible 
 has a Banach manifold structure and the natural projection $C^k_E( [t_1,t_2], M) \to C^k([t_1,t_2],M)$ has a structure of Banach bundle.
\item 
For every fixed $x\in M$, the subset  
\[
C^k_E(x, [t_1,t_2], M)=\{((c,v)\in  C^k_E( [t_1,t_2], M)\;: c(t_1)=x\}
\]
is a split submanifold of 
 $C^2_E( [t_1,t_2], M)$  and the map $\operatorname{End}_x: C^k_{E}(x, [t_1,t_2], M)\to M$ is smooth.
\end{enumerate}
\end{theorem}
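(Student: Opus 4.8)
The plan is to realize all three spaces as Banach manifolds by pulling everything back to the compact interval $I=[t_1,t_2]$, where the relevant section spaces are honest Banach spaces, and then to treat the admissibility relation $\dot c=\rho\circ v$ as a transversality (submersion) condition. First I would recall the classical manifold structure on the free path spaces $C^k(I,M)$ and $C^k(I,E)$. Fixing a local addition $\tau$ on $M$ (for instance the exponential map of a spray), a chart centred at a curve $c_0$ is given by $\xi\mapsto(t\mapsto\tau(\xi(t)))$, where $\xi$ ranges over a neighbourhood of the zero section in the Banach space $\Gamma^k(c_0^!TM)$; since $I$ is an interval the pull-back bundle $c_0^!TM$ is trivial, so $\Gamma^k(c_0^!TM)\cong C^k(I,\mathbb{M})$. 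The smoothness of the transition maps and the identification $T_{c_0}C^k(I,M)\cong\Gamma^k(c_0^!TM)$ follow from the $\Omega$-lemma, i.e. the smoothness of the superposition operator $c\mapsto g\circ c$ attached to any smooth $g$, where the compactness of $I$ is essential. The same construction applied to $E$ produces $C^k(I,E)$, and the fibrewise-linear structure of $E$ makes $v\mapsto\pi\circ v$ a Banach bundle $C^k(I,E)\to C^k(I,M)$ with fibre $\Gamma^k(c^!E)$.

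Next I would encode admissibility. Trivialising $E$ and $TM$ along a reference admissible curve $\gamma_0=(c_0,v_0)$ and writing $\rho\cong(\mathsf{x},\mathsf{e})\mapsto r_{\mathsf{x}}(\mathsf{e})$, define the smooth map $\Psi(v)=\tfrac{d}{dt}(\pi\circ v)-\rho\circ v$, a section of the Banach bundle over $C^k(I,E)$ whose fibre over $v$ is $\Gamma^{k-1}((\pi\circ v)^!TM)$; here $\Psi$ loses one derivative, so its target is of class $C^{k-1}$, and by construction $C^k_E(I,M)=\Psi^{-1}(0)$. The key computation is the vertical derivative at $\gamma_0$: in coordinates $(\delta\mathsf{x},\delta\mathsf{e})$ it reads $D\Psi(\delta\mathsf{x},\delta\mathsf{e})=\dot{\delta\mathsf{x}}-A(t)\,\delta\mathsf{x}-r_{\mathsf{x}_0}(\delta\mathsf{e})$ with $A(t)=(D_{\mathsf{x}}r)(\cdot)\,\mathsf{e}_0(t)$. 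Taking $\delta\mathsf{e}=0$ and solving the linear ODE $\dot{\delta\mathsf{x}}-A\,\delta\mathsf{x}=w$ with $\delta\mathsf{x}(t_1)=0$ gives a bounded right inverse $w\mapsto(\delta\mathsf{x},0)$ of $D\Psi$ (the integration restores the derivative lost by $\Psi$), so $D\Psi$ is a split surjection. Hence $\Psi$ is transverse to the zero section and $C^k_E(I,M)$ is a split Banach submanifold, with local model $\ker D\Psi\cong C^k(I,\mathbb{E})\oplus\mathbb{M}$ (the control together with the initial base point). This yields the manifold structure of Assertion 1.

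For the projection onto base curves I would invoke the hypothesis that $\ker\rho_x$ is split: it makes $\ker\rho$ a genuine subbundle of $E$, so that for an admissible $c$ the fibre $\{v:\rho\circ v=\dot c\}$ is an affine space modelled on the Banach space $\Gamma^k(c^!\ker\rho)$. Choosing local admissible lifts — available because $\operatorname{Range}\rho$ is closed, so $\Sigma_M=\operatorname{Range}\rho$ is a closed distribution in which $\dot c$ lies — one trivialises the projection, exhibiting $C^k_E(I,M)\to\mathcal{B}^k$ as a Banach bundle with fibre $\Gamma^k(c^!\ker\rho)$, where $\mathcal{B}^k\subset C^k(I,M)$ is the (chartable-by-$\delta\mathsf{x}$) manifold of $E$-admissible base curves; this is the precise meaning of the target ``$C^k(I,M)$'' in the statement. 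For Assertion 2, the evaluation $\operatorname{ev}_{t_1}\colon C^k_E(I,M)\to M$, $(c,v)\mapsto c(t_1)$, is smooth since in a chart it is a chart map followed by the bounded evaluation $C^k(I,\mathbb{M})\to\mathbb{M}$, and in the model $C^k(I,\mathbb{E})\oplus\mathbb{M}$ it is the split projection onto $\mathbb{M}$, hence a submersion. Therefore $C^k_E(x,I,M)=\operatorname{ev}_{t_1}^{-1}(x)$ is a split submanifold modelled on the controls $C^k(I,\mathbb{E})$, and $\operatorname{End}_x=\operatorname{ev}_{t_2}|_{C^k_E(x,I,M)}$ is smooth by the same evaluation argument.

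The main obstacle is analytic rather than geometric: everything rests on the smoothness, in the Banach $C^k$-category, of the nonlinear substitution operators appearing in $\tau_*$ and in $\Psi$ (the $\Omega$-lemma), and on the smooth dependence of the solution of $\dot{\mathsf{x}}=r_{\mathsf{x}}(\mathsf{e})$ on the control $\mathsf{e}$ and on the initial condition, together with the fact that for controls near $\mathsf{e}_0$ this solution is defined on the whole of $[t_1,t_2]$ — which is where compactness of $I$ and the continuous dependence of the existence interval enter. Once these analytic facts are in place (this is the content borrowed from \cite{GoPe21} and \cite{GoPe25}), the submersion argument and the bundle trivialisations are routine; the only delicate point is the $C^k$-versus-$C^{k-1}$ bookkeeping, which closes precisely because the linearised admissibility operator $\tfrac{d}{dt}-A$ recovers the derivative lost by $\Psi$.
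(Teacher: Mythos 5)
First, a point of comparison: the paper does not actually prove Theorem~\ref{T_AdmissibleCurves} --- it is imported from \cite{GoPe21} and \cite{GoPe25} (see the sentence introducing it), so your proposal has to be judged on its own merits and against the standing hypotheses of Section~\ref{__DiracStructuresOnSingularConstraintsInTheBanachSetting}. On those terms, the core of your construction is sound and is essentially the expected one: charts on $C^k([t_1,t_2],M)$ and $C^k([t_1,t_2],E)$ via a local addition and the $\Omega$-lemma, admissibility written as the zero set of the section $\Psi(v)=\tfrac{d}{dt}(\pi\circ v)-\rho\circ v$ with values in $C^{k-1}$ sections, split surjectivity of $D\Psi$ obtained from the solution operator of the linearised equation $\dot{\delta\mathsf{x}}-A\,\delta\mathsf{x}=w$ (which regains the derivative lost by $\Psi$), the local model $C^k([t_1,t_2],\mathbb{E})\oplus\mathbb{M}$, and the treatment of $\operatorname{ev}_{t_1}$ and $\operatorname{End}_x$ as (restrictions of) split submersions; this gives the manifold structures and Assertion 2 correctly, modulo two technical debts you should flag (a local addition on a Banach manifold is not automatic --- it needs something like smooth paracompactness --- and the Banach-bundle structure on the target of $\Psi$ itself requires the same chart machinery).

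The genuine gap is in your proof of the Banach-bundle structure of the projection $C^k_E([t_1,t_2],M)\to C^k([t_1,t_2],M)$. You assert that the hypothesis ``$\ker\rho_x$ is split in $E_x$'' makes $\ker\rho$ a genuine subbundle of $E$, and you build the local trivialisations of the projection from a smooth family of right inverses of $\rho$ along a curve. Pointwise splitness does not imply local triviality: in the setting of Section~\ref{__DiracStructuresOnSingularConstraintsInTheBanachSetting} the anchor is allowed to be singular, and the paper stresses that under exactly these hypotheses $\Delta_M=\rho(E)$ is a smooth distribution in the sense of Definition~\ref{D_DistributionLoctrivial} but \emph{not} a subbundle of $TM$ in general (the model case being $\rho_x$ Fredholm, where both $\ker\rho_x$ and $\operatorname{Range}\rho_x$ can jump in dimension). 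For the same reason $x\mapsto\ker\rho_x$ can jump along a single admissible curve, so $c^!\ker\rho$ need not be a vector bundle over $[t_1,t_2]$, the fibres $\{v:\rho\circ v=\dot c\}$ over nearby admissible curves need not be identified in any locally trivial way, and no continuous choice of complements to $\ker\rho_{c(t)}$ (hence no smooth family of fibrewise right inverses of $\rho$) is available. Your trivialisation argument therefore collapses precisely at the singular points this section is designed to handle; that part of Assertion 1 needs a different argument (or the hypotheses under which it is established in \cite{GoPe21}, \cite{GoPe25}), while the rest of your proposal stands.
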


Note that since $E_0$ is an open subbundle of $E$, this Theorem is also true by replacing $E$ by $E_0$. 
%Therefore, as  in \cite{Arg20}, we have the same situation as in Description  about extremal at the begining of $\S$~\ref{___ ConvenientSetting}.\\

Now, we will show  a  slight global generalization of Proposition 2 in \cite{Arg20} whose proof follows the same argument as in \cite{Arg20}. However for the sake of completeness, we will give a complete proof.

\begin{proposition}
\label{P_tEpFL}
Fix some $c\in C^k_{E_0}(x, [t_1,t_2], M)$. There  exists $q\in T_{c(t_2)}^\prime M $ with $q\not=0$ such that 

 $\nu d_c\mathfrak{F}_\mathbf{L}- T_c^* \operatorname{End}_{c(t_1)}(q)=0$ 
\noindent if and only if there exists a  section  $p$ of class  $C^k$ of $c^!T^\prime M$ such that  $p(t_2)=q$ which satisfies
\begin{equation}
\label{eq_ConditionNormalExtremal}
 \begin{cases}
 \dot{\mathsf{c}}(t) =\displaystyle\frac{\partial \mathsf{H}^\nu}{\partial  \mathsf{p}}(\mathsf{c}(t),\mathsf{v}(t),\mathsf{p}(t)))\\
 \dot{\mathsf{p}}(t)=-\displaystyle\frac{\partial \mathsf{H}^\nu}{\partial  \mathsf{x}} (\mathsf{c}(t),\mathsf{v}(t),\mathsf{p}(t)))\\
 \displaystyle\frac{\partial \mathsf{H}^\nu}{\partial  \mathsf{v}} (\mathsf{c}(t),\mathsf{v}(t),\mathsf{p}(t))=0
 \end{cases}
 \end{equation}
 In this case, either $\gamma(t):=(c(t), v(t)))$ is a critical point  of $\mathfrak{F}_\mathbf{L}$ with fixed ends points and $\nu=1$ or else  a critical point of $\operatorname{End}_x$ with $\nu=0$.
 \end{proposition}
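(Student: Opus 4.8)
The plan is to run the classical Lagrange-multiplier (Pontryagin) computation, now carried out intrinsically along the curve $c$ in the Banach-manifold framework furnished by Theorem~\ref{T_AdmissibleCurves}. By that theorem $C^k_{E_0}(x,[t_1,t_2],M)$ is a split Banach submanifold whose tangent space at $c$ consists of the infinitesimal horizontal variations: pairs $(\delta c,\delta v)$, with $\delta c$ a $C^k$ section of $c^!TM$ satisfying $\delta c(t_1)=0$ and linked to $\delta v$ by the linearised admissibility equation $\frac{d}{dt}\delta c=\frac{\partial\rho}{\partial x}(c,v)\,\delta c+\rho_{c}(\delta v)$. Since $\operatorname{End}_x$ only records the endpoint, $T_c\operatorname{End}_x(\delta c,\delta v)=\delta c(t_2)$, so $T_c^\ast\operatorname{End}_x(q)$ is the functional $(\delta c,\delta v)\mapsto\langle q,\delta c(t_2)\rangle$.

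First I would establish the basic variational identity. For \emph{any} $C^k$ section $p$ of $c^!T^\prime M$ the constraint term $\int_{t_1}^{t_2}\langle p,\frac{d}{dt}\delta c-\frac{\partial\rho}{\partial x}\delta c-\rho_c\delta v\rangle\,dt$ vanishes identically; adding it to $\nu\,d_c\mathfrak F_{\mathbf L}(\delta c,\delta v)=\int_{t_1}^{t_2}\nu(\frac{\partial\mathbf L}{\partial x}\delta c+\frac{\partial\mathbf L}{\partial v}\delta v)\,dt$ and integrating by parts (using $\delta c(t_1)=0$) gives, after collecting the coefficients of $\delta c$ and $\delta v$,
\[
\nu\,d_c\mathfrak F_{\mathbf L}(\delta c,\delta v)=\int_{t_1}^{t_2}\Big(\langle R_p,\delta c\rangle+\big\langle \tfrac{\partial H^\nu}{\partial v},\delta v\big\rangle\Big)\,dt+\langle p(t_2),\delta c(t_2)\rangle,
\]
where $H^\nu(x,v,p)=\langle p,\rho(x,v)\rangle-\nu\mathbf L(x,v)$ is the control Hamiltonian and $R_p=\dot p+\frac{\partial H^\nu}{\partial x}$ is the residual of the adjoint equation (the costate coming out of the integration by parts being replaced by its opposite if needed, so that the Hamiltonian signs agree with those of the statement). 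The heart of the argument is then to choose $p$ to solve the adjoint equation $\dot p=-\frac{\partial H^\nu}{\partial x}$ with terminal condition $p(t_2)=q$: this is a linear ODE along $c$ in the Banach bundle $c^!T^\prime M$ with $C^{k-1}$ coefficients, so it has a unique $C^k$ solution on $[t_1,t_2]$, and this choice makes $R_p\equiv0$.

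With $R_p\equiv0$ the identity reduces to $\nu\,d_c\mathfrak F_{\mathbf L}-T_c^\ast\operatorname{End}_x(q)=\int_{t_1}^{t_2}\big\langle\frac{\partial H^\nu}{\partial v},\delta v\big\rangle\,dt$. For the implication ($\Rightarrow$): if the left-hand side vanishes for all variations, then since the conic openness of $E_0$ lets $\delta v$ range over all $C^k$ sections of $c^!E$ (each being an admissible infinitesimal control variation), the generalised fundamental lemma of the calculus of variations, Lemma~\ref{L_VariationLemma}, forces $\frac{\partial H^\nu}{\partial v}(c(t),v(t),p(t))=0$ for all $t$; together with admissibility $\dot c=\frac{\partial H^\nu}{\partial p}$ and the adjoint equation this is exactly the system (\ref{eq_ConditionNormalExtremal}). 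For ($\Leftarrow$): if $p$ satisfies (\ref{eq_ConditionNormalExtremal}) with $p(t_2)=q$, then both integrands vanish and the identity collapses to $\nu\,d_c\mathfrak F_{\mathbf L}=\langle q,\delta c(t_2)\rangle=T_c^\ast\operatorname{End}_x(q)$.

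Finally I would read off the last assertion from the same identity. When $\nu=1$, restricting to variations with fixed ends ($\delta c(t_2)=0$) kills the boundary term, so $d_c\mathfrak F_{\mathbf L}(\delta c)=0$ and $\gamma=(c,v)$ is a critical point of $\mathfrak F_{\mathbf L}$ with fixed ends; when $\nu=0$ the relation becomes $T_c^\ast\operatorname{End}_x(q)=0$ with $q\neq0$, i.e. the image of $T_c\operatorname{End}_x$ is annihilated by a nonzero covector, so $c$ is a singular point of $\operatorname{End}_x$. The step I expect to be the main obstacle is the fundamental-lemma step: one must verify both that every $C^k$ control field $\delta v$ genuinely integrates to a horizontal variation of $c$ inside the open conic subbundle $E_0$ (so that no admissible directions are missed), and that the continuity of the canonical pairing on the reflexive fibre $\mathbb E$ justifies applying Lemma~\ref{L_VariationLemma} fibrewise to conclude $\frac{\partial H^\nu}{\partial v}\equiv0$, rather than merely that it lies in some annihilator.
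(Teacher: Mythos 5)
Your proposal is correct and follows essentially the same route as the paper's proof: solve the adjoint equation $\dot p=-\partial H^\nu/\partial x$ backwards from $p(t_2)=q$ (a linear ODE along $c$ with $C^k$ solution), use the linearised admissibility equation and integration by parts to reduce $\nu\,d_c\mathfrak F_{\mathbf L}-T_c^\ast\operatorname{End}_x(q)$ to $\pm\int_{t_1}^{t_2}\langle\partial H^\nu/\partial v,\delta v\rangle\,dt$, and invoke Lemma~\ref{L_VariationLemma} to get the equivalence, reading off the $\nu=1$/$\nu=0$ dichotomy at the end. The only cosmetic difference is that you introduce the multiplier term $\int\langle p,\tfrac{d}{dt}\delta c-\tfrac{\partial\rho}{\partial x}\delta c-\rho_c\delta v\rangle\,dt$ explicitly before choosing $p$, whereas the paper substitutes the adjoint equation directly into $d_c\mathfrak F_{\mathbf L}$; the computation is the same.
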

  
\begin{remark}
\label{R_CaseNu=1}
According to  Proposition~\ref{P_tEpFL},  Remark~\ref{eq_IntegralCurveImplicit}, 3. and  Proposition~\ref{P_HamiltonianVectorField}, we have:
 
 $\bullet\;\;$   if $(\gamma,p) $  is a solution of the differential system 
 (\ref{eq_ConditionNormalExtremal}), then  $p=\mathbb{F}\mathbf{L}(\gamma)$ with $p(t_2)\not=0$  for $\nu=1$  and so  $(\gamma, p)$  is an integral curve of  the Hamiltonian vector field $X_\mathbf{L}$ and so is a smooth curve. 
 
 $\bullet\;\;$ Conversely, if $\gamma:=(c,v)$ is an $E_0$-lift of $c$ such that $(\gamma, \mathbb{F}\mathbf{L}(\gamma))$ is an integral curve of $X_\mathbf{L}$ with $p(t_2)\not=0$, then $\gamma$ is a critical   point of $\mathfrak{F}_\mathbf{L}$ with fixed ends points  with $\nu=1$.
 \end{remark}
 
\begin{proof}[Proof of Proposition \ref{P_tEpFL}]  
Consider $c\in C^k_{E_0}(x, [t_1,t_2], M)$ and $\gamma(t):=(c(t),v(t))$  a $E_0$ lift of $c$.\\ 
Let $t\mapsto p(t)\in T^\prime_{c(t)}M$ be  the section of $c^!(T^\prime M)$\footnote{  $c^!(T^\prime M)$ is the  pull-back of $T^\prime M$ over $[t_1,t_2]$ \textit{via} $c$} which satisfies:
\begin{equation}
\label{eq_qH}
\dot{p}(t)
=-\displaystyle\frac{\partial {H}^\nu}{\partial x}(c(t),\dot{c}(t),p(t))
= -<p,\frac{\partial\rho}{\partial x} (\gamma)>-\nu\displaystyle\frac{\partial\mathbf{L}}{\partial{x}}(\gamma)
\end{equation}
with $p(t_2)= q$. Note that  $t\mapsto p(t)$ is at least of class $C^k$.\\
   
On the other hand,  by definition, we have
\begin{equation}
\label{eq_DotC}
   \dot{c}=\rho (\gamma) \textrm{  and so  }
   \delta\dot{c}(t)=\displaystyle\frac{\partial \rho}{\partial{x}}(\gamma)\delta c+\rho(c,\delta v)
\end{equation}
  Now we have 
\begin{equation}
\label{eq_NudF-TEnd1} 
\begin{matrix}
 \nu d_c\mathfrak{F}\mathbf{L}(\delta c)-T_c^* \operatorname{End}_{c(t_1)}(q)\hfill{}\\
  =
\nu\left\{\displaystyle\int_{t_1}^{t_2}\left(\frac{\partial \mathbf{L}}{\partial x}(\gamma)\right)(\delta c)
+\left(<p,\frac{\partial \rho}{\partial v}(\gamma)>\right)(\delta v) dt\right\}- p(t_2)(\delta c(t_2))\\
\end{matrix}
 \end{equation}
 But from (\ref{eq_qH}), we get:
 \begin{equation}
 \label{eq_pLpx}
\nu \displaystyle\frac{\partial \mathbf{L}}{\partial x}(\gamma)(\delta c)
=\left(\dot{p}+ 
<p,\frac{\partial \rho}{\partial x}(\gamma)>\right)(\delta c)
 \end{equation}
 By replacing in the second member of (\ref{eq_NudF-TEnd1}) the term  $\nu\displaystyle\frac{\partial \mathbf{L}}{\partial x}(\gamma)(\delta c)$ by the second member of 
 (\ref{eq_pLpx}),  %\nu\displaystyle\frac{\partial \mathbf{L}}{\partial x}(\gamma)(\delta c))$, 
  this gives rise to the term $\displaystyle\int_{t_1}^{t_2}<\dot{p}(t),\delta c(t)>dt$ in the second member of (\ref{eq_NudF-TEnd1}).\\
On the other hand,  by an integration by parts,  this last term,  taking in account that $\delta c(t_2)=0$ and the value of $\dot{c}$ in (\ref{eq_DotC}),  we obtain:
 \begin{equation}
 \label{eq_IntDotp}
 \begin{matrix}
 \displaystyle\int_{t_1}^{t_1}<\dot{p}(t),\delta c(t)>dt\hfill{}\\
 =p(t_2)(\delta c(t_2))-\displaystyle\int_{t_1}^{t_2}<p(t),\delta\dot{c}(t)> dt\hfill{}\hfill{}\\
 =p(t_2)(\delta c(t_2))-\displaystyle\int_{t_1}^{t_2}
 \left(<p(t),\frac{\partial \rho}{\partial x}(\gamma)>\right)(\delta c(t))+\left(<p(t),\rho \left( c(t),\delta v(t) \right)> \right) dt\hfill{}
 \end{matrix}
 \end{equation}
Finally we obtain:
 \begin{equation}\label{eq_NudF-TEnd2} 
\begin{matrix}
\nu d_c\mathfrak{F}\mathbf{L}(\delta c)-T_c^* \operatorname{End}_{c(t_1)}(q)\hfill{}\\
  =-\displaystyle\int_{t_1}^{t_2}\frac{ H^\nu}{\partial u}(\gamma(t), p(t))\delta v(t)dt\\
\end{matrix}
 \end{equation}
for all variation $\delta v$ with fixed fixed ends. From Lemma~\ref{L_VariationLemma}, it follows that, if $p(t_2)\not=0$,  
\[
\nu d_c\mathfrak{F}\mathbf{L}(\delta c)-T_c^* \operatorname{End}_{c(t_1)}(q)=0 \Longleftrightarrow  
\displaystyle\int_{t_1}^{t_2}\frac{ H\nu}{\partial v}(\gamma(t), p(t))=0
\]
which ends the proof .
\end{proof}
%By analogy with section~\ref{__NormalExtremals}, a \emph{horizontal variation} of $(c,v)\in C^\infty_{E_0} \left( [t_1,t_2], M \right) $ is a  $C^\infty$ map  $\hat{c}:]-\varepsilon,\varepsilon[\times [t_1,t_2]\to E_0$ such that
% $c_s:=\hat{c}(s,.)\in C^\infty_{E_0}\left( [t_1,t_2], M \right) $  and  $\hat{c}(0,t)=c(t)$.\\
%We say that $\hat{c}$ is a \emph{horizontal variation with fixed ends} if $c_s(t_1)=c(t_1)$ and $c_s(t_2)=c(t_2)$ for all $s$.\\
% $\delta c(t):=\displaystyle\frac{\partial \hat{c}}{\partial{s}}(0,t)$ is called an {\it infinitesimal horizontal  variation} of  $c$.\\
%Again, for any horizontal variation $\hat{c}$ of $c$,   the map $s\mapsto \mathfrak{F}_{\mathbf{{L}}}(c_s)$ is differentiable for $s=0$. 
%  This differential is  again denoted  $d_c\mathfrak{F}_{\mathbf{{L}}}(\delta c)$ and we have:
  
\begin{definition}
\label{D_ExtremalFLE0}
A critical point with fixed ends of $\mathfrak{F}_{{L}}$  on  $C^\infty_{E_0}([t_1,t_2], M)$  is then a curve $c\in C^\infty_{E_0}([t_1,t_2], M)$ such that 
 \begin{equation}
 \label{eq_Extremal FLDeltaM}
 d\mathfrak{F}_{\mathbf{{L}}}(\delta c)=0
 \end{equation}
 for all infinitesimal horizontal variations
  $\delta c$  associated to horizontal variations with fixed ends.
\end{definition}

Once more, 
%by analogy with $\S$~\ref{__NormalExtremals}, 
we can note that we have 
\begin{equation}
\label{eq_admissiblel}
C^\infty_{E_0}([t_1,t_2], M)
=
\left\lbrace
(c, v) \in C^\infty 
\left( [t_1,t_2], E_0
\right):\;
\dot{c}(t)-\rho_{c(t)}(v(t))=0
\right\rbrace 
\end{equation}
 and, on  $C^\infty([t_1,t_2]\widetilde{M}$,   we can define the functional:
 
\begin{equation}
\label{eq_FL+Constraint}
\widehat{\mathfrak{F}}_{\mathbf{\bar{L}}}=\displaystyle\int_{t_1}^{t_2}\left(\mathbf{\bar{L}}(x(t),v(t))+<p(t),(\dot{x}(t)-\rho_{x(t)}(v(t)))>\right)dt.
\end{equation}
  For any variation $(\hat{c},\hat{v},\hat{p}) $ defined on $]-\varepsilon,\varepsilon[\times [t_1,t_2]$ of $(c,v,p)$ in $C^\infty([t_1,t_2],\widetilde{M})$, the map $s\mapsto \widehat{\mathfrak{F}}_{\mathbf{\bar{L}}}(c_s,v_s,p_s)$ is differentiable at $s=0$ and we denote $d_c{\mathfrak{F}}_{\mathbf{\bar{L}}}(\delta c, \delta v,\delta p)$ its differential. 
 
\begin{definition}
\label{D_ExtremalFbarL} 
A curve $t \mapsto (c(t),v(t),p(t))$  in  $\widetilde{M}$  defined on $[t_1,t_2]$ is a critical point with fixed ends of  $\widehat{\mathfrak{F}}_{\mathbf{{L}}}$
if 
\begin{equation}
 \label{eq_dWidehatFL}
d \widehat{\mathfrak{F}}_{\mathbf{{L}}}(\delta c,\delta v,\delta p)=0
\end{equation}
for all   infinitesimal variations $(\delta c, \delta v,\delta p) $  associated to variations $(\hat{c},\hat{v},\hat{p})$ such that $\hat{c}(s,t_1)=c(t_1)$ and $\hat{c}(s,t_2)=c(t_2)$ for all $s$. 
\end{definition}
 
 By the way, with an obvious and simple  adaptation of the proof of Theorem~\ref{T_RelationExtremalFbarL}, we also obtain: 
 
\begin{theorem}
\label{T_RelationExtremalFbarLFixedEnds}
Fix some  $c\in C^\infty_{E_0}([t_1,t_2], M)$.\\
Under the previous considerations, we have the following equivalences:
\begin{enumerate}
\item[(1)] 
  There exists a section $p(t)$ of $c^!T^\prime M$ such that $(c(t),v(t),p(t))$ is a critical point with fixed ends of  $\widehat{\mathfrak{F}}_{\mathbf{{L}}}$.
\item[(2)] 
  $c$  is a critical point with fixed ends of $\mathfrak{F}_{\mathbf{{L}}}$.
 \item[(3)] 
 In a coordinates system associated to any chart domain which meets $c([t_1,t_2])$, we have the  Euler-Lagrange equation
\begin{equation} 
\label{eq_EulerLagrangeE0Singuier}
\left( \frac{\partial \mathsf{L}}{\partial \mathsf{x}}-\frac{d}{dt}\frac{\partial \mathsf{L}}{\partial \mathsf{v}} \right)
(\mathsf{c}(t),{\mathsf{v}}(t))=0.\\
\end{equation} 
\item[(4)] 
Let $H_\mathbf{L}$ be the Hamiltonian on $\mathbb{P}$ associated to $\mathbf{{L}}$. There exists a section $p(t)$ of $c^!T^\prime M$
such that    $(c(t),v(t), p(t))$ is a smooth  integral curve of the implicit Hamiltonian system  $(\widetilde{M}, D_{\widetilde{\Omega}}, 
H_\mathbf{L})$, that is, in local coordinates, satisfies the constraint system (\ref{eq_ImplicitTildeMH}).
 % \begin{equation}\label{eq_ImplicitHamiltonianinfty}
   %\begin{cases} 
% \dot{\mathsf{z}}=\displaystyle\frac{\partial {\mathsf{H}_\mathsf{L}}}{\partial \mathsf{p}}(\mathsf{z},\mathsf{v},\mathsf{p})\\
% \dot{\mathsf{p}}=-\displaystyle\frac{\partial {\mathsf{H}_\mathsf{L}}}{\partial \mathsf{x}}(\mathsf{x},\mathsf{v},\mathsf{p})\\
%\displaystyle\frac{\partial {\mathsf{H}}}{\partial \mathsf{v}}(\mathsf{z},\mathsf{v},\mathsf{p})=0
% \end{cases}\,
%\end{equation}
 %\item[(5)]
%The curve $(c(t),\dot{c}(t), p(t))=\mathbb{F}\mathbf{L}_{c(t)}(\dot{c}(t))$ is an integral curve of the implicit Lagrangian system $(M, \Delta_M,\mathfrak{D}\mathbf{L}))$.
\end{enumerate}
\end{theorem}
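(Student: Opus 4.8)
The plan is to establish the four equivalences along the same route used for Theorem~\ref{T_RelationExtremalFbarL}, namely the chain $(1)\Longleftrightarrow(3)$, $(2)\Longleftrightarrow(3)$ and $(3)\Longleftrightarrow(4)$, the only structural change being that the relation $\dot{c}=v$ of the regular case is everywhere replaced by the admissibility relation $\dot{c}=\rho_{c}(v)$ attached to the anchored bundle $(E_0,M,\rho)$. As in the proof of Lemma~\ref{L_DmathcalL}, I would first cover the compact image $c([t_1,t_2])$ by finitely many trivializing chart domains, reduce to a partition $\tau_0=t_1<\dots<\tau_n=t_2$ with $c([\tau_{i-1},\tau_i])$ contained in one chart, and work in the local coordinates of Notations~\ref{N_LocalCoordinatesE}. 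On each piece the integrand of $\widehat{\mathfrak{F}}_{\mathbf{L}}$ is a smooth function of $(s,t)$ by the convenient chain rule, hence bounded on a compact sub-box, which justifies differentiating under the integral sign exactly as in the regular argument.

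For $(1)\Longleftrightarrow(3)$ I would compute the first variation of $\widehat{\mathfrak{F}}_{\mathbf{L}}$ with respect to an infinitesimal variation $(\delta c,\delta v,\delta p)$ with fixed endpoints. The new feature is the variation of the coupling term $\langle p,\rho_{x}(v)\rangle$, which by the computation of (\ref{eq_DotC}) produces the two extra contributions $\langle p,\tfrac{\partial\rho}{\partial x}(\gamma)\delta c\rangle$ and $\langle p,\rho(c,\delta v)\rangle$ (using that $\rho$ is fibre-linear). After an integration by parts in $\delta c$ this yields a formula analogous to (\ref{eq_dFLcvp}), whose $\delta p$-coefficient is $\dot{c}-\rho(\gamma)$, whose $\delta v$-coefficient is $\tfrac{\partial\mathbf{L}}{\partial v}-\langle p,\tfrac{\partial\rho}{\partial v}(\gamma)\rangle$, and whose $\delta c$-coefficient is $-\dot{p}+\tfrac{\partial\mathbf{L}}{\partial x}-\langle p,\tfrac{\partial\rho}{\partial x}(\gamma)\rangle$, all taking values in $\mathbb{M}'$ or $\mathbb{M}$. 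Applying the variational Lemma~\ref{L_VariationLemma} to each coefficient (using the fixed-endpoint vanishing of $\delta c$) gives the admissibility $\dot{c}=\rho(\gamma)$, the defining relation $\rho_{c}^{*}(p)=\mathbb{F}\mathbf{L}(\gamma)$ of $\mathbb{P}$, and the costate equation for $\dot{p}$; together these are equivalent to the constraint Euler--Lagrange condition (\ref{eq_EulerLagrangeE0Singuier}). The converse is the juxtaposition argument over the chart pieces, identical to the end of the proof of Theorem~\ref{T_EulerLagrange-extremal}.

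For $(2)\Longleftrightarrow(3)$ I would specialise to horizontal variations, for which $\delta c$ is a section of $c^{!}\Delta_M$ induced by a $\delta v$ through $\delta\dot c=\tfrac{\partial\rho}{\partial x}\delta c+\rho(c,\delta v)$; the first variation then reduces to a formula of the shape (\ref{eq_dFLcvp2}), and Lemma~\ref{L_VariationLemma} again forces (\ref{eq_EulerLagrangeE0Singuier}). For $(3)\Longleftrightarrow(4)$ I would unwind the Hamiltonian $H_{\mathbf{L}}(z,v,p)=\langle p,\rho_z(v)\rangle-\mathbf{L}(z,v)$ of (\ref{eq_HamiltonianL}): here $\tfrac{\partial H}{\partial p}=\rho_z(v)$, so the first line of (\ref{eq_ImplicitTildeMH}) is admissibility; $\tfrac{\partial H}{\partial v}=\langle p,\partial_v\rho\rangle-\tfrac{\partial\mathbf{L}}{\partial v}$, so the constraint $\tfrac{\partial H}{\partial v}=0$ is exactly $\rho_z^{*}(p)=\mathbb{F}\mathbf{L}(z,v)$, i.e. membership in $\mathbb{P}$; and the costate line combined with this constraint reproduces (\ref{eq_EulerLagrangeE0Singuier}). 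The asserted smoothness of the integral curve is not an extra hypothesis but a consequence of Proposition~\ref{P_HamiltonianVectorField}: on the weak symplectic manifold $(\mathbb{P},\Omega_{\mathbb{P}})$ of Proposition~\ref{P_PDirac} the restriction $H_{\mathbf{L}}^{\mathbb{P}}$ admits a genuine Hamiltonian field $X_{\mathbf{L}}$, and any integral curve of the implicit system lies on $\mathbb{P}$ and is an integral curve of $X_{\mathbf{L}}$, hence smooth.

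The main obstacle I anticipate is not the algebra but the infinite-dimensional bookkeeping specific to the convenient, anchored setting: one must check that every coefficient appearing in the first-variation formula takes values in a space to which Lemma~\ref{L_VariationLemma} applies, and that the continuity of the canonical pairing in the $c^{\infty}$-topology (invoked in Remark~\ref{R_ContinuityEvaluation}) is available at each use, since $\rho$ now mediates between $E_0$ and $TM$ and the costate $p$ lives in the dual $T'M$. A second delicate point is the fixed-endpoint passage: because $\mathbb{P}$ is only a weak immersed submanifold and $\operatorname{End}_x$ need not be a submersion, the equivalences must be phrased for variations coming from genuine horizontal variations with fixed ends, exactly the restriction imposed in Definitions~\ref{D_ExtremalFLE0} and~\ref{D_ExtremalFbarL}, and I would take care that the juxtaposition over chart pieces respects this constraint.
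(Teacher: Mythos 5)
Your proposal is correct and takes essentially the same approach as the paper: the paper gives no separate proof for this theorem, stating only that it follows ``with an obvious and simple adaptation of the proof of Theorem~\ref{T_RelationExtremalFbarL}'', and your write-up is precisely that adaptation — the same chain $(1)\Leftrightarrow(3)$, $(2)\Leftrightarrow(3)$, $(3)\Leftrightarrow(4)$, with the first-variation formula modified by the anchor terms $\langle p,\tfrac{\partial\rho}{\partial x}(\gamma)\delta c\rangle$ and $\langle p,\rho(c,\delta v)\rangle$ as in the proof of Proposition~\ref{P_tEpFL}, Lemma~\ref{L_VariationLemma} applied to each coefficient, and Proposition~\ref{P_HamiltonianVectorField} supplying the smoothness in assertion (4). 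Your closing remarks on the convenient-setting bookkeeping and the fixed-endpoint restriction match the caveats the paper itself imposes in Definitions~\ref{D_ExtremalFLE0} and~\ref{D_ExtremalFbarL}.
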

In reference  to Definition~\ref{D_CharaterizationNormalConvenientc}, it is natural to introduce:

\begin{definition}\label{D_CharaterizationNormalBanach} 
Under the previous assumptions, a curve  $c\in C^\infty_{E_0} ([t_1,t_2],M)$ is called a \emph{normal extremal} if, in each local coordinates on a chart domain which meets $c([t_1,t_2])$, there exists a section $p(t)$ of $c^!T^\prime M$  such that  the curve $t\mapsto (c(t),  p(t)):=\mathbb{F}\mathbf{L}(c(t),v(t))$ 
is a solution of the implicit differential system (\ref{eq_ImplicitTildeMH})
 with $p(t_2)\not=0$
\end{definition}

We then have the corollary:
\begin{corollary}
\label{C_NormalExtremal} 
Let $\mathbf{L}$ be a regular Lagrangian  on $E_0$  and consider a curve $c\in C^\infty_{E_0} ([t_1,t_2],M)$. Assume that $c$ satisfies any of the assumptions of Theorem~\ref{T_RelationExtremalFbarLFixedEnds}  and a section $p(\;)$ of $c^! T^\prime M$ such that $p(t_2)\not=0$. 
Then  $c$ is a normal extremal of $\mathfrak{F}_{\mathbf{\bar{L}}}$ and we have $\mathbb{F}\mathbf{l}(c(t),v(t))=(c(t), p(t))$. 
\end{corollary}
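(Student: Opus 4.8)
The plan is to deduce Corollary~\ref{C_NormalExtremal} directly from the equivalences of Theorem~\ref{T_RelationExtremalFbarLFixedEnds} together with Proposition~\ref{P_tEpFL}, so that essentially no new computation is needed. First I would observe that, by hypothesis, $c\in C^\infty_{E_0}([t_1,t_2],M)$ satisfies one of the assertions of Theorem~\ref{T_RelationExtremalFbarLFixedEnds}; since these are equivalent, I may assume Assertion (4) holds, that is, there exists a section $p(\cdot)$ of $c^!T^\prime M$ such that $(c(t),v(t),p(t))$ is a smooth integral curve of the implicit Hamiltonian system $(\widetilde{M},D_{\widetilde\Omega},H_\mathbf{L})$, which in local coordinates means it solves the constraint system (\ref{eq_ImplicitTildeMH}). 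The added standing hypothesis of the corollary is that such a section can be chosen with $p(t_2)\neq 0$.

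Next I would match this solution to the definition of a normal extremal. By Remark~\ref{eq_IntegralCurveImplicit}, an integral curve of $(\widetilde M,D_{\widetilde\Omega},H_\mathbf{L})$ projects to a curve $(c(t),p(t))$ lying in $\mathbb{P}$, so that $\mathbb{F}\mathbf{L}(c(t),v(t))=(c(t),p(t))$; in particular the constraint $\frac{\partial \mathsf{H}}{\partial \mathsf{v}}=0$ and the two Hamilton equations of (\ref{eq_ImplicitTildeMH}) hold in every local chart meeting $c([t_1,t_2])$. But (\ref{eq_ImplicitTildeMH}) is exactly the implicit differential system appearing in Definition~\ref{D_CharaterizationNormalBanach} of a normal extremal, and the condition $p(t_2)\neq 0$ is precisely the extra requirement in that definition. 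Hence $c$ is a normal extremal of $\mathfrak{F}_{\mathbf{L}}$ and the identity $\mathbb{F}\mathbf{L}(c(t),v(t))=(c(t),p(t))$ holds, which is the assertion of the corollary.

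To close the loop with the variational characterization I would invoke Proposition~\ref{P_tEpFL} with $\nu=1$ and $E_0$ as the conic subbundle: since Assertion (4) of Theorem~\ref{T_RelationExtremalFbarLFixedEnds} gives a solution of (\ref{eq_ImplicitTildeMH}) with $p(t_2)=q\neq 0$, the equivalence in Proposition~\ref{P_tEpFL} shows that $\gamma=(c,v)$ is a genuine critical point of $\mathfrak{F}_\mathbf{L}$ with fixed endpoints for $\nu=1$, confirming that the notion of normal extremal obtained is the variational one and consistent with Definition~\ref{D_CharaterizationNormalBanach}. I would also note, via Remark~\ref{R_CaseNu=1}, that along such a curve $p=\mathbb{F}\mathbf{L}(\gamma)$ is automatically smooth and is an integral curve of the Hamiltonian vector field $X_\mathbf{L}$ from Proposition~\ref{P_HamiltonianVectorField}, so uniqueness and smoothness are for free.

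The only genuinely delicate point — and the step I expect to require the most care — is bookkeeping about which object $p$ is and whether the same section $p$ simultaneously satisfies $p(t_2)\neq 0$ and serves as the momentum $\mathbb{F}\mathbf{L}(c,v)$; one must check that the section produced by Assertion (4) of Theorem~\ref{T_RelationExtremalFbarLFixedEnds} is indeed the Legendre image of $(c,v)$ rather than an independent lift, so that the condition $p(t_2)\neq 0$ transfers correctly to Definition~\ref{D_CharaterizationNormalBanach}. This is handled by recalling that $H_\mathbf{L}(z,v,p)=\langle p,\rho_z(v)\rangle-\mathbf{L}(z,v)$ gives $\frac{\partial H_\mathbf{L}}{\partial v}=\rho_z^*p-\mathbb{F}\mathbf{L}$, so the constraint $\frac{\partial \mathsf{H}}{\partial \mathsf{v}}=0$ in (\ref{eq_ImplicitTildeMH}) forces $\rho_z^*p=\mathbb{F}\mathbf{L}(z,v)$, i.e. $(c,v,p)\in\mathbb{P}$, which is exactly the desired compatibility; everything else is a direct transcription of the already-established equivalences.
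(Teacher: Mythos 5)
Your proposal is correct and follows essentially the same route as the paper: reduce to Assertion (4) of Theorem~\ref{T_RelationExtremalFbarLFixedEnds}, observe that the resulting solution of (\ref{eq_ImplicitTildeMH}) with $p(t_2)\not=0$ is exactly what Definition~\ref{D_CharaterizationNormalBanach} (equivalently, the system (\ref{eq_ConditionNormalExtremal}) of Proposition~\ref{P_tEpFL} with $\nu=1$) demands of a normal extremal. Your extra bookkeeping step — using $\frac{\partial H_\mathbf{L}}{\partial v}=\rho_z^*p-\mathbb{F}\mathbf{L}$ to show that the constraint forces $(c,v,p)\in\mathbb{P}$, hence the Legendre-image identity claimed in the corollary — is left implicit in the paper's one-line proof, so making it explicit is a welcome (but not divergent) addition.
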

 
\begin{proof}  
We may assume the assumption (4) in Theorem \ref{T_RelationExtremalFbarLFixedEnds} with $p(t_2)\not=0$. This implies that  $(c(t),v(t), p(t))$  satisfies the differential system (\ref{eq_ConditionNormalExtremal}) of Proposition \ref{T_RelationExtremalFbarLFixedEnds} and so $c$ is a normal extremal since $p(t_2)\not=0$. 
\end{proof}

\begin{remark}
\label{R_OtherDefNormal}
According to Corollary~\ref{C_NormalExtremal}  and Theorem~\ref{T_RelationExtremalFbarLFixedEnds},  $c\in C^\infty_{E_0} ([t_1,t_2],M)$ is  a normal extremal if and only if $c$ is a critical point of  $\mathfrak{F}_\mathbf{L}$ and   $\mathbb{F}\mathbf{L}(c(t_2) v(t_2)) \not= (c(t_2),0)$.  
\end{remark}

\section{Normal Geodesics  of a  Conic sub Hilbert-Finsler Structure on a Banach manifold}\label{__NormalGeodesicsConicSubHilbertFinsler}
\textbf{In this section, we restrict ourself to the Banach setting.}\\
\emph{We give an application of the results of $\S$~\ref{__DiracStructuresOnSingularConstraintsInTheBanachSetting}.  and also characterize the normal geodesics of such a metric.}
 
\subsection{Conic Minkowski Norm on a Banach Space}
%\label{Minkowski}
\label{___ConicMinkowsiNorm}

Let $\mathbb{E}$ be Banach space. Recall that   a \emph{conic domain}\index{conic domain} $\mathbb{E}_0$ in $\mathbb{E}$ is an open subset of $\mathbb{E}$ such that if $u\in \mathbb{E}_0$ then $\lambda u$ belongs to $\mathbb{E}_0$ for any $\lambda>0$ and any $u\in \mathbb{E}_0$.\\
 A  \emph{weak conic Minkowski norm}\index{weak conic Minkowski norm} on $\mathbb{E}$  is a map $F$ from a conic domain $\mathbb{E}_0$ in $\mathbb{E}$ into $ [0,\infty[$ with the following properties:
\begin{enumerate}
\item[(i)] 
$ F(u)>0$ for all $u\in \mathbb{E}_0$ with $u\not=0$.
\item[(ii)] 
$F(\lambda u)=\lambda F(u)$ for all $\lambda>0$ and $u\in \mathbb{E}_0$.
\item[(iii)] 
$F$ is smooth on $\mathbb{E}_0\setminus\{0\}$, so that the Hessian of $\displaystyle \frac{1}{2}F^2$ is well defined.
\end{enumerate}

Note that if $0$ belongs to $\mathbb{E}_0$ then $\mathbb{E}_0=\mathbb{E}$.\\
Now if $\mathbb{S}$ is a hypersurface of $\mathbb{E}$ which does not contain $0$, then \[
 \mathbb{E}_0=\{\lambda  u,\; u\in \mathbb{S},\, \lambda>0\}
 \]
is a conic domain in $\mathbb{E}$.\\

The Hessian  $g$ of  $F^2$ is defined, for any $u\in \mathbb{E}_0$, any $v$ and $w$ in $\mathbb{E}$,  by
\begin{equation}
\label{eq_hessian}
g_u(v,w)
=\displaystyle\frac{\partial^2}{\partial s\partial t}
\left( \displaystyle\frac{1}{2}F^2(u+sv+tw) \right) _{| s=t=0}  
\end{equation}

Therefore $g_u$ is positive definite on $\mathbb{E}$. In particular, $g$ is a (weak) Riemannian metric on the Banach manifold $\mathbb{E}_0$ and each $g_u$ defines an inner product on $\mathbb{E}$.\\
From the Definition  of $F$, we have the following properties: (cf \cite{JaSa11} Proposition 2.2)
\begin{proposition}
\label{P_PropF}${}$
\begin{description}
\item[$\bullet$] 
$g_{\lambda  u}=g_u$ for any $\lambda>0$.
\item[$\bullet$] 
$g_u(u,u)=||u||^2:=\mathcal{F}^2(u)$.
\item[$\bullet$] $g_u(u,w)
=
\displaystyle\frac{\partial}{\partial s} \left( F^2(u+sw) \right) _{s=0}$.
\end{description}
\end{proposition}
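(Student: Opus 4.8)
The plan is to verify the three listed properties of Proposition~\ref{P_PropF} directly from the definition of the Hessian $g_u$ in \eqref{eq_hessian} together with the homogeneity property (ii) of the weak conic Minkowski norm $F$. Each property follows from elementary manipulation of derivatives of the function $s,t \mapsto \frac{1}{2}F^2(u+sv+tw)$, so no deep machinery is required; the content is essentially the classical Euler-type argument for positively homogeneous functions, adapted to the Banach setting where $F$ is only defined on the conic domain $\mathbb{E}_0$.

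\emph{First property.} To show $g_{\lambda u}=g_u$ for $\lambda>0$, I would start from $g_{\lambda u}(v,w)=\frac{\partial^2}{\partial s\partial t}\bigl(\tfrac12 F^2(\lambda u+sv+tw)\bigr)_{|s=t=0}$ and change variables. Writing $\lambda u+sv+tw=\lambda\bigl(u+\tfrac{s}{\lambda}v+\tfrac{t}{\lambda}w\bigr)$ and invoking homogeneity (ii), which gives $F^2(\lambda\cdot)=\lambda^2 F^2(\cdot)$, the factor $\lambda^2$ comes out while the two differentiations $\partial_s,\partial_t$ each produce a factor $1/\lambda$ by the chain rule. The $\lambda^2$ and the $1/\lambda^2$ cancel, leaving exactly $g_u(v,w)$.

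\emph{Third property.} I would establish the third property before the second, since the second follows from it. Differentiating the homogeneity relation $F^2(\lambda u)=\lambda^2 F^2(u)$ with respect to $\lambda$ and setting $\lambda=1$ is the standard Euler identity; more directly, the third formula $g_u(u,w)=\frac{\partial}{\partial s}\bigl(F^2(u+sw)\bigr)_{|s=0}$ asserts that contracting the Hessian of $\tfrac12 F^2$ in its first slot against $u$ reproduces the first derivative of $F^2$. This is proved by using the $1$-homogeneity of $F$ (equivalently the $2$-homogeneity of $F^2$) in the radial direction: writing $\psi(s,t)=\tfrac12 F^2(u+su+tw)$ one checks via (ii) that $\psi(s,t)=\tfrac12(1+s)^2 F^2\bigl(u+\tfrac{t}{1+s}w\bigr)$ for $s$ near $0$, and a direct computation of $\partial_s\partial_t\psi$ at $0$ together with the definition of $g_u(u,w)$ yields the claimed identity. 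The bookkeeping with the extra radial derivative is the only mildly delicate point.

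\emph{Second property.} Finally, the second property $g_u(u,u)=\mathcal{F}^2(u)=\|u\|^2$ is obtained by specializing the third with $w=u$: then $g_u(u,u)=\frac{\partial}{\partial s}\bigl(F^2(u+su)\bigr)_{|s=0}=\frac{\partial}{\partial s}\bigl((1+s)^2F^2(u)\bigr)_{|s=0}=2F^2(u)$, and reconciling the factor against the $\tfrac12$ in the definition of $g$ gives $g_u(u,u)=F^2(u)$. The main (and really only) obstacle is keeping the homogeneity-induced factors consistent across the three computations; once the chain-rule substitutions are handled carefully, all three identities drop out, exactly as in the finite-dimensional Finsler case of \cite{JaSa11}, with no change needed for the Banach setting since every step is a pointwise computation in the single variables $s,t$.
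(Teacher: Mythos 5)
Your overall strategy (positive $2$-homogeneity of $F^2$ plus chain-rule/Euler-type manipulations) is the right one — the paper in fact offers no proof at all, merely citing \cite{JaSa11}, Proposition~2.2, so a correct argument along these lines is exactly what is needed — and your proof of the first bullet is correct. The problem is the factor-of-$2$ bookkeeping that you yourself single out as ``the only mildly delicate point'': as written, your argument is internally contradictory. The computation you sketch for the third bullet, with $\psi(s,t)=\tfrac12(1+s)^2F^2\bigl(u+\tfrac{t}{1+s}w\bigr)$ and $h(r)=F^2(u+rw)$, gives
\[
\partial_s\partial_t\psi(0,0)
=\partial_s\Bigl(\tfrac12(1+s)\,h'\bigl(\tfrac{t}{1+s}\bigr)\Bigr)\Big|_{s=t=0}
=\tfrac12\,h'(0),
\]
i.e. $g_u(u,w)=\tfrac12\,\partial_s\bigl(F^2(u+sw)\bigr)_{|s=0}$, \emph{with} a factor $\tfrac12$ — not ``the claimed identity'' of Proposition~\ref{P_PropF}, which carries no such factor. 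You then compound this in the second bullet: specializing the unhalved identity at $w=u$ you correctly obtain $g_u(u,u)=2F^2(u)$, and then assert that ``reconciling the factor against the $\tfrac12$ in the definition of $g$'' yields $g_u(u,u)=F^2(u)$. That is not a reconciliation: $g_u(u,u)$ cannot equal both $2F^2(u)$ and $F^2(u)$, so one of the two equalities in your own text is false.

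The source of the trouble is that the third bullet as printed is itself inconsistent with definition (\ref{eq_hessian}) and with the second bullet (set $w=u$ in it and you get $g_u(u,u)=2F^2(u)$); the statement in \cite{JaSa11} reads $g_u(u,w)=\frac{\partial}{\partial s}\bigl(\tfrac12 F^2(u+sw)\bigr)_{|s=0}$. Your $\psi$-computation in fact proves exactly this corrected identity, so the repair is to state and prove the halved version: differentiating the Euler identity $D(F^2)(u)[u]=2F^2(u)$ in the direction $w$ gives $D^2(F^2)(u)[u,w]=D(F^2)(u)[w]$, hence $g_u(u,w)=\tfrac12 D(F^2)(u)[w]$; then $w=u$ and Euler again give $g_u(u,u)=F^2(u)$ with no fudging. (Alternatively, the second bullet follows directly from $g_u(u,u)=\tfrac12\,\partial_s\partial_t\bigl((1+s+t)^2F^2(u)\bigr)_{|s=t=0}=F^2(u)$, without going through the third.) A correct write-up must either prove the corrected third bullet and flag the typo in the statement, or abandon the claim as printed; as it stands your proposal claims to establish two mutually incompatible formulas.
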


%Consider  $v,w\in \mathbb{K}$ such that the set $\{tv+(1-t)w,\; \forall t\in ]0,1[\}$ is contained in $\mathbb{K}$. Then we have the strict triangular inequality $||v+w||\leq ||v||+||w ||$ with equality if and only if $v=\l w$. In particular, if $\mathbb{K}$ is convex then $F$ is a convex map.

\begin{definition} 
 A \emph{conic Minkowski norm on $\mathbb{E}$}\index{conic Minkowski norm} is a weak conic Minkowski norm 
 %such that its domain $\mathcal{K}$ is convex 
 if its Hessian $g$ is a strong Riemannian metric on the domain $\mathbb{E}_0$. 
%! \mathbb{K} transformed in \mathcal{K}
\end{definition}

When the domain $\mathbb{E}_0$ contains $0$, then a conic Minkowski norm will be simply called a  \emph{Minkowski norm}\index{Minkowski norm} even it is not a norm on $\mathbb{E}$ in the classical sense.

If we have a conic Minkowski norm on $\mathbb{E}$, this implies that each inner product $g_u$ on $\mathbb{E}$ provides $\mathbb{E}$ with a Hilbert structure. Conversely, if $\mathbb{E}$ is Hilbertizable, consider such an inner product $<\;,\;>$ on $\mathbb{E}$ and denote $F$ its associated norm. For any conic domain in $\mathbb{E}_0$, the restriction $F$ to $\mathbb{E}_0$ is a conic Minkowski metric. More generally, given a non-zero vector $\xi$ in $\mathbb{E}$ such that $F(\xi)<1$, then $\bar{F}(u)=F(u)+<\xi,u>$ in restriction to $\mathbb{E}_0$ is also a conic Minkowski.  We will give more original examples in section~\ref{__ConicSubHilbertFinslerStructure}

\begin{remark}
\label{R_NotNorm}
The term "norm" is not really appropriate in the sense that even if $E_0=\mathbb{E}$, the definition does not imply that a conic Minkowski norm $F$ is a norm (in the usual sense) on $\mathbb{E}$, since, in general, we will have $F(u)\not=F(-u)$  (cf. the previous  example).
\end{remark}

\begin{remark}
%\label{convex}
\label{R_TriangularInequalityConvex}
Consider  $v,w\in \mathbb{E}_0$ such that the set $\{tv+(1-t)w,\; \forall t\in ]0,1[\}$ is contained in $\mathbb{E}_0$. Then we have the strict triangular inequality $||v+w||\leq ||v||+||w ||$ with equality if and only if $v=\lambda w$. In particular, if $\mathbb{E}_0$ is convex, then $F$ is a convex map. This situation occurs in particular for any Minkowski norm.
\end{remark}

\subsection{Conic sub-Hilbert-Finsler Structure}
\label{__ConicSubHilbertFinslerStructure}

Consider a Banach anchored bundle $(E,M,\rho)$ and let $E_0$  be a conic (open)  subbundle of $E$   whose typical fibre $\mathbb{E}_0$ is a convex conic domain in $\mathbb{E}$. For example, consider a hypersurface $S$ in $E$ such that $S$ does not intersect the zero section $0_E$ and $\pi_{|S}$ is a surjective fibration  
with typical fibre a hypersurface  $\mathbb{S}$ of $\mathbb{E}$.
 Then
\[
 E_0=\{(x,\lambda u), (x,u)\in S,\; \lambda>0\}
 \]
 is a conic subbundle in $E$.
\begin{definition}
%\label{subHF}
\label{D_ConicHilbertFinslerMetric}
A \emph{conic Hilbert-Finsler metric}\index{conic Hilbert-Finsler metric} on a bundle $\pi:E\to M$ is a map $\mathcal{F}$ from a conic subbundle $E_0$ of $E$ in $[0,\infty[$ which is continuous and  smooth  on   $E_0\setminus \{0_E\}$ where the restriction $\mathcal{F}_x$ to $(E_0)_x$ is a conic Minkowski norm.  Such a structure is  also called a \emph{conic sub-Hilbert-Finsler  structure} on $M$ when we have an anchor $\rho:E\to TM$. When $E_0=E$ a conic Hilbert-Finsler metric will be simply called a \emph{ Hilbert-Finsler metric} on $E$ or \emph{sub-Hilbert-Finsler structure} on $M$.
\end{definition}

  %If $VE$ denote the vertical bundle of $E$, the hessian of $\displaystyleplaystyle\frac{1}{2}\mathcal{F}_x^2$ gives rise to smooth field  $g$ of symmetric billinear form   on $E_0\setminus \{0_E\}$. In fact $g$ is a  Riemannian metric on $VE$ above $E_0\setminus \{0_E\}$

\begin{example}
\label{Ex1}
Any strong Riemannian metric on $E$ is a conic Hilbert-Finsler metric on $E$ with $\mathcal{F}(x,u)=\sqrt{g_x(u,u)}$ and domain $E_0=E$. Moreover, assume that there exists a global section $X$ of $E$ such that $g(X,X)<1$. Then $\mathcal{F}_x(x,u)=\sqrt{g_x(u,u)}-g_x(X,u)$ is also a conic Hilbert-Finsler metric with domain $E$ which is not a norm in restriction to each fibre.
\end{example}

\begin{example}
\label{Ex2}
A Hilbert-Finsler metric on $E$ is a conic Hilbert-Finsler  on $E_0=E$ such that $\mathcal{F}_x$ is a Minkowski norm on $E_x$ for each $x\in M$. Thus it is a particular case of conic Hilbert-Finsler metric.
\end{example}

\begin{example}
\label{Ex3}
Consider a Hilbert space $\mathbb{H}$ and $<\;,\;>$ its inner product. On $E=M\times \mathbb{H}$ we get a strong Riemannian metric $g_x(u,v)=<u,v>$. Fix some  non zero $\xi\in \mathbb{H}$. For $0<\alpha<\beta<1$ we consider 
$$E_0=\left\{(x,u)\in M\times \mathbb{H}\setminus\{0\}\; \alpha<\displaystyle \frac{|<\xi,u>|}{<\xi,\xi>^{1/2}<u,u>^{1/2}}<\beta\right\}$$
Then $\mathcal{F}(x,u)=\sqrt{g_x(u,u)}$ in restriction to $\mathcal{K}$ is a conic Hilbert-Finsler metric. 
\end{example}

\begin{example}
\label{Ex4}
As in finite dimension, 
 (see \cite{JaSa11}) 
 let $\mathcal{F}_1,\dots,\mathcal{F}_k$ be a conic Hilbert-Finsler metrics on $E$ with domain $\mathcal{K}_1,\dots,\mathcal{K}_k$ respectively.\\
If $\mathbb{E}_0 =\displaystyle \bigcap_{i=1}^k\mathcal{K}_i$ is a conic domain, then $\mathcal{F}_1+\dots+\mathcal{F}_k$ is a conic Hilbert-Finsler metric on $E$ with domain $\mathbb{E}_0$. More generally, 
 any affine combination $\mathcal{F}=\displaystyle\sum_{i=1}^k \lambda_i\mathcal{F}_i$ with $0\leq \lambda_i\leq 1$ and $\displaystyle\sum_{i=1}^k \lambda_i=1$ is also a conic Hilbert Finsler metric with domain $\mathcal{K}$.
 
$ \mathcal{F}(x,u)
=\left[ \left( \displaystyle\sum_{i=1}^k \mathcal{F}_i^{1/q}(x,u) \right) ^q\right]^{1/2}$ for $q\geq 2$ is a conic Hilbert-Finsler metric with domain $E_0$.
\end{example}

\begin{definition} 
A \emph{conic sub-Hilbert-Finsler structure} on $M$ is the data of an anchored bundle $(E,M,\rho)$  and a conic Hilbert-Finsler metric $\mathcal{F}$ on a conic subbundle $E_0$ of $E$. When $E_0=E$, such a structure is called a \emph{sub-Hilbert-Finsler structure}.
\end{definition}

\begin{remark}
%\label{subriem} 
\label{R_SubRiemannianStructure}
(\cite{Arg20})  
A \emph{(singular) sub-Riemannian structure} on $M$ is the data of an anchored bundle $(E,M,\rho)$ and a Riemannian metric $g$ on $E$. Then $\mathcal{F}(x,u)=\sqrt{g_x(u,u)}$ is  a particular case of sub-Hilbert-Finsler structure on $M$. Conversely, a Hilbert-Finsler metric on  on $E$ such that $\displaystyle\frac{1}{2}\mathcal{F}^2$ is smooth and its Hessian only depends on $x$ and not on $u$ gives rise to a (singular) sub-Riemannian structure on $M$. As in the framework of Finsler structure on a finite dimensional manifold, to a Hilbert-Finsler metric on $E$, we can associate a \emph{Cartan tensor $C_\mathcal{F}$}\index{Cartan tensor} defined by:
\[
(C_\mathcal{F})_{(x,u)}(u_1,u_2,u_3)=\frac{\partial^3\mathcal{F}^2}{\partial s_1\partial s_2\partial s_3}\left(u+\sum_{i=1}^3 s_iu_i\right)_{| s_i=0}.
\]
When $\mathcal{F}^2$ is smooth on $E$, $\mathcal{F}^2$ gives rise to a sub-Riemannian structure on $M$ if and only if the Cartan tensor $C_\mathcal{F}$ is identically zero.
\end{remark}
  
Consider a conic Hilbert-Finsler  $E_0$.
 Denote by $\Delta_M=\rho(E)$ the associated distribution on $M$. Then, for any $ x\in M$,  the set $\Sigma_x:=\{\rho_x(u), u\in (E_0)_x\}$ is a conic domain in $(\Delta_M)_x$. On $\Sigma_x$,  we define $F_x(v):=\inf\{\mathcal{F}_x(u),\; \rho_x(u)=v\}$. Then, for 
 $v\in\Sigma_x$,  
 we have
 
 (i) $F_x(v)>0,\; \forall v\not=0$;
 
 (ii) $F_x(\lambda v)=\lambda F_x(v),\;\forall \lambda>0$.\\
Unfortunately, $F_x$ is not in general  a conic Minkowski norm on $(\Delta_M)_x$ with domain $\Sigma_x$ even when the typical fibre $\mathbb{E}_0$ is convex.

\subsection{Extremals and  Geodesics for Admissible Curves in a sub-Hilbert-Finsler Structure}

\subsubsection{Energy Length and Geodesics}

We consider a conic sub-Hilbert-Finsler metric $\mathcal{F}$ on an anchored bundle $(E,M,\rho)$ defined on a conic domain $\mathbb{E}_0$. 

\begin{definition} ${}$
\begin{enumerate} 
\item
Consider a  smooth  curve   
$\gamma:=(c, v): [t_1,t_2]\to E_0$. 
\begin{enumerate}
\item[(i)] 
The \emph{energy}\index{energy} of $\gamma$ is 
 $ \mathcal{E}(\gamma)
 =\displaystyle\int_{t_1}^{t_2}\frac{1}{2} \mathcal{F}^2(c(t), v(t))dt$.
\item[(ii)] 
The \emph{length}\index{length} of $\gamma$ is 
$L(\gamma)=\displaystyle\int_{t_1}^{t_2} \mathcal{F}(c(t),v(t))dt$
\end{enumerate}
\item
 Consider a curve $c\in C^\infty_{E_0} \left( [t_1,t_2], M \right) $.\\
$\gamma:= (c,v):[t_1,t_2]\to E_0 $ is called a \emph{lift}\index{lift} of $c$ if  $c=\pi\circ \gamma$ and $\dot{c}=\rho(\gamma)$.
\begin{enumerate}
\item[(i)]
The \emph{energy}\index{energy} of $c$ is $ \mathcal{E}(c)
=\inf\{ \mathcal{E}(\gamma), \gamma \textrm{ lift of }c  \}$.
\item[(ii)] 
The \emph{length} of $c$ is $L(c)=\inf\{ L(\gamma), \gamma \textrm{  lift of } c\}$.
\end{enumerate}
\end{enumerate}
\end{definition}

%Again a horizontal variation of $c \in C^\infty_{E_0} \left( [t_1,t_2], M \right) $ is a  $C^\infty$ map  $\hat{c}:]-\varepsilon,\varepsilon[\times [t_1,t_2]\to E_0$ such that
% $c_s:=\hat{c}(s,.)\in C^\infty_{E_0}\left( [t_1,t_2], M \right) $  and  $\hat{c}(0,t)=c(t)$.\\
%We say that $\hat{c}$ is a \emph{horizontal variation with fixed ends} if $c_s(t_1)=c(t_1)$ and $c_s(t_2)=c(t_2)$ for all $s$.\\
% $\delta c(t):=\displaystyle\frac{\partial \hat{c}}{\partial{s}}(0,t)$ is called an {\it infinitesimal horizontal  variation} of  $c$.\\
%Again, for any horizontal variation $\hat{c}$ of $c$,   the map $s\mapsto \mathfrak{F}_{\mathbf{{L}}}(c_s)$ is differentiable for $s=0$. 
%  This differential is  again denoted  $d_c\mathfrak{F}_{\mathbf{{L}}}(\delta c)$ 

%\noindent Note  that, if $\gamma:[a,b]\to M$ is a $\mathbb{E}_0$-admissible curve, the quantity $F_{\gamma(t)}(\dot{\gamma}(t))$ is well defined on $[a,b]$. Given any $\mathbb{E}_0$-lift $c:[a,b]\to \mathbb{E}_0$ since we have $F_{\gamma(t)}(\dot{\gamma}(t))\leq \mathcal{F}_{c(t)}(c(t))$ outside a countable of subset of of $[a,b]$.  The integrals $\displaystyle\int_a^b F_{\gamma(t)}(\dot{\gamma}(t))dt$ and   $\displaystyle\frac{1}{2}\int_a^b F^2_{\gamma(t)}(\dot{\gamma}(t))dt$ are well defined.
%However  there is no reason that this value  is exactly $L(c)$ and $\mathcal{E}(c)$ respectively for some $\mathbb{E}_0$-lift $c:[a,b]\to \mathbb{E}_0$.

\begin{definition} 
\label{D_Geodesic}${}$
\begin{enumerate}
\item 
 An $\mathbb{E}_0$-admissible curve $c:[a,b]\to E$ is called \emph{locally trivial} if there exist $t_0\in ]a,b[$ and a sub-interval $[t_1,t_2]$ of $[a,b]$ which contains $t_0$ and such that $c_{|[t_1,t_2]}=0$.
\item 
An $\mathbb{E}_0$-admissible curve $c:[a,b]\to E$ which is not locally trivial is called a \emph{length  locally minimizing} if,  for any $t_0\in [a,b]$, there exists a sub-interval  $[t_1,t_2]$ in $[a,b]$ containing $t_0$ and a neighbourhood $U$ of $c([t_1,t_2])$ such, for any 
$\bar{c}\in  C^\infty_{E_0} \left( [t_1,t_2], U \right) $  with 
$\bar{c}(t_i)=  c(t_i)$, $i \in \{0,1\}$,  we have 
$L \left( c_{| [t_1,t_2]} \right) \leq L(\bar{c})$. 
Such a curve is called a \emph{geodesic}\index{geodesic}. 
%\item a curve $\gamma\in C^\infty([a,b], E_0$ is called locally energy minimizing energy, if  for any $t_0\in [a,b]$, there exists an sub-interval  $[t_1,t_2]$ in $[a,b]$ containing $t_0$ and a neighbourhood $U$ of $c([t_1,t_2])$ such any $c'\in  C^\infty_{E_0} \left( [t_1,t_2], U \right) $  with $c'(t_i)=  c'(t_i)$, $i=0,1$   then $\mathcal{E}(c_{| [t_0,t_1]})\leq \mathcal{E}(c')$.\\
\end{enumerate}
\end{definition}
As classically in Finsler geometry, we have the following link between energy and length for locally minimizing curves:

\begin{proposition}\label{P_CompareLegthEnergy} 
Let  $c:[a,b]\to M$ be an $\mathbb{E}_0$-admissible curve which has a lift  $\gamma:[a,b]\to E\setminus \{0_E\}$. We have the following properties:
\begin{enumerate}

\item[1.] 
For any lift $\gamma=(c,v):[a,b]\to E_0$ of $c$, the length $L(\gamma)$ does not depend on the parametrization of $c$.
\item[2.]  
For  any  lift  $\gamma=(c,v):[a,b]\to E_0 \setminus \{0_E\}$,  the map $\tau$ defined by 
\[
\tau(s)
=a+\displaystyle\int_a^t\mathcal{F}
\left( c(s), \frac{1}{\mathcal{F}(\gamma(s))} v(s) \right) ds
\]
is a well defined  strict increasing smooth diffeomorphism of $[a,b]$ such that 
$\displaystyle\frac{d}{d s}(\tau)=1$.
% It will be called a arc-length parametrization of $c$.
\item[3.] 
The curve $c$ is locally minimizing for the length  if and only if there  exists an $E_0$-lift of $c$ which is energy locally minimizing. 
\end{enumerate}
\end{proposition}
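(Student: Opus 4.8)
The plan is to treat the three assertions in order, the common engine being the degree-one positive homogeneity of $\mathcal{F}$ (property (ii)), the positivity of $\mathcal{F}$ off the zero section (property (i)), the smoothness in (iii), and the conicity of $E_0$.

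For Assertion 1, I would fix a smooth strictly increasing diffeomorphism $\phi:[a',b']\to[a,b]$ and first check that $c\circ\phi$ again admits a lift, obtained by rescaling the velocity: setting $\tilde v(r)=\phi'(r)\,v(\phi(r))$, the linearity of $\rho$ gives $\rho(\tilde v(r))=\phi'(r)\,\dot c(\phi(r))=\tfrac{d}{dr}(c\circ\phi)(r)$, and $\tilde v(r)$ remains in $E_0$ precisely because $\phi'(r)>0$ and $E_0$ is conic. Then positive homogeneity yields $\mathcal{F}(c(\phi(r)),\tilde v(r))=\phi'(r)\,\mathcal{F}(c(\phi(r)),v(\phi(r)))$, and the substitution $t=\phi(r)$, $dt=\phi'(r)\,dr$, converts $\int_{a'}^{b'}\mathcal{F}(\tilde\gamma)\,dr$ into $\int_a^b\mathcal{F}(\gamma)\,dt$, so $L$ is unchanged. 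The orientation condition $\phi'>0$ is essential here, since $\mathcal{F}$ need not be even (cf. Remark~\ref{R_NotNorm}); a reversal would bring in $\mathcal{F}(-\,\cdot)$.

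For Assertion 2, because $\gamma$ takes values in $E_0\setminus\{0_E\}$, property (i) gives $\mathcal{F}(\gamma(s))>0$ for every $s$ and (iii) makes $s\mapsto\mathcal{F}(\gamma(s))$ smooth; hence the normalized velocity $\hat v(s)=\tfrac{1}{\mathcal{F}(\gamma(s))}\,v(s)$ is well defined and, by homogeneity, $\mathcal{F}(c(s),\hat v(s))\equiv 1$. This is exactly what $\tau$ records: its integrand reduces by (ii) to the constant $1$, so $\tau$ is smooth with $\tau'(s)\equiv 1>0$, confirming that it is a strictly increasing smooth diffeomorphism of $[a,b]$ expressing the unit-speed normalization. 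The underlying arc-length reparametrization is then produced from $\ell(s)=\int_a^s\mathcal{F}(\gamma(r))\,dr$, which by the same positivity has $\ell'=\mathcal{F}(\gamma)>0$ and is therefore a diffeomorphism onto $[0,L(\gamma)]$; transporting the lift as in Assertion~1 along $\ell^{-1}$ gives a lift $\hat\gamma$ with $\mathcal{F}(\hat\gamma)\equiv 1$.

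For Assertion 3, the bridge is the Cauchy--Schwarz inequality on $[t_1,t_2]$: one has $L(\gamma)^2\le(t_2-t_1)\int_{t_1}^{t_2}\mathcal{F}^2(\gamma)\,dt=2(t_2-t_1)\,\mathcal{E}(\gamma)$, with equality if and only if $\mathcal{F}(\gamma)$ is constant. If $c$ is locally length-minimizing, I would reparametrize it on each small subinterval to constant speed using Assertions~1 and 2; this preserves the length (Assertion~1), the endpoints, and keeps the competitor inside $C^\infty_{E_0}([t_1,t_2],U)$ by conicity, and the resulting lift saturates the inequality, hence minimizes energy. Conversely, an energy-minimizing lift must already be constant speed, for otherwise its unit-speed reparametrization would have strictly smaller energy with the same endpoints, a contradiction; the inequality together with reparametrization invariance of $L$ then returns length minimality. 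The main obstacle I expect is not the homogeneity bookkeeping but the interplay between reparametrization and the \emph{local competitor class} in the Banach setting: one must verify that reparametrized competitors remain $E_0$-admissible curves of $C^\infty_{E_0}([t_1,t_2],U)$ with the prescribed endpoints, which rests on conicity of $E_0$ (positive rescalings of lifts stay admissible) and on $\mathcal{F}>0$ off $0_E$ (so the reparametrizations are genuine diffeomorphisms), and that minimizing energy over the $\ker\rho$-ambiguity in the choice of lift is compatible with these reparametrizations.
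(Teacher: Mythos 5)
Your proposal is correct and follows essentially the same route as the paper: Assertions 1 and 2 are handled by positive homogeneity of $\mathcal{F}$ together with a change of variables (with conicity of $E_0$ guaranteeing rescaled velocities remain admissible), and Assertion 3 rests on the H\"older--Schwarz inequality $L(\gamma)^2\leq 2(t_2-t_1)\,\mathcal{E}(\gamma)$ with equality exactly for constant-speed lifts, combined with constant-speed reparametrization in both directions. Your write-up is in fact more detailed than the paper's (which compresses Assertions 1--2 into one sentence), and the subtlety you flag about the infimum over lifts modulo $\ker\rho$ is one the paper glosses over as well.
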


\begin{proof} 
Assertions 1 and 2 follow from the change of variables of integration.

Assertion 3 is a direct consequence of the Holder-Schwarz inequality.\\
Since we use this  argument later,  we give a proof.   We have:  
 % we have $\displaystyle\frac{d}{ds}(\tau(s))=\displaystyle\frac{d}{dt}(c(t))\dot{\tau}(s)$ Therefore  $\gamma'(s)=(c(s),  \dot{\tau}(s) v(s)$ is also a lift of of $c$
$$\left(\int_{t_1}^{t_2}\mathcal{F}(\gamma(t))dt\right)^2\leq (t_2-t_1)\displaystyle\int_{t_0}^{t_1}\mathcal{F}^2(\gamma(t))dt$$ 
with equality if and only if  $\mathcal{F}(c(t))$ is constant. It follows that if $\gamma_{|[t_0,t_1]}$ is a minimum for  $\mathcal{E}$, then it  is also a minimum for the length. In particular, for such $\gamma$, we have 
$L(c_{|[t_0,t_1]})=L(\gamma_{|[t_0,t_1]} )$. Conversely, according to  the Holder-Schwarz inequality, if 
$\gamma_{|[t_0,t_1]}$ is a minimum ,   for the parametrization given in Assertion 1,  there exists a lift $\gamma$ of $c$ for which  this minimum  is 
\[
 L \left( \gamma_{[t_0,t_1]} \right) 
=\displaystyle\frac{2\mathcal{E}(\gamma_{|[t_0,t_1]})}{t_2-t_1}.
\] 
\end{proof}

\begin{remark}
\label{R_MinimalLength} 
Under the proof  of  Proposition \ref{P_CompareLegthEnergy}, for any $t_0\in [a,b]$, there exists a sub-interval $[t_1,t_2]$ in $[a,b]$, which contais $t_0$, 
 a $E_0$-lift $\gamma$ of $c$  and a reparametrization of $c_{| [t_1,t_2]} $ such that $L(c_{| [t_1,t_2]})=L(\gamma_{|[t_0,t_1]})=\displaystyle\frac{2\mathcal{E}(\gamma_{|[t_0,t_1]})}{t_2-t_1}$. In particular, $L(c)$ does not depend on the parametrization of $c$.
\end{remark}
On the other hand, 
%as in $\S$~\ref{NormalExtremalL}, 
we consider:\\ 

$\bullet\;\;$ on the space $C^\infty \left( [t_1,t_2], E_0 \right) $  the functional 
\begin{equation}
\label{eq_FL+ConstraintConic1}
\mathcal{E}(\gamma)
=\displaystyle\int_{t_1}^{t_2}\left( \frac{1}{2} \mathcal{F}^2(\gamma(t))\right)dt.
\end{equation}

$\bullet\;\;$ on the  the set  $C^\infty \left( [t_1,t_2], \widetilde{M} \right) $, the functional 
\begin{equation}
\label{eq_FL+ConstraintConic2}
\widehat{\mathcal{E}}(\gamma, p) 
= \displaystyle\int_{t_1}^{t_2}\left(\frac{1}{2} \mathcal{F}^2(\gamma(t))+<p(t),\dot{x}(t)-\rho_{x(t)}(v(t)) > \right) dt.
\end{equation}

if $\gamma(t)=(c(t), v(t))$  and $(\gamma(.), ,p(.))\in  C^\infty([t_1,t_2], \widetilde{M}) $.
\medskip

Then we have:

\begin{proposition}
\label{P_locMin}
Let $c$ be  locally minimizing such that $c$ has a lift $\gamma:[a,b]\to E_0$.  For any $t_0\in [a, b]$, let $[t_1,t_2]$ be a sub-interval of $[a,b]$ and an open neighbourhood $U$ of $c \left( [t_1,t_2] \right) $ which satisfy the assumptions of Definition~\ref{D_Geodesic}. There exists a  lift $\gamma: [t_1, t_2],\to E_0\setminus \{0_E\}$  of  $c \left( [t_1,t_2] \right) $ such that  $\gamma_{|[t_1,t_2]}$ is a critical point  of  $\mathcal{E}$ on $C^\infty_{E_0}([t_1,t_2], U)$.
%(resp. $\widetilde{\mathcal{E}}$ on $C^\infty([t_1,t_2],\widetilde{M})$). 
\end{proposition}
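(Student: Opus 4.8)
The plan is to recognize the energy $\mathcal{E}$ as the action functional $\mathfrak{F}_{\mathbf{L}}$ of the Lagrangian $\mathbf{L}=\tfrac12\mathcal{F}^2$ on $E_0$ and then to reduce the statement to the comparison between length- and energy-minimizers already packaged in Proposition~\ref{P_CompareLegthEnergy}. First I would note that $\mathbf{L}=\tfrac12\mathcal{F}^2$ is a regular Lagrangian on $E_0$: by (\ref{eq_hessian}) its fibre Hessian $\partial^2\mathbf{L}/\partial v^2$ equals the Hessian $g$ of $\tfrac12\mathcal{F}^2$, which is a strong Riemannian metric on the conic domain, hence strongly non-degenerate. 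Under this identification $\mathcal{E}(\gamma)=\mathfrak{F}_{\mathbf{L}}(\gamma)$ on $C^\infty_{E_0}([t_1,t_2],U)$, and by the argument of Lemma~\ref{L_DmathcalL} the functional is differentiable along horizontal variations of any admissible lift.

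Next, since $c$ is a geodesic it is not locally trivial, so (shrinking the relevant parameter interval around $t_0$ inside $[t_1,t_2]$ if necessary) the lift $\gamma$ of $c$ may be assumed to take values in $E_0\setminus\{0_E\}$, and Proposition~\ref{P_CompareLegthEnergy}, Assertion~2, furnishes a reparametrization to constant $\mathcal{F}$-speed on $[t_1,t_2]$; constancy of the speed is precisely the equality case of the Holder--Schwarz inequality used in the proof of Assertion~3. After this reparametrization the constant-speed lift $\gamma$ satisfies $\mathcal{E}(\gamma)=\frac{L(c_{|[t_1,t_2]})^2}{2(t_2-t_1)}$, using parametrization-invariance of the length (Assertion~1) and Remark~\ref{R_MinimalLength}.

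I would then compare $\gamma$ with an arbitrary competitor. For any admissible $\bar\gamma=(\bar c,\bar v)\in C^\infty_{E_0}([t_1,t_2],U)$ with $\bar c(t_i)=c(t_i)$, the Holder--Schwarz inequality gives $\mathcal{E}(\bar\gamma)\ge \frac{L(\bar\gamma)^2}{2(t_2-t_1)}$; combining this with the defining length-minimality $L(\bar c)\ge L(c_{|[t_1,t_2]})$ of the geodesic (Definition~\ref{D_Geodesic}) yields $\mathcal{E}(\bar\gamma)\ge \mathcal{E}(\gamma)$. Hence $\gamma$ is a local minimum of $\mathcal{E}$ among admissible curves in $U$ with fixed endpoints. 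Finally, a local minimum of a differentiable functional is a critical point: for every horizontal variation $\hat c$ of $c$ with fixed ends, the real function $s\mapsto \mathcal{E}(c_s)$ attains a local minimum at $s=0$, so $d_\gamma\mathcal{E}(\delta c)=0$ for the associated infinitesimal horizontal variation $\delta c$. This is exactly the critical-point condition of Definition~\ref{D_ExtremalFLE0}, whence $\gamma_{|[t_1,t_2]}$ is a critical point of $\mathcal{E}$ on $C^\infty_{E_0}([t_1,t_2],U)$.

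The main obstacle is the bookkeeping around the reparametrization: one must ensure that passing to constant $\mathcal{F}$-speed keeps the lift inside $E_0\setminus\{0_E\}$ and the base curve inside $U$ with unchanged endpoints, and that the resulting energy-minimality is tested against exactly the admissible competitors parametrized on $[t_1,t_2]$. The Holder--Schwarz equality case is what makes the length problem (parametrization-invariant, defining the geodesic) and the energy problem (parametrization-dependent, producing the critical point) coincide; everything else is the elementary passage from minimality to vanishing first variation.
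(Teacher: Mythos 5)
Your proof is correct and follows essentially the same route as the paper: both reduce the statement to the length--energy comparison of Proposition~\ref{P_CompareLegthEnergy} (equivalently Remark~\ref{R_MinimalLength}), obtaining via the H\"older--Schwarz equality case a constant-speed lift in $E_0\setminus\{0_E\}$ that minimizes $\mathcal{E}$ among admissible competitors with fixed ends, and then conclude by the elementary fact that a local minimum of the differentiable functional $\mathcal{E}$ is a critical point. The only difference is presentational: you unfold the H\"older--Schwarz estimate and the endpoint bookkeeping explicitly, whereas the paper cites Remark~\ref{R_MinimalLength} and phrases the final step as a contradiction.
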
 
 
\begin{proof} (classical)
From Remark \ref{R_MinimalLength}, if $c$ is locally minimizing, there exists  a lift $\gamma$ of $c$   such that   $\gamma$ is locally minimizing for the energy.  Now  since $c$ is not locally trivial,  if $\gamma_{|[t_1,t_2]}$ were not a critical point  $ \mathcal{E}$, for some $t_0\in [a,b[]$   and some sub-interval $[t_1,t_2]$ of $[a,b]$ such that
 $d_{ \gamma_{| [t_1,t_2]}}\mathcal{E}\not=0$ there would exist a variation  $\hat{\gamma}=(\hat{c}, \hat{v})$ on $[t_1,t_2]$ with fixed ends such that $\mathcal{E}(\gamma_s)<\mathcal{E}(\gamma)$ for $s$ small enough and $s>0$ or $s<0$,  which gives rise to a contradiction.\\
Now, from Theorem~\ref{T_RelationExtremalFbarLFixedEnds},  it is also a critical point  of $\widehat{\mathcal{E}}$ on $C^\infty([t_1,t_2],\widetilde{M})$.
\end{proof}

According to Remark~\ref{R_OtherDefNormal}, this Proposition justifies the following Definition:

\begin{definition}
\label{D_NormalGeodesic} 
 A locally minimizing curve $c\in C^\infty_{E_0} ([t_1,t_2],M)$ is called a \emph{normal geodesic}\index{normal geodesic} if, for any $t_0\in [a, b]$, there exists  a sub-interval  $[t_1,t_0]$ of $[a,b]$ and an open neighbourhood $U$  of $c \left( [t_1,t_2] \right) $ which satisfy the assumptions of Definition~\ref{D_Geodesic}, and  a section $p(.)$ of $c^!T^\prime M$ over $[t_1,t_2]$ such that 
$t \mapsto (c(t),v(t),p(t))$ 
is a critical point  of $\widehat{\mathcal{E}}$ on $C^\infty([t_1,t_2], U)$ with $p(t_2)\not=0$.
\end{definition}

\subsubsection{Characterization of Normal Geodesics}
\label{____CharacterizationOfNormalGeodesics}

On $\widetilde{M}$, to the problem of critical point of the energy $\widehat{\mathcal{E}}$ is associated the  Hamiltonian
%!o $z$ <- $x$
\begin{equation}\label{eq_ConicFinslerHamiltonian}
H(x,v,p)
=<p,\rho(x,v))>-\displaystyle\frac{1}{2}\mathcal{F}^2(x,v)
=<p,\rho(x,v)>
-\displaystyle\frac{1}{2}g_{(x,v)}(v,v)
\end{equation}

If $g_{(x,v)}$ denote the Hessian of $\mathcal{F}^2$ at $(x,v)\in E_0$,  according to the elementary properties of $g_{(x,v)}$  in each fibre $(E_0)_x$ (cf. relation~(\ref{eq_hessian})), we have 
$g_{(x,v)}(v,\;)=\displaystyle\frac{\partial \mathcal{F}^2_x}{\partial v}(v,\;)$ which is the Legendre transformation associated 
 to the Lagrangian $\mathcal{F}^2$. Moreover, we also have $\mathcal{F}^2(x,v)=g_{(x,v)}(v,v)$ (cf. Proposition~\ref{P_PropF}).  
This implies that we have 
\begin{description}
\item 
From (\ref{eq_ConicFinslerHamiltonian}), we have:
\begin{equation}
\label{eq_HamiltonianF}
H(x,v,p)=<p,\rho(x,v))>-\displaystyle\frac{1}{2}g_{(x,v)}(v,v)
\end{equation}
\item  
\begin{eqnarray}
\label{dHdu}
\displaystyle\frac{\partial H}{\partial v}(x,v,p)(\;)=p- \displaystyle\frac{1}{2}g_{(x,v)}(v,\;).
\end{eqnarray}
\item  
The manifold $\mathbb{P}$ in $\widetilde{M}$  is 
\begin{equation}
\label{eq_PConic}
\mathbb{P}=\{(x,v,p)\in \widetilde{M}\;:\;\rho_x^*p= g_{(x,v)}(v,\;)\}
\end{equation}
and so $\displaystyle\frac{\partial H}{\partial v}(x,v,p)\equiv 0$ on $\mathbb{P}$.
\end{description}
 
%Now, from Proposition \ref{P_HamiltonianVectorField},  to  $H_\mathbb{P}$ is associated a  Hamiltonian vector field on $\mathbb{P}$ which will be denoted $X_\mathbb{P}$. According to Remark \ref{R_LocEdHamiltonianXL}, we have:
%\begin{equation}\label{eq_ComponentXF}
%X_{\mathbb{P}}(z,v,p)=\left(\rho(z,v), 0,\displaystyle\frac{\partial }{\partial x}(g_{(z,v,)}(v,v))\right).\\
%\end{equation}

%Therefore, according to Proposition \ref}\label{P_locMin} and Definition \refl{D_NormalGeodesic},  if $c$ is a normal geodesic, if  $p(t)=g_{(c(t),v(t))}(v(t),\;)$, then  $(c(t), v(t), p(t))$ is an integral curves of $X_\mathcal{F}$ for any $t_0\in [a,b]$ there exists a sub-nterval $[t_1,t_2]$ of $[a,b]$ around $t_0$ such that $p(t_2)-p(t_&)\not=0$

Now, as in \cite{Arg20}, we have the following characterization of normal  geodesics:

\begin{theorem}\label{T_CharacterizationGeodesic}
Consider an $\mathbb{E}_0$-admissible curve $c:[a,b]\to M$. %if $\gamma(t)=(c(t), v(t)$ is a $E_0$-lift of $c$ and we set $p(t)=g_{\gamma(t))}(v(t),\;)$. 
 Then $c$ is a normal geodesic if and only there exists an $E_0\setminus \{0_E\}$-lift  such that if $p(t)=g_{\gamma(t))}(v(t),\;)$, then  the curve $t\mapsto (c(t), p(t))$ is an integral curve of the Hamiltonian vector field $X_{\dfrac{1}{2}\mathcal{F}^2}$\footnote{cf. Notations of Proposition~\ref{P_HamiltonianVectorField}.} .
\end{theorem}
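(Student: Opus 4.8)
The plan is to reduce the statement to the characterization of normal extremals already established in Theorem~\ref{T_RelationExtremalFbarLFixedEnds} and Corollary~\ref{C_NormalExtremal}, applied to the specific Lagrangian $\mathbf{L}=\tfrac{1}{2}\mathcal{F}^2$ on $E_0$. The key observation is that $\mathbf{L}$ is a regular Lagrangian in the sense of $\S$\ref{LagrangiansAndAssociatedIntegrabilityDomainsWithSingularConstraints}: by Definition~\ref{D_ConicHilbertFinslerMetric}, the Hessian $g_{(x,v)}$ of $\mathcal{F}^2$ is a \emph{strong} Riemannian metric on the conic fibre $(E_0)_x$, so the fibre derivative $\tfrac{\partial^2\mathbf{L}}{\partial v^2}(x,v)=\tfrac{1}{2}g_{(x,v)}(\;,\;)$ is a smooth field of strongly non-degenerate bilinear forms on $\mathbb{E}$. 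Hence the hypotheses of Lemma~\ref{L_ImFL} and Proposition~\ref{P_PDirac} are met, $\mathbb{P}=\{(x,v,p):\rho_x^*p=g_{(x,v)}(v,\;)\}$ is a split immersed submanifold of $\widetilde{M}$ carrying the weak symplectic form $\Omega_\mathbb{P}$, and the Hamiltonian vector field $X_{\frac{1}{2}\mathcal{F}^2}=X_\mathbf{L}$ is well defined on $\mathbb{P}$ by Proposition~\ref{P_HamiltonianVectorField}.

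First I would record, using the elementary properties of the Hessian collected just before the theorem, that the Legendre transformation of $\mathbf{L}=\tfrac12\mathcal{F}^2$ is exactly $\mathbb{F}\mathbf{L}(x,v)=g_{(x,v)}(v,\;)$, and that the associated Hamiltonian $H(x,v,p)=\langle p,\rho(x,v)\rangle-\tfrac12 g_{(x,v)}(v,v)$ satisfies $\tfrac{\partial H}{\partial v}\equiv 0$ on $\mathbb{P}$ by (\ref{dHdu}) and (\ref{eq_PConic}). This identifies the constraint $p(t)=g_{\gamma(t)}(v(t),\;)$ appearing in the theorem with the condition $p(t)=\mathbb{F}\mathbf{L}(\gamma(t))$, i.e.\ with membership of the curve $t\mapsto(c(t),v(t),p(t))$ in $\mathbb{P}$.

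Next I would argue the two implications using Proposition~\ref{P_locMin}. For the forward direction, if $c$ is a normal geodesic, then by Definition~\ref{D_NormalGeodesic} there is, on each small subinterval, a lift $\gamma=(c,v)$ into $E_0\setminus\{0_E\}$ and a section $p(\cdot)$ with $p(t_2)\neq0$ such that $t\mapsto(c(t),v(t),p(t))$ is a critical point of $\widehat{\mathcal{E}}$; this is precisely Assertion (1) of Theorem~\ref{T_RelationExtremalFbarLFixedEnds}, so by the equivalence with Assertion (4) the curve is an integral curve of the implicit Hamiltonian system $(\widetilde{M},D_{\widetilde{\Omega}},H_\mathbf{L})$, which by Proposition~\ref{P_HamiltonianVectorField} and Remark~\ref{eq_IntegralCurveImplicit}(6) means $t\mapsto(c(t),p(t))$ is an integral curve of $X_{\frac12\mathcal{F}^2}$ on $\mathbb{P}$, with $p(t)=g_{\gamma(t)}(v(t),\;)$. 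Conversely, if such a lift and integral curve of $X_{\frac12\mathcal{F}^2}$ with $p(t_2)\neq0$ exist, then by Remark~\ref{R_CaseNu=1} the pair $(\gamma,p)$ is a critical point of $\mathfrak{F}_\mathbf{L}$ with fixed ends for $\nu=1$; combined with Proposition~\ref{P_CompareLegthEnergy}(3) relating energy minimizers to length minimizers and Corollary~\ref{C_NormalExtremal}, $c$ is a normal extremal of the energy functional, hence a normal geodesic in the sense of Definition~\ref{D_NormalGeodesic}.

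The main obstacle I anticipate is bookkeeping the passage between the \emph{energy} functional $\widehat{\mathcal{E}}$ (and $\mathcal{E}$) used to define normal geodesics and the \emph{action} functionals $\widehat{\mathfrak{F}}_\mathbf{L}$ and $\mathfrak{F}_\mathbf{L}$ in Theorem~\ref{T_RelationExtremalFbarLFixedEnds}: these coincide only after fixing the arc-length-type reparametrization of Proposition~\ref{P_CompareLegthEnergy}(2), along which $\mathcal{F}(\gamma(t))$ is constant, so one must verify that the critical-point condition is invariant under, or compatible with, this reparametrization and that the nonvanishing condition $p(t_2)\neq0$ survives. A secondary point requiring care is that $\gamma$ takes values in $E_0\setminus\{0_E\}$ where $\mathcal{F}^2$ is smooth, so that the Legendre transform $g_{\gamma(t)}(v(t),\;)$ and the whole singular Dirac machinery of $\S$\ref{__DiracStructuresOnSingularConstraintsInTheBanachSetting} genuinely apply; the homogeneity properties in Proposition~\ref{P_PropF} guarantee this is consistent with the conic structure of $E_0$.
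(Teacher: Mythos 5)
Your forward direction (normal geodesic $\Rightarrow$ integral curve of $X_{\frac{1}{2}\mathcal{F}^2}$) is sound and is essentially the paper's argument: regularity of $\mathbf{L}=\frac{1}{2}\mathcal{F}^2$ follows from strong non-degeneracy of the Hessian $g$, the critical-point property built into Definition~\ref{D_NormalGeodesic} feeds into the equivalence $(1)\Leftrightarrow(4)$ of Theorem~\ref{T_RelationExtremalFbarLFixedEnds}, and Proposition~\ref{P_HamiltonianVectorField} converts this into an integral curve of $X_\mathbf{L}$ on $\mathbb{P}$; constancy of $H$ along the curve keeps the lift inside $E_0\setminus\{0_E\}$.

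The converse direction, however, has a genuine gap. Definition~\ref{D_NormalGeodesic} requires a normal geodesic to be a \emph{locally minimizing} curve in the sense of Definition~\ref{D_Geodesic}, not merely a critical point of $\widehat{\mathcal{E}}$ with $p(t_2)\neq 0$. Your argument produces, via Remark~\ref{R_CaseNu=1} and Corollary~\ref{C_NormalExtremal}, only that $c$ is a normal \emph{extremal}, i.e.\ a critical point of the constrained energy. Critical points need not minimize, and Proposition~\ref{P_CompareLegthEnergy}, Assertion 3, which you invoke to bridge this, compares length \emph{minimizers} with energy \emph{minimizers}; it says nothing about critical points, so it cannot upgrade extremality to minimality. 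This missing step is precisely why the paper calls the sufficient condition ``very much more technical'': following Arguill\`ere's Theorem~7, it constructs a calibration, namely a closed $1$-form $\theta$ on a neighbourhood of $c([t_0,t_0+\varepsilon])$, built from the flow of the Hamiltonian field $h_W$ through a transversal hypersurface $U_0\subset\ker p_0$, satisfying $\theta_{\gamma(t)}(\dot{\gamma}(t))=\delta\sqrt{H(\gamma(t),p(t))}$ along the curve and $|\theta_x(\rho_x(u))|\leq\delta\sqrt{g(u,u)}$ for nearby $(x,u)$, together with the closedness statement $\mathfrak{f}^*\theta=\delta^2 dt$ of Lemma~\ref{L_thetaClosed}. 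The calibration inequality forces every competing horizontal curve with the same endpoints to be at least as long, and this is what establishes local minimality of $c$. Without this construction (or a substitute argument), you have only shown ``integral curve of $X_{\frac{1}{2}\mathcal{F}^2}$ with $p(t_2)\neq 0$ implies normal extremal,'' which is strictly weaker than the theorem's conclusion.
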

The proof of Theorem~\ref{T_CharacterizationGeodesic} will proceed in two steps:
\begin{enumerate}
\item[$\bullet$]
The first one for the necessary condition, which is  very easy. 
\item[$\bullet$]
The second one  for the sufficient condition is very much more technical and which  follows, step by step, the proof of property "local minimizing geodesic" in the proof of Theorem 7 of \cite{Arg20}.
\end{enumerate}

\begin{proof}${}$\\
1. \emph{Necessary condition}.\\
Fix a  geodesic $c:[a,b]\to M$. The necessary condition is a local problem; so we can consider  the local upper context. From (\ref{eq_HamiltonianF}) and (\ref{eq_PConic}),  the restriction of  $H_\mathbf{L}$ to $\mathbb{P}$ is 
\begin{equation}
\label{eq_HrestrictP} 
H_\mathbf{L}(x,v,p)=\displaystyle\frac{1}{2}g_{(x,v)}(v,v)
\end{equation}
But, since $c$ is a  geodesic, from Proposition~\ref{P_locMin}, there exists an $E_0$-lift \\
$\gamma:= \left( c(\;), v(\;) \right) $ of $c$ which is a critical point of $\mathcal{E}$.\\
On the other hand, from Theorem~\ref{T_RelationExtremalFbarLFixedEnds},  (4) and Remark~\ref{eq_IntegralCurveImplicit}, (6),  the relation (\ref{eq_HrestrictP}) implies that  
\begin{equation}
\label{eq_gvvconstant}
g_{\gamma(t)}(v(t),v(t))=\text{Cte} 
\end{equation}
for all $t\in [t_1,t_2]$.\\
Since $c$ is not locally trivial, this implies that $g_{\gamma(t)}(v(t),v(t))\not=0$ for all $t\in[t_1,t_2]$  and then  $p(t)=g_{\gamma(t)}(v(t),\;)\not=0$ for all $t\in [t_1,t_2]$, in particular, for $t_2$.  As,  for every  $t\in [a,b]$, $g_{\gamma(t)}(\;,\;)$ is a non degenerate bilinear form on $E_{c(t)}$, this implies $v(t)\not=0$, for all $t\in [a,b]$. Thus, From Corollary~\ref{C_NormalExtremal}, $c$ is a normal geodesic whose $E_0$-lift is contained in $E_0\setminus \{0_E\}$.\\

2. \emph{Sufficient condition}.\\
According to the proof of the corresponding result of Theorem~7 in \cite{Arg20},  we only have to build a closed  $1$-form $\theta$ on a neighbourhood  in restriction to some small interval  $[t_0, t_0+\varepsilon]$\footnote{If $t_0=b$, the same type of  arguments will be true for the interval $[b-\varepsilon, b]$.} (called a \emph{calibration} in \cite{Arg20}) with the following properties:

there exists $\delta>0$  such that we have
  
 \noindent  $\bullet\;$ for $t$ small enough:
$\;\;\;\theta_{\gamma(t)}(\dot{\gamma}(t))=\delta \sqrt{{H}(\gamma(t),p(t))}\leq \delta \sqrt{g_{\gamma(t)}(v(t),v(t))}$.\\
$\bullet\;$ 
for any $(x,v)\in E$ close to $\gamma(t_0)$:
$\;\;\;|\theta_x(\rho_x(v))|\leq \delta\sqrt{ g_{\gamma(t_0)}(v,v)}$. \\

\emph{Then the result will follow by using the same (classical) arguments as the ones on calibration used in \cite{Arg20}. Thus we only have to build $\theta$ in our context and verify at each step that all the arguments used in such a proof in \cite{Arg20} are also valid in this context}. \\
For such a proof we need to precise this local context.

\begin{notations}  
We will apply the context of Remark \ref{R_LocEdHamiltonianXLP} to the context of  "sub-Hilbert-Finsler metric".\\

Consider a  fixed $\mathbb{E}_0$-admissible curve $c:[a,b]\to M$ which has an $E_0\setminus \{0_E\}$ lift  $\gamma(t)=(c(t), v(t))$  and set $p(t)=g_{\gamma(t)}(v(t),\;)$. 
We also fix some $t_0\in [a,b]$,  a sub-interval $[t_0,t_1]$ with $\varepsilon=t_1-t_0$ in $[a,b]$  and some open set $U$ in $M$ of $c \left( [t_0,t_1] \right) $. Finally, let $W$ be an  open neighbourhood  of 
$ \left( c \left( [t_1, t_2] \right) , p \left( [t_1,t_2] \right) \right) $ such that the projection of $W$ on $M$ contains $U$.

As $g_{(x,v)}(v,\;)$ is the Legendre transformation $\mathbb{L}\mathcal{F}^2$ of $\mathcal{F}^2$  and $g_{(x,v)}(\;,\;)$ is a strong Riemannian metric, by assumption,  on the manifold $E_0$,  the Lagrangian $\mathcal{F}^2$ is regular.
 
We denote by $g_{(x,v)}^\flat$ the associated canonical isomorphism from $E_{(z,v)}$ to  its dual $E_{(x,v)}^\prime$\footnote{Recall that $g_{(x,v)}$ is a strong non-degenerate bilinear form on each fibre $E_{(x,v)}$.}. 
 By the way, from Remark~\ref{R_LocEdHamiltonianXLP}, we can choose $[t_1,t_2]$, $U$ and $W$, small enough, such that
\begin{equation}
\label{eq_VWFinser}
v_W(x,p)
:=(g_{(x,v)}^\flat)^{-1} \left( \rho_x^*(p) \right) 
\end{equation}
On  $W$, we have  
$\rho_x^*(p)=g_{(x, v_W(x,p))}(v_W(x,p),\;)$  and from (\ref{eq_HrestrictP}), we obtain 
(cf. Example~\ref{Ex_SubRiemaniannToBeContinued},  relation (\ref{eq_HamitonianSubRiemannian}))
\begin{equation}
\label{eq_hW}
h_W (x,p)
=\displaystyle\frac{1}{2}g_{(x, v_W(x,p))} \left( v_W(x,p),v_W(x,p) \right)
=
\displaystyle\frac{1}{2}\mathcal{F}^2
 \left( x, v_W(x,p) \right) \displaystyle\frac{1}{2}
<p, \rho_x \left( v_W(x,p) \right) >\\
\end{equation}
In fact, $h_W$ is a  Hamiltonian on $W\subset T^\prime M$  and $X_W$ is a vector field  on $W$. Moreover, from relations (\ref{eq_VWFinser}) and (\ref{eq_hW}), $h_W$ is homogeneous of degree 2 in the second variable $p$.\\

Now assume that   $ \left( c(t),p(t) \right) $ is an integral curve of  the Hamiltonian vector field $X_{\frac{1}{2}\mathcal{F}^2}$ where  $p(t)=g_{\gamma(t))}(v(t),\;)$.  Since $v(t)\not=0$ on $[a,b]$, according to Proposition~\ref{P_CompareLegthEnergy}, we have $\mathcal{F}(c(t),v(t))=1$ and so, without loss of generality, we may assume that $[a,b]=[0,1]$.\\

For simplicity, the associated  smooth map $v_W(\;)$  is  denoted $v(\;)$,  the associated  hamiltonian $h_W$ is denoted $h$ and its Hamiltonian vector field is denoted $X_h$.\\
Moreover, without loss of generality, we can identify $U$ with an open set of $\mathbb{M}$ and $W$ with an open set of $T^\prime U\equiv U\times \mathbb{M}^\prime$ which can be chosen of  type $U\times U'$ where $U'$ is an open set of $\mathbb{M}
^\prime $.  Thus, the  Hamiltonian vector field $X_h$ is a vector field on $U\times U'$. \\
 Moreover, we can also assume that $x_0=c(t_0)=0\in U$ and we set $v \left( \gamma(t_0) \right) =(0, p_0) \in U\times U^\prime$.
\end{notations}

{\bf Construction of $\Theta$}${}$\\
According to the previous notations, after restricting $U$ if necessary, we can assume that $X_h$ has a  flow defined on $U\times U'$ which takes the form 
 
 $(t,{x}, p)\mapsto \left( \mathsf{Fl}_1(t,{x} ,p),\mathsf{Fl}_2(t,{x} ,p) \right).$
 
\noindent Note that since $\gamma([0,1]) \subset E_0\setminus \{0_E\}$,  then $v(t)\not=0$, for all $t\in [0,1]$ and so $p_0=g_{\gamma(t_0)} \left( v(t_0), v(t_0) \right) >0$, which implies that $p_0\not=0$. Since  the curve $(c,p)$ is an integral curve of $X_h$ through  $(0,p_0)$, we have $\mathsf{Fl}_1(t,0,p_0)= (c(t), p(t))$ for $t$ small enough and $p(t_0)=p_0$. In this context, let $\mathbb{K}$ be the kernel of $p_0$ in an hyperplane in $\mathbb{M}$ and set $U_0=\mathbb{K}\cap U$.\\ 

As in  the proof of Theorem 7 in \cite {Arg20}, for $\varepsilon >0$ small enough we can define a map $\mathfrak{f}: ]t_0-\varepsilon, t_0+\varepsilon[\times U_0\to U$ after shrinking $U_0$ if necessary, by
 \begin{eqnarray}
 \label{phi}
\mathfrak{f}(t,x)=\mathsf{Fl}_1(t,x,\nu(x)p_0)
\end{eqnarray}
where $\nu(x)= \sqrt{\dfrac{h(0,p_0)}{h(x,p_0)}}$.  But we have seen that $h$ is $2$-homogeneous in $p$, so we have
\begin{eqnarray}
\label{2H}
h(x,\nu(x)p_0)=h(0,p_0),\;\; \forall x\in U_0
\end{eqnarray}

By same arguments as in the proof of Lemma~5 in \cite{Arg20}, after reducing $\varepsilon$, $U_0$ and $U$ if necessary,   we can show that $\mathfrak{f}$ is a diffeomorphism. Note that $t\mapsto \mathfrak{f}(t,x)$ is the projection on $U$ of the integral curve of $X_h$ with initial conditions $(y,\nu(x)p_0)$ at time $t=t_0$. Now for each $x \in U$ we set $(t(x), z({x}))=\mathfrak{f}^{-1}({x})$. Of course the map ${x}\mapsto (t({x}), z({x}))$ is smooth. Now consider the $1$-form $\theta$ on $U$ defined by
$$\theta({x})=\mathsf{Fl}_2(t({x}),z({x}) ,\nu(z({x}))p_0)$$

By construction, we have $\theta(\mathfrak{f}(t,z)):={p}_z(t)$ with initial condition $p_z(t_0)=\nu(z)p_0$ and so, with these notations, we have  $\theta(x)=p_{z(x)}(t(x))$. We set $c_z(t)=\mathfrak{f}(t,z)$ and  so $(c_z,p_z)$ is an integral curve of $X_h$ with initial conditions $c_z(t_0)=z$ and $p_z(t_0)=\nu(z) p_0$.\\

As in the sub-Riemannian context (cf. \cite{Arg20}), we also have:\\

$|\theta({x}) \left( \rho_{{x}}({u}) \right)|
=|<p_{z(x)}(t(x)),\rho_x (u)>|
=|g_{({x},{v}({x},{p}_{z(x)}(t(x))}(v(x,p))_{z(x)}(t({x}))),{u})|$\\
${}\hfill
\leq \sqrt{g_{({x},{v}({x},{p}_{z(x)}(t({x}))}({v}({x},{p}_{z(x)}(t({x})),{v}({x},{p}_{z(x)}(t({x}))}\sqrt{g_{({x},{v}({x},{p}_{z(x)}(t({x}))}({u},{u})}$\\
${}\;\; \leq \sqrt{2 h({x},{p}_{z(x)}(t({x})))}\sqrt{g_{({x},{v}({x},{p}_{z(x)}(t({x}))}(u,u)}.\hfill{}$\\
since   $g_{ \left( x,v(x,p_{z(x)}(t({x}))) \right) }
 \left( v(x,p_{z(x)}(t({x}))),v(x,p_{z(x)}(t({x}) \right)
= 
2 h \left( x,p_{z(x)}(t(x)) \right) $.\\

Now from (\ref{2H}), we have
\[
h(x,\theta(x))
=h \left( x,p_{z(x)}(t(x)) \right) =
h \left( z(x),\nu(z)p_0 \right) 
=h \left( 0,p_0 \right)
\]
Therefore, for $\delta=\sqrt{2h(0,p_0)}$ from the previous inequality we obtain
\[
|\theta({x})(\rho_{{x}}({u})|\leq \delta \sqrt{g_{({x},{u}({x},{p}_{z(x)}(t({x}))}(u,u)}.
\]

\noindent It remains to show that $\theta$ is closed.  Since $\mathfrak{f}$ is a diffeomorphism,  for this purpose, it is sufficient to prove that

\begin{lemma}
\label{L_thetaClosed}
$\mathfrak{f}^*\theta=\delta ^2dt$.
\end{lemma}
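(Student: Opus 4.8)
The plan is to identify $\theta$ with the pullback, via $\mathfrak{f}^{-1}$, of the second component of the flow-out map
\[
\Phi:\;]t_0-\varepsilon,t_0+\varepsilon[\times U_0\to T^\prime M,\qquad
\Phi(t,z)=\bigl(c_z(t),p_z(t)\bigr)=\mathsf{Fl}_t\bigl(z,\nu(z)p_0\bigr),
\]
and then to reduce the whole computation to the source. Since $p_{T^\prime M}\circ\Phi=\mathfrak{f}$ and $\theta\bigl(\mathfrak{f}(t,z)\bigr)=p_z(t)$, the $1$-form $\theta$, viewed as a section $U\to T^\prime M$, satisfies $\Phi=\theta\circ\mathfrak{f}$. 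Writing $\vartheta$ for the tautological (Liouville) $1$-form on $T^\prime M$, with $\Omega=-d\vartheta$ and $\vartheta\cong\mathsf{p}\,d\mathsf{x}$ in local coordinates, the tautological identity $\theta^{*}\vartheta=\theta$ valid for any $1$-form gives
\[
\mathfrak{f}^{*}\theta=\mathfrak{f}^{*}\theta^{*}\vartheta=(\theta\circ\mathfrak{f})^{*}\vartheta=\Phi^{*}\vartheta,
\]
so it suffices to prove $\Phi^{*}\vartheta=\delta^{2}\,dt$. I would write $\Phi^{*}\vartheta=A\,dt+B$, where $B$ gathers only the $dz$-terms, and treat the two components separately.

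For the $dt$-component, the first Hamilton equation $\dot c_z=\partial h/\partial\mathsf{p}$ together with the $2$-homogeneity of $h$ in $\mathsf{p}$ recorded in the Notations (Euler's relation $\langle\mathsf{p},\partial h/\partial\mathsf{p}\rangle=2h$) yields
\[
A=\Phi^{*}\vartheta(\partial_t)=\langle p_z(t),\dot c_z(t)\rangle=2\,h\bigl(c_z(t),p_z(t)\bigr).
\]
By conservation of $h$ along the integral curves of $X_h$ (Remark~\ref{eq_IntegralCurveImplicit}, (5)) combined with (\ref{2H}), one has $h(c_z(t),p_z(t))=h(z,\nu(z)p_0)=h(0,p_0)$, hence $A\equiv 2h(0,p_0)=\delta^{2}$.

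The heart of the argument is then showing $B\equiv 0$, for which the key fact is that $\Phi$ has isotropic image, $\Phi^{*}\Omega=0$. For the mixed term, $\Omega(X_h,\,T\Phi\,\partial_{z})=-dh(T\Phi\,\partial_{z})=-\partial_{z}(h\circ\Phi)=0$ since $h\circ\Phi$ is constant; for the $\partial_{z_i},\partial_{z_j}$ term, flow-invariance of $\Omega$ reduces it to its value at $t=t_0$, i.e.\ to $\Sigma_0^{*}\Omega$ with $\Sigma_0(z)=(z,\nu(z)p_0)$, and $\Sigma_0^{*}\vartheta=\nu(z)\langle p_0,dz\rangle=0$ because $U_0\subset\ker p_0$, so $\Sigma_0^{*}\Omega=-d(\Sigma_0^{*}\vartheta)=0$. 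Consequently $d(\Phi^{*}\vartheta)=\Phi^{*}(d\vartheta)=-\Phi^{*}\Omega=0$, so $\Phi^{*}\vartheta$ is closed. As $A=\delta^{2}$ is constant, the $dt\wedge dz$-part of the relation $d(\Phi^{*}\vartheta)=0$ forces $\partial_t B=0$, i.e.\ $B$ is independent of $t$; evaluating at $t=t_0$, where $\Phi(t_0,z)=\Sigma_0(z)$, gives $B|_{t=t_0}=\Sigma_0^{*}\vartheta=0$, whence $B\equiv 0$ and $\Phi^{*}\vartheta=\delta^{2}\,dt$.

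The main obstacle I anticipate is not the formal computation but the functional-analytic bookkeeping in the Banach setting: justifying the tautological identity $\theta^{*}\vartheta=\theta$ and the flow-invariance of the \emph{weak} symplectic form $\Omega$, verifying the smooth dependence of the flow $\mathsf{Fl}$ and of the Jacobi fields $T\Phi\,\partial_{z}$ on the parameter $z$ ranging over the closed subspace $\mathbb{K}=\ker p_0$, and applying Euler's homogeneity relation for $h$ in infinite dimensions. All of these are available here because $U$ is identified with an open set of $\mathbb{M}$, $W$ with an open set of $T^\prime U\cong U\times\mathbb{M}^\prime$, $X_h$ admits a genuine flow on $W$ by the standing Banach hypothesis, and $\mathfrak{f}$ is a diffeomorphism by construction.
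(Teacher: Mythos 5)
Your proof is correct, but note that the paper does not actually prove this lemma: it defers to Lemma~6 of \cite{Arg20}, asserting that the proof is a step-by-step adaptation of Arguill\`ere's argument and leaving that adaptation to the reader. The intended route is a direct computation in the coordinates $(t,z)$: writing $(\mathfrak{f}^*\theta)(\partial_t)=\langle p_z(t),\dot c_z(t)\rangle$ and $(\mathfrak{f}^*\theta)(\zeta)=\langle p_z(t),\partial_z c_z(t)\zeta\rangle$ for $\zeta\in\mathbb{K}$, one uses Hamilton's equations $\dot c_z=\partial h/\partial\mathsf{p}$, $\dot p_z=-\partial h/\partial\mathsf{x}$ and the Euler identities for the $2$-homogeneous $h$ to get $\langle p_z,\dot c_z\rangle=2h\equiv\delta^2$ and $\partial_t\langle p_z(t),\partial_z c_z(t)\zeta\rangle=\partial_z\bigl(h(c_z(t),p_z(t))\bigr)\zeta=0$, then evaluates at $t=t_0$ where $\langle\nu(z)p_0,\zeta\rangle=0$ because $\zeta\in\ker p_0$. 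Your argument is the invariant packaging of exactly this computation: the tautological identity $\theta^*\vartheta=\theta$ converts $\mathfrak{f}^*\theta$ into $\Phi^*\vartheta$; the Euler relation together with conservation of $h$ along the flow and (\ref{2H}) gives the $dt$-component $2h(0,p_0)=\delta^2$; and the $dz$-component is killed by $\Phi^*\Omega=0$ (Hamilton's equation plus constancy of $h\circ\Phi$ for the mixed terms, flow-invariance of $\Omega$ together with $\Sigma_0^*\vartheta=0$ for the tangential terms) combined with the initial condition $B|_{t=t_0}=\Sigma_0^*\vartheta=0$. Each version has its advantages: yours isolates the structural inputs ($2$-homogeneity, conservation of $h$, $L_{X_h}\Omega=0$, isotropy of the initial section $\Sigma_0$) and yields $d(\mathfrak{f}^*\theta)=0$ — hence the closedness of $\theta$, which is what the paper needs right after the lemma — as a byproduct before any component is computed; the computational version avoids invoking Cartan calculus and flow-invariance of the weak symplectic form in the Banach setting, although these are unproblematic here precisely because, as you observe, $X_h$ is arranged to have a genuine smooth flow on $U\times U'$, so all the functional-analytic caveats you list are covered by the standing hypotheses.
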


This Lemma is precisely Lemma 6 in \cite{Arg20} and its proof is the same type of adaptation step by step of Arguill\`ere's Lemma as in the construction of  calibration $\theta$. Therefore this proof is left to interested readers.

It follows that $\theta$ is closed  and  so the proof Theorem~\ref{T_CharacterizationGeodesic} is completed.
\end{proof}

%\printindex
  

\begin{thebibliography}{999999999999}

\bibitem[AbMa87]{AbMa87}
 R. Abraham,  J E. Marsden 
\textit{Foundations of Mechanics} 
Addison-Wesley 

\bibitem[ABB19]{ABB19}
A. A. Agrachev, D. Barilari, U. Boscain, 
\textit{A comprehensive Inroduction to Sub-Riemannian Geometry}. 
Cambridge Studies in Advanced Mathematics, 181, 2019.

\bibitem[AgSa04]{AgSa04}
A. A. Agrachev, Y.  L. Sachkov,  
\textit{Control Theory from the Geometric Viewpoint}. 
Encyclopaedia of Mathematical Sciences, Volume 87 (2004).

\bibitem[AOP01]{AOP01}
M. Alcheik, P. Orro, F. Pelletier,
\textit{Singularit\'e de l'application ext\'emit\'e en g\'eom\'etrie sous riemannienne}. Groupe de travail~: Singularit\'e et g\'eom\'etrie sous Riemannienne,  Chamb\'ery, octobre 1997, Travaux en cours Hermann (2001).

\bibitem[Arg20]{Arg20}  S. Arguill\`ere, 
\textit{Sub-Riemannian Geometry and Geodesics in Banach Manifolds}.  
J. Geom. Anal. 30 (2020) 2897--2938. 

\bibitem[BlRa04]{BlRa04}
G. Blankenstein, T.S. Ratiu,
\textit{Geometry of Dirac structures}. 
Summer School on Poisson Geometry, Trieste (2005).

\bibitem[Blo03]{Blo03}
A. M. Bloch, 
\textit{Nonholonomic Mechanics and Control}. 
Volume 24 of Interdisciplinary Applied Mathematics. Springer-Verlag, New York. With the collaboration of J. Baillieul.

\bibitem[BGJP19]{BGJP19}
D. Belti\c t\u a, T. Goli\'nski, G. Jakimowicz, F. Pelletier, 
\textit{Banach-Lie groupoids and generalized inversion}.
J. Funct. Anal. 276 (2019), no. 5, 1528--1574.

\bibitem[BKMM96]{BKMM96}
A. M. Bloch, P. S. Krishnaprasad, J. E. Marsden, R. Murray,  
\textit{Nonholonomic mechanical systems with symmetry}. 
Arch. Rational Mech. Anal. (1996) 136, 21--99.

\bibitem[Bur13]{Bur13}
H. Bursztin, 
\textit{A brief introduction to Dirac manifolds}. 
In A. Cardona, I. Contreras and A. Reyes-Lega (Eds.), Geometric and Topological Methods for Quantum Field Theory: Proceedings of the 2009 Villa de Leyva Summer School, Cambridge: Cambridge University Press, 4--38.

\bibitem[CaPe23]{CaPe23}
P. Cabau, F. Pelletier, 
\textit{Direct and Projective Limits of Banach Structures}.
With the assistance and partial collaboration of Daniel Belti\c{t}\u{a}. 1st edition,
Chapman \& Hall/CRC, Monographs and Research Notes in Mathematics, 2023.

\bibitem[CaPe19]{CaPe19}
P. Cabau, F. Pelletier,
\textit{Integrability on direct limits of Banach manifolds}.
Ann. Fac. Sci. Toulouse Math. (6) 28 (2019), no. 5, 909--956.

\bibitem[CMR04]{CMR04}
H.Cendra, J. Marsden, T S. Ratiu,
  \textit{Cocycles, Compatibility, and Poisson Brackets for Complex Fluids}.  
  arXiv:2207.11601v1 [math.DG] (2022).
Advances in Multifield Theories for Continua with Substructure(2004).

\bibitem[CDK87]{CDK87}
L. O. Chua, Ch. A. Desoer, E. S. Kuh, 
\textit{Linear and Nonlinear Circuits}. 
McGraw-Hill series in electrical engineering. Circuits and system, 1987.

\bibitem[Cou90]{Cou90}
T. Courant, 
\textit{Dirac manifolds}. 
Trans. Amer. Math. Soc. 319 (1990), 631--661.

\bibitem[CoWe88]{CoWe88}
T. Courant, A. Weinstein, 
\textit{Beyond Poisson structures}. 
S\'eminaire sud-rhodanien de g\'eom\'etrie
VIII. Travaux en Cours 27, Hermann, Paris (1988) 39--49.

\bibitem[CrFe11]{CrFe11}
M. Crainic, R.L. Fernandes, \textit{Lectures on integrability of Lie brackets}.
 In: T. Ratiu, A. Weinstein, N. T. Zung, \textit{Lectures on Poisson geometry}, Geom. Topol. Monogr., 17, Geom. Topol. Publ., Coventry, 2011, pp. 1--107.

\bibitem[Dir50]{Dir50}
P. A. M. Dirac,
 \textit{Generalized Hamiltonian dynamics}. 
  Canadian J. Math. 2, 129--148.

\bibitem[DGV16]{DGV16}
C. T. J. Dodson, G .N. Galanis, E. Vassiliou,
\textit{Geometry in a Fr\'{e}chet context. A projective limit approach}.
London Mathematical Society, Lecture Note Series 428. Cambridge University Press, 2016.

\bibitem[Dor87]{Dor87}
I. Dorfman, 
\textit{Dirac structures of integrable evolution equations}. 
Phys. Lett. A 125 (5) (1987) 240--246. 

\bibitem[DuNo84]{DuNo84}
B. Dubrovin,S. Novikov, 
\textit{On Poisson brackets of the hydrodynamic type}. 
Doklady Akademii Nauk SSSR 279(2), 294--297 (1984)

\bibitem[Fad80]{Fad80}
L.D Fadeev, L.D, 
\textit{Hamiltonian interpretation of inverse scattering method}
 R.K. Bullough, P.J. Caudrey (eds.) Solitons. Springer-Verlag, Berlin Heidelberg New York (1980).
 
\bibitem[FrKr88]{FrKr88}
A. Fr\"olicher and A. Kriegl,
\textit{Linear spaces and differentiation theory}.
J. Wiley, Chichester, 1988. Pure and Applied Mathematics.

\bibitem[Glo05]{Glo05}
H. Gl\"{o}ckner,
\textit{Fundamentals of direct limit Lie theory}.
Compos. Math. 141 (2005), no. 6, 1551--1577.

\bibitem[GoPe21]{GoPe21}
T. Goli\'nski, F. Pelletier
\textit{Regulated curves on a Banach manifold and singularities of endpoint map. I. Banach manifold structure}
arXiv:2112.14690 and  will appear in Differential Geometry and its Applications

\bibitem[GoPe25]{GoPe25}
T. Goli\'nski, F. Pelletier
\textit{Regulated curves on a Banach manifold and singularities of endpoint map. II.  Endpoint map singularities and dynamical systems}.
Preprint in preparation (2025).

\bibitem[Gra94]{Gra94}
J. Grabowski, 
\textit{A Poisson-Lie structure on the diffeomorphism group of a circle}.
Lett. Math. Phys. 32(4), 307--313 (1994).

\bibitem[GKMS18]{GKMS18}
J. Grabowski, M.  Ku\'s, G Marmo, T. Shulman, 
\textit{Geometry of quantum dynamics in infinite dimension}. 
J. Phys. A: Math. Gen. 51(16), 165301 (2018). 

\bibitem[GMV15]{GMV15} 
E. Grong, I. Markina, A. Vasilev, 
\textit{Sub-riemannian geometry on infinite-dimensional manifolds}.  
The Journal of Geometric Analysis, 25(4):2474--2515.

\bibitem[GrXu12]{GrXu12}
M. Grutzmann, X. Xu, 
\textit{Cohomology for almost Lie algebroids}.  arxiv.org/abs/1206.5466.

\bibitem[JaZw23]{JaZw23}
B. Jacob, H. Zwart,  
\textit{Infinite-dimensional linear port-Hamiltonian systems on a one-dimensional spatial domain: An Introduction}. 
Mathematics, Engineering, Physics (2023).

\bibitem[JaSa11]{JaSa11}
 M. A. Javaloyes and M. Sanchez, 
 \textit{On the definition and examples of Finsler metrics}.
 arXiv. 1111.5066v1 (2011). 

\bibitem[JoRa12]{JoRa12}
M. Jotz, T. S. Ratiu, 
\textit{Dirac structures, nonholonomic systems and reduction}. 
Rep. Math. Phys, 69(1) (2012) 5--56.

\bibitem[KMM20]{KMM20}
B. Khesin, G. Misiolek, K. Modin, 
\textit{Geometric Hydrodynamics of Compressible Fluids}. 
Physics, Mathematics (2020).

\bibitem[KrMi97]{KrMi97}
A. Kriegl, P.W. Michor,
\textit{The convenient setting of global analysis}.
Mathematical Surveys and Monographs, 53.
American Mathematical Society, Providence, RI, 1997.

\bibitem[Lan95]{Lan95}
S. Lang, \textit{Differential and Riemannianm Manifolds}.
Graduate Texts in Mathematics, 160. Springer, New York, 1995.

\bibitem[L-Don21]{L-Don21}
E. Le Donne. 
\textit{Lecture notes on sub-Riemannian geometry}.preprint (2021) 
https://cvgmt.sns.it/media/doc/paper/5339/sub-Riem-notes.pdf 

\bibitem[LeOh11]{LeOh11}
M. Leok, T. Ohsawa, 
\textit{Variational and Geometric Structures of Discrete Dirac Mechanics}. 
Foundations of Computational Mathematics volume 11 (2011) 529--562.

\bibitem[Marc16]{Marc16}
I. Marcut, 
\textit{An introduction to Dirac Structures}. 
Preprint, 2016.

\bibitem[Mars72]{Mars72}
J.E. Marsden,
\textit{Darboux's theorem fails for weak symplectic forms}.
Proc. Amer. Math. Soc. 32 (1972), 590--592.

\bibitem[MaRa99]{MaRa99}
J.E. Marsden, T.S. Ratiu
\textit{Introduction to Mechanics and Symmetry}
Springer (1999)

\bibitem[Mei17]{Mei17}
E. Meinrenken,
\textit{Lie groupoids and Lie algebroids}.
Lecture note fall 2017.

\bibitem[Mon02]{Mon02}
R. Montgomery,
\textit{A tour of subriemannian geometries, their geodesics and applications}.
volume 91 of Mathematical Surveys and Monographs. American Mathematical
Society, Providence, RI, 2002.

\bibitem[MuPr08]{MuPr08}
V. Mu\~noz, F. Presas, 
\textit{Geometric structures on loop and path spaces}. 
Published 23 January 2008 Mathematics Proceedings - Mathematical Sciences.

\bibitem[O-BTCOM13]{O-BTCOM13}
S. Ober-B\"obaum, M. Tao , M.  Cheng,  H. Owhadi, J. E. Marsden,
\textit{Variational integrators for electric circuits}. 
Journal of Computational Physics, Vol. 242, pp. 498--530.

\bibitem[Pel18]{Pel18}
F. Pelletier, 
\textit{On a Darboux theorem for symplectic forms on direct limits of ascending sequences of Banach manifolds}.
 Int. J. Geom. Methods Mod. Phys.  15 (2018), no. 12, 1850206.

\bibitem[Pel19]{Pel19}
F. Pelletier,
\textit{Conic Sub-Hilbert-Finsler Structure on a Banach Manifold}.
 Geometric Methods in Physics XXXVI,
Workshop and Summer School, Bialowieza, Poland, 2017.

\bibitem[Pel21]{Pel21}
F. Pelletier,
\textit{Projective limit of a sequence of compatible weak symplectic forms on a sequence of Banach bundles and Darboux Theorem}.

\bibitem[PeCa24]{PeCa24}
F. Pelletier, P. Cabau, 
\textit{Partial Dirac structures in infinite dimension}. 
arXiv:2409.13497v1 [math.DG]. 

\bibitem[PiTa01]{PiTa01}
P. Piccione, D.V. Tausk,
\textit{ Variational aspects of the geodesics problem in sub-Riemannian geometry}.
Journal of Geometry and Physics (2001).
 
\bibitem[Rei22]{Rei22}
T. Reis, 
\textit{Some notes on port-Hamiltonian systems on Banach spaces}.
IFAC-PapersOnLine 54(19):223--229.

\bibitem[RoTo08]{RoTo08}
E.A.M. Rocha, D.F.M. Torres, 
\textit{First integrals for problems of calculus of variations on locally convex spaces}.
Balkan Society of Geometers (2008) and arxiv.org/abs/math/0511347.

\bibitem[SaVo23]{SaVo23}
 V. Zh. Sakbaev,  I. V. Volovich
\textit{Analogues of Jacobi and Weyl Theorems for Infinite-Dimensional Tori}.
arxiv.org/abs/2305.05929.

\bibitem[Ste74]{Ste74}
P. Stefan, \textit{Integrability of systems of vector fields}. J. London Math. Soc. (2) 21 (1974), no. 3,  544--556.

\bibitem[Sus73]{Sus73}
H. J. Sussmann,
\textit{Orbits of families of vector fields and integrability of distributions}.
Trans. Amer. Math. Soc. 80 (1973), 171--188.

\bibitem[SB-KZB17]{SB-KZB17}
D. Shi, Y. Berchenko-Kogan, D.V. Zenkov, A.M. Bloch,
\textit{Hamel's Formalism for Infinite-Dimensional Mechanical Systems}.
J. Nonlinear Sci. 27  (2017) 241--283. 
https://doi.org/10.1007/s00332-016-9332-7.

\bibitem[Val09]{Val09}
J. A.  Vallejo,  
\textit{Euler-Lagrange equations for functionals defined on Fr\'echet manifolds}. 
Journal of Nonlinear Mathematical Physics,
Vol. 16 (2009).

\bibitem[VdS98]{VdS98}
A. J. Van der Schaft, 
\textit{Implicit Hamiltonian systems with symmetry}. 
Rep. Math. Phys. (1998) 41, 203--221.

\bibitem[VdS95]{VdS95}
A. J. Van der Schaft, 
\textit{The Hamiltonian formulation of energy conserving physical systems with external ports}. 
Archiv f\"ur Elektronik und \"Ubertragungstechnik, 49 (1995), pp. 362-371.

\bibitem[VdS94]{VdS94}
A. J. van der Schaft,  B.M. Maschke
\textit{On the Hamiltonian formulation of nonholonomic mechanical systems}. 
Rep. Math. Phys. 34 (1994), pp. 225-233.

\bibitem[Tul77]{Tul77}
W. M. Tulczyjew,
\textit{The Legendre transformation}.
Ann. Inst. H Poincar\'e 27 (1977).

\bibitem[Vul14]{Vul14}
V.-A. Vulcu, 
\textit{Dirac Structures on Banach Lie Algebroids}. Analele Stiintifice ale Universitatii Ovidius Constanta, Seria Matematica. 22. (2014) 219--228.

\bibitem[Wur95]{Wur95}
T. Wurzbacher, 
\textit{Symplectic geometry of the loop space of a Riemannian manifold}.
Journal of Geometry and Physics Volume 16, Issue 4, July 1995, 345--384.  

\bibitem[YoMa06I]{YoMa06I}
H. Yoshimura, J. E. Marsden, 
\textit{Dirac Structures in Lagrangian Mechanics. Part I: Implicit Lagrangian Systems}. 
Journal of Geometry and Physics, 57 (2006) 133--156.

\bibitem[YoMa06II]{YoMa06II}
H. Yoshimura, J. E. Marsden, 
\textit{Dirac structures in Lagrangian mechanics Part II: Variational structures}. 
Journal of Geometry and Physics 57 (2006) 209--250.

\bibitem[YoMa07]{YoMa07}
H. Yoshimura, J. E. Marsdens,  
\textit{Dirac Structures and the Legendre Transformation for Implicit Lagrangian and
Hamiltonian Systems}. 
Lecture Notes in Control and Information Sciences (2007).

\bibitem[YJM10]{YJM10}
H. Yoshimura, H. Jacobs, J. E. Marsden, 
\textit{Interconnection of Dirac Structures and Lagrange-Dirac Dynamical Systems}. 
In Proceedings of the 19th International Symposium on Mathematical Theory of Networks and Systems, Budapest, Hungary, pp. 5-9, July 2010.

\end{thebibliography}
\end{document}